\newtheorem{theorem}{Theorem}
\newtheorem*{theorem*}{Theorem}
\newtheorem{lemma}[theorem]{Lemma}
\newtheorem{corollary}[theorem]{Corollary}
\newtheorem{proposition}[theorem]{Proposition}
\theoremstyle{definition}
\newtheorem{remark}[theorem]{Remark}
\newtheorem{definition}[theorem]{Definition}
\newcommand{\overbarint}{
\rule[.036in]{.12in}{.009in}\kern-.16in \displaystyle\int }
\newcommand{\overbarcal}{\mbox{$ \rule[.036in]{.11in}{.007in}\kern-.128in\int $}}
\newcommand{\bbbn}{\mathbb N}
\newcommand{\bbbz}{\mathbb Z}
\newcommand{\bbbr}{\mathbb R}
\newcommand{\eps}{\varepsilon}
\newcommand{\bbbb}{\mathbb B}
\def\5{\text{\Saturn}}
\def\diam{\operatorname{diam}}
\def\id{{\rm id\, }}
\newcommand{\overbar}[1]{\mkern 1.7mu\overline{\mkern-1.7mu#1\mkern-1.5mu}\mkern 1.5mu}
\def\mvint_#1{\mathchoice
          {\mathop{\vrule width 6pt height 3 pt depth -2.5pt
                  \kern -8pt \intop}\nolimits_{\kern -3pt #1}}%
          {\mathop{\vrule width 5pt height 3 pt depth -2.6pt
                  \kern -6pt \intop}\nolimits_{#1}}%
          {\mathop{\vrule width 5pt height 3 pt depth -2.6pt
                  \kern -6pt \intop}\nolimits_{#1}}%
          {\mathop{\vrule width 5pt height 3 pt depth -2.6pt
                  \kern -6pt \intop}\nolimits_{#1}}}
\numberwithin{theorem}{section} \numberwithin{equation}{section}
\title[Homeomorphisms with prescribed derivative]{Constructing diffeomorphisms and homeomorphisms with prescribed derivative}
\author[P. Goldstein]{Pawe\l{}  Goldstein}
\address{Pawe\l{} Goldstein, Institute of Mathematics, Faculty of Mathematics, Informatics and Mechanics, University of Warsaw, Banacha 2, 02-097 Warsaw, Poland} \email{P.Goldstein@mimuw.edu.pl}
\author[Z. Grochulska]{Zofia Grochulska}
\address{Zofia Grochulska, Doctoral School of Exact and Natural Sciences, University of Warsaw, Banacha 2, 02-097 Warsaw, Poland} \email{z.grochulska@uw.edu.pl}
\thanks{P.G. and Z.G. were supported by NCN grant no 2019/35/B/ST1/02030}
\author[P. Haj\l{}asz]{Piotr Haj\l{}asz}
\address{Piotr Haj\l{}asz, Department of Mathematics, University of Pittsburgh, Pittsburgh, PA 15260, USA}
\email{hajlasz@pitt.edu}
\thanks{P.H. was supported by NSF grant  DMS-2055171 and
by Simons Foundation grant 917582.}
\subjclass[2020]{Primary: 28A75  Secondary: 26B05, 26B15}
\keywords{approximate differentiability, homeomorphisms, prescribed derivative}
\begin{document}

\begin{abstract}
We prove that for any measurable mapping $T$ into the space of matrices with positive determinant, there is a diffeomorphism whose derivative equals $T$ outside a set of measure less than $\varepsilon$. We use this fact to prove that for any measurable mapping $T$ into the space of matrices with non-zero determinant (with no sign restriction), there is an almost everywhere approximately differentiable homeomorphism whose derivative equals $T$ almost everywhere.
\end{abstract}

\maketitle
\section{Introduction}
There are three main results in the paper, Theorem~\ref{T31}, Theorem~\ref{T36} and Theorem~\ref{T47}. Theorem~\ref{T5} is also new.
Throughout the paper we assume that $n\geq 2$.

Alberti \cite[Theorem~1]{Alberti} proved the following result:
\begin{theorem}[Alberti]
\label{T30}
Let $\Omega\subset\bbbr^n$ be a bounded domain and let $T:\Omega\to\bbbr^n$ be a measurable function. Then, for every $\eps>0$, there is a function $\phi\in C_c^1(\Omega)$ and a compact set $K\subset\Omega$ such that $|\Omega\setminus K|<\eps$ and $D\phi(x)=T(x)$ for all $x\in K$.
\end{theorem}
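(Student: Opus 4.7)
The plan is to combine Lusin's theorem with a cube-by-cube piecewise construction, and then to iterate in order to upgrade approximate equality of the gradient to exact equality on a large set.

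First I would apply Lusin's theorem to find a compact set $K_0\subset\Omega$ with $|\Omega\setminus K_0|<\eps/3$ on which $T$ is continuous and, after further restriction, bounded. By uniform continuity of $T|_{K_0}$, I would cover $K_0$ by finitely many closed cubes $Q_1,\ldots,Q_N\subset\Omega$ with pairwise disjoint interiors, chosen small enough that the oscillation of $T$ on each $K_0\cap Q_j$ is less than a prescribed tolerance $\eta>0$. On each $Q_j$ pick $x_j\in K_0\cap Q_j$, set $v_j=T(x_j)$, and define a building block $\phi_j(x):=\bigl(v_j\cdot(x-x_j)\bigr)\rho_j(x)$, where $\rho_j\in C_c^\infty(Q_j)$ is a cutoff equal to $1$ on an inner concentric subcube $Q_j'\subset Q_j$ with $|Q_j\setminus Q_j'|<\eps/(3N)$. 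The sum $\phi:=\sum_j\phi_j$ then lies in $C_c^1(\Omega)$ because the cubes have disjoint interiors, and on each $Q_j'$ one has $\nabla\phi=v_j$.

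On the candidate good set $K:=K_0\cap\bigcup_j Q_j'$, which satisfies $|\Omega\setminus K|<2\eps/3$, this direct construction yields only $|\nabla\phi(x)-T(x)|<\eta$ rather than the required exact equality. To upgrade, I would iterate: set $T_1:=T-\nabla\phi$ and apply the same construction to $T_1$ on an open neighborhood of the previous bad set, using progressively finer cubes and tolerances $\eta_k\downarrow 0$. In the limit, $\phi+\sum_k\psi_k\in C_c^1(\Omega)$ and the gradient would equal $T$ on the intersection of the successive good sets, whose measure exceeds $|\Omega|-\eps$.

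The hardest part is ensuring $C^1$ convergence of this iteration. Although $\|\psi_k\|_\infty$ can be made to decay geometrically by shrinking the cube size at each step, $\|\nabla\psi_k\|_\infty$ is controlled only by the supremum of the residual $T_{k-1}$ on the support of $\psi_k$, and this does not automatically decrease unless each correction is confined to (a small open neighborhood of) the previous good set, where the residual is small by construction. The technical heart of the argument lies in balancing the cube sizes, tolerances, supports, and bad-set measures so that both $\|\psi_k\|_\infty$ and $\|\nabla\psi_k\|_\infty$ are summable while the good sets still intersect in a set of measure at least $|\Omega|-\eps$.
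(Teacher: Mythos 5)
The paper does not prove this theorem: it is Alberti's Lusin-type theorem for gradients, quoted from \cite{Alberti} and used as a black box, so there is no internal proof for you to match. Your outline --- Lusin's theorem, a cube decomposition with cut-off affine building blocks, and an iteration to upgrade approximate to exact gradient --- is the standard route to the qualitative statement. One phrase, taken literally, would lead you astray: ``apply the same construction to $T_1$ on an open neighborhood of the previous bad set'' should say \emph{good} set. The residual $T_{k-1}=T-\nabla\phi_{k-1}$ is small only on the previous good set $G_{k-1}$; on the bad set it is uncontrolled, and basing the next correction there would destroy summability of the gradients. What matters is that the sample points $x_j$ for the next correction lie in $G_{k-1}$, so that every coefficient $v_j=T_{k-1}(x_j)$ satisfies $|v_j|<\eta_{k-1}$; then $\|\nabla\psi_k\|_\infty$ is bounded by $\eta_{k-1}$ times a cut-off factor $\sim(1-\lambda_k)^{-1}$ (with $Q_j'=\lambda_kQ_j$), not by $\sup_{\supp\psi_k}|T_{k-1}|$. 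You come close to this in your final paragraph without correcting the earlier phrasing.

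The balancing you flag as the ``technical heart'' is the one genuine gap and is worth making explicit, because it is exactly where the argument could fail. Requiring $\sum_j|Q_j\setminus Q_j'|<\eps_k$ forces $1-\lambda_k\lesssim\eps_k/|\Omega|$, so $\|\nabla\psi_k\|_\infty\lesssim\eta_{k-1}|\Omega|/\eps_k$. Taking, e.g., $\eta_k=8^{-k}$ and $\eps_k=2^{-k-2}\eps$ gives $\sum_k\eps_k<\eps/2$ while $\sum_k\|\nabla\psi_k\|_\infty$ converges geometrically, and $\|\psi_k\|_\infty\le\eta_{k-1}\cdot(\text{cube diameter})$ converges a fortiori. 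Two further details are needed to close the loop: all cubes at all stages must lie inside a single fixed compact subset of $\Omega$ (take the cube diameters smaller than $\dist(K_0,\partial\Omega)$), so that the limit is genuinely in $C^1_c(\Omega)$; and at each stage the cubes must be fine enough that $T_{k-1}$, restricted to the compact set $K_0$, oscillates by less than $\eta_k$ on each cube, which uses the continuity of $T|_{K_0}$ furnished by Lusin, not merely measurability. With these explicit choices the iteration closes and your approach is correct. Note, finally, that Alberti's actual theorem is quantitative --- it comes with $L^p$-estimates on $D\phi$ in terms of $T$ --- which this argument does not attempt to reproduce.
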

Because of a certain analogy to the classical Lusin theorem, the above result is known as the {\em Lusin property for gradients}.
Applying the result to components of a matrix-valued measurable mapping $T:\Omega\to M^{n\times n}$, we obtain a mapping $\Phi\in C_c^1(\Omega,\bbbr^n)$ and a compact set $K\subset\Omega$ such that $|\Omega\setminus K|<\eps$ and $D\Phi(x)=T(x)$ for all $x\in K$. However, the result does not provide any information about geometric properties of the mapping $\Phi:\Omega\to\bbbr^n$. Our first main result addresses this issue and we prove that if $\det T>0$ a.e., then we can actually construct a $C^1$-diffeomorphism such that $D\Phi=T$ outside an open set of a small measure.
By $GL(n)$ we denote the space of all real $n\times n$ matrices with non-zero determinant, while by
$GL(n)^+$ we denote the space of real $n\times n$ matrices with positive determinant.
\begin{theorem}
\label{T31}
Let $\Omega \subset \bbbr^n$ be a~bounded domain and $F: \Omega \to \bbbr^n$ an orientation preserving diffeomorphism onto the bounded image $F(\Omega)$. Suppose that $T: \Omega \to GL(n)^+$ is a measurable function such that $\int_\Omega \det T  \leq |F(\Omega)|$. Then, for any $\eps > 0$, there exists a $C^1$-diffeomorphism $\Phi: \Omega \to F(\Omega)$ with the following properties:
\begin{enumerate}[(a)]
\item $\Phi(x) = F(x)$ near $\partial \Omega$;
\item there exists a~compact set $K \subset \Omega$ such that for every $x \in K$, $D\Phi (x) = T (x)$ and $|\Omega \setminus K|< \eps$.
\end{enumerate}
\end{theorem}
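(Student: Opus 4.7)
The strategy is to combine the componentwise version of Alberti's theorem (to enforce $D\Phi=T$ exactly on a large set) with a piecewise-affine construction that exploits the volume slack $\int_\Omega\det T\le|F(\Omega)|$ to upgrade the resulting $C^1$ map into a diffeomorphism. Without the slack, injectivity cannot be salvaged; with strict slack, there is geometric room to arrange disjoint images and to interpolate between the prescribed pieces and the boundary data $F$.

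I would begin with two preliminary reductions: by Lusin's theorem I may assume $T$ is continuous on a compact $K_0\subset\Omega$ of almost full measure, and by absolute continuity of the integral I may pass to a compactly contained subdomain $\Omega'\Subset\Omega$ where the inequality becomes strict, with positive deficit $\delta:=|F(\Omega')|-\int_{\Omega'}\det T>0$. On the collar $\Omega\setminus\Omega'$ I set $\Phi:=F$, which immediately secures condition~(a). Inside $\Omega'$, I would tile most of $K_0$ by pairwise disjoint small closed cubes $Q_1,\dots,Q_N$ on each of which the oscillation of $T$ is less than a prescribed $\eta\ll 1$. On each $Q_i$ I fix a base point $x_i$ and a translation $y_i$, let $A_i(x):=T(x_i)(x-x_i)+y_i$ be the affine target map, and apply Alberti's theorem coordinatewise to $T-T(x_i)$ on $Q_i$ to obtain $\phi_i\in C^1_c(Q_i,\bbbr^n)$ with $D\phi_i=T-T(x_i)$ on a compact $K_i\subset Q_i$ of almost full measure. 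Setting $\Phi:=A_i+\phi_i$ on $Q_i$ yields $\Phi=A_i$ near $\partial Q_i$ (so that we can glue) and $D\Phi=T$ on $K_i$ (as required). The translations $y_i$ are chosen so that the parallelepipeds $P_i:=A_i(Q_i)$ are pairwise disjoint and compactly contained in $F(\Omega')$; this is feasible by a greedy placement on a grid much finer than the cubes, using the volume bound $\sum_i|P_i|=\sum_i\det T(x_i)\,|Q_i|\le\int_{\Omega'}\det T<|F(\Omega')|-\delta/2$ for $\eta$ small.

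The main obstacle is the gluing step, which has two facets. First, one must ensure each $\Phi|_{Q_i}=A_i+\phi_i$ is globally injective on $Q_i$; this is not automatic from Alberti and requires a perturbative control on $\|\phi_i\|_{C^1(Q_i)}$, which one expects by a scaling argument (the right-hand side $T-T(x_i)$ has $L^\infty$ norm at most $\eta$, and by further shrinking $Q_i$ and rescaling one can force the Alberti corrector to be $C^1$-small, so that $\Phi|_{Q_i}$ is a small perturbation of the invertible linear map $A_i$). Second, and more delicate, one must extend $\Phi$ over the transition region $R:=\Omega'\setminus\bigcup_iQ_i$ as an orientation-preserving diffeomorphism of $R$ onto $F(\Omega')\setminus\bigcup_iP_i$ matching the prescribed boundary data ($A_i$ on $\partial Q_i$, $F$ on $\partial\Omega'$). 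Source and target are bounded open subsets of $\bbbr^n$ of equal volume; with the $P_i$ placed in a combinatorial pattern mirroring the $Q_i$, the two sets have matching topological type, so an orientation-preserving diffeomorphism between them with the correct boundary correspondence exists. The $C^1$-matching across the many interfaces $\partial Q_i$ and $\partial\Omega'$ is achieved via thin-collar bump-function interpolations close to the identity in $C^1$, so that global injectivity and positivity of the Jacobian are preserved; if a volume mismatch remains after such smoothing, a Dacorogna--Moser prescribed-Jacobian correction on $R$ absorbs it. The slack $\delta>0$ is indispensable throughout: both for packing the $P_i$ with room to spare and for providing the maneuvering room needed in the gluing and smoothing steps.
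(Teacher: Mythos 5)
Your plan hinges on the claim that, after shrinking the cubes $Q_i$ and rescaling, the Alberti corrector $\phi_i$ (with $D\phi_i=T-T(x_i)$ on a large compact $K_i\subset Q_i$) can be made small in $C^1(Q_i)$, so that $\Phi|_{Q_i}=A_i+\phi_i$ is a small $C^1$ perturbation of the linear map $A_i$ and hence injective. This is not justified, and it is the crux of the difficulty. Alberti's theorem prescribes $D\phi_i$ only on $K_i$; on the exceptional set $Q_i\setminus K_i$ the derivative $D\phi_i$ is uncontrolled, and indeed the quantitative form of Alberti's theorem gives a bound on $\|\nabla\phi_i\|_{L^p}$ (for $p<\infty$) that blows up as the exceptional-set parameter tends to zero — there is no $L^\infty$ or $C^1$ smallness. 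Rescaling does not help: if $\phi_r(x)=r\phi(x/r)$, then $D\phi_r(x)=(D\phi)(x/r)$, so the $C^0$ norm shrinks but the $C^1$ norm does not. Moreover, the oscillation bound $\|T-T(x_i)\|\le\eta$ holds only on the Lusin set $K_0\cap Q_i$, not on all of $Q_i$, so even the $L^p$ bounds do not become small with $\eta$. Without $C^1$ smallness, there is no reason $\Phi|_{Q_i}$ should be injective; this is exactly the obstruction that forces the paper to avoid using Alberti's map as a perturbation and instead to graft it, at arbitrarily small scales around density points, onto a genuine diffeomorphism constructed by Dacorogna--Moser (Corollary~\ref{T29}), joining the two via the explicit gluing results Lemma~\ref{T17}, Proposition~\ref{T21} and Proposition~\ref{T38} (the latter relying on a combinatorial storage lemma), each of which comes with a rigorous proof of diffeomorphy.

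A second, independent gap is the step where you pass a diffeomorphism of the transition region $R=\Omega'\setminus\bigcup_iQ_i$ onto $F(\Omega')\setminus\bigcup_iP_i$ ``matching the prescribed boundary data''. Saying the two sets ``have matching topological type'' is not enough: one must realize a specific boundary correspondence ($A_i$ on each $\partial Q_i$, $F$ on $\partial\Omega'$) by a diffeomorphism of the interior, and in a multiply connected domain this is a statement about the diffeomorphism group and the isotopy class of the boundary data, not merely the homeomorphism type. You also invoke a Dacorogna--Moser correction on $R$ without checking the hypotheses (positive smooth Jacobian, identity near $\partial R$, the mass-balance condition on a domain with many holes). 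The paper avoids this soft argument entirely by never leaving the category of explicit elementary replacements — in each step one replaces a diffeomorphism on a ball by another diffeomorphism agreeing near the boundary, and one checks injectivity directly (e.g.\ via the topological Lemma~\ref{T14}). Finally, a small remark: the paper normalizes to the \emph{equality} case $\int_\Omega\det T=|F(\Omega)|$ (adjusting $T$ on a small set), rather than working with a strict slack; the slack plays no role once the storage argument is in place, and the equality case is actually necessary for the Dacorogna--Moser step.
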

The condition $\int_\Omega\det T\leq |F(\Omega)|$ is necessary. Indeed, if $\int_\Omega\det T>|F(\Omega)|$,
then by the change of variables formula, for $\eps$ sufficiently small, we have
$$
|F(\Omega)|=|\Phi(\Omega)|=\int_\Omega\det D\Phi>\int_K\det T> |F(\Omega)|,
$$
which is a contradiction. The result shows that the upper bound for the integral of $\det T$ is in fact necessary and sufficient for the existence of a diffeomorphism $\Phi$ as in Theorem~\ref{T31}.

While we use Alberti's result in the proof of Theorem~\ref{T31}, it is just  a tip of the iceberg as the main difficulty lies in making sure that the map is a diffeomorphism. To this end, we use a theorem of Dacorogna and Moser \cite{DacorognaMoser} about smooth diffeomorphisms with prescribed Jacobians (Lemma~\ref{T11} below), some topological arguments of Munkres \cite{munkres60} (see Lemma~\ref{T14} and Section~\ref{S23}), and a lot of explicit constructions of diffeomorphisms with prescribed properties (see Section~\ref{S23}). All diffeomorphisms are constructed rigorously.

While Theorem~\ref{T31} tells us about diffeomorphisms with prescribed derivative outside a set of small measure, the next result, Theorem~\ref{T36}, is about homeomorphisms with almost everywhere prescribed derivative.

We say that a measurable function $f:\Omega\to\bbbr^m$, $\Omega\subset\mathbb{R}^n$, is approximately
differentiable at $x\in \Omega$ if there is a measurable set $E_x\subset \Omega$
and a linear function $D_{\rm a}f(x):\bbbr^n\to\bbbr^m$ such that $x$ is a density point of $E_x$ and
$$
\lim_{E_x\ni y\to x} \frac{|f(y)-f(x)-D_{\rm a}f(x)(y-x)|}{|y-x|} = 0.
$$
See Section~\ref{AD} for more information about approximately differentiable functions.

One of the reasons why the class of mappings that are approximately differentiable a.\,e. is important is
the following general form of the change of variables formula that was essentially proved by Federer \cite{Federer2}  (see \cite{hajlasz} for this particular statement and a~detailed proof). By $N(\Phi,y)$ we will denote the number of points (cardinality) of the preimage $\Phi^{-1}(y)$.

We say that a measurable mapping $f:\Omega\to\mathbb{R}^n$ defined on an open set $\Omega\subset\bbbr^n$ satisfies the {\em Lusin condition~(N)} if it maps sets of measure zero to sets of measure zero.
\begin{theorem}[Federer]
\label{T42}
Let $\Phi:\Omega\to\mathbb{R}^n$ be a measurable mapping defined on an open set $\Omega\subset\bbbr^n$. Assume that it is approximately differentiable a.e. If $\Phi$ satisfies the Lusin condition (N), then for any measurable function $f:\bbbr^n\to\bbbr$ we have
\begin{equation}
\label{eq84}
\int_\Omega (f\circ\Phi)(x)|\det D_{\rm a}\Phi(x)|\, dx = \int_{\Phi(\Omega)} f(y)N(\Phi,y)\, dy.
\end{equation}
If $\Phi$ does not satisfy the condition (N), then we can redefine $\Phi$ on a set of measure zero so that after the redefinition, $\Phi$ satisfies the condition (N) and hence \eqref{eq84}.
\end{theorem}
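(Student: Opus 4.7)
The plan is to reduce to the classical area formula for Lipschitz maps by decomposing $\Omega$ into countably many pieces on which $\Phi$ agrees with a globally Lipschitz map, and then to use the Lusin condition (N) to discard the leftover null set.

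First, I would invoke the standard Whitney--Federer structure theorem for approximately differentiable maps: there exist pairwise disjoint Borel sets $E_i \subset \Omega$ ($i=1,2,\ldots$) and globally Lipschitz maps $g_i : \bbbr^n \to \bbbr^n$ such that $|\Omega \setminus \bigcup_i E_i| = 0$, $\Phi = g_i$ on $E_i$, and $D_{\rm a}\Phi(x) = Dg_i(x)$ for a.e.\ $x \in E_i$. The proof of this decomposition proceeds by first writing $\Omega$ (up to a null set) as a countable union of sets on which $\Phi$ is approximately Lipschitz with uniform constants (via Egoroff and a Lusin-type argument for the approximate differential), passing to compact subsets on which $\Phi$ is genuinely Lipschitz, and then extending each restriction to a globally Lipschitz map via the Whitney extension theorem.

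Second, the classical area formula for Lipschitz maps (the version of the theorem for $g_i$) gives, on each piece,
$$\int_{E_i} (f\circ g_i)(x)\,|\det Dg_i(x)|\,dx \;=\; \int_{\bbbr^n} f(y)\, \H^0(g_i^{-1}(y)\cap E_i)\,dy.$$
Summing over $i$, the left-hand side becomes $\int_{\bigcup_i E_i} (f\circ\Phi)|\det D_{\rm a}\Phi|$, which equals $\int_\Omega (f\circ\Phi)|\det D_{\rm a}\Phi|$ up to an integral over a null set. On the right, $\sum_i \H^0(g_i^{-1}(y)\cap E_i) = N(\Phi|_{\bigcup_i E_i},y)$; the Lusin condition (N) ensures that $\Phi(\Omega\setminus\bigcup_i E_i)$ has measure zero, so for a.e.\ $y$ this coincides with $N(\Phi,y)$. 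Combining these identifications yields \eqref{eq84}.

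For the final assertion, if $\Phi$ does not satisfy (N), I would redefine $\Phi(x) := 0$ on the null set $Z = \Omega\setminus\bigcup_i E_i$ (leaving $\Phi$ unchanged on $\bigcup_i E_i$, where it already agrees with the Lipschitz maps $g_i$). For any null set $A\subset\Omega$, $\Phi(A) \subset \{0\}\cup\bigcup_i g_i(A\cap E_i)$, and since each $g_i$ is Lipschitz it sends null sets to null sets; hence the redefined $\Phi$ satisfies (N) and the formula applies. The main obstacle in the whole argument is the first step: approximate differentiability is considerably weaker than classical or even Lipschitz differentiability, so carefully extracting the Lipschitz decomposition (controlling both the approximate differential and the pointwise restriction of $\Phi$ simultaneously on large sets) is the technical heart of the proof; once it is in place, the rest is bookkeeping on top of the classical Lipschitz area formula.
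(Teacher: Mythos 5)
The paper does not reprove Theorem~\ref{T42}: it attributes the result to Federer~\cite{Federer2} and explicitly points to~\cite{hajlasz} for this precise statement and a detailed proof, so there is no in-paper argument to compare against. Your sketch is the standard proof and is correct. The Lipschitz decomposition you invoke in the first step follows readily from Whitney's theorem (Lemma~\ref{T2} in this paper): choosing $\Phi_k\in C^1$ with $|\{\Phi\neq\Phi_k\}|<1/k$, chopping each agreement set $\{\Phi=\Phi_k\}$ into bounded pieces on which $\Phi_k$ is Lipschitz, disjointifying, and extending by Kirszbraun/McShane gives exactly the $(E_i,g_i)$ decomposition; Rademacher plus the density-point argument of Lemma~\ref{T7} then give $D_{\rm a}\Phi=Dg_i$ a.e.\ on $E_i$. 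The summation step and the use of Lusin (N) to identify $N(\Phi|_{\bigcup_iE_i},\cdot)$ with $N(\Phi,\cdot)$ a.e.\ are exactly right, and this also disposes of the measurability of the right-hand integrand (one integrates the measurable function $\sum_i\H^0(g_i^{-1}(y)\cap E_i)$, which equals $N(\Phi,y)$ a.e.). Your redefinition $\Phi:=0$ on the null set $Z=\Omega\setminus\bigcup_iE_i$ is also correct: for any null $A\subset\Omega$, the redefined image lies in $\{0\}\cup\bigcup_ig_i(A\cap E_i)$, which is null by Lipschitzness of the $g_i$, and the approximate derivative is unaffected a.e.\ by a change on a null set (again Lemma~\ref{T7}). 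So the proposal matches, in essence, the cited proof.
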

To be more precise, \eqref{eq84} means that the function on the left hand side is integrable if and only if the function on the right hand side is integrable and then we have equality.

In particular, if $\Phi:\Omega\to\bbbr^n$ is a homeomorphism that is approximately differentiable a.e. and satisfies condition~(N), then
applying \eqref{eq84} to $f=\chi_{\Phi(E)}$, where $E$ is measurable, we get
\begin{equation}
\label{eq115}
\int_E |\det D_{\rm a}\Phi(x)|\, dx = |\Phi(E)|
\end{equation}
($\Phi(E)$ is measurable, because $E$ is a union of a Borel set and a set of measure zero and homeomorphisms preserve Borel sets). In particular,
\begin{equation}
\label{eq86}
\int_\Omega |\det D_{\rm a}\Phi(x)|\, dx = |\Phi(\Omega)|.
\end{equation}
If the homeomorphism $\Phi$ is Lipschitz, then $D\Phi$ must satisfy several conditions, for example: $\det D\Phi$ cannot change sign and $\partial^2\Phi/\partial x_i\partial x_j=\partial^2\Phi/\partial x_j\partial x_i$ (in the distributional sense).
Now assume that $\Phi$ is a homeomorphism satisfying assumptions of Theorem~\ref{T42}, i.\,e., $\Phi$ is approximately differentiable a.\,e.\ and satisfies the condition (N).
{\em What conditions must $D_{\rm a}\Phi$ satisfy?} It is an important question, because Sobolev and $BV$ homeomorphisms are approximately differentiable a.\,e.\ and they are abundant in
non-linear elasticity, geometric problems in calculus of variations, and in the theory of quasiconformal and quasiregular mappings. Because of the nature of the problems in which they appear, understanding the relation between geometric and topological properties of the mapping and properties of the derivative is crucial. Our second main result,
Theorem~\ref{T36}, answers the above question: {\em Basically none}. Indeed, Theorem~\ref{T36} shows that we can construct a homeomorphism $\Phi$ with arbitrarily prescribed approximate derivative. This answers a conjecture from \cite{GoldsteinHajlasz17} in the positive. For simplicity, we assume that $\Omega$ is the interior of the unit cube.
\begin{theorem}
\label{T36}
Let $\mathscr{Q}=[0,1]^n$.
For any measurable map $T: \mathscr{Q} \to GL(n)$ that satisfies
\begin{equation}
\label{eq85}
\int_\mathscr{Q} |\mathrm{det}\, T(x)|\, dx = 1,
\end{equation}
there exists an a.\,e.\ approximately differentiable homeomorphism $\Phi:\mathscr{Q}\to \mathscr{Q}$ such that $\Phi|_{\partial \mathscr{Q}} = \id$ and $ D_{\rm{a}}\Phi = T$ a.e.
Moreover,
\begin{itemize}
\item[(a)] $\Phi^{-1}$ is approximately differentiable a.\,e.\ and $D_{\rm a}\Phi^{-1}(y)=T^{-1}(\Phi^{-1}(y))$ for almost all $y\in \mathscr{Q}$;
\item[(b)] $\Phi$ preserves the sets of measure zero, i.e., for any $A\subset \mathscr{Q}$,
$$
|A|=0
\qquad
\text{if and only if}
\qquad
|\Phi(A)|=0.
$$
\item[(c)] $\Phi$ is a limit of $C^\infty$-diffeomorphisms $\Phi_k:\mathscr{Q}\to \mathscr{Q}$, $\Phi_k=\id$ in a neighborhood of $\partial\mathscr{Q}$,
in the uniform metric, i.e., $\Vert\Phi-\Phi_k\Vert_\infty+\Vert\Phi^{-1}-\Phi_k^{-1}\Vert_\infty\to 0$ as $k\to\infty$.
\end{itemize}
\end{theorem}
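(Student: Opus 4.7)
My plan is to construct $\Phi$ as the uniform limit of a Cauchy sequence $\{\Psi_k\}$ of homeomorphisms $\mathscr{Q}\to\mathscr{Q}$, each equal to the identity near $\partial\mathscr{Q}$, obeying three invariants: nested compact sets $K_0\subset K_1\subset\cdots\subset\mathscr{Q}$ with $|\mathscr{Q}\setminus K_k|\to 0$; each $\Psi_k$ is of class $C^1$ on a neighborhood of $K_k$ with classical derivative $D\Psi_k=T$ on $K_k$; and $\Psi_{k+1}\equiv\Psi_k$ on $K_k$. In addition, for a refining sequence of cube partitions $\mathcal{P}_k$ of $\mathscr{Q}$, the homeomorphism $\Psi_k$ carries each $Q\in\mathcal{P}_k$ onto a region $V_Q=\Psi_k(Q)$ with $|V_Q|=\int_Q|\det T|\,dx$. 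Granted this setup, the limit satisfies $\Phi|_{K_k}=\Psi_k|_{K_k}$, and at every density point $x\in K_k$ one reads off $D_{\rm a}\Phi(x)=D\Psi_k(x)=T(x)$; since $\bigl|\bigcup_k K_k\bigr|=|\mathscr{Q}|$, this yields $D_{\rm a}\Phi=T$ almost everywhere.

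At stage $k$, I would refine $\mathcal{P}_k$ so that on every sub-cube $Q\not\subset K_k$ the set on which $\det T$ takes the dominant sign has measure $>(1-\eps_k)|Q|$, using the Dacorogna--Moser lemma (Lemma~\ref{T11}) to adjust $\Psi_k|_Q$ if necessary so that it is a smooth diffeomorphism onto $V_Q$ of the correct mass. On a positive cube I apply Theorem~\ref{T31} directly with $F=\Psi_k|_Q$ to obtain a $C^1$-diffeomorphism of $Q$ onto $V_Q$, equal to $F$ near $\partial Q$, whose derivative equals $T$ on a compact $K(Q)$ of measure $>(1-\eps_k)|Q|$. On a negative cube I pick a slightly smaller concentric cube $Q'\Subset Q$ and a coordinate reflection $R$ preserving $Q$, and apply Theorem~\ref{T31} on $Q'$ to the reflected data $\tilde T(y):=T(Ry)R^{-1}$ (which has positive determinant); postcomposition with $R$ produces an orientation-reversing homeomorphism of $Q'$ onto a preset inner cell $V'\Subset V_Q$ of volume $\int_{Q'}|\det T|$, with derivative $T$ on a large compact subset of $Q'$. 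The collar $Q\setminus Q'$ is bridged by an explicit homeomorphism onto $V_Q\setminus V'$ reconciling the two boundary maps; gluing across all cubes (and to $\Psi_k$ outside them) produces $\Psi_{k+1}$ and $K_{k+1}:=K_k\cup\bigcup_Q K(Q)$.

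The hardest step will be the collar construction on negative cubes, because the orientations induced by $\Psi_{k+1}$ on $\partial Q'$ and on $\partial Q$ are opposite, so the bridging homeomorphism must contain a topological fold along which the Jacobian is necessarily degenerate. My plan is to preset $|V'|=\int_{Q'}|\det T|$ by a Dacorogna--Moser adjustment, then invoke Munkres-type boundary-extension arguments (Lemma~\ref{T14} and the constructions of Section~\ref{S23}) to interpolate between the two boundary homeomorphisms on the shell $Q\setminus Q'$. This collar sits outside $K_{k+1}$, so its measure contributes to $|\mathscr{Q}\setminus K_{k+1}|$; keeping the total collar mass summable across $k$ is the main quantitative constraint.

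Summable control on both $\max_{Q\in\mathcal{P}_k}\diam Q$ and $\max_{Q\in\mathcal{P}_k}\diam V_Q$ makes $\{\Psi_k\}$ and $\{\Psi_k^{-1}\}$ Cauchy in the uniform metric, so $\Phi$ is a homeomorphism. Property~(c) follows by smoothing each $\Psi_k$ into a genuine $C^\infty$-diffeomorphism that coincides with $\Psi_k$ on the locked set $K_k$, via a further local Dacorogna--Moser construction and diagonal extraction. Property~(b) follows from the identity $|\Psi_k(E)|=\int_E|\det T|\,dx$ for $E\in\mathcal{P}_k$ passing to the limit for arbitrary measurable $E$, which combined with $\det T\neq 0$ a.e.\ gives null-set preservation in both directions. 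Finally, (a) follows from (b), the identity $D_{\rm a}\Phi=T$, and the change-of-variables framework of Theorem~\ref{T42} applied to $\Phi^{-1}$.
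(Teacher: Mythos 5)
The plan breaks down at its very first stated invariant, and the difficulty you flag in the collar construction is in fact a topological obstruction, not a gap you can close by cleverness. You want each intermediate $\Psi_k$ to be a homeomorphism of $\mathscr{Q}$ onto itself that equals the identity near $\partial\mathscr{Q}$, is of class $C^1$ on a neighborhood of $K_k$, and has classical derivative $D\Psi_k=T$ on $K_k$. But a homeomorphism of $\mathscr{Q}$ fixing a neighborhood of $\partial\mathscr{Q}$ has topological degree $+1$, hence is sense-preserving; by degree theory (see the remark following Theorem~\ref{T36}, and e.g.\ the proof of Theorem~5.22 in \cite{HenclKoskela}), a sense-preserving homeomorphism that is classically differentiable at a point with non-degenerate derivative must have positive Jacobian there. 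Whenever $\det T<0$ on a set of positive measure meeting $K_k$, your requirement forces $\det D\Psi_k=\det T<0$ on a positive-measure subset of a neighborhood where $\Psi_k$ is $C^1$ — impossible. The same degree argument also kills the collar map on a negative cube: if $G\colon Q\to V_Q$ is a homeomorphism agreeing with an orientation-preserving map on $\partial Q$, then $G$ has local degree $+1$ throughout $Q$, so its restriction to the inner cube $Q'$ cannot be orientation-reversing, and no interpolating homeomorphism of the shell $Q\setminus Q'$ onto $V_Q\setminus V'$ reconciling the two orientations exists (a homeomorphism cannot have a fold). Calling this a fold with degenerate Jacobian does not help; degeneracy on the collar does not alter the degree argument inside $Q'$.

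The paper circumvents this by keeping the sign change out of every finite stage. It first proves the theorem assuming $\det T>0$ a.e.\ (Section~\ref{gen}), where all $\Phi_k$ are genuine orientation-preserving $C^1$-diffeomorphisms constructed by iterating Theorem~\ref{T31} on diffeomorphic dyadic partitions whose cell measures are matched via Propositions~\ref{T43} and~\ref{T32}. Then (Section~\ref{S7.1}) it sets $\hat T=T$ where $\det T\ge 0$ and $\hat T=\mathcal{R}T$ where $\det T<0$, gets $\hat\Phi$ with $D_{\rm a}\hat\Phi=\hat T$, and postcomposes with the a.e.-approximately-differentiable (but nowhere $C^1$ with negative Jacobian) homeomorphism $\Phi'$ from Theorem~\ref{T5}, which has $D_{\rm a}\Phi'=\mathcal{R}$ on $\hat\Phi(\{\det T<0\})$ and $\mathcal{I}$ elsewhere. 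The orientation reversal thus appears only in the uniform limit and only as an approximate derivative — exactly the loophole that the degree obstruction leaves open — and never as a classical derivative of a finite-stage homeomorphism.

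Your outline of the case $\det T>0$ a.e.\ (iterating Theorem~\ref{T31} over a refining partition, controlling cell diameters of $Q$ and $\Psi_k(Q)$ to get a Cauchy sequence, using Lemma~\ref{T11} to preset cell volumes) is close to the paper's argument for that case, though you have suppressed the nontrivial bookkeeping of diffeomorphic dyadic partitions and the set-theoretic accounting (the sets $E_k$, $L_k$, $C_k$) that makes the $|C_k|\to 1$ estimate go through. But without a correct treatment of the negative-determinant part, the proof does not stand.
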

Note that $\Phi$ cannot be differentiable in the classical sense if $\det T<0$ on a set of positive measure, because $\Phi$ is orientation preserving and hence it follows from the degree theory that if $\Phi$ is differentiable at $x$ and $\det D\Phi(x)\neq 0$, then the classical derivative must satisfy $\det D\Phi(x)>0$, see e.g. the proof of \cite[Theorem~5.22]{HenclKoskela}.

It follows from \eqref{eq86} that if $T:\mathscr{Q}\to GL(n)$ is measurable, and $\Phi:\mathscr{Q}\to \mathscr{Q}$ is a homeomorphism that is approximately differentiable, satisfies the Lusin condition (N), and $D_{\rm a}\Phi(x)=T(x)$ a.\,e., then $T$ must satisfy \eqref{eq85}. As Theorem~\ref{T36} shows, this is actually the only constraint for the mapping $T$.

Some related results have been obtained earlier. The following  was the main result in \cite{GoldsteinHajlasz17} (cf.\ \cite{GoldsteinHajlasz18} and Theorem~\ref{T5} below).
\begin{theorem}
\label{T33}
There is a homeomorphism $\Phi:\mathscr{Q}=[0,1]^n\to \mathscr{Q}$, $\Phi|_{\partial \mathscr{Q}}=\id$, such that
$$
D_{\rm a} \Phi=
\left[
\begin{array}{ccccc}
1       &      0       &   \ldots   &   0      &     0      \\
0       &      1       &   \ldots   &   0      &     0      \\
\vdots  &   \vdots     &   \ddots   &  \vdots  &    \vdots  \\
0       &      0       &   \ldots   &   1      &     0      \\
0       &      0       &   \ldots   &   0      &    -1      \\
\end{array}
\right]\ \
 \text{ a.e.}
$$
and $\Phi$ is a limit of volume preserving $C^\infty$-diffeomorphisms $\Phi_k:\mathscr{Q}\to \mathscr{Q}$, $\Phi_k=\id$ in a neighborhood of $\partial\mathscr{Q}$,
in the uniform metric.\footnote{The statement of \cite[Theorem~1.4]{GoldsteinHajlasz17} says $\Phi_k|_{\partial\mathscr{Q}}=\id$, but the proof shows that $\Phi_k=\id$ in a neighborhood of $\partial{\mathscr{Q}}$.}
\end{theorem}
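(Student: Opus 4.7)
The plan is to construct $\Phi$ as the uniform limit of an explicitly built sequence of volume-preserving $C^\infty$ diffeomorphisms $\Phi_k\colon \mathscr{Q} \to \mathscr{Q}$, each equal to the identity on a neighbourhood of $\partial \mathscr{Q}$, built by an inductive refinement procedure. The conceptual point to keep in mind throughout is that every $\Phi_k$ is orientation preserving --- its Jacobian is strictly positive everywhere, since $\Phi_k = \id$ near the connected boundary and the Jacobian is continuous --- while the target approximate derivative $\operatorname{diag}(1, \dots, 1, -1)$ is orientation reversing. This is not a contradiction: the approximate derivative sees $\Phi$ only along density-one subsets, so the orientation reversal emerges purely in the limit, never in any single $\Phi_k$.

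At stage $k$ of the induction I would maintain a closed set $K_k \subset \mathscr{Q}$ with $|K_k| \geq 1 - 2^{-k}$, decomposed into small paired sub-cubes $(P_{k,j}, P_{k,j}')$ of side length $\sim 2^{-k}$, where $P_{k,j}'$ is the image of $P_{k,j}$ under the prescribed affine reflection. The diffeomorphism $\Phi_k$ is obtained from $\Phi_{k-1}$ by composing, on each paired region $P_{k,j} \cup P_{k,j}'$ together with a thin connecting channel, with a symmetric \emph{atom} diffeomorphism that approximately swaps $P_{k,j}$ with $P_{k,j}'$. Since each atom is supported in a region of diameter $O(2^{-k})$ we have $\|\Phi_k - \Phi_{k-1}\|_\infty \leq C\cdot 2^{-k}$, and the same bound for $\|\Phi_k^{-1} - \Phi_{k-1}^{-1}\|_\infty$, the latter because the atom is designed to be close to an involution. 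Summing, both sequences are uniformly Cauchy, so $\Phi_k \to \Phi$ and $\Phi_k^{-1} \to \Phi^{-1}$ uniformly, and $\Phi$ is a homeomorphism. Since $\sum_k |\mathscr{Q} \setminus K_k| \leq \sum_k 2^{-k}$ is finite, almost every $x \in \mathscr{Q}$ lies in $K_k$ for all but finitely many $k$; by designing the atoms so that on a density-one subset of each paired region they coincide with the affine reflection up to error $o(2^{-k})$, one deduces that $\Phi(y) = \Phi(x) + \operatorname{diag}(1, \dots, 1, -1)(y - x) + o(|y - x|)$ along a density-one set of $y$ near a.e.\ $x$, hence $D_{\mathrm{a}} \Phi(x) = \operatorname{diag}(1, \dots, 1, -1)$ almost everywhere.

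The main technical obstacle is the construction of the stage-$k$ atom: a volume-preserving $C^\infty$ diffeomorphism, supported in a neighbourhood of $P_{k,j} \cup P_{k,j}'$ together with a connecting channel and equal to the identity on its boundary, which moves $P_{k,j}$ onto $P_{k,j}'$ (and vice versa) by the prescribed affine reflection on a subset of near-full measure, while being globally orientation preserving. A direct implementation of the reflection on the open set $P_{k,j}$ is topologically impossible, because it would force $\det D\Phi_k$ to change sign and hence vanish somewhere, contradicting $\Phi_k$ being a diffeomorphism. The resolution is to realise the swap through a composition of orientation-preserving elementary motions --- translations, shears, small rotations and braidings in the connecting channel --- that together produce the correct set-theoretic permutation of sub-cubes, with the parasitic extra factors in the pointwise derivatives absorbed into vanishing transition regions of small measure. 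The smooth volume-preserving realisation of this geometric scheme uses the Dacorogna--Moser theorem (Lemma~\ref{T11}) to correct the Jacobian of the initial ansatz to the constant value $1$. Verifying that, despite this extra orientation-preserving machinery, the cumulative effect across all stages preserves the prescribed approximate derivative at a.e.\ point is the core content of the proof.
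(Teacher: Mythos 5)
This statement is Theorem~1.4 of~\cite{GoldsteinHajlasz17}; the present paper cites it rather than reproving it, so there is no ``paper's own proof'' here to compare against --- only the related iteration in the proof of Theorem~\ref{T5}, which however \emph{uses} Theorem~\ref{T33} as a black box. Your overall skeleton (iterative small-scale corrections, Cauchy in the uniform metric, a Borel--Cantelli--type density argument, Dacorogna--Moser to normalise the Jacobian) is the right shape, but the two crucial steps are asserted rather than proved, and the second does not survive at your stated rates.

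First, the atom. You want, at stage $k$, an orientation- and volume-preserving $C^\infty$ diffeomorphism which, on a density-one subset of a cube of side $\sim 2^{-k}$, coincides with the orientation-reversing affine map $y\mapsto \mathcal{R}y+v$ up to $o(2^{-k})$ in $C^0$. None of the ``elementary motions'' you list achieves this. A rotation by $\pi$ places a sub-cube in its reflected position set-theoretically, but acts on it by $y\mapsto -y+c$, which differs from $\mathcal{R}y+v$ in the first $n-1$ coordinates by $O(2^{-k})$ --- exactly the error budget you cannot afford. The degree obstruction you correctly notice for the naive reflection does not vanish for the approximating map: an orientation-preserving diffeomorphism cannot be $C^0$-close to an orientation-reversing affine map on a closed curve bounding a region of comparable diameter, so the good set must be chosen with no one-dimensional topology (Cantor-like), and the atom must be built around that set in a precise way that the proposal never specifies. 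This is the core of the proof, and your outline stops exactly here.

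Second, the passage to the approximate derivative. You bound $\|\Phi_k - \Phi_{k-1}\|_\infty \lesssim 2^{-k}$, so summing the tail gives only $\|\Phi - \Phi_k\|_\infty \lesssim 2^{-k}$. Combined with the stage-$k$ atom estimate this yields, for $|y-x|\sim 2^{-k}$, only $|\Phi(y)-\Phi(x)-\mathcal{R}(y-x)| = O(2^{-k}) = O(|y-x|)$, an $O$ where you need an $o$. Fixing this requires either super-geometrically shrinking side-lengths (so the tail is $o$ of the current scale), or --- as in the paper's proof of Theorem~\ref{T5} --- arranging that $\Phi = \Phi_k$ \emph{exactly} on an exhausting family of measurable sets and then transferring the approximate derivative with Lemma~\ref{T7}. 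Your proposal replaces the exact equality by $C^0$-closeness and the quantitative loss is fatal: the claimed conclusion ``$D_{\mathrm a}\Phi = \operatorname{diag}(1,\dots,1,-1)$ a.e.'' does not follow from the estimates as written.
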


Thus despite the fact that $\Phi$ is an orientation preserving homeomorphism, it can have negative Jacobian.
In the case of Sobolev  homeomorphisms the problem of the sign of the Jacobian had been studied in \cite{CDH,CHT,GoldsteinHajlasz19,henclm1,henclv} and the example constructed in \cite{CHT} gave a negative answer to a Ball-Evans question \cite{ball} about approximation of Sobolev homeomorphisms by diffeomorphisms or piecewise linear homeomorphisms.
While the papers mentioned above provide a large number of examples of homeomorphisms with unexpected properties of the derivative, these are just examples. On the other hand, Theorem~\ref{T36} provides a general answer.

There has been substantial interest in finding diffeomorphisms and homeomorphisms with prescribed Jacobian. The first result in this line is due to Oxtoby and Ulam \cite{OxtobyUlam41} followed by many other papers \cite{AlpernPrasad,Banyaga,BuragoK,CsatoDacorognaKneuss,DacorognaMoser,goffman,GuerraKL,McMullen,Moser,RiviereYe}. It is possible if the Jacobian is positive and sufficiently regular, but if $f\in L^p$, $p>1$, then, in general, there are no solutions to $\det D\Phi=f$ with expected Sobolev regularity $\Phi\in W^{1,np}$, \cite{GuerraKL} (see also \cite{BuragoK,McMullen}). On the other hand, Theorem~\ref{T36} shows that with weaker regularity one can find a homeomorphism with not only prescribed Jacobian, but with arbitrarily prescribed derivative. To the best of our knowledge this is the first result of this kind.

Here is another reason why the class of functions that are approximately differentiable a.\,e.\ is natural.
In the class of measurable functions $f:E\to\bbbr$ defined on a measurable set $E\subset\bbbr^n$ of finite measure (we allow here $n=1$) we define the {\em Lusin metric}
$$
d_L(f,g)=|\{x\in E:\, f(x)\neq g(x)\}|.
$$
This is a metric if we identify functions that are equal a.e. It is easy to see that the space of measurable functions is complete with respect to the metric $d_L$, see Lemma~\ref{T48}, and according to Lusin's theorem, continuous functions are dense in that space.
Now, if $\Omega\subset\bbbr^n$ is a bounded domain, Whitney's theorem (Lemma~\ref{T2} below) asserts that the closure of $C^1(\Omega)$ in the metric $d_L$ is precisely the class of functions that are a.\,e.\ approximately differentiable. Therefore, the class of functions that are a.\,e.\ approximately differentiable is the counterpart of the class of measurable functions when in Lusin's theorem we replace continuous functions with $C^1$ functions.

In particular, if $\mathscr{Q}=[0,1]^n$ and $\Phi_k:\mathscr{Q}\to\mathscr{Q}$ is a sequence of surjective, orientation preserving $C^1$-diffeomorphisms that converge to a surjective homeomorphism $\Phi:\mathscr{Q}\to\mathscr{Q}$ in the Lusin metric, then $\Phi$ is approximately differentiable a.\,e.\ and it is easy to see (we leave details to the reader) that
$$
\det D_{\rm a} \Phi(x)>0 \text{ a.\,e.}
\quad
\text{and}
\quad
\int_{\mathscr{Q}} \det D_{\rm a}\Phi(x)\, dx\leq 1.
$$
We may again ask the same question as before: {\em What other conditions must $D_{\rm a}\Phi$ satisfy?} The answer is similar to that given before: {\em Basically none.}
\begin{theorem}
\label{T47}
Let $\mathscr{Q}=[0,1]^n$. For any measurable map $T:\mathscr{Q}\to GL(n)^+$ that satisfies
$$
\int_\mathscr{Q} \det T(x)\, dx=1,
$$
there exists a sequence of $C^1$-diffeomorphisms $\Phi_k:\mathscr{Q}\to\mathscr{Q}$, $\Phi_k=\id$ in a neighborhood of $\partial\mathscr{Q}$, that converges both in the uniform metric $d$ and the Lusin metric $d_L$ to a homeomorphism $\Phi:\mathscr{Q}\to\mathscr{Q}$, $\Phi|_{\partial\mathscr{Q}}=\id$, that is a.\,e.\ approximately differentiable and satisfies $D_{\rm a}\Phi=T$ a.\,e.
\end{theorem}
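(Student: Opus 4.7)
The plan is to produce $\Phi$ as the common uniform and Lusin limit of an inductive sequence of $C^1$-diffeomorphisms built by repeated application of Theorem~\ref{T31}. Starting from $\Phi_0 = \id$ and $K_0 = \emptyset$, at stage $k$ I construct a $C^1$-diffeomorphism $\Phi_k : \mathscr{Q} \to \mathscr{Q}$ equal to $\id$ near $\partial \mathscr{Q}$, together with a compact set $K_k \subset \mathscr{Q}$ such that $D\Phi_k = T$ on $K_k$, $|\mathscr{Q} \setminus K_k| < 2^{-k}$, and $\Phi_k = \Phi_{k-1}$ on $K_{k-1} \setminus C_k$ for an auxiliary ``corridor'' set $C_k \subset K_{k-1}$ of measure $< 2^{-k-1}$, together with
$$
\|\Phi_k - \Phi_{k-1}\|_\infty + \|\Phi_k^{-1} - \Phi_{k-1}^{-1}\|_\infty < 2^{-k}.
$$
Then $(\Phi_k)$ and $(\Phi_k^{-1})$ are Cauchy in the sup norm, so the limit $\Phi := \lim \Phi_k$ is a homeomorphism with $\Phi^{-1} = \lim \Phi_k^{-1}$ and $\Phi|_{\partial \mathscr{Q}} = \id$. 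Setting $E_k := K_k \setminus \bigcup_{j > k} C_j$ (which has measure $> 1 - 2^{-k+1}$), an easy telescoping argument shows $\Phi = \Phi_k$ pointwise on $E_k$. Hence at every density point of some $E_k$ the limit $\Phi$ is approximately differentiable with $D_{\rm a}\Phi(x) = D\Phi_k(x) = T(x)$, which covers almost every $x \in \mathscr{Q}$, and Lusin convergence $d_L(\Phi_k,\Phi) \leq |\mathscr{Q} \setminus E_k| < 2^{-k+1}$ is immediate.

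For the inductive step, set $U_{k-1} := \mathrm{int}(\mathscr{Q}) \setminus K_{k-1}$. Combining $D\Phi_{k-1} = T$ on $K_{k-1}$ with $\int_\mathscr{Q} \det T = 1 = \int_\mathscr{Q} \det D\Phi_{k-1}$ yields the \emph{global} equality $\int_{U_{k-1}} \det T = |\Phi_{k-1}(U_{k-1})|$. A direct application of Theorem~\ref{T31} to each connected component $V$ of $U_{k-1}$ with boundary datum $\Phi_{k-1}|_V$ would instead demand the \emph{per-component} identity $\int_V \det T = |\Phi_{k-1}(V)|$, which the global one does not imply---this is the central technical obstacle. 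The main device I would use to overcome it is to cut thin corridors $C_k \subset K_{k-1}$ joining all the components of $U_{k-1}$ into a single connected domain $U'_{k-1} := U_{k-1} \cup C_k$; since $\det D\Phi_{k-1} = \det T$ on $C_k \subset K_{k-1}$, the global volume identity on $U_{k-1}$ upgrades to the same identity on $U'_{k-1}$, and a single application of Theorem~\ref{T31} on the connected domain $U'_{k-1}$ with boundary datum $\Phi_{k-1}$ produces $\Phi_k$ with $D\Phi_k = T$ on a compact $K'_k \subset U'_{k-1}$ of arbitrarily large fractional measure. Setting $K_k := (K_{k-1} \setminus C_k) \cup K'_k$ completes the step.

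The sup-norm bound is enforced by working in a fine dyadic subdivision: cover $\mathscr{Q}$ by sub-cubes of diameter $< 2^{-k}$ whose $\Phi_{k-1}$-images also have diameter $< 2^{-k}$ (possible by uniform continuity of $\Phi_{k-1}$), and perform the corridor construction together with the invocation of Theorem~\ref{T31} within each sub-cube separately. Then each modification lies inside a set of diameter $< 2^{-k}$ in both source and image, giving forward and inverse sup-norm bounds simultaneously. A subtlety is that the localized application inside a sub-cube requires the corresponding local volume balance, which is not automatic; this is handled either by a further micro-corridor connecting the relevant local pieces, or by a preliminary equalisation of Jacobians via the Dacorogna--Moser Lemma~\ref{T11}. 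In summary, the two main obstacles are (i) the per-component (and per-sub-cube) volume-balance required by Theorem~\ref{T31}, resolved by the corridor trick, and (ii) simultaneous geometric smallness of the forward and inverse modifications, resolved by the dyadic subdivision that is small in both source and image.
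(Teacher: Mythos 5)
Your overall architecture (inductively correct the derivative by applying Theorem~\ref{T31}, control both sup-norm metrics by localizing to a fine subdivision, take the common limit) matches the paper's proof of Theorem~\ref{T36}, and you correctly identify the two key obstacles: per-component (and per-subcube) volume balance, and simultaneous smallness of the forward and inverse displacements. However, the way you propose to resolve the per-subcube volume balance has a genuine gap, and it is precisely this issue that the paper devotes all of Section~\ref{S3} (Lemmas~\ref{T8}, \ref{T24}, \ref{T25}, \ref{T9} and Propositions~\ref{T43}, \ref{T32} --- the Oxtoby--Ulam-type measure-transfer machinery) to handling.

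Your global corridor trick is fine for \emph{connectivity}: cutting a thin corridor $C_k\subset K_{k-1}$ connecting the components of $U_{k-1}$ does preserve the volume identity, because $\det D\Phi_{k-1}=\det T$ on $C_k$. But corridors cannot fix the \emph{per-subcube} volume mismatch, and your fallback options do not either. If two subcubes $Q_1, Q_2$ have volume imbalances $+c$ and $-c$, a ``micro-corridor'' joining them produces a single connected domain whose diameter is of order $\mathrm{dist}(Q_1,Q_2)$, not $2^{-k}$; Theorem~\ref{T31} on that joined domain then yields a modification that may carry a point of $Q_1$ to a point near $\Phi_{k-1}(Q_2)$, destroying the sup-norm bound. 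A corridor is a connectivity device, not a measure-transfer device. Your alternative, ``a preliminary equalisation of Jacobians via Lemma~\ref{T11}'', does not close the gap either: (i) Lemma~\ref{T11} requires $f\in C^\infty$, whereas the relevant density $\det T(\Phi_{k-1}^{-1}(y))\det D\Phi_{k-1}^{-1}(y)$ is only $L^1$, and the $L^1$ version (Lemma~\ref{T12}) yields only an approximate Jacobian on a large set, while here one needs the \emph{exact} per-cell balance $\int_{P_j}\det T=|\Psi(\Phi_{k-1}(P_j))|$; (ii) the correcting diffeomorphism must also be the identity in a neighborhood of the already-good compact set $K_{k-1}$ (otherwise you spoil the derivative you already prescribed) and must have \emph{both} $\diam P_j$ and $\diam\Psi(P_j)$ small; none of this follows from Dacorogna--Moser alone. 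These are exactly the requirements built into Propositions~\ref{T43} and \ref{T32}, whose proofs occupy the bulk of Section~\ref{S3} and use a flow-based measure-transfer between adjacent cells (Lemma~\ref{T8}), the excision of compacta so the complement is connected (Lemmas~\ref{T24}, \ref{T40}, which play the role you intended for corridors but inside a cell), and an iterated dyadic refinement to control image diameters (Lemma~\ref{T9}). Without this machinery the inductive step cannot simultaneously deliver the per-cell balance, the smallness of modifications in source and image, and the preservation of the already prescribed derivative.
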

This result is a straightforward consequence of the proof of Theorem~\ref{T36}. Indeed, in the case $\det T>0$ a.\,e.\  we constructed in Section~\ref{gen} a sequence of $C^1$-diffeomorphisms $\Phi_k$ with properties (i)-(v) which imply that the sequence $\Phi_k$ converges in both metrics $d$ and $d_L$ to a homeomorphism $\Phi$ with the properties listed in Theorem~\ref{T47}.

The paper is structured as follows. In Section~\ref{GN} we fix the notation used throughout the paper. Technical results needed in the proof of Theorem~\ref{T31} are collected in Section~\ref{S1}. This section contains a lot of explicit constructions of diffeomorphisms, including results about gluing of diffeomorphisms. We believe that these results can be used in other situations. Section~\ref{trzy} is devoted to the proof of Theorem~\ref{T31}.
In Section~\ref{S3} we discuss diffeomorphic partitions of cubes and constructions of diffeomorphisms that transfer measures between the cells of partitions. The material is motivated by the celebrated work of Oxtoby and Ulam \cite{OxtobyUlam41}, but the constructions are far more sophisticated due to required regularity and  additional constraints. These constructions are needed in the proof of Theorem~\ref{T36}. In Section~\ref{AD} we review basic properties of approximately differentiable functions. However, Theorem~\ref{T5}, a generalization of Theorem~\ref{T33}, is new. The final Section~\ref{szesc} is devoted to the proof of Theorem~\ref{T36}. In Appendix, we prove Lemma~\ref{T48}.

Except for Theorems~\ref{T30}, \ref{T42} and~\ref{T33} recalled in the Introduction, we use a convention that well-known results needed in our proofs and results of technical character are called ``Lemma'', ``Proposition'' or ``Corollary''.

\subsection*{Acknowledgements}
Piotr Hajłasz appreciates the hospitality of the University of Warsaw, where part of this work was conducted. His stay in Warsaw received funding
from the University of Warsaw via the  IDUB project (Excellence Initiative Research University) as part of the Thematic Research Programme \emph{Analysis and Geometry}.

Paweł Goldstein enjoyed the hospitality of the University of Pittsburgh during his several visits.

\section{General notation}
\label{GN}

In this section we explain most of the notation used throughout the paper. Also some of the notions introduced here are recalled later in the paper to make reading of this long paper easier.

Throughout the paper we assume that $n\geq 2$. The only exception is Lemma~\ref{T48}.

$\bbbn$ and $\bbbz$ will denote the sets of positive and all integers, respectively.

The space of real $n\times n$ matrices, invertible matrices and matrices with positive determinant will be denoted by $M^{n\times n}$, $GL(n)$, and $GL(n)^+$, respectively. The identity matrix will be denoted by $\mathcal{I}$. The operator and the Hilbert-Schmidt norms of $A\in M^{n\times n}$ will be denoted by $\Vert A\Vert$ and $|A|$, respectively. It is easy to see that $\Vert A\Vert\leq |A|$.

The tensor product of vectors $u,v\in\bbbr^n$ is the matrix
$u\otimes v=[u_iv_j]_{i,j=1}^n\in M^{n\times n}$. It is easy to see that $\Vert u\otimes v\Vert=|u\otimes v|=|u|\, |v|$.
Note that if $U\subset\bbbr^n$ is open and $F:U\to\bbbr^n$, $\eta:U\to\bbbr$ are differentiable, then
\begin{equation}
\label{eq54}
D(\eta F)=F\otimes D\eta+\eta DF
\quad
\text{so}
\quad
\Vert D(\eta F)\Vert\leq |D\eta|\, |F|+|\eta|\,\Vert DF\Vert.
\end{equation}

Symmetric difference of sets $A$ and $B$ is $A\vartriangle B=(A\setminus B)\cup (B\setminus A)$.

The interior and the closure of a set $A$ will be denoted by $\mathring{A}$ and $\overbar{A}$. Boundary of the set will be denoted by $\partial A$.
We write $A\Subset B$ if $\overbar{A}$ is a compact subset of $\mathring{B}$.

We say that a map between topological spaces is {\em proper} if preimages of compact sets are compact.

Open balls in $\mathbb{R}^n$ will be denoted by $B(x,r)$. $\Omega$ will always denote an open subset of~$\mathbb{R}^n$. By a domain we mean an open and connected set.

$\mathscr{Q}$ will always denote the closed unit cube $\mathscr{Q}=[0,1]^n$.
If $x=(x_1,\ldots,x_n)\in\bbbr^n$, then we define $\Vert x\Vert_\infty=\max_i|x_i|$.
$\mathring{Q}(p, r) = \{x: \, \Vert x - p\Vert_\infty <r\}$ and $Q(p, r) = \{x: \, \Vert x - p\Vert_\infty \leq r\}$ denote the open and the closed cube centered at $p$ of side-length $2r$. We will denote with $\mathsf{L}(Q)$ the side-length of cube $Q$.

The space of $k$-times continuously differentiable functions with compact support will be denoted by $C^k_c(\Omega)$. Approximate derivative will be denoted by $D_{\rm a}f(x)$. The Jacobian of a mapping $\Phi:\Omega\to\bbbr^n$, $\Omega\subset\bbbr^n$, will be denoted by $\det D\Phi(x)$ or by $J_\Phi(x)$.

If $\Omega\subset\bbbr^n$ is open, then by a diffeomorphism (homeomorphism) $\Phi:\Omega\to\bbbr^n$ we mean a diffeomorphism (homeomorphism) onto the image, i.e., diffeomorphism (homeomorphism) between $\Omega$ and $\Phi(\Omega)$.

A diffeomorphism of a closed set $A$ is a mapping that extends to a diffeomorphism of a neighborhood of $A$. We will use this notion mostly to discuss diffeomorphisms  of closed cubes.

A set $P$ is a \emph{diffeomorphic closed cube} if there is a diffeomorphism $\Theta$ defined on a neighborhood of $P$ such that $\Theta(P)=\mathscr{Q}$. We say that $\mathscr{P}=\{P_i\}_{i=1}^N$ is \emph{a partition of $P$} if $P=\bigcup_{i=1}^N P_i$ and $P_i$ are diffeomorphic closed cubes with pairwise disjoint interiors.

Important examples of partitions of $\mathscr{Q}$ are the \emph{dyadic partitions} into $2^{nk}$ identical cubes of edge length $2^{-k}$, for $k=0,1,2,\ldots$

A partition $\mathscr{P}=\{ P_i\}_{i=1}^N$ of $P$ is a \emph{diffeomorphic dyadic partition} if there is a diffeomorphism $\Theta$ defined in a neighborhood of $P$ and such that $\{\Theta(P_i)\}_{i=1}^N$ forms a dyadic partition of $\mathscr{Q}$. More generally, a partition $\mathscr P$ of $P$ is \emph{diffeomorphic} to a partition $\mathscr{P}'$ of $P'$ if  $\mathscr{P}'=\{\Theta(P_i)~~:~~P_i\in \mathscr{P}\}$ for some diffeomorphism $\Theta$ defined in a neighborhood of $P$.

The space of homeomorphisms of the unit cube $\mathscr{Q}$ is equipped with the {\em uniform metric}
$$
d(\Phi,\Psi)=\sup_{x\in\mathscr{Q}} |\Phi(x)-\Psi(x)|+\sup_{x\in \mathscr{Q}}|\Phi^{-1}(x)-\Psi^{-1}(x)|,
$$
see Section~\ref{S26} for more details.

Lebesgue measure of a set $E\subset\bbbr^n$ will be denoted by $|E|$. If $E\subset\bbbr^n$ is measurable, we say that $x\in\bbbr^n$ is a {\em density point} of $E$ if $|B(x,r)\cap E|/|B(x,r)|\to 1$ as $r\to 0^+$. According to the Lebesgue differentiation theorem, almost all points $x\in E$ are density points of $E$.

We say that a mapping $f:\Omega\to\bbbr^n$, $\Omega\subset\bbbr^n$, satisfies the {\em Lusin (N) condition} if it maps sets of Lebesgue measure zero to sets of Lebesgue measure zero.

\section{Preliminaries for Theorem \ref{T31}}
\label{S1}

\subsection{Linear algebra}
\label{S2}

\begin{lemma}
\label{T16}
If $A\in GL(n)$ and $\Vert A-B\Vert<\Vert A^{-1}\Vert^{-1}$, then $B\in GL(n)$.
\end{lemma}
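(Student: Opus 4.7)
The plan is the standard Neumann-series argument for small perturbations of invertible operators, presented in the contrapositive form needed here.

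First, I would rewrite $B$ as a perturbation of $A$ factored through $A$:
\[
B = A - (A-B) = A\bigl(\mathcal{I} - A^{-1}(A-B)\bigr).
\]
Setting $C := A^{-1}(A-B)$, submultiplicativity of the operator norm gives
\[
\Vert C\Vert \leq \Vert A^{-1}\Vert \cdot \Vert A-B\Vert < \Vert A^{-1}\Vert \cdot \Vert A^{-1}\Vert^{-1} = 1.
\]
Since $\Vert C\Vert < 1$, the Neumann series $\sum_{k=0}^\infty C^k$ converges in $M^{n\times n}$ and provides an inverse for $\mathcal{I}-C$. Therefore $B$ is the product of two invertible matrices, hence $B \in GL(n)$.

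The argument is elementary and there is no real obstacle; the only point to be careful about is that the norm used here is the operator norm, which is what makes the submultiplicative inequality $\Vert A^{-1}(A-B)\Vert \leq \Vert A^{-1}\Vert\,\Vert A-B\Vert$ hold cleanly. If one preferred to avoid Neumann series entirely, an equally short alternative is to check that $B$ has trivial kernel: if $Bx = 0$ then $Ax = (A-B)x$, whence
\[
|x| = |A^{-1}(A-B)x| \leq \Vert A^{-1}\Vert\,\Vert A-B\Vert\,|x| < |x|,
\]
forcing $x=0$, so $B$ is injective and thus invertible. Either route yields the conclusion in a few lines.
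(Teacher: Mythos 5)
Your main argument is exactly the paper's: writing $A^{-1}B=\mathcal{I}-A^{-1}(A-B)$, bounding $\Vert A^{-1}B-\mathcal{I}\Vert<1$ by submultiplicativity, and inverting via the Neumann series. The kernel-triviality alternative you sketch at the end is also valid, but the paper uses the Neumann-series route, so the proposal matches.
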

\begin{proof}
Under the given assumptions $\Vert A^{-1}B-\mathcal{I}\Vert<1$ and hence $A^{-1}B$ is invertible (the inverse can be written as an absolutely convergent power series).
\end{proof}
\begin{lemma}
\label{T15}
Assume $T \in GL(n)^+$. Then for any $\eps > 0$, there exists a~finite family of matrices $\{A_i\}_{i=1}^{M_\eps} \subset GL(n)^+$ such that $\|A_i - \mathcal{I}\| < \eps$ for each $i = 1, \dots, M_\eps$, and
\begin{equation*}
T = A_1 \cdot \ldots \cdot A_{M_\eps}.
\end{equation*}
\end{lemma}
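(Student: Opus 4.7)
The statement is essentially that every element of the connected Lie group $GL(n)^+$ is a finite product of elements arbitrarily close to the identity, with a controlled number of factors. My plan is to exploit path-connectedness of $GL(n)^+$ and then discretize a path by taking successive ratios.

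First, I would recall (or note) that $GL(n)^+$ is path-connected: for instance, $GL(n)^+$ is the identity component of $GL(n)$, or one can argue directly via the polar decomposition $T=QS$ with $Q\in SO(n)$ and $S$ symmetric positive definite, together with the fact that both $SO(n)$ and the cone of symmetric positive definite matrices are path-connected. Pick a continuous path $\gamma\colon[0,1]\to GL(n)^+$ with $\gamma(0)=\mathcal{I}$ and $\gamma(1)=T$. Since $\gamma([0,1])$ is compact in $GL(n)^+$, the inversion map is uniformly continuous on it, so both $\gamma$ and $t\mapsto\gamma(t)^{-1}$ are uniformly continuous on $[0,1]$. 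In particular there is a constant $C=\sup_{t\in[0,1]}\Vert\gamma(t)^{-1}\Vert<\infty$.

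Next, given $\eps>0$, use uniform continuity of $\gamma$ to choose a partition $0=t_0<t_1<\cdots<t_M=1$ so fine that
\[
\Vert\gamma(t_i)-\gamma(t_{i-1})\Vert<\eps/C\qquad\text{for each }i=1,\dots,M.
\]
Define
\[
A_i=\gamma(t_{i-1})^{-1}\gamma(t_i)\in GL(n),\qquad i=1,\dots,M.
\]
The product telescopes: $A_1A_2\cdots A_M=\gamma(t_0)^{-1}\gamma(t_M)=\mathcal{I}^{-1}T=T$. Moreover,
\[
A_i-\mathcal{I}=\gamma(t_{i-1})^{-1}\bigl(\gamma(t_i)-\gamma(t_{i-1})\bigr),
\]
so $\Vert A_i-\mathcal{I}\Vert\leq\Vert\gamma(t_{i-1})^{-1}\Vert\cdot\Vert\gamma(t_i)-\gamma(t_{i-1})\Vert<C\cdot\eps/C=\eps$. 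Finally, $\det A_i=\det\gamma(t_i)/\det\gamma(t_{i-1})>0$ since $\gamma$ takes values in $GL(n)^+$, so each $A_i\in GL(n)^+$, as required. Setting $M_\eps:=M$ finishes the proof.

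The argument contains no genuine obstacle; the only points requiring a moment's care are verifying that the telescoping product yields exactly $T$ (rather than $T^{-1}$ or some reordered product), and checking that closeness of the $A_i$ to $\mathcal{I}$ — not merely closeness of consecutive $\gamma(t_i)$ to each other — is what one obtains from the estimate, which is why the factor $\gamma(t_{i-1})^{-1}$ must be pulled out and its norm controlled uniformly along the path. If one preferred an approach that avoids path-connectedness explicitly, one could instead use the polar decomposition $T=e^A e^B$ with $A$ skew-symmetric and $B$ symmetric, and then write $T=(e^{A/N})^N(e^{B/N})^N$ for $N$ large enough that both $e^{A/N}$ and $e^{B/N}$ lie within $\eps$ of $\mathcal{I}$; this gives the same conclusion with $M_\eps=2N$.
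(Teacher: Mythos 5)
Your proof is correct, and it takes a genuinely different route from the paper. The paper simply invokes the abstract topological-group fact (Warner, Proposition~3.18) that any neighborhood of the identity in a connected Lie group generates the whole group, together with connectedness of $GL(n)^+$ (Warner, Theorem~3.68); that abstract fact is usually proved by showing the subgroup generated by a symmetric neighborhood of the identity is open, hence closed, hence everything. You instead give a direct, self-contained construction: pick a path $\gamma$ from $\mathcal{I}$ to $T$, discretize, and telescope the ratios $A_i=\gamma(t_{i-1})^{-1}\gamma(t_i)$, using compactness of $\gamma([0,1])$ to bound $\Vert\gamma(t)^{-1}\Vert$ uniformly so that the estimate $\Vert A_i-\mathcal{I}\Vert<\eps$ actually follows. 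Each approach has its merits: the paper's citation is shorter and avoids any path-connectedness bookkeeping, while yours is elementary, constructive, and makes visible exactly where the number of factors $M_\eps$ comes from. Your alternative via polar decomposition and matrix exponentials is also sound and gives an even more explicit bound $M_\eps=2N$.
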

\begin{proof}
$GL(n)^+$ is a Lie group with respect to the matrix multiplication. According to \cite[Theorem~3.68]{warner}, $GL(n)^+$ is connected. Then the result follows from \cite[Proposition~3.18]{warner} which says that if $U$ is a neighborhood of the identity element in a connected Lie group $G$, then any element $g\in G$ can be represented as $g=u_1\cdot\ldots\cdot u_k$ for some $k$ and ${u_1,\ldots,u_k\in U}$.
\end{proof}

\subsection{Local to global homeomorphisms}
Recall that a map is {\em proper} if preimages of compact sets are compact.
A {\em local homeomorphism} $f:X\to Y$ is a map that is a homeomorphism in a neighborhood of each point $x\in X$.
The following result is due to Ho \cite{Ho81,Ho75}, see also \cite[Chapter~4, Section~2.4]{prasolov}.
\begin{lemma}
\label{T13}
Suppose that $X$ and $Y$ are path-connected Hausdorff spaces, where $Y$ is simply connected. Then a local homeomorphism $f:X\to Y$ is a global homeomorphism of $X$ onto $Y$ if and only if $f$ is a proper map.
\end{lemma}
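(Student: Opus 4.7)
The forward implication is immediate: if $f$ is a homeomorphism, then $f^{-1}$ is continuous, so for any compact $K\subset Y$, the preimage $f^{-1}(K)$ is the continuous image of a compact set, hence compact.

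For the harder direction, my plan is to establish that a proper local homeomorphism onto a path-connected Hausdorff space is a covering map, and then invoke standard covering space theory. First, I would observe that each fiber $f^{-1}(y)$ is finite: the local homeomorphism property forces $f^{-1}(y)$ to be discrete in $X$, properness forces it to be compact, and a discrete compact subset of a Hausdorff space is finite.

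Next, for $y\in Y$ with $f^{-1}(y)=\{x_1,\dots,x_k\}$, I would use Hausdorffness of $X$ together with the local homeomorphism property to pick pairwise disjoint open neighborhoods $U_i\ni x_i$ on which $f$ restricts to a homeomorphism onto an open set $W_i\subset Y$. The decisive step is to find a neighborhood $V\subset\bigcap_i W_i$ of $y$ with $f^{-1}(V)\subset\bigcup_i U_i$, for then each restriction $f|_{U_i\cap f^{-1}(V)}$ is a homeomorphism onto $V$ and $V$ is evenly covered. To produce $V$, set $C=X\setminus\bigcup_i U_i$; since $f$ is proper and $Y$ is Hausdorff, $f$ is a closed map, so $f(C)$ is closed in $Y$ and does not contain $y$, and I can take $V=(\bigcap_i W_i)\setminus f(C)$.

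With $f$ established as a covering map, the rest is quick: $f(X)$ is simultaneously open (by the local homeomorphism property) and closed (because closed maps have closed image), hence equal to $Y$ by connectedness of $Y$. Because $X$ is path-connected and $Y$ is simply connected, classical covering space theory tells us that the number of sheets equals the index $[\pi_1(Y):f_*\pi_1(X)]=1$, so $f$ is bijective, and an injective local homeomorphism is automatically a homeomorphism. The main obstacle I anticipate is the evenly-covered-neighborhood construction in the previous paragraph, which relies in an essential way on the fact that a proper continuous map into a Hausdorff space is closed; this is precisely the point at which the Hausdorff hypothesis on $Y$ enters the argument.
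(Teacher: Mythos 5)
Your argument follows precisely the strategy that the paper attributes to Ho and merely sketches (the paper cites the result without proof and says the idea is to show the map is a covering map and then use covering space theory). Your finite-fiber observation, the construction of evenly covered neighborhoods by cutting away the image of $X\setminus\bigcup_i U_i$, the surjectivity via open-plus-closed image, and the one-sheet conclusion from simple connectedness are all the standard ingredients. One point, however, deserves more care than you give it, and it is exactly the point you flag as the crux: you assert that a proper continuous map into a Hausdorff space is closed, and you attribute this to the Hausdorff hypothesis on $Y$. With ``proper'' taken to mean that preimages of compacta are compact, the implication ``proper $\Rightarrow$ closed'' is \emph{not} a theorem for arbitrary Hausdorff $Y$; it holds when $Y$ is locally compact Hausdorff, or more generally compactly generated (this is where the equivalence with Bourbaki-properness, i.e.\ closed with compact fibers, comes from). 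Path-connected Hausdorff alone does not guarantee it. In the paper's actual application of Lemma~\ref{T13}, $Y$ is an open subset of $\bbbr^n$, so the step is valid there, and the references the paper cites almost certainly carry an implicit local-compactness or metrizability assumption. But as literally stated the lemma's hypotheses do not give you the closedness of $f$ for free; you should either add a local compactness hypothesis on $Y$, or replace that step with a path-lifting argument (show the lifted-path set is closed in $[0,1]$ by using properness to confine the partial lift to a compact set), which uses properness without passing through the closed-map lemma.
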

Note that in general, a local homeomorphism need not be surjective, and surjectivity is a part of the lemma. The main idea of the proof is to show that a proper local homeomorphism between path connected Hausdorff spaces is a covering map and then the result follows from general facts about covering spaces, see also \cite[Lemma~3.1]{KauranenLT}.

If $f:\partial\bbbb^n\to\bbbr^n$ is an embedding, then according to the Jordan-Brouwer separation theorem $f(\partial\bbbb^n)$ separates $\bbbr^n$ into two domains, bounded and unbounded, and $f(\partial\bbbb^n)$ is their common boundary. The example of the inward Alexander horned sphere shows that in general, the bounded component of $\bbbr^n\setminus f(\partial\bbbb^n)$ need not be simply connected.

The next result is a version of Corollary~8.2 from \cite{munkres60}, but our proof is different and more elementary.
\begin{lemma}
\label{T14}
Let $f: \overbar{\bbbb}^n \to \bbbr^n$ be a continuous function such that $f|_{\partial\bbbb^n}$ is one-to-one, $f|_{\bbbb^n}$ is a local homeomorphism, and the bounded component of  $\bbbr^n\setminus f(\partial\bbbb^n)$  is simply connected. Then $f: \overbar{\bbbb}^n \to \bbbr^n$ is a homeomorphism of $\overbar{\bbbb}^n$ onto $f(\overbar{\bbbb}^n)$.
\end{lemma}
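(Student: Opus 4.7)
The plan is to reduce to Ho's theorem (Lemma~\ref{T13}) applied to the restriction $f|_{\bbbb^n}:\bbbb^n\to U$, where $U$ is the bounded (hence, by hypothesis, simply connected) component of $\bbbr^n\setminus f(\partial\bbbb^n)$. Note that $f|_{\partial\bbbb^n}$ is a continuous injection from the compact sphere $\partial\bbbb^n$ into the Hausdorff space $\bbbr^n$, hence a homeomorphism onto $f(\partial\bbbb^n)$; by Jordan--Brouwer, $\bbbr^n\setminus f(\partial\bbbb^n)=U\sqcup V$ with $V$ unbounded and $f(\partial\bbbb^n)=\partial U=\partial V$.

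The main step, and the principal obstacle, is to establish $f(\bbbb^n)=U$. I would proceed by two clopen-subset arguments. First, using $V\cap f(\partial\bbbb^n)=\emptyset$, the set $V\cap f(\overbar{\bbbb}^n)=V\cap f(\bbbb^n)$ is open in $V$ (since $f|_{\bbbb^n}$ is a local homeomorphism, making $f(\bbbb^n)$ open) and closed in $V$ (since $f(\overbar{\bbbb}^n)$ is compact); connectedness of $V$ together with $V$ unbounded and $f(\overbar{\bbbb}^n)$ bounded forces this set to be empty, giving $f(\overbar{\bbbb}^n)\subset\overbar{U}$. Second, any $y\in f(\bbbb^n)\cap f(\partial\bbbb^n)$ would, by openness of $f(\bbbb^n)$, produce a ball $B(y,r)\subset f(\bbbb^n)\subset\overbar{U}$, contradicting $y\in\partial V$; hence $f(\bbbb^n)\subset U$. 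The same clopen argument inside the connected $U$, with $U\cap f(\bbbb^n)=U\cap f(\overbar{\bbbb}^n)$ open, closed, and nonempty (it contains $f(x_0)$ for any $x_0\in\bbbb^n$), yields the reverse inclusion $U\subset f(\bbbb^n)$.

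With $f(\bbbb^n)=U$ in hand, the map $f|_{\bbbb^n}:\bbbb^n\to U$ is proper: for compact $K\subset U$, the preimage $f^{-1}(K)$ is compact in $\overbar{\bbbb}^n$ and disjoint from $\partial\bbbb^n$ (since $f(\partial\bbbb^n)\cap U=\emptyset$), so it is compact in $\bbbb^n$. Since $U$ is path-connected, simply connected, and Hausdorff, and $\bbbb^n$ is likewise, Lemma~\ref{T13} gives that $f|_{\bbbb^n}$ is a homeomorphism of $\bbbb^n$ onto $U$. Combined with the homeomorphism $f|_{\partial\bbbb^n}:\partial\bbbb^n\to\partial U$, the full map $f:\overbar{\bbbb}^n\to\overbar{U}$ is a continuous bijection from a compact space onto a Hausdorff space, and therefore a homeomorphism onto $f(\overbar{\bbbb}^n)=\overbar{U}$. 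The delicate point throughout is the clopen chain proving $f(\bbbb^n)=U$; once that is secured, Ho's theorem and the standard compact-to-Hausdorff trick finish the argument.
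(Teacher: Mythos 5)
Your proposal is correct and follows essentially the same route as the paper's proof: establish $f(\bbbb^n)$ lands in the bounded complementary component, show $f|_{\bbbb^n}$ is proper, apply Ho's theorem (Lemma~\ref{T13}), and conclude via compactness. The only differences are cosmetic: where you run a clopen argument in the unbounded component $V$ to show $f(\overbar{\bbbb}^n)\subset\overbar{U}$, the paper uses an informal curve-to-infinity argument; and your third clopen argument proving $U\subset f(\bbbb^n)$ is redundant, since Lemma~\ref{T13} already delivers surjectivity as part of its conclusion.
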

\begin{remark}
The above result is true even if we do not assume that the bounded component of $\bbbr^n\setminus f(\partial\bbbb^n)$ is simply connected, but the proof requires the theory of Eilenberg-MacLane spaces from algebraic topology, see \cite[Theorem~1.2]{KauranenLT}.
\end{remark}
\begin{proof}
Denote the bounded and the unbounded components of $\bbbr^n\setminus f(\partial\bbbb^n)$ by $D$ and $U$, respectively. According to our assumptions, $D$ is simply connected.

Since $f(\overbar{\bbbb}^n)$ is compact, it follows that $\partial f(\overbar{\bbbb}^n)\subset f(\overbar{\bbbb}^n)$. On the other hand, $f(\bbbb^n)$ is open (because $f$ is a local homeomorphism in $\bbbb^n)$ and hence $f(\bbbb^n)\cap \partial f(\overbar{\bbbb}^n)=\varnothing$, so $\partial f(\overbar{\bbbb}^n)\subset f(\partial\bbbb^n)=\partial U=\partial D$.

We claim that $f(\overbar{\bbbb}^n)\subset\overbar{D}$. Suppose to the contrary that $f(x)\in U$ for some $x\in\overbar{\bbbb}^n$. Since $U$ is unbounded and connected, there is a curve connecting $f(x)$ to infinity inside $U$. Since $f(\overbar{\bbbb}^n)$ is bounded, the curve must intersect with the boundary of that set and hence a point in $U$ belongs to  $\partial f(\overbar{\bbbb}^n)$, which is a contradiction, because $\partial f(\overbar{\bbbb}^n)\subset \partial U$.

Since $f(\bbbb^n)\subset\overbar{D}$ is an open subset of $\bbbr^n$, it follows that $f(\bbbb^n)\subset D$.
We claim that the mapping $f:\bbbb^n\to D$ is proper. Indeed, if $K\subset D$ is compact, then $K$ is a closed subset of $\bbbr^n$, so $f^{-1}(K)\cap\overbar{\bbbb}^n$ is closed and hence compact. On the other hand, $f^{-1}(K)\cap\partial\bbbb^n=\varnothing$, because $f(\partial\bbbb^n)\cap D=\varnothing$, so $f^{-1}(K)$ is a compact subset of $\bbbb^n$. This proves that $f:\bbbb^n\to D$ is proper. Now, Lemma~\ref{T13} yields that $f:\bbbb^n\to D$ is a homeomorphism onto $D$. That also implies that $f$ is one-to-one on $\overbar{\bbbb}^n$ because $f(\partial\bbbb^n)\cap D=\varnothing$. Since $\overbar{\bbbb}^n$ is compact, it follows that $f:\overbar{\bbbb}^n\to\bbbr^n$ is a homeomorphism onto its image.
\end{proof}

\subsection{Gluing homeomorphisms together}

Throughout the paper we repeatedly use the following observation (c.f. \cite[Lemma 3.7]{GoldsteinHajlasz17}) and its corollary.
\begin{lemma}
\label{lempg1}
Let $\Omega \subset \bbbr^n$ be a~bounded domain and let $F, G: \Omega \to \bbbr^n$ be homeomorphisms onto their respective images. Assume moreover that $F = G$ near $\partial \Omega$. Then $F(\Omega) = G(\Omega)$.
\end{lemma}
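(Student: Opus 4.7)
The plan is to show that $F(\Omega)$ and $G(\Omega)$ have the same topological boundary in $\bbbr^n$, and then conclude using connectedness of $\Omega$. First I would rephrase the hypothesis: there exists a compact $K\Subset\Omega$ with $F=G$ on $\Omega\setminus K$, so $Y:=F(\Omega\setminus K)=G(\Omega\setminus K)$ is a non-empty open subset of $F(\Omega)\cap G(\Omega)$. By invariance of domain, $F(\Omega)$ and $G(\Omega)$ are open in $\bbbr^n$, and both $F^{-1}$ and $G^{-1}$ are continuous on their domains.

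The core step is to prove $\partial F(\Omega)=\partial G(\Omega)$. Take $y\in\partial F(\Omega)$ and pick $x_n\in\Omega$ with $F(x_n)\to y$. Since $y\notin F(\Omega)$ and $F$ is continuous, no subsequence of $(x_n)$ can converge to a point of $\Omega$, so compactness of $\overline{\Omega}$ forces $\dist(x_n,\partial\Omega)\to 0$. Because $\dist(K,\partial\Omega)>0$, eventually $x_n\in\Omega\setminus K$, where $F=G$, so $G(x_n)\to y$ and hence $y\in\overline{G(\Omega)}$. If we had $y=G(x^\ast)$ for some $x^\ast\in\Omega$, then continuity of $G^{-1}$ would force $x_n\to x^\ast$, contradicting $\dist(x_n,\partial\Omega)\to 0$; so $y\notin G(\Omega)$ and thus $y\in\partial G(\Omega)$. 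Swapping the roles of $F$ and $G$ yields $\partial F(\Omega)=\partial G(\Omega)$.

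To conclude, I would use that $F(\Omega)\cap\partial G(\Omega)=F(\Omega)\cap\partial F(\Omega)=\varnothing$, so $F(\Omega)\setminus G(\Omega)=F(\Omega)\setminus\overline{G(\Omega)}$ is open. Thus $F(\Omega)$ decomposes as a disjoint union of the open sets $F(\Omega)\cap G(\Omega)$ and $F(\Omega)\setminus G(\Omega)$. Since $F(\Omega)$ is connected (continuous image of the connected set $\Omega$) and the first piece contains the non-empty set $Y$, the second piece must be empty, giving $F(\Omega)\subset G(\Omega)$; by symmetry, equality holds. The step I expect to be the main obstacle is the boundary-matching argument: its success hinges on the geometric separation $\dist(K,\partial\Omega)>0$, which forces any sequence in $\Omega$ approaching $\partial\Omega$ to enter the region $\Omega\setminus K$ where $F$ and $G$ agree, thereby allowing limits of $F(x_n)$ to be transferred to limits of $G(x_n)$.
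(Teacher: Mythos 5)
Your proof is correct, and it takes a genuinely different route from the paper's. The paper argues by contradiction via a curve-crossing argument: assuming some $G(y)\notin F(\Omega)$, it chooses nested subdomains $\Omega_1\Subset\Omega_2\Subset\Omega$ with $F=G$ on $\Omega\setminus\Omega_1$, connects $y$ to a point $x$ with $F(x)=G(x)$ by a path $\gamma\subset\Omega_2$, and observes that $G(\gamma)$ must cross $\partial F(\Omega_2)=F(\partial\Omega_2)=G(\partial\Omega_2)$, contradicting injectivity of $G$ since $\gamma\cap\partial\Omega_2=\varnothing$. You instead first establish the intermediate fact $\partial F(\Omega)=\partial G(\Omega)$ by a sequential argument that exploits $\dist(K,\partial\Omega)>0$ (any sequence whose images under $F$ escape to $\partial F(\Omega)$ must eventually lie in $\Omega\setminus K$, where $F$ and $G$ agree), and then finish with a clopen-decomposition argument on the target side: $F(\Omega)$ splits into the open sets $F(\Omega)\cap G(\Omega)$ and $F(\Omega)\setminus\overline{G(\Omega)}$, and connectedness of $F(\Omega)$ together with non-emptiness of $F(\Omega)\cap G(\Omega)$ forces the second piece to vanish. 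Your approach uses connectedness of the image rather than path-connectedness of the domain, isolates a reusable boundary-matching statement, and sidesteps the (unproved but folklore) step in the paper of producing subdomains $\Omega_1\Subset\Omega_2\Subset\Omega$ containing the disagreement set; the paper's proof is shorter and more directly geometric. One small point worth making explicit in your write-up is why such a compact $K\Subset\Omega$ exists: writing the hypothesis as $F=G$ on $\Omega\cap U$ for some open $U\supset\partial\Omega$, the set $K:=\overline{\Omega}\setminus U=\Omega\setminus U$ is compact, contained in $\Omega$, and contains $\{F\neq G\}$.
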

\begin{proof}
Assume otherwise; without loss of generality we may assume that there is $y\in\Omega$ such that $G(y)\not \in F(\Omega)$. We can find subdomains $\Omega_1 \Subset \Omega_2 \Subset \Omega$ such that $F = G$ on $\Omega \setminus \Omega_1$. This means that $y$ needs to lie in $\Omega_1 \Subset \Omega_2$ and $G(y) \notin F(\Omega_2)$. However, as $F = G$ on $\Omega_2 \setminus \Omega_1$, there is an $x \in \Omega_2$ with $G(x) = F(x)$.

Since $\Omega_2$ is path-connected, we may connect $x$ and $y$ with a~curve $\gamma$ lying entirely in $\Omega_2$. Recall that by a well known consequence of Brouwer's theorem on invariance of domain, homeomorphisms $F$ and $G$ map interior points to interior points and boundary points to boundary points. The curve $G(\gamma)$ connects $G(x) = F(x)$, which is an interior point of $F(\Omega_2)$ with $G(y)$, which lies outside $F(\Omega_2)$ and thus $G(\gamma)$ must intersect $\partial F(\Omega_2)=F(\partial\Omega_2)$. Since $F = G$ near $\partial \Omega_2$, $G(\gamma)$ intersects $G(\partial \Omega_2)$. This leads to a~contradiction,  because $\gamma \subset \Omega_2$.
\end{proof}

\begin{corollary}
\label{T45}
Assume $\Omega_2\subset\Omega_1\subset\mathbb{R}^n$ are bounded domains and let $F:\Omega_1\to\mathbb{R}^n$, $G:\Omega_2\to\mathbb{R}^n$ be homeomorphisms onto their respective images. Assume moreover that  for all $x\in\Omega_2$ in some neighborhood of $\partial\Omega_2$ we have $F(x)=G(x)$. Then $\widetilde{F}:\Omega_1\to \mathbb{R}^n$ given by
$$
\widetilde{F}(x)=
\begin{cases}
F(x)&\text{ for }x\in \Omega_1\setminus \Omega_2,\\
G(x)&\text{ for }x\in\Omega_2 \end{cases}
$$
is a homeomorphism and $F(\Omega_1)=\widetilde{F}(\Omega_1)$.
\end{corollary}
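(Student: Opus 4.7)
The plan is to leverage Lemma~\ref{lempg1} to show that the two definitions of $\widetilde{F}$ produce the same image on $\Omega_2$, and then assemble the pieces into a global homeomorphism.

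First I would apply Lemma~\ref{lempg1} to the restriction $F|_{\Omega_2}$ and $G$ on the bounded domain $\Omega_2$. Both are homeomorphisms onto their respective images, and by hypothesis they agree near $\partial\Omega_2$. Hence $F(\Omega_2)=G(\Omega_2)$. In particular the image set does not change under the swap, and
$$
\widetilde{F}(\Omega_1)=F(\Omega_1\setminus\Omega_2)\cup G(\Omega_2)=F(\Omega_1\setminus\Omega_2)\cup F(\Omega_2)=F(\Omega_1),
$$
which settles the image claim.

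Next I would check that $\widetilde{F}$ is continuous on $\Omega_1$. Continuity is automatic on the open set $\Omega_2$ (where $\widetilde{F}=G$) and on the open set $\Omega_1\setminus\overbar{\Omega_2}$ (where $\widetilde{F}=F$). The only points to worry about are those in $(\partial\Omega_2)\cap\Omega_1\subset\Omega_1\setminus\Omega_2$. By assumption there is a neighborhood $V\subset\Omega_2$ of $\partial\Omega_2$ on which $F=G$; therefore on the open set $(\Omega_1\setminus\overbar{\Omega_2})\cup V\cup((\partial\Omega_2)\cap\Omega_1)$, which is a neighborhood of $(\partial\Omega_2)\cap\Omega_1$, the map $\widetilde{F}$ coincides with $F$ and hence is continuous there.

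For injectivity, $\widetilde{F}$ is injective on each of $\Omega_1\setminus\Omega_2$ (since it equals $F$) and on $\Omega_2$ (since it equals $G$). Injectivity across the two pieces follows from step one: because $F$ is injective on $\Omega_1$, $F(\Omega_1\setminus\Omega_2)$ is disjoint from $F(\Omega_2)=G(\Omega_2)=\widetilde{F}(\Omega_2)$. Thus $\widetilde{F}:\Omega_1\to\mathbb{R}^n$ is a continuous injection of a bounded domain into $\mathbb{R}^n$, so by Brouwer's invariance of domain it is a homeomorphism onto the open set $\widetilde{F}(\Omega_1)=F(\Omega_1)$.

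I do not anticipate a serious obstacle; the only subtlety is verifying continuity at the interface $(\partial\Omega_2)\cap\Omega_1$, which is precisely where the agreement hypothesis $F=G$ near $\partial\Omega_2$ is used. Once Lemma~\ref{lempg1} gives $F(\Omega_2)=G(\Omega_2)$, the remaining steps are essentially bookkeeping together with one appeal to invariance of domain.
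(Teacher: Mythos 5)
Your proposal is correct and follows essentially the same route as the paper: apply Lemma~\ref{lempg1} on $\Omega_2$ to obtain $F(\Omega_2)=G(\Omega_2)$, deduce global injectivity of $\widetilde{F}$ from the resulting disjointness of $\widetilde F(\Omega_2)$ and $\widetilde F(\Omega_1\setminus\Omega_2)$, and conclude via invariance of domain. The paper phrases the final step as ``injective local homeomorphism, hence homeomorphism'' rather than ``continuous injection plus invariance of domain,'' but these are the same argument; your extra care at the interface $(\partial\Omega_2)\cap\Omega_1$ is exactly the point the paper compresses into the phrase ``since $F=G$ near $\partial\Omega_2$.''
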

\begin{proof}
$\widetilde{F}$ is a local homeomorphism since $F$ and $G$ are homeomorphisms and $F=G$ near $\partial\Omega_2$. Since by
Lemma~\ref{lempg1} $\widetilde{F}$ is injective and $F(\Omega_1) = \widetilde{F}(\Omega_1)$, it follows that $\widetilde{F}$ is a homeomorphism of $\Omega_1$ onto $F(\Omega_1)$.
\end{proof}

\subsection{Gluing diffeomorphisms together}
\label{S23}

Lemma~\ref{T17} below is similar to Lemma~3.8 in \cite{GoldsteinHajlasz17}. However, the proof in \cite{GoldsteinHajlasz17} required the diffeomorphism to be at least of class $C^2$. We managed to prove the results for $C^1$-diffeomorphisms by using a topological argument due to Munkres \cite{munkres60} (Lemma~\ref{T14} above). He used it in a similar context.

\begin{lemma}
\label{T17}
Suppose that $\Phi:U\to \bbbr^n$ is a $C^k$-diffeomorphism, $k\in\bbbn\cup\{\infty\}$, defined on an open set $U\subset\bbbr^n$ and $\lambda\in(0,1)$ is given. Then for any $x_o\in U$ there is $r_{x_o}>0$ such that $B(x_o,r_{x_o})\Subset U$ and that for any $r\in(0,r_{x_o}]$ it is possible find diffeomorphisms $H_1,H_2:U\to\bbbr^n$ of class $C^k$ satisfying
\begin{equation}
\label{eq24}
H_1(x)=
\begin{cases}
\Phi(x_o)+D\Phi(x_o)(x-x_o) &\text{ for }x\in \overbar{B}(x_o,\lambda r),\\
\Phi(x) &\text{ for }x\in U\setminus B(x_o,r).
\end{cases}
\end{equation}
\begin{equation}
\label{eq55}
H_2(x)=
\begin{cases}
\Phi(x) &\text{ for }x\in \overbar{B}(x_o,\lambda r),\\
\Phi(x_o)+D\Phi(x_o)(x-x_o) &\text{ for }x\in U\setminus B(x_o,r).
\end{cases}
\end{equation}
\end{lemma}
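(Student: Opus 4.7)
The plan is to interpolate between $\Phi$ and its affine model $L(x) := \Phi(x_o) + D\Phi(x_o)(x - x_o)$ via a smooth radial cutoff. Choose $R > 0$ with $\overline{B}(x_o, R) \Subset U$, put $r_{x_o} := R/2$, and for $r \in (0, r_{x_o}]$ set $r' := \tfrac{1+\lambda}{2}\, r$. Pick a $C^\infty$ cutoff $\eta : \bbbr^n \to [0,1]$ that equals $1$ on $\overline{B}(x_o, \lambda r)$, vanishes outside $B(x_o, r')$, and satisfies $|D\eta| \leq C_\lambda/r$. Define
$$H_1(x) := \Phi(x) + \eta(x)\bigl(L(x) - \Phi(x)\bigr), \qquad H_2(x) := L(x) + \eta(x)\bigl(\Phi(x) - L(x)\bigr).$$
The prescribed boundary values \eqref{eq24} and \eqref{eq55} and the $C^k$-regularity on $U$ are immediate. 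Moreover $H_1 \equiv \Phi$ and $H_2 \equiv L$ on $U \setminus B(x_o, r')$; in particular each agrees with its global counterpart on a neighborhood of $\partial B(x_o, r)$, which enables the later gluing.

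Next I verify that $DH_1, DH_2 \in GL(n)$ everywhere on $U$. Outside $\overline{B}(x_o, r')$ this is automatic: $DH_1 = D\Phi$ and $DH_2 = DL = D\Phi(x_o)$. On $B(x_o, r)$, uniform continuity of $D\Phi$ on the compact set $\overline{B}(x_o, R)$ yields a modulus $\omega(r) \to 0$ with $\|D(L - \Phi)\| \leq \omega(r)$ and, by integration, $|L - \Phi| \leq \omega(r)\, r$. Using $|D\eta| \leq C_\lambda/r$ together with the product rule \eqref{eq54},
$$\bigl\|D\bigl(\eta(L - \Phi)\bigr)\bigr\| \leq |D\eta|\,|L - \Phi| + \eta\,\|D(L - \Phi)\| \leq (C_\lambda + 1)\,\omega(r).$$
Hence on $B(x_o, r)$ the matrix $DH_1(x)$ differs from $D\Phi(x)$, and $DH_2(x)$ differs from $DL \equiv D\Phi(x_o)$, by at most $(C_\lambda + 1)\omega(r)$ in operator norm. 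Since $\|(D\Phi)^{-1}\|$ is bounded on the compact set $\overline{B}(x_o, R)$, Lemma~\ref{T16} gives $DH_i(x) \in GL(n)$ for every $x \in U$ after shrinking $r_{x_o}$, and the inverse function theorem upgrades $H_1, H_2$ to $C^k$ local diffeomorphisms on $U$.

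Global injectivity is the main nontrivial step and is where Lemma~\ref{T14} enters. Restrict $H_1$ to $\overline{B}(x_o, r)$: on $\partial B(x_o, r)$ it coincides with $\Phi$ and is therefore injective, and the bounded component of $\bbbr^n \setminus \Phi(\partial B(x_o, r))$ is $\Phi(B(x_o, r))$, a diffeomorphic image of a ball, hence simply connected. Lemma~\ref{T14} (after affine rescaling of the ball to $\overline{\bbbb}^n$) then promotes $H_1|_{\overline{B}(x_o, r)}$ to a homeomorphism onto its image, which by the same separation argument equals $\Phi(\overline{B}(x_o, r))$. Since $H_1 \equiv \Phi$ near $\partial B(x_o, r)$, Corollary~\ref{T45} applied inside the bounded domain $B(x_o, R)$, with $F = \Phi$ and $G = H_1|_{\mathring{B}(x_o, r)}$, gives a homeomorphism on $B(x_o, R)$; extending by $H_1 = \Phi$ on $U \setminus B(x_o, R)$ produces the desired global $C^k$-diffeomorphism. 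The argument for $H_2$ is identical with $\Phi$ replaced by the affine map $L$: on $\partial B(x_o, r)$ one has $H_2 = L$, the bounded component of $\bbbr^n \setminus L(\partial B(x_o, r))$ is $L(B(x_o, r))$ (affine image of a ball, hence simply connected), and the final gluing uses $F = L$.

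The main obstacle is precisely this global injectivity step: a local diffeomorphism could a priori fold $\overline{B}(x_o, r)$ onto itself, and it is the simple-connectedness of the enclosed region, together with Lemma~\ref{T14}, that forbids this. All remaining ingredients---the cutoff construction, the first-order perturbation estimate, and the gluing---are routine given the scale separation $\lambda r < r' < r$.
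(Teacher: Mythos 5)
Your proposal is correct and follows essentially the same route as the paper: define $H_1,H_2$ by linearly interpolating between $\Phi$ and its affine model via a radial cutoff supported in the annulus $\overbar{B}(x_o,\lambda r)\subset B(x_o,r)$, verify $DH_{1,2}\in GL(n)$ through a first-order perturbation estimate and Lemma~\ref{T16}, and then upgrade the local diffeomorphism to a global one using Lemma~\ref{T14}. The only differences are cosmetic (a qualitative modulus of continuity $\omega(r)$ in place of the paper's explicit $\beta/4$ thresholds, an outer cutoff radius $r'=\tfrac{1+\lambda}{2}r<r$, and an explicit appeal to Corollary~\ref{T45} for the gluing step, which the paper handles inline).
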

\begin{proof}
Let $S(x):=\Phi(x_o)+D\Phi(x_o)(x-x_o)$ and let $\beta=\|(D\Phi(x_o))^{-1}\|^{-1}$. Since $\Phi$ is continuously differentiable, there is $r_{x_o}>0$ such that $B(x_o,r_{x_o})\Subset U$,
$$
\Vert D\Phi(x_o)-D\Phi(x)\Vert<\frac{\beta}{4}
\quad
\text{and}
\quad
\frac{|\Phi(x)-S(x)|}{|x-x_o|}<\frac{(1-\lambda)\beta}{4}
\quad
\text{for } x\in \overbar{B}(x_o,r_{x_o}).
$$
For $r\in (0,r_{x_o}]$, let $\eta\in C^\infty(\bbbr^n)$, $0\leq\eta\leq 1$, be a cut-off function such that
$$
\eta=
\begin{cases}
1 &\text{ on } B(x_o,\lambda r),\\
0&\text{ on } \bbbr^n\setminus B(x_o,r),
\end{cases}
\quad
\text{and}
\quad
\Vert D\eta \Vert_\infty\leq \frac{2}{r(1-\lambda)}\, .
$$
Here $\Vert D\eta\Vert_\infty$ stands for the supremum norm of $|D\eta|$.

Now, we define
$$
H_1(x):=\Phi(x)+\eta(x) (S(x)-\Phi(x))
\quad
\text{and}
\quad
H_2(x):=S(x)+\eta(x) (\Phi(x)-S(x)).
$$
Clearly, $H_{1,2}\in C^k(U;\bbbr^n)$ satisfy \eqref{eq24} and \eqref{eq55}. It remains to show that $H_{1,2}$ are diffeomorphisms. To this end, it suffices to show that $DH_{1,2}(x)$ is invertible for all $x\in U$ and that $H_{1,2}$ are homeomorphisms.

The matrices $DH_{1,2}(x)$ are invertible for $x \in U \setminus \overbar{B}(x_o, r)$, because $H_{1,2}$ are diffeomorphisms in $U \setminus \overbar{B}(x_o, r)$.
To show invertibility of $DH_{1,2}(x)$ for $x\in \overbar{B}(x_o,r)$, it suffices to show that (see Lemma~\ref{T16}):
\begin{equation}
\label{eq56}
\Vert DH_{1,2}(x)-D\Phi(x_o)\Vert<\beta
\quad
\text{for } x\in \overbar{B}(x_o,r).
\end{equation}
Note that $|\eta|\leq 1$ and hence \eqref{eq54} yields
$
\Vert D(\eta(S-\Phi))\Vert\leq |D\eta|\, |S-\Phi|+\Vert DS-D\Phi\Vert.
$
Bearing in mind that $DS(x)=D\Phi(x_o)$, we have
\begin{equation}
\label{eq57}
\begin{split}
&\Vert DH_1(x)-D\Phi(x_o)\Vert \\
&\leq \Vert D\Phi(x)-D\Phi(x_o)\Vert +|D\eta(x)|\, |S(x)-\Phi(x)| + \Vert DS(x)-D\Phi(x)\Vert    \\
&=
|D\eta(x)|\, |\Phi(x)-S(x)|+2 \Vert D\Phi(x)-D\Phi(x_o)\Vert.
\end{split}
\end{equation}
Similarly,
\begin{equation}
\label{eq58}
\Vert DH_2(x)-D\Phi(x_o)\Vert
\leq
|D\eta(x)|\, |\Phi(x)-S(x)|+ \Vert D\Phi(x)-D\Phi(x_o)\Vert.
\end{equation}
For $x\in \overbar{B}(x_o,r)$ we have $|x-x_o|\leq r$ and hence
$$
|D\eta(x)|\, |\Phi(x)-S(x)| +2\Vert D\Phi(x)-D\Phi(x_o)\Vert
\leq
\frac{2}{1-\lambda}\frac{|\Phi(x)-S(x)|}{|x-x_o|} +
2\cdot\frac{\beta}{4}<\beta,
$$
which together with \eqref{eq57} and \eqref{eq58} proves \eqref{eq56}.

It remains to show that $H_{1,2}:U\to\bbbr^n$ are homeomorphisms of $U$ onto their respective images.

Since the surfaces $H_1(\partial B(x_o,r))=\Phi(\partial B(x_o,r))=\partial\Phi(B(x_o,r))$ and $H_2(\partial B(x_o,r))=\partial S(B(x_o,r))$ bound simply connected domains $\Phi({B}(x_o,r))$ and $S({B}(x_o,r))$, and $H_{1,2}$ are local homeomorphisms (because $\det DH_{1,2}\neq 0$), it follows from Lemma~\ref{T14} that $H_1$ and $H_2$ are homeomorphisms of $B(x_o,r)$ onto $\Phi(B(x_o,r))$ and $S(B(x_o,r))$, respectively, and hence $H_1$ and $H_2$ are homeomorphisms of $U$ onto $\Phi(U)$ and $S(U)$, respectively.
The proof is complete.
\end{proof}
The next result shows how to connect linear maps in $GL(n)^+$ in a diffeomorphic way.
\begin{proposition}
\label{T21}
Fix $r>0$ and $\theta\in(0,1)$, and let $A_1, A_2 \in GL(n)^+$. If
$$
A_1 \left(B(0,\theta r) \right) \Subset A_2 \left(B(0, r) \right),
$$
then there exists a $C^\infty$-diffeomorphism $H:\bbbr^n\to\bbbr^n$ which coincides with $x \mapsto A_1 x$ on $B(0, \theta r)$ and with $x \mapsto A_2 x$ on $\bbbr^n \setminus B(0, r)$.
\end{proposition}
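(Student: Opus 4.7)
The plan is to reduce to $A_2 = \mathcal{I}$ by composing with $A_2^{-1}$: setting $\tilde H = A_2^{-1} H$, the task becomes finding a $C^\infty$-diffeomorphism $\tilde H : \mathbb{R}^n \to \mathbb{R}^n$ equal to $A := A_2^{-1} A_1$ on $B(0, \theta r)$ and to the identity outside $B(0, r)$; the hypothesis becomes $A(B(0, \theta r)) \Subset B(0, r)$, in particular $\|A\|\theta < 1$. This is then handled in two stages.

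\textbf{Perturbative case.} I would first establish a local lemma: for any $\lambda \in (0, 1)$ there is $\delta = \delta(\lambda) > 0$ such that, for any $B \in GL(n)^+$ with $\|B - \mathcal{I}\| < \delta$ and any $R > 0$, the formula $\psi(x) = x + \eta(x)(B - \mathcal{I})x$ --- with a smooth radial cutoff $\eta$ equal to $1$ in a neighborhood of $\overline{B(0, \lambda R)}$, supported in $B(0, R)$, and satisfying $\|D\eta\|_\infty \leq 3/(R(1-\lambda))$ --- defines a $C^\infty$-diffeomorphism of $\mathbb{R}^n$ equal to $B$ on $B(0, \lambda R)$ and to $\mathcal{I}$ outside $B(0, R)$. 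Indeed, by \eqref{eq54},
$$\|D\psi(x) - \mathcal{I}\| \leq \|B - \mathcal{I}\|\Bigl(1 + \frac{3|x|}{R(1-\lambda)}\Bigr), \quad x \in B(0, R),$$
which is less than $1$ once $\delta$ is small enough. Lemma~\ref{T16} then gives $D\psi \in GL(n)$ everywhere; since $\psi|_{\partial B(0, R)} = \mathrm{id}$ bounds the simply-connected ball $B(0, R)$, Lemma~\ref{T14} yields that $\psi$ is a homeomorphism on $\overline{B(0, R)}$. Combined with $\psi = \mathcal{I}$ outside, we obtain the required $C^\infty$-diffeomorphism.

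\textbf{General case by factorization and gluing.} For general $A$, use Lemma~\ref{T15} to write $A = B_M \cdots B_1$ with $\|B_i - \mathcal{I}\| < \varepsilon$ for $\varepsilon$ to be chosen. I would obtain this factorization from an equal-step discretization of a smooth path $\gamma : [0, 1] \to GL(n)^+$ from $\mathcal{I}$ to $A$ with bounded left-logarithmic derivative $\|\gamma^{-1} \gamma'\|_\infty$, so that the partial products $D_i := B_i \cdots B_1 = \gamma(i/M)$ have uniformly bounded condition numbers $\|D_i\|\|D_i^{-1}\|$. Choose concentric radii $\theta r = \rho_M < \cdots < \rho_0 = r$ of constant ratio $\rho_i/\rho_{i-1} = \theta^{1/M}$. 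For each $i$ apply the perturbative construction to the conjugate $\tilde B_i := D_{i-1}^{-1} B_i D_{i-1} = D_{i-1}^{-1} D_i$ at scale $R = \rho_{i-1}$ and ratio $\lambda = \theta^{1/M}$, obtaining $\tilde\psi_i$; then $\phi_i := D_{i-1} \tilde\psi_i$ is a $C^\infty$-diffeomorphism of $\mathbb{R}^n$ equal to $D_i$ on $B(0, \rho_i)$ and to $D_{i-1}$ outside $B(0, \rho_{i-1})$. Define
$$\tilde H(x) = \begin{cases} A x, & x \in B(0, \theta r),\\ \phi_i(x), & x \in B(0, \rho_{i-1}) \setminus B(0, \rho_i),\\ x, & x \notin B(0, r). \end{cases}$$
By taking cutoffs constant in a full neighborhood of each seam $\partial B(0, \rho_i)$, $\tilde H$ is $C^\infty$ (both neighboring pieces agree with the linear map $D_i$ there). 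Global injectivity follows from the nested image inclusions $A(B(0, \theta r)) \subset D_{M-1}(B(0, \rho_{M-1})) \subset \cdots \subset B(0, r)$, which hold because each $\tilde B_i$ acts as a small perturbation of the identity.

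\textbf{Main obstacle.} The delicate part is the simultaneous balancing of constants: the local construction requires $\|\tilde B_i - \mathcal{I}\| < \delta(\theta^{1/M})$, where $\delta(\lambda)$ is of order $1 - \lambda$ (so of order $(1-\theta)/M$ for large $M$), while $\|\tilde B_i - \mathcal{I}\| \leq \|D_{i-1}\|\|D_{i-1}^{-1}\|\|B_i - \mathcal{I}\|$. The smooth-path factorization is what makes this workable: it keeps $\|D_i\|\|D_i^{-1}\|$ bounded in $M$ and, more importantly, makes $\|\tilde B_i - \mathcal{I}\| = \|D_{i-1}^{-1} D_i - \mathcal{I}\|$ directly of order $1/M$ (through $\gamma^{-1}\gamma'$), avoiding any condition-number amplification from the conjugation. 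Taking $M$ sufficiently large then makes both sides compatible. I expect this bookkeeping to be the main technical hurdle, while the geometric and topological substance is carried by Lemmas~\ref{T14}, \ref{T15}, and~\ref{T16}.
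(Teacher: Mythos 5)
Your reduction to $A_2=\mathcal{I}$ and your perturbative cutoff step are the same as the paper's (Lemma~\ref{T20}), but your route to the general case — an equal-ratio chain of annular shells matched to a smooth-path discretization of $A$ — has a gap precisely at the ``main obstacle'' you flag, and your claim that ``taking $M$ sufficiently large then makes both sides compatible'' is not correct. Your own estimate gives the admissibility threshold $\delta(\lambda)=\frac{1-\lambda}{4-\lambda}$, so with $\lambda=\theta^{1/M}$ one has $\delta(\theta^{1/M})\approx\frac{-\ln\theta}{3M}$, while the discretized path gives $\|\tilde B_i-\mathcal{I}\|\approx \|\gamma^{-1}\gamma'\|_\infty/M =: L/M$. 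Both sides are $\Theta(1/M)$: increasing $M$ leaves the ratio unchanged, and what you actually need is the $M$-independent inequality $L\lesssim -\ln\theta$. This is \emph{not} implied by the hypothesis $A(B(0,\theta))\Subset B(0,1)$, which only gives $\|A\|<1/\theta$. For instance, take $A$ a rotation by angle $\pi$ in a coordinate $2$-plane: $\|A\|=1$, the hypothesis holds for every $\theta\in(0,1)$, yet any path from $\mathcal{I}$ to $A$ in $GL(n)^+$ has $L\geq\pi$, while $-\ln\theta$ can be made arbitrarily small by taking $\theta$ close to $1$. For such $A$ and $\theta$ your shell construction fails for every $M$. (The nesting $D_i(B(0,\rho_i))\subset B(0,\rho_{i-1})$ you invoke for injectivity is another instance of the same $\Theta(1/M)$-vs.-$\Theta(1/M)$ constraint and fails in the same regime.)

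The paper avoids this by decoupling shape from scale. Lemma~\ref{T20} first builds a diffeomorphism equal to $A_1$ on \emph{some} uncontrolled ball $B(0,\varrho)$ and to the identity outside $B(0,1)$, with $\varrho$ typically far smaller than $\theta$. The scale is then repaired by pre- and post-composing with radial diffeomorphisms $h_1,h_3$ supported in $B(0,1)$: $h_1$ contracts $B(0,\theta)$ onto $B(0,\varrho)$ by a homothety, and $h_3$ expands $A_1(B(0,\varrho))$ onto $A_1(B(0,\theta))$. Radial maps of this kind can realize an arbitrarily large contraction/expansion ratio across any annulus of positive thickness — they are essentially one-dimensional and carry no ``budget'' constraint of the type that kills your shell interpolation — and the hypothesis $A_1(B(0,\theta))\Subset B(0,1)$ is exactly what makes $h_3$ fit. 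Some such non-perturbative rescaling step is the missing ingredient in your outline.
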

In the proof we will need the following special case of the result (c.f.\ \cite[Lemma 8.1]{munkres60}):
\begin{lemma}
\label{T20}
Let $A\in GL(n)^+$. Then for any $r>0$ there is $\varrho \in (0, r)$ and a $C^\infty$-diffeomorphism $H:\bbbr^n\to\bbbr^n$ such that
\begin{equation}
\label{eq26}
H(x)=
\begin{cases}
Ax & \text{for } x\in B(0,\varrho),\\
x  & \text{for } x\in\bbbr^n\setminus B(0,r).
\end{cases}
\end{equation}
\end{lemma}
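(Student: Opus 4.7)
The plan is to construct $H$ in two stages: first by an explicit cut-off formula when $A$ is close to the identity, and then by reducing the general case via Lemma~\ref{T15} to a nested composition of such near-identity diffeomorphisms. The main delicacy is ensuring the cut-off formula produces a local diffeomorphism; once that is done, global injectivity and surjectivity follow from properness together with Lemma~\ref{T13}.

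\textbf{Step 1 (near-identity case).} Assume $\|A-\mathcal I\|<1/5$ and set $\varrho:=r/2$. Pick a standard cut-off $\eta\in C^\infty([0,\infty))$ with $0\le\eta\le 1$, $\eta\equiv 1$ on $[0,\varrho]$, $\eta\equiv 0$ on $[r,\infty)$, and $\|\eta'\|_\infty\le 4/r$. Define
$$
H(x):=x+\eta(|x|)(A-\mathcal I)x.
$$
Then $H\in C^\infty(\bbbr^n,\bbbr^n)$, $H(x)=Ax$ on $B(0,\varrho)$, and $H(x)=x$ outside $B(0,r)$. A direct differentiation gives, for $x\neq 0$,
$$
DH(x)=\mathcal I+\eta(|x|)(A-\mathcal I)+\frac{\eta'(|x|)}{|x|}\bigl((A-\mathcal I)x\bigr)\otimes x.
$$
Using the identity $\|u\otimes v\|=|u|\,|v|$ from Section~\ref{GN} and the scale-invariant bound $|\eta'(|x|)|\cdot|x|\le 4$ (on the support of $\eta'$ we have $|x|\le r$), one obtains $\|DH(x)-\mathcal I\|\le 5\|A-\mathcal I\|<1$. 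Lemma~\ref{T16} then forces $DH(x)\in GL(n)$ for every $x$, so $H$ is a local $C^\infty$-diffeomorphism. Since $H$ coincides with the identity outside $\overbar{B}(0,r)$, it is proper, and Lemma~\ref{T13} (with $X=Y=\bbbr^n$, which is path-connected and simply connected) promotes $H$ to a global $C^\infty$-diffeomorphism of $\bbbr^n$.

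\textbf{Step 2 (general case).} For arbitrary $A\in GL(n)^+$, apply Lemma~\ref{T15} with $\eps=1/5$ to write $A=A_1 A_2\cdots A_M$ with $\|A_i-\mathcal I\|<1/5$ for each $i$. Pick nested radii $r>r_1>r_2>\cdots>r_M>0$ with $r_{i+1}<r_i/2$, and for each $i$ invoke Step~1 with outer radius $r_i$ to obtain a $C^\infty$-diffeomorphism $H_i:\bbbr^n\to\bbbr^n$ with $H_i(x)=A_ix$ on $B(0,r_i/2)$ and $H_i(x)=x$ outside $B(0,r_i)$. Set
$$
H:=H_1\circ H_2\circ\cdots\circ H_M,
$$
which is a $C^\infty$-diffeomorphism coinciding with the identity outside $B(0,r_1)\subset B(0,r)$. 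For the prescribed value near the origin, take
$$
\varrho:=\min_{1\le k\le M}\frac{r_k/2}{\|A_{k+1}\cdots A_M\|}>0,
$$
with the convention that the empty product for $k=M$ equals $\mathcal I$. A short induction on $k$ from $M$ down to $1$ shows that for every $x\in B(0,\varrho)$ the iterate $(H_{k+1}\circ\cdots\circ H_M)(x)=A_{k+1}\cdots A_M x$ lies in $B(0,r_k/2)$, so $H_k$ acts linearly on it, and in the end $H(x)=A_1\cdots A_M x=Ax$.

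\textbf{Main obstacle.} The substantive point is the derivative estimate in Step~1: a priori the factor $\eta'(|x|)/|x|$ in the cross-term could blow up as $r\to 0$ or as $x\to 0$, but the scale-invariant bound $|\eta'(|x|)|\cdot|x|\le 4$ (independent of $r$) combined with $\|((A-\mathcal I)x)\otimes x\|\le\|A-\mathcal I\|\cdot|x|^2$ keeps $DH$ uniformly close to $\mathcal I$ when $\|A-\mathcal I\|$ is small. Once local invertibility is in hand, properness and Lemma~\ref{T13} carry the rest of the argument, and the iteration in Step~2 is purely combinatorial.
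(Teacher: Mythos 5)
Your proof is correct and follows essentially the same structure as the paper's: first handle matrices $A$ close to $\mathcal I$ by an explicit cutoff interpolation $x\mapsto x+\eta(\cdot)(A-\mathcal I)x$, controlling $DH$ via the tensor-product norm identity and Lemma~\ref{T16}, and then reduce the general case by factoring $A$ through Lemma~\ref{T15} and composing. The only minor departures from the paper are stylistic: the paper closes the near-identity case with a direct Lipschitz injectivity estimate $|H_A(x)-H_A(y)|\ge(1-2M|A-\mathcal I|)|x-y|$ rather than invoking properness and Lemma~\ref{T13}, and in the general case it observes that the intersection of preimages $B(0,\tfrac r2)\cap A_k^{-1}(B(0,\tfrac r2))\cap\cdots$ is an open neighborhood of the origin rather than computing an explicit $\varrho$ (your nested radii $r_{i+1}<r_i/2$ are harmless but unused --- only $r_i<r$ is needed).
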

\begin{proof}
Let $\eta\in C_c^\infty(B(0,1))$, $\eta=1$ on $B(1,1/2)$, and let $M:=\Vert\eta\Vert_\infty+\Vert D\eta\Vert_\infty$. For $r>0$ and
$L\in M^{n\times n}$ define
$f(x):=\eta(x/r)Lx$. We have (see \eqref{eq54}):
$$
Df(x)=\frac{1}{r}(Lx)\otimes D\eta\left(\frac{x}{r}\right) +\eta\left(\frac{x}{r}\right)L,
$$
and hence
$$
|Df(x)|\leq\frac{1}{r}|Lx|\left|D\eta\left(\frac{x}{r}\right)\right| +\left|\eta\left(\frac{x}{r}\right)\right|\, |L|
\leq \frac{1}{r}|L|\, |x| M\chi_{B(0,r)}(x)+M|L|
\leq 2M|L|.
$$
In particular,
$$
|f(x)-f(y)|\leq |x-y|\int_0^1|Df(y+t(x-y))|\, dt
\leq 2M|L|\, |x-y|.
$$
First we will prove the lemma under the assumption that $|A-\mathcal{I}|<(2M)^{-1}$. Let $L:=A-\mathcal{I}$ and define
$$
H_A(x):=x+\eta\left(\frac{x}{r}\right)(A-\mathcal{I})x=x+f(x).
$$
We have
$$
\Vert DH_A(x)-\mathcal{I}\Vert\leq |DH_A(x)-\mathcal{I}|=|Df(x)|\leq 2M|A-\mathcal{I}|<1,
$$
so $DH_A(x)$ is invertible by Lemma~\ref{T16}. $H_A$ is also one-to-one, because for $x\neq y$ we have
$$
|H_A(x)-H_A(y)|\geq |x-y|-|f(x)-f(y)|\geq |x-y|-2M|A-\mathcal{I}|\, |x-y|>0.
$$
Therefore, $H_A$ is a diffeomorphism and \eqref{eq26} is true with $\varrho=r/2$.

Now assume that $A\in GL(n)^+$ is an arbitrary matrix. According to Lemma~\ref{T15}, we can write
$
A=A_1\cdot\ldots\cdot A_k,
$
where
$
|A_i-\mathcal{I}|<(2M)^{-1}.
$
Then
$$
H:=H_{A_1}\circ\ldots\circ H_{A_k}:\bbbr^n\to\bbbr^n
$$
is a diffeomorphism that satisfies $H(x)=x$ for $|x|\geq r$. Using simple induction and the fact that the diffeomorphisms $H_{A_i}$ satisfy \eqref{eq26} with $A_i$ and $\varrho=r/2$, one can easily check that
$$
H(x)=A_1(A_2(\ldots(A_k(x)\ldots))=Ax
$$
if $x$ belongs to the set
$$
B(0,\tfrac{r}{2})\cap A_k^{-1}(B(0,\tfrac{r}{2}))\cap
(A_{k-1}\circ A_k)^{-1}(B(0,\tfrac{r}{2}))\cap\ldots\cap
(A_{2}\circ\ldots\circ A_k)^{-1}(B(0,\tfrac{r}{2})).
$$
Since this set is an open neighborhood of $0$, it contains a ball $B(0,\varrho)$ for some $0<\varrho<r$ and hence $H$ satisfies \eqref{eq26}.
\end{proof}
\begin{proof}[Proof of Proposition~\ref{T21}]
By composing with $A_2^{-1}$ and scaling if necessary,
we can assume that $A_2 = \id$ and $r = 1$. Using Lemma~\ref{T20} we can construct a diffeomorphism $h_2$ of $\bbbr^n$ such that $h_2(x) = A_1 x$ on $B(0, \varrho)$ for some $\varrho\in (0,1)$ and $h_2(x)=x$ on $\bbbr^n \setminus B(0, 1)$.
This diffeomorphism would have desired properties if we could take $\varrho=\theta$, but it may happen that $\varrho$ is smaller than $\theta$.
Thus assume that $\varrho<\theta$.
To correct $h_2$ we take a~radial diffeomorphism $h_1$ of $\bbbr^n$, which equals $h_1(x)=\varrho\,\theta^{-1} x$ on $B(0,\theta)$ and is identity outside $B(0, 1)$. As a~result, $h_2(h_1(x)) = \varrho\,\theta^{-1}A_1 x$ on $B(0, \theta)$ and $h_2(h_1(x)) = x$ on $\bbbr^n \setminus B(0,1)$.
This diffeomorphism has all the required properties, except that it equals $\varrho\,\theta^{-1} A_1 x$ on $B(0,\theta)$ instead of required $A_1x$. To correct it, we
take a~radial diffeomorphism $h_3$, which equals $h_3(x)=\theta\,\varrho^{-1} x$ on $A_1(B(0, \varrho))$ and $h_3(x)=x$ on $\bbbr^n\setminus B(0,1)$. Such a~diffeomorphism exists, because
$A_1(B(0,\varrho))\Subset B(0,1)$ and
$h_3(A_1(B(0, \varrho)))=A_1(B(0,\theta))\Subset B(0,1)$.
The map $H = h_3 \circ h_2 \circ h_1$ is the desired diffeomorphism.
\end{proof}

For the proof of Proposition~\ref{T38} we need two technical lemmata, Lemma~\ref{T37} and Lemma~\ref{T39}.

\begin{lemma}[Storage lemma]
\label{T37}
Fix $\ell \in \mathbb{Z}$. Assume that $\mathscr{V}$ is a~finite family of closed cubes $V \subset \bbbr^n$ with pairwise disjoint interiors, and each cube $V \in \mathscr{V}$ has the same side-length $\mathsf{L}(V) = 2^{-\ell}$. Assume that $\mathscr{W}$ is another finite family of closed cubes $W \subset \bbbr^n$ such that
\begin{equation}
\label{eq60}
\sum_{W \in \mathscr{W}} |W| \leq \sum_{V \in \mathscr{V}} |V|
\end{equation}
and that for each $W \in \mathscr{W}$, $\mathsf{L}(W) = 2^{-k}$ for some $k \in \mathbb{Z}$, $k \geq \ell$.
Then, for each $W \in \mathscr{W}$, there is an isometric closed cube $\widetilde{W}$ i.\,e., $\mathsf{L}(\widetilde{W}) = \mathsf{L}(W)$, such that the cubes $\{ \widetilde{W}\}_{W \in \mathscr{W}}$ have pairwise disjoint interiors, and
$$
\bigcup \nolimits_{W \in \mathscr{W}} \widetilde{W} \subset \bigcup \nolimits_{V \in \mathscr{V}} V.
$$

\end{lemma}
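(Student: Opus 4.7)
The natural approach is a \emph{greedy dyadic packing} argument: since all the cubes in $\mathscr{V}$ and $\mathscr{W}$ have side-lengths of the form $2^{-j}$ with $j\ge\ell$, and every $W\in\mathscr{W}$ is no larger than any $V\in\mathscr{V}$, we can recursively subdivide cubes of $\mathscr{V}$ into dyadic subcubes and fit the $W$'s into them in order of decreasing size. The volume hypothesis \eqref{eq60} guarantees that we never run out of room.

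I would enumerate $\mathscr{W}=\{W_1,\dots,W_N\}$ so that $\mathsf{L}(W_1)\ge\mathsf{L}(W_2)\ge\cdots\ge\mathsf{L}(W_N)$, and maintain a finite family $\mathscr{A}$ of closed cubes with pairwise disjoint interiors, each of dyadic side-length, all contained in $\bigcup_{V\in\mathscr{V}}V$. Initially set $\mathscr{A}:=\mathscr{V}$. At step $i$, with $\mathsf{L}(W_i)=2^{-k_i}$, I choose $A_i\in\mathscr{A}$ of minimal side-length; if $\mathsf{L}(A_i)=\mathsf{L}(W_i)$, set $\widetilde{W}_i:=A_i$ and remove $A_i$ from $\mathscr{A}$. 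Otherwise $\mathsf{L}(A_i)=2^{-j}$ for some $j<k_i$, in which case I subdivide $A_i$ into its $2^n$ equal closed dyadic subcubes, replace $A_i$ in $\mathscr{A}$ by those, and repeat; after finitely many subdivisions a cube of side $2^{-k_i}$ appears and is used for $\widetilde{W}_i$.

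The correctness hinges on the following invariant, maintained just before step $i{+}1$:
\emph{every $A\in\mathscr{A}$ satisfies $\mathsf{L}(A)\ge\mathsf{L}(W_{i+1})$.}
The base case holds because $\mathsf{L}(V)=2^{-\ell}\ge\mathsf{L}(W_1)$, and the inductive step is immediate since during step $i$ we only subdivide down to side $\mathsf{L}(W_i)\ge\mathsf{L}(W_{i+1})$. Consequently, the cube $A_i$ of minimal side chosen at step $i$ always has side $\ge\mathsf{L}(W_i)$, so the subdivision process terminates, and the placed $\widetilde{W}_i$ is an isometric copy of $W_i$ contained in some $V\in\mathscr V$. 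The only way the algorithm could fail is if $\mathscr{A}$ were empty when needed; but subdivision preserves total volume, hence
\[
\sum_{A\in\mathscr{A}}|A|\;=\;\sum_{V\in\mathscr{V}}|V|-\sum_{j\le i}|W_j|\;\ge\;\sum_{W\in\mathscr{W}}|W|-\sum_{j\le i}|W_j|\;\ge\;|W_{i+1}|\;>\;0
\]
by \eqref{eq60}, so $\mathscr{A}\ne\varnothing$ at every step. The resulting cubes $\widetilde{W}_1,\dots,\widetilde{W}_N$ have pairwise disjoint interiors (since they are disjoint dyadic subcubes at the moment of placement, and further subdivisions only refine the remaining $\mathscr{A}$) and lie in $\bigcup_V V$, as required.

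The only real obstacle is the bookkeeping that verifies the size invariant; processing the $W_i$'s in decreasing order of side-length is what makes it work, since subdividing to accommodate a small $W$ never produces cubes too small to serve the next (even smaller) cube.
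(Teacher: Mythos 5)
Your proof is correct and follows essentially the same strategy as the paper's: process the cubes of $\mathscr{W}$ in decreasing order of side-length, match them to available dyadic subcubes of $\mathscr{V}$, and rely on volume conservation under dyadic subdivision to guarantee availability. The only (cosmetic) difference is that the paper organizes the packing level-by-level — matching all cubes of a given side-length at once and then subdividing every remaining container cube to the next dyadic level — whereas you do it one $W_i$ at a time, subdividing lazily only the single cube you draw from $\mathscr{A}$; the underlying invariant and volume bookkeeping are identical.
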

\begin{remark}
The lemma has a practical interpretation. You can place dyadic boxes $W\in\mathscr{W}$ in the storage containers $V\in\mathscr{V}$ (identical and dyadic) if and only if the total volume of the boxes $W$ does not exceed the total volume of the storage, and no box $W$ is larger than a storage container.
\end{remark}
\begin{proof}
Let $\mathscr{V}_\ell := \mathscr{V}$. Divide the family $\mathscr{W}$ into subfamilies according to the side-length: $\mathscr{W} = \bigcup_{i=\ell}^N \mathscr{W}_i$, where $\mathscr{W}_i = \{ W \in \mathscr{W}: \, \mathsf{L}(W) = 2^{-i}\}$. Clearly, \eqref{eq60} can be rewritten as
\begin{equation}
\label{eq61}
\sum_{i=\ell}^N \sum_{W \in \mathscr{W}_i} |W| \leq \sum_{V \in \mathscr{V}_\ell} |V|.
\end{equation}
It follows that the number of cubes in $\mathscr{W}_\ell$ is less than or equal to the number of cubes in $\mathscr{V}_\ell$. Thus, for each $W \in \mathscr{W}_\ell$, we can find $\widetilde{W} \in \mathscr{V}_\ell$ so that the cubes $\widetilde{W}$ have pairwise disjoint interiors. Clearly, $W$ and $\widetilde{W}$ are isometric.

Divide each of the cubes in the remaining family
\begin{equation}
\label{eq62}
\mathscr{V}_\ell \setminus \{ \widetilde{W}: W \in W_\ell \}
\end{equation}
into $2^n$ dyadic closed cubes of side-length $2^{-(\ell + 1)}$. Denote the resulting family of cubes by $\mathscr{V}_{\ell + 1}$. That is, each of the cubes in $\mathscr{V}_{\ell + 1}$ has side-length $2^{-(\ell + 1)}$ and the number of cubes in $\mathscr{V}_{\ell + 1}$ equals $2^{n}$ times the number of the cubes in \eqref{eq62}. Clearly, \eqref{eq61} implies that
$$
\sum_{i = \ell +1}^N \sum_{W \in \mathscr{W}_i} |W| \leq \sum_{V \in \mathscr{V}_{\ell + 1}} |V|,
$$
because by removing cubes $W \in \mathscr{W}_\ell$ from $\mathscr{W}$ and cubes $\widetilde{W} \in \mathscr{V}_\ell$, from $\mathscr{V}_\ell$, we removed equal volumes from both sides of \eqref{eq61}. Now, we can repeat the procedure described above and match each $W \in \mathscr{W}_{\ell + 1}$ with a~suitable cube $\widetilde{W} \in \mathscr{V}_{\ell + 1}$. We repeat the procedure by induction. The required family $\{ \widetilde{W}\}_{W \in \mathscr{W}}$ will be constructed after a~finite number of steps.
\end{proof}

It is a~known fact that given any two points $p, q$ lying in the interior of a~smooth, connected manifold $M$, one can find a~diffeomorphism of $M$ onto itself that carries $p$ into $q$ and is isotopic to identity, see \cite[Chapter 4]{Milnor}. We need a~slightly stronger folklore result, stating that given a~finite family of points in a~Euclidean domain, we can rearrange them in a~diffeomorphic manner so that neighborhoods of these points are mapped by translation.
\begin{lemma}
\label{T39}
Let $\{p_i\}_{i=1}^N$ and $\{q_i\}_{i=1}^N$ be given points in $U$, a domain in $\bbbr^n$, with $p_i \neq p_j$ and $q_i \neq q_j$ for $i \neq j$. Then, there exists an~$\eps > 0$ and a $C^\infty$-diffeomorphism $H: U \to U$, identity near the boundary, such that
$$
H(x) = x + (q_i - p_i) \text{ for } x \in B(p_i, \eps),
$$
i.e., $H$ maps by translation each ball $B(p_i, \eps)$ onto $B(q_i, \eps)$ with $H(p_i) = q_i$.
\end{lemma}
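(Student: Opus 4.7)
My plan is to build $H$ as the time-$1$ map of the flow of a carefully chosen time-dependent smooth vector field $V_t$ on $U$, whose support lies inside pairwise disjoint tubes around trajectories joining $p_i$ to $q_i$, and which is spatially constant (equal to the instantaneous velocity of the moving point) in a neighborhood of each trajectory. This forces the flow to act by pure translation on small balls around each $p_i$, which is exactly the conclusion required.

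First I would choose smooth paths $\alpha_i \colon [0,1]\to U$ with $\alpha_i(0)=p_i$, $\alpha_i(1)=q_i$, and with $\alpha_i(t)\neq\alpha_j(t)$ for $i\neq j$ and every $t\in[0,1]$. This uses that the ordered configuration space of $N$ distinct points in $U$ is path-connected when $U\subset\bbbr^n$ is a domain and $n\geq 2$: I move one point at a time along a smooth arc, perturbing the arc off the finitely many remaining stationary points, which is generic in codimension $n\geq 2$. Since the $\alpha_i$ are smooth, pairwise disjoint, and have compact image in $U$, I fix $\eps_0>0$ small enough that the closed tubes $T_i:=\bigcup_{t\in[0,1]}\overline{B}(\alpha_i(t),2\eps_0)$ are pairwise disjoint and contained in a fixed compact subset of $U$.

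With a cutoff $\chi\in C_c^\infty(\bbbr^n)$ satisfying $\chi\equiv 1$ on $\overline{B}(0,\eps_0)$ and $\chi\equiv 0$ off $B(0,2\eps_0)$, I set
\[
V_t(x) := \sum_{i=1}^N \chi(x-\alpha_i(t))\,\dot\alpha_i(t).
\]
This is smooth jointly in $(x,t)$, compactly supported in $U$, and (using disjointness of the tubes $T_i$) equals the constant vector $\dot\alpha_i(t)$ on $\overline{B}(\alpha_i(t),\eps_0)$. Let $\phi_t$ be its flow, which is globally defined on $U$ for all $t\in[0,1]$ since $V_t$ vanishes off a compact set. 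The crucial observation is that for $x\in B(p_i,\eps_0)$, the candidate curve $t\mapsto x+(\alpha_i(t)-p_i)$ sits at constant distance $|x-p_i|<\eps_0$ from $\alpha_i(t)$, hence stays inside the zone where $V_t$ is identically $\dot\alpha_i(t)$; differentiating the curve gives $\dot\alpha_i(t)$, matching $V_t$ evaluated along it, so by uniqueness of ODE solutions this curve is $\phi_t(x)$. In particular, $\phi_1$ restricts to the translation $x\mapsto x+(q_i-p_i)$ on $B(p_i,\eps_0)$.

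Setting $H:=\phi_1$ and $\eps:=\eps_0$ then yields the lemma: $H$ is a $C^\infty$-diffeomorphism of $U$ onto itself, identity outside the compact set $\bigcup_i T_i$ and hence identity in a neighborhood of $\partial U$, and it maps each $B(p_i,\eps)$ onto $B(q_i,\eps)$ by the translation $x\mapsto x+(q_i-p_i)$. The main point requiring care is the simultaneous disjointness of the paths in Step~1; the sequential one-at-a-time construction combined with the codimension $n\geq 2$ argument handles this cleanly, and the hypothesis $n\geq 2$ is essential for this step.
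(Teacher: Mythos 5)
Your proof is correct and takes a genuinely different route from the paper's. The paper proceeds by composition: it first constructs, for $N=1$, an autonomous vector field supported in a tube around a piecewise-linear path and composes its time-$1$ flows along the segments; for general $N$ it finds piecewise-linear paths $\gamma_i$ from $p_i$ to $q_i$ with pairwise disjoint $2\eps$-neighborhoods (available when $\{p_i\}\cap\{q_i\}=\varnothing$) and composes the corresponding single-point diffeomorphisms, handling the case $\{p_i\}\cap\{q_i\}\neq\varnothing$ by routing through an intermediate configuration $\{s_i\}$. You instead use a single time-dependent vector field whose time-$1$ flow is $H$, relying on path-connectedness of the ordered configuration space of $N$ points in $U$ (valid for $n\geq 2$). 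Your route avoids both the case-split and the need for paths with disjoint \emph{images}: you only need $\alpha_i(t)\neq\alpha_j(t)$ at each fixed $t$. The paper's route is more elementary and self-contained, using only segments and the translation-flow idea without invoking configuration-space connectivity.

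One small imprecision worth fixing: you assert that the tubes $T_i=\bigcup_{t}\overline{B}(\alpha_i(t),2\eps_0)$ are pairwise disjoint, but this does \emph{not} follow from $\alpha_i(t)\neq\alpha_j(t)$ for each $t$, since two paths can pass through the same region of $U$ at different times. What your construction actually needs, and what does follow from $\alpha_i(t)\neq\alpha_j(t)$ by compactness of $[0,1]$, is the time-by-time separation $\inf_{t\in[0,1],\,i\neq j}|\alpha_i(t)-\alpha_j(t)|\geq 3\eps_0$, which guarantees that at each fixed time the balls $\overline{B}(\alpha_i(t),2\eps_0)$ are pairwise disjoint and hence $V_t\equiv\dot\alpha_i(t)$ on $\overline{B}(\alpha_i(t),\eps_0)$. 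With this correction the argument is airtight: the trajectory $c(t)=x+(\alpha_i(t)-p_i)$ for $x\in B(p_i,\eps_0)$ stays inside $\overline{B}(\alpha_i(t),\eps_0)$, satisfies $\dot c=V_t(c)$, and by uniqueness equals $\phi_t(x)$; outside the compact set $\bigcup_i T_i$ the field vanishes for all $t$ so $\phi_1=\mathrm{id}$ there, in particular near $\partial U$.
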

\begin{proof}[Proof]
Firstly, let us consider the case $N=1$ and assume for simplicity that $p:=p_1$ and $q:=q_1$ can be connected by a~segment $\gamma$ contained in $U$. We choose $\eps >0$ so that the $2\eps$-neighborhood of $\gamma$ is contained in $U$ and
find a~smooth vector field $X$, satisfying
$$
X = \begin{cases}
    0 & \text{ on the set } \{x: \, \mathrm{dist}(x, \gamma) \geq 2\eps \},\\
    q - p & \text{ on the set } \{x: \, \mathrm{dist}(x, \gamma) \leq \eps\}.
\end{cases}
$$
If $\Phi_t:U\to U$ is the one-parameter family of diffeomorphisms generated by $X$, then $H_{p,q}^\eps := \Phi_1$ is a~diffeomorphism which acts as the translation by $q - p$ on the ball $B(p,\eps)$ and maps it onto the~ball $B(q,\eps)$. Moreover, $H_{p,q}^\eps$ equals identity outside the $2\eps$-neighborhood of $\gamma$.

In view of path-connectedness of Euclidean domains, any two points $p, q$ can be connected with a~piecewise linear curve $\gamma \subset U$ with vertices $a_0=p, a_1, \ldots, a_m=q$.
Choose $\eps>0$ so that the $2\eps$-neighborhood of $\gamma$ is contained in $U$  and apply the construction from previous paragraph to each pair of points $a_i, a_{i+1}$ for $i=0, \ldots, m-1$ to construct diffeomorphisms $H_{a_i, a_{i+1}}^\eps$, identity outside the $2\eps$-neighborhood of $\gamma$, such that
$$
H_{a_i, a_{i+1}}^\eps (x) = x + (a_{i+1} - a_i) \text{ for } x \in B(a_i, \eps).
$$
Then, $H^\eps_{p,q} := H_{a_{m-1}, q}^\eps \circ \ldots \circ H_{p, a_1}^\eps$ is the desired diffeomorphism when $N=1$.

For $N>1$, consider firstly the case when $\{p_i\}_{i=1}^N \cap \{q_i\}_{i=1}^N = \varnothing$. We can then find $N$ distinct piecewise linear curves $\gamma_i$, $i=1, \ldots, N$, connecting $p_i$ with $q_i$, and an $\eps > 0$ so that the $2\eps$-neighborhoods of $\gamma_i$ are pairwise disjoint and contained in $U$ and construct diffeomorphisms $H_{p_i, q_i}^\eps$ from the previous paragraph. Diffeomorphism $H = H_{p_N, q_N}^\eps \circ \ldots \circ H_{p_1, q_1}^\eps$ is the desired map.

If $\{p_i\}_{i=1}^N \cap \{q_i\}_{i=1}^N \neq \varnothing$, then we find a set of distinct points  $\{s_i\}_{i=1}^N\subset U$, such that $\{p_i\}_{i=1}^N\cap\{s_i\}_{i=1}^N=\varnothing$ and
$\{s_i\}_{i=1}^N\cap\{q_i\}_{i=1}^N=\varnothing$. From what we already proved, there is a diffeomorphism $H_1$ that translates neighborhoods of $p_i$'s onto neighborhoods of $s_i$'s and a diffeomorphism $H_2$ that translates neighborhoods of $s_i$'s onto neighborhoods of $q_i$'s. Then $H=H_2\circ H_1$ satisfies the claim of the lemma.
\end{proof}

The next result shows in particular that if $A_1, A_2 \in GL(n)^+$ and $\det A_1=\det A_2$, then it is possible to find a~diffeomorphism of a ball $B$ onto $A_2(B)$ which on a~large part of $B$ acts like a piecewise affine map with $A_1$ as its linear part.
\begin{proposition}
\label{T38}
Let $G \subset B$ be a~measurable subset of an~open ball $B \subset \bbbr^n$ centered at the origin, and let $r: G \to (0, \infty)$ be any function. Let $A_1, A_2 \in GL(n)^+$ satisfy
\begin{equation}
\label{eq46}
\det A_2 > \beta \det A_1 \text{ for some } \beta \in (0,1).
\end{equation}
Then, there is a~finite family of pairwise disjoint closed balls $\overbar{B}(p_j, r_j) \subset B$ such that
\begin{equation}
\label{eq48}
p_j \in G, \qquad r_j < r(p_j),  \qquad \big|G\cap\bigcup \nolimits_j \overbar{B}(p_j, r_j)\big| > \beta |G|,
\end{equation}
and a~diffeomorphism $F: B \to A_2(B)$ which agrees with $A_2$ in a~neighborhood of $\partial B$ and
\begin{equation}
\label{eq49}
F(x) = A_1 x + v_j \text{ for all } x \in \overbar{B}(p_j, r_j) \text{ and some } v_j \in \bbbr^n.
\end{equation}
\end{proposition}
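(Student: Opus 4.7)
The plan proceeds in three stages: (1) a Vitali-type selection of the balls $\overbar{B}(p_j,r_j)$, (2) a packing argument to determine the translations $v_j$, and (3) explicit gluing of diffeomorphisms to produce $F$.

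\textbf{Stage 1.} Fix $\beta''$ with $\beta<\beta''<\det A_2/\det A_1$, possible by \eqref{eq46}. Applying Vitali's covering theorem to the fine cover of density points of $G$ by closed balls $\overbar{B}(p,s)$ with $p$ a density point of $G$, $s<r(p)$, and $|G\cap \overbar{B}(p,s)|/|\overbar{B}(p,s)|$ as close to $1$ as we wish, I extract a finite disjoint subfamily $\{\overbar{B}(p_j,r_j)\}_{j=1}^N$, stopping as soon as the $G$-coverage first exceeds $\beta|G|$. The high-density condition combined with the stopping rule forces $\sum_j |\overbar{B}(p_j,r_j)|<\beta''|B|$, so that $\det A_1\sum_j|\overbar{B}(p_j,r_j)|<\det A_2|B|$; in particular, \eqref{eq48} holds.

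\textbf{Stage 2.} I seek translations $v_j$ so that the ellipsoids $E_j:=A_1(\overbar{B}(0,r_j))+(A_1p_j+v_j)$ are pairwise disjoint inside $A_2(B)$, with a small collar near $\partial A_2(B)$ free. After the linear change $y=A_1z$ in the target, the $E_j$ become balls $\overbar{B}(q_j,r_j)$ and $A_2(B)$ becomes the ellipsoid $A_1^{-1}A_2(B)$ of volume $(\det A_2/\det A_1)|B|>\sum_j|\overbar{B}(p_j,r_j)|$. Since Vitali allows arbitrarily small radii, I may further assume each $r_j$ is dyadic; then a Storage-Lemma-style argument (Lemma \ref{T37}) on a fine dyadic decomposition of a compact subset of $A_1^{-1}A_2(B)$ yields the disjoint placements $\overbar{B}(q_j,r_j)$. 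Reverting $z=A_1^{-1}y$ defines the $v_j$ via $v_j=A_1(q_j-p_j)$.

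\textbf{Stage 3.} Write $F=A_1\circ\widetilde{F}$, where $\widetilde{F}:B\to A_1^{-1}A_2(B)$ equals $A_1^{-1}A_2$ near $\partial B$ and satisfies $\widetilde{F}(x)=x+(q_j-p_j)$ on each $\overbar{B}(p_j,r_j)$. Construct $\widetilde{F}=H_2\circ H_1$ as follows. Using the flow technique from the proof of Lemma \ref{T39}, along disjoint tubes whose widths exceed the corresponding $r_j$, build $H_1:B\to B$, identity near $\partial B$, which translates each $\overbar{B}(p_j,r_j)$ onto $\overbar{B}(q_j,r_j)$. Then, with $M:=A_1^{-1}A_2$, construct $H_2:B\to M(B)$ by applying Proposition \ref{T21} centered at each $q_j$ to transition from the identity map (on a neighborhood of each $\overbar{B}(q_j,r_j)$) to $M$ outside a slightly larger ball; the transition is possible since $I(B(0,\theta r))\Subset M(B(0,r))$ for $\theta<\sigma_{\min}(M)$. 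This $H_2$ equals $M$ near $\partial B$ and is the identity on each $\overbar{B}(q_j,r_j)$, so $F=A_1\circ H_2\circ H_1$ has the required properties.

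\textbf{Main obstacle.} Stage (2) is the hard step: packing balls of total volume close to $(\det A_2/\det A_1)|B|$ into the ellipsoid $A_1^{-1}A_2(B)$ when only the inequality $\det A_2>\beta\det A_1$ is available, giving a narrow margin. Any shape-dependent inflation factor (between a ball and its circumscribing dyadic cube, or between the ellipsoid $A_1^{-1}A_2(B)$ and its dyadic interior approximation) must be absorbed into the slack $\beta''-\beta$ and into the boundary collar. This requires delicate coordination between the Vitali selection (choosing dyadic radii at the right scale, possibly subdividing the selected balls further) and the dyadic scale used in Lemma \ref{T37}, so that the cube-to-ball ratio does not exhaust the volume budget.
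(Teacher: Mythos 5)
Your Stage 2 is where the proposal breaks down, and you yourself sense this in the ``Main obstacle'' paragraph---but the difficulty is not a matter of ``delicate coordination''; it is structural, and the order of operations you chose cannot be made to work. Lemma~\ref{T37} packs \emph{dyadic cubes}, not balls. To feed your balls $\overbar{B}(q_j,r_j)$ into it you would have to replace each by its circumscribing cube, which multiplies its volume by $2^n/\omega_n$ (where $\omega_n$ is the volume of the unit ball). This factor exceeds $1$ in every dimension and grows without bound as $n\to\infty$, so it cannot be absorbed into any slack $\beta''-\beta<1$; your budget $\sum_j|\overbar{B}(p_j,r_j)|<\beta''|B|$ is simply not enough once the cube inflation is paid. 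Subdividing the balls further does not help, because the ball-to-circumscribing-cube ratio is scale-invariant. The dyadic-radius assumption is also not something Vitali gives for free: what you control is the density of $G$ in the ball, not the radius being a power of two, and forcing dyadic radii while preserving $p_j\in G$, $r_j<r(p_j)$, disjointness, and the $\beta|G|$ coverage is yet another uncontrolled perturbation.

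The paper's proof resolves exactly this by reversing the order of your Stages 1 and 2: it first Vitali-covers $G$ by \emph{dyadic cubes} $Q_i$ (cubes $Q(q,2^{-k})$ centered at points of $G$, so the Storage Lemma applies directly with no shape inflation), packs those cubes into $U=A_2(B)$ via Lemma~\ref{T37} to obtain translations $w_i$, and only \emph{then} selects pairwise disjoint balls $\overbar{B}(p_{ik},r_{ik})$ inside each $\mathring{Q}_i$ by a second Vitali argument. Each such ball inherits the translation $v_j:=w_i$ of its containing cube, and the translated balls are automatically disjoint because the translated cubes $\widetilde{Q}_i=Q_i+w_i$ have pairwise disjoint interiors. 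No packing of balls is ever performed. This ``cubes first, balls second'' order is the key idea missing from your argument. Your Stage~3 also needs more machinery than sketched---Lemma~\ref{T39} only translates balls of a uniformly small radius $\eps$, so the paper interposes radial contractions and expansions ($H_3$, $H_4$) to reconcile the given radii $r_j$ with $\eps$, and obtaining $H_2=M$ near $\partial B$ while $H_2=\id$ on every ball $\overbar{B}(q_j,r_j)$ cannot be done by a single application of Proposition~\ref{T21} per ball (the resulting local maps do not glue directly to a single diffeomorphism of $B$ onto $M(B)$)---but these are repairable technicalities; the essential gap is Stage~2.
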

\begin{remark}
\label{R1}
If $B=B(p,R)$ is a ball not necessarily centered at the origin, and $q\in\bbbr^n$, but all other assumptions remain the same, then we can find balls $\overbar{B}(p_j,r_j)\subset B$ satisfying \eqref{eq48} and a diffeomorphism $F:B\to\bbbr^n$ such that $F(x)=A_2(x-p)+q$ in a neighborhood of $\partial B$ and $F$ satisfies \eqref{eq49} in each of the balls $\overbar{B}(p_j,r_j)$.

Indeed, such a diffeomorphism is obtained from Proposition~\ref{T38} by composing with the translations $x\mapsto x-p$ in the domain and $y\mapsto y+q$ in the target. Then in each of the balls $\overbar{B}(p_j,r_j)$ we have
$F(x)=A_1(x-p)+v_j+q=A_1x+w_j$, for some $w_j\in\bbbr^n$.
\end{remark}
\begin{proof}
Assume first that $A_1 = \id$. Let $U = A_2(B)$.

Before proceeding to details, let us describe the main idea of the proof.
We begin by finding a finite family of disjoint closed cubes $Q_i\subset B$ satisfying
$|G\cap\bigcup_i Q_i|>\beta |G|$. Working with cubes allows us to apply Lemma~\ref{T37} and to find translated cubes $Q_i+w_i\subset A_2(B)$ with pairwise disjoint interiors. Then, for each $i$ we find a finite family of pairwise disjoint balls $\overbar{B}(p_{ik},r_{ik})\subset\mathring{Q}_i$. Clearly, the balls $\overbar{B}(p_{ik},r_{ik})+w_i\subset A_2(B)$ are pairwise disjoint. After re-enumeration, we can write
$$
\{\overbar{B}(p_j,r_j)\}_j:=\{\overbar{B}(p_{ik},r_{ik})\}_{i,k},
\quad
v_j:=w_i \ \text{ if } \ p_j=p_{ik}.
$$
Choosing the balls carefully, we can guarantee \eqref{eq48}. Note that the balls $\overbar{B}(p_j,r_j)+v_j\subset A_2(B)$ are pairwise disjoint. Then we construct a diffeomorphism $F$ that equals $A_2$ near $\partial B$ and satisfies $F(x)=x+v_j$ for $x\in\overbar{B}(p_j,r_j)$, which is \eqref{eq49} in the case when $A_1=\id$.

\noindent
\underline{Step 1. Finding cubes.} Since $|U| > \beta |B|$ by \eqref{eq46}, we also have $|U| > \alpha |B|$ for some $\alpha \in (\beta, 1)$.
Let $\mathscr{F}_\ell$ be the family of all closed dyadic cubes (i.\,e., with vertices at points of $2^{-\ell} \mathbb{Z}^n$) of side-length $2^{-\ell}$ that are contained in $U$. Clearly, the family $\mathscr{F}_\ell$ is finite. We choose $\ell$ large enough to guarantee that
\begin{equation}
\label{eq63}
\sum_{Q \in \mathscr{F}_\ell} |Q| > \frac{\beta}{\alpha} |U|.
\end{equation}
Let $V\subset\bbbr^n$ be an open set such that
$$
G\subset V\subset B
\quad
\text{and}
\quad
|V\setminus G|<\frac{1}{2}\beta(\alpha^{-1}|U|-|G|)
$$
(note that $\alpha^{-1}|U|-|G|>0$).

For each $q\in G$ consider the family $\mathscr{G}_q$ of all closed cubes $Q = Q(q, 2^{-k})$, $k \in \mathbb{Z}$, centered at $q$, that satisfy
\begin{equation}
\label{eq64}
Q \subset V, \quad \mathsf{L}(Q) \leq 2^{-\ell}, \quad |Q| < \frac{1}{2}\beta (\alpha^{-1} |U| - |G|).
\end{equation}
The family
$
\widetilde{\mathscr{G}} := \bigcup \nolimits_{q \in G} \mathscr{G}_q
$
is a~Vitali covering of $G$ and Vitali's covering theorem yields a~finite sub-family $\mathscr{G}'=\{Q_i\}_{i=1}^{N'} \subset \widetilde{\mathscr{G}}$ of pairwise disjoint cubes such that
\begin{equation}
\label{eq65}
\sum_{i=1}^{N'}|Q_i|\geq \sum_{i=1}^{N'}|Q_i\cap G| > \beta |G|.
\end{equation}
By removing some cubes from the family $\mathscr{G}'$, we can obtain a family $\mathscr{G}=\{Q_i\}_{i=1}^N$ (so $N\leq N'$) such that
\begin{equation}
\label{eq68}
\frac{\beta}{\alpha}|U|\geq \sum_{i=1}^{N}|Q_i|\geq \sum_{i=1}^{N}|Q_i\cap G| > \beta |G|.
\end{equation}
Indeed, to show this, it suffices to prove for any $m$ the implication
\begin{equation}
\label{eq69}
\sum_{i=1}^{m+1}|Q_i|>\frac{\beta}{\alpha}|U|
\quad
\Longrightarrow
\quad
\sum_{i=1}^m |Q_i\cap G|>\beta |G|.
\end{equation}
Note that
$$
\sum_{i=1}^m|Q_i|-\sum_{i=1}^m |Q_i\cap G|
=\sum_{i=1}^m |Q_i\setminus G|\leq |V\setminus G|
<\frac{1}{2}\beta(\alpha^{-1}|U|-|G|).
$$
On the other hand, the hypothesis in \eqref{eq69} and the upper estimate for $|Q|$ in \eqref{eq64} yield
$$
\sum_{i=1}^m |Q_i|>\frac{\beta}{\alpha}|U|-|Q_{m+1}|> \frac{\beta}{\alpha}|U| - \frac{1}{2}\beta(\alpha^{-1}|U| - |G|) =
\beta |G|+\frac{1}{2}\beta(\alpha^{-1}|U|-|G|),
$$
and hence
$$
\sum_{i=1}^m|Q_i\cap G|=
\sum_{i=1}^m |Q_i|-\Big(\sum_{i=1}^m |Q_i|-\sum_{i=1}^m|Q_i\cap G|\Big)>\beta |G|.
$$
This proves the implication \eqref{eq69} and hence proves the existence of $\mathscr{G}=\{Q_i\}_{i=1}^N$ satisfying \eqref{eq68}.
Now, \eqref{eq63} and \eqref{eq68} yield
$$
\sum_{Q\in\mathscr{F}_\ell} |Q|\geq \sum_{i=1}^N |Q_i|.
$$
Since $\mathsf{L}(Q_i)=2^{-k}$, $k\geq\ell$, it follows from Lemma~\ref{T37}
that there are vectors $w_i \in \bbbr^n$ such that the cubes $\widetilde{Q}_i := Q_i + w_i$ have pairwise disjoint interiors and
$$
\bigcup_{i=1}^N \widetilde{Q}_i \subset \bigcup \nolimits_{Q \in \mathscr{F}_\ell} Q \subset U.
$$
The cubes $Q_i$ are pairwise disjoint, but the cubes $\widetilde{Q}_i$ need not be.

To summarize, we constructed a family of pairwise disjoint closed cubes $\{Q_i\}_{i=1}^N$, $Q_i\subset B$, such that
\begin{equation}
\label{eq75}
\left|G\cap\bigcup\nolimits_{i} \mathring{Q}_i\right|=\left|G\cap\bigcup\nolimits_{i}  {Q}_i\right|>\beta|G|,
\end{equation}
and we constructed vectors $\{w_i\}_{i=1}^N$ such that the cubes $\widetilde{Q}_i=Q_i+w_i\subset U=A_2(B)$ have pairwise disjoint interiors.

\noindent
\underline{Step 2. Finding balls.} We will now find a finite family of closed, pairwise disjoint balls $\overbar{B}(p_j, r_j)$, satisfying \eqref{eq48}. For each $i=1,2,\ldots,N$, let
$$
\mathscr{B}_i = \left\{ \overbar{B}(p,r):\, p \in G\cap \mathring{Q}_i,\
\overbar{B}(p,r)\subset\mathring{Q}_i,\ r<r(p)\right\}.
$$
Vitali's covering theorem and \eqref{eq75} yield a~finite sub-family of disjoint closed balls $\overbar{B}(p_{ik}, r_{ik})$, $k=1,\ldots,N_i$, so that
$$
p_{ik}\in G,
\qquad
r_{ik}<r(p_{ik}),
\qquad
\overbar{B}(p_{ik}, r_{ik}) \subset \mathring{Q}_i,
$$
and
$$
\Big| G\cap\bigcup\nolimits_{ik} \overbar{B}(p_{ik},r_{ik})\Big| > \beta |G|.
$$
For each $i$, the balls $\overbar{B}(p_{ik}, r_{ik}) + w_i$ are pairwise disjoint and contained in the interior of $\widetilde{Q}_i$. Since the interiors of the cubes $\widetilde{Q}_i$ are pairwise disjoint, the balls in the family $\{\overbar{B}(p_{ik}, r_{ik}) + w_i\}_{i,k}$ are pairwise disjoint as well. After re-enumerating, we get a family of balls $\overbar{B}_j := \overbar{B}(p_j, r_j)$, $j=1,2,\ldots,M$
that satisfy \eqref{eq48} and vectors $v_j\in\bbbr^n$ such that the balls
$\overbar{B}'_j := \overbar{B}(p_j, r_j) + v_j\subset U = A_2(B)$ are pairwise disjoint.

\noindent
\underline{Step 3. Finding the diffeomorphism $F$.} To complete the proof in the case $A_1=\id$, it remains to construct a diffeomorphism $F:B\to\bbbr^n$ which equals $A_2$ near $\partial B$ and
$F(x)=x+v_j$ for $x\in \overline{B}_j$ for $j=1,2,\ldots,M$.

Fix $R_1>0$ such that $B_{R_1}:=B(0,R_1)\Subset B\cap A_2(B)$. Let $B_{R_2}:=B(0,R_2)$ be a ball such that
$$
\bigcup_{j=1}^M\overbar{B}_j\subset B_{R_2}\Subset B.
$$
Let $H_1:\bbbr^n\to\bbbr^n$ be a radial diffeomorphism such that $H_1(x)=x$ near $\partial B$ and $H_1(x)=R_1R_2^{-1}x$ on $B_{R_2}$.
Proposition~\ref{T21} yields a diffeomorphism $H_2:\bbbr^n\to\bbbr^n$ such that $H_2=A_2$ near $\partial B$ and $H_2=\id$ on $\overbar{B}_{R_1}$. Then, $H_2\circ H_1:B\to A_2(B)$ equals $A_2$ near $\partial B$ and maps the balls $\overbar{B}_j$ by scaling (with factor $R_1R_2^{-1}$) onto balls in $B_{R_1}\Subset A_2(B)$.

Let $a_j:=H_2(H_1(p_j))$ be the centers of the balls $H_2(H_1(\overbar{B}_j))$ and let $b_j:=p_j+v_j$ be the centers of the balls $\overbar{B}_j'=\overbar{B}_i+v_j$. Both families $\{ a_j\}_{j=1}^M$ and
$\{b_j\}_{j=1}^M$ are contained in $U=A_2(B)$, and Lemma~\ref{T39} gives $\eps>0$ and a diffeomorphism $\Theta:U\to U$ that equals identity near $\partial U$ and satisfies
$$
\Theta(x) = x + b_j - a_j \text{ for } x \in B(a_j, \eps).
$$
Clearly, we can assume that
$$
\eps<\min_j R_1 R_2^{-1}r_j.
$$
Since the balls
\begin{equation}
\label{eq76}
H_2(H_1(\overbar{B}_j))=\overbar{B}(a_j,R_1R_2^{-1}r_j)\subset U
\end{equation}
are pairwise disjoint, there is $\delta>0$ such that the balls
\begin{equation}
\label{eq77}
\overbar{B}(a_j,R_1R_2^{-1}r_j+\delta)\subset B
\end{equation}
are pairwise disjoint.

For each $j=1,2,\ldots,M$, we find a diffeomorphism $H_1^j:\bbbr^n\to\bbbr^n$ that is similar to $H_1$. It is a radial diffeomorphism centered at $a_j$, it is identity outside the ball \eqref{eq77} and it maps the ball \eqref{eq76} onto $\overbar{B}(a_j,\eps)$ by scaling centered at $a_j$ with factor $\eps R_1^{-1}R_2 r_j^{-1}<1$. Clearly, the diffeomorphism
$$
H_3:=H_1^1\circ H_1^2\circ\ldots\circ H_1^M:U\to U
$$
is identity near $\partial U$ and maps each of the balls \eqref{eq76} onto $\overbar{B}(a_j,\eps)$ by scaling (centered at $a_j$). Now
$\Theta\circ H_3\circ H_2\circ H_1:B\to U$ equals $A_2$ near $\partial U$ and maps the balls $B_j=B_j(p_j,r_j)$ onto the balls $\overbar{B}(b_j,\eps)$ by affine maps whose linear part is scaling by factor $\eps r_j^{-1}$.

Finally, if $H_4:\bbbr^n\to\bbbr^n$ is a diffeomorphism similar to $H_3$ that equals identity near $\partial U$ and expands the balls $\overbar{B}(b_j,\eps)$ to $\overbar{B}(b_j,r_j)=\overbar{B}_j'$ by scaling, then the diffeomorphism
$$
F:H_4\circ\Theta\circ H_3\circ H_2\circ H_1:B\to A_2(B)
$$
is the required diffeomorphism satisfying \eqref{eq49} for $A_1 = \id$.

\noindent
\underline{Step 4. The general case.} Finally, suppose that $A_1$ and $A_2$ are arbitrary $GL(n)^+$ mappings satisfying \eqref{eq46}.
Then $\widetilde{A}_1:=\id$ and $\widetilde{A}_2:=A_1^{-1}\circ A_2$ satisfy $\det \widetilde{A}_2>\beta\det\widetilde{A}_1$ and the construction from Step~3 yields a family of balls $\overbar{B}_j$ satisfying \eqref{eq48} and a diffeomorphism $\widetilde{F}:B\to\widetilde{A}_2(B)$ such that
$$
\widetilde{F}(x) =x + \widetilde{v}_j \text{ for all } x \in \overbar{B}_j \text{ and some } \widetilde{v}_j \in \bbbr^n.
$$
Setting $F = A_1 \circ \widetilde{F}$ yields the desired diffeomorphism satisfying \eqref{eq49}.
\end{proof}

\subsection{The Dacorogna-Moser theorem}
The following lemma is a special case of a theorem of Dacorogna and Moser (\cite[Theorem~5]{DacorognaMoser}, see also \cite[Theorem 10.11]{CsatoDacorognaKneuss}), who generalized earlier results of Moser \cite{Moser} and Banyaga~\cite{Banyaga}.
\begin{lemma}
\label{T11}
Let  $\Omega\subset\bbbr^n$ be a bounded domain and let $f\in C^{\infty}(\Omega)$ be a positive function equal $1$ in a neighborhood of $\partial\Omega$ such that
$$
\int_\Omega f(x) \, dx=|\Omega|.
$$
Then there exists a $C^{\infty}$-diffeomorphism $\Psi$ of $\Omega$ onto itself that is identity on a neighborhood of $\partial\Omega$ and satisfies
$$
J_{\Psi}(x)=f(x)
\quad
\text{for all } x\in \Omega.
$$
\end{lemma}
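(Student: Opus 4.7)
The plan is Moser's continuity (flow) method. First I would interpolate linearly by setting $f_t := 1 + t(f-1)$, so that $f_0 \equiv 1$, $f_1 = f$, and $\dot f_t \equiv f - 1$ vanishes in a neighborhood of $\partial\Omega$; each $f_t$ is smooth, positive on $\overline{\Omega}$, equal to $1$ near $\partial\Omega$, and has integral $|\Omega|$. The goal then becomes to construct a smooth family of diffeomorphisms $\Phi_t : \Omega \to \Omega$ with $\Phi_0 = \id$, $\Phi_t = \id$ near $\partial\Omega$, and
$$
f_t(\Phi_t(x)) \, J_{\Phi_t}(x) \equiv 1 \qquad \text{for all } x \in \Omega,\ t \in [0,1].
$$
Then $\Psi := \Phi_1^{-1}$ will be a $C^\infty$-diffeomorphism of $\Omega$, identity near $\partial\Omega$, with $J_\Psi(y) = 1/J_{\Phi_1}(\Phi_1^{-1}(y)) = f(y)$, as required.

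The analytic heart of the argument, and the step I expect to be the main obstacle, is producing a vector field $u \in C_c^\infty(\Omega, \bbbr^n)$ with $\operatorname{div} u = f - 1$ on $\Omega$. Such a $u$ exists because $f - 1$ is smooth, compactly supported in $\Omega$ (since $f \equiv 1$ near $\partial\Omega$), and has mean zero by the hypothesis $\int_\Omega f = |\Omega|$; one obtains it by the explicit Bogovskii integral operator on a star-shaped bounded subdomain containing $\operatorname{supp}(f - 1)$ and extending by zero. Given such $u$, set $v_t(x) := -u(x)/f_t(x)$, which is smooth on $\overline{\Omega} \times [0,1]$ and vanishes near $\partial\Omega$ (as $f_t$ is bounded below by a positive constant), and let $\Phi_t$ be the flow of $v_t$ from the identity. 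Because $v_t$ vanishes near $\partial\Omega$, standard ODE theory yields that $\Phi_t$ is globally defined for $t \in [0,1]$, stays in $\Omega$, equals identity near $\partial\Omega$, and is a $C^\infty$-diffeomorphism of $\Omega$ onto itself for every $t$.

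Finally, I would verify the target identity. Differentiating $f_t(\Phi_t(x)) \, J_{\Phi_t}(x)$ in $t$ via the chain rule and Liouville's formula $\tfrac{d}{dt} J_{\Phi_t} = (\operatorname{div} v_t \circ \Phi_t) \cdot J_{\Phi_t}$ gives
$$
\frac{d}{dt}\bigl[f_t(\Phi_t(x)) \, J_{\Phi_t}(x)\bigr] = J_{\Phi_t}(x) \bigl[\dot f_t + \operatorname{div}(f_t v_t)\bigr](\Phi_t(x)).
$$
Since $f_t v_t = -u$ and $\operatorname{div} u = f - 1 = \dot f_t$, the bracket vanishes identically, so $f_t(\Phi_t) J_{\Phi_t}$ is constant in $t$. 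Its value at $t = 0$ equals $1$, hence it equals $1$ for all $t \in [0,1]$, and $\Psi = \Phi_1^{-1}$ is the required diffeomorphism.
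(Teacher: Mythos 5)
Your overall strategy is Moser's flow method, which is exactly the route the paper itself refers you to for the $C^\infty$ version of this lemma (the paper does not reprove it but cites Dacorogna--Moser and points to \cite[Appendix, Lemma 2.3]{DacorognaDirect} for the flow-method argument). The interpolation $f_t = 1 + t(f-1)$, the choice $v_t = -u/f_t$, the flow $\Phi_t$, the Liouville computation showing $\tfrac{d}{dt}\bigl[f_t(\Phi_t)J_{\Phi_t}\bigr] = J_{\Phi_t}\bigl[\dot f_t + \operatorname{div}(f_tv_t)\bigr]\circ\Phi_t \equiv 0$, and the conclusion $\Psi = \Phi_1^{-1}$ are all correct. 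The uniform positive lower bound for $f_t$ needed to make $v_t$ smooth up to the boundary is also fine, since $f\equiv 1$ near $\partial\Omega$ and is continuous and positive on the remaining compact subset of $\Omega$.

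There is, however, a genuine gap in the step you yourself single out as ``the main obstacle,'' namely the existence of $u\in C_c^\infty(\Omega,\bbbr^n)$ with $\operatorname{div}u = f-1$. You justify this by invoking the Bogovskii operator ``on a star-shaped bounded subdomain containing $\operatorname{supp}(f-1)$ and extending by zero.'' But the lemma assumes only that $\Omega$ is a bounded domain, with no regularity, and such a star-shaped open set $\Omega'$ with $\operatorname{supp}(f-1)\subset\Omega'\Subset\Omega$ need not exist: take $\Omega = \{1 < |x| < 2\}\subset\bbbr^2$ and $\operatorname{supp}(f-1) = \{5/4\le|x|\le 7/4\}$; any candidate star center $p$ lies in the annulus, and the segment from $p$ to the antipodal point of the support passes through the hole, so no star-shaped $\Omega'\Subset\Omega$ can contain the support. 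The conclusion you need is nevertheless true for every bounded \emph{connected} open $\Omega$, but it requires a different justification. One clean way is to note that the top compactly supported de Rham cohomology $H^n_c(\Omega)$ of a connected $n$-manifold is $\bbbr$ with the isomorphism given by integration, so $(f-1)\,dx^1\wedge\cdots\wedge dx^n$ is exact in $\Omega^n_c(\Omega)$ precisely because its integral vanishes, and the primitive $(n-1)$-form corresponds to the desired vector field. A more hands-on alternative: cover $\operatorname{supp}(f-1)$ by a finite chain of overlapping balls $B_1,\ldots,B_N\Subset\Omega$ (using connectedness of $\Omega$), split $f-1$ by a subordinate partition of unity, transfer the nonzero means of the pieces along the chain using bump functions supported in consecutive overlaps so that each corrected piece has mean zero and is supported in one ball, and then apply Bogovskii ball by ball. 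Either repair closes the gap, after which your argument is complete.
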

Although the proofs in \cite{CsatoDacorognaKneuss} and \cite{DacorognaMoser} are written only for $f$ and $\Psi\in C^{k}(\Omega)$ for some $k\in\bbbn$, they clearly work for $f$ and $\Psi\in C^{\infty}(\Omega)$; for a proof using Moser's \emph{flow method} with $f,~\Psi\in C^{\infty}$ see e.g. \cite[Appendix, Lemma 2.3]{DacorognaDirect} (the first edition of the book).

The next result shows that if $f$ is only measurable, then on a large set we can uniformly approximate $f$ by the Jacobian of a smooth diffeomorphism.
\begin{lemma}
\label{T12}
Let $\Omega\subset\bbbr^n$ be a bounded domain and let $f\in L^1(\Omega)$, $f>0$ a.e., be such that
$$
\int_\Omega f(x)\, dx =|\Omega|.
$$
Then for any $\eps>0$ there exists a compact set $K\subset\Omega$ with $|\Omega\setminus K|<\eps$, and
a $C^{\infty}$-diffeomorphism $\Psi$ of $\Omega$ onto itself that is identity on a neighborhood of $\partial\Omega$ and satisfies
\begin{equation}
\label{eq21}
|J_{\Psi}(x)-f(x)|<\eps f(x)
\qquad
\text{for all } x\in K.
\end{equation}
\end{lemma}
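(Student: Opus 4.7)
The plan is to reduce to Lemma~\ref{T11} by building a smooth positive function $\tilde g \in C^\infty(\Omega)$ that (i) equals $1$ in a neighborhood of $\partial\Omega$, (ii) satisfies $\int_\Omega \tilde g = |\Omega|$, and (iii) on a compact set $K\subset\Omega$ with $|\Omega\setminus K|<\eps$ obeys $|\tilde g(x)-f(x)|<\eps f(x)$. Feeding such a $\tilde g$ into Lemma~\ref{T11} yields a $C^\infty$-diffeomorphism $\Psi$ of $\Omega$ onto itself, identity near $\partial\Omega$, with $J_\Psi=\tilde g$, and then \eqref{eq21} holds on $K$.

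To build $\tilde g$ I would proceed in three stages. First, since $f\in L^1(\Omega)$ and $f>0$ a.e., choose $N\geq 1$ so large that $E_N:=\{x\in\Omega:\,1/N\leq f(x)\leq N\}$ satisfies $|\Omega\setminus E_N|<\eps/4$. The bounded form of Lusin's theorem then yields a compact $K\subset E_N$ with $|E_N\setminus K|<\eps/4$ and a continuous function $h:\bbbr^n\to[1/N,N]$ with $h=f$ on $K$. Convolving with a standard mollifier $\varphi_\eta$ and using uniform continuity of $h$, we choose $\eta>0$ so small that $h_\eta:=h*\varphi_\eta$ satisfies $|h_\eta-f|<(\eps/4)f$ on $K$. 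Second, fix nested open sets $K\Subset V_1\Subset V_2\Subset\Omega$ and a cutoff $\chi\in C_c^\infty(V_2)$ with $\chi\equiv 1$ on $V_1$, and put $g:=\chi\,h_\eta+(1-\chi)$. Then $g\in C^\infty(\Omega)$, $g$ is bounded between two positive constants, $g=1$ outside $V_2$, and $g=h_\eta$ on $K$.

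Third, I correct the integral. Fix once and for all a nonnegative $\rho\in C_c^\infty(\Omega)$ with $\supp\rho\subset\Omega\setminus\overbar{V_2}$ and $\int_\Omega\rho=1$; then $g\equiv 1$ on $\supp\rho$ and $\supp\rho\cap K=\varnothing$. Set $\tilde g:=g+c\rho$ with $c:=|\Omega|-\int_\Omega g$. Writing
$$
c=\int_K(f-g)+\int_{\Omega\setminus K}(f-g),
$$
absolute continuity of $\int f$, the bound $\|g\|_\infty\leq N$, smallness of $|\Omega\setminus K|$, and $|h_\eta-f|<(\eps/4)f$ on $K$ together make $|c|$ arbitrarily small. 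Then $\tilde g\in C^\infty(\Omega)$, $\tilde g=1$ near $\partial\Omega$, $\int_\Omega\tilde g=|\Omega|$, and $\tilde g=g=h_\eta$ on $K$, so (iii) holds; moreover $\tilde g>0$ provided $|c|<\|\rho\|_\infty^{-1}$. Lemma~\ref{T11} applied to $\tilde g$ then produces the required $\Psi$. The main difficulty lies in the seemingly circular dependence between the multiplicative error on $K$, the measure $|\Omega\setminus K|$, and the integral discrepancy $c$; this is resolved by fixing the correcting bump $\rho$ \emph{first}, so that the admissible size of $|c|$ becomes an absolute constant $\|\rho\|_\infty^{-1}$, after which $N$, $K$, $\eta$, and the cutoffs can be tightened independently to control every contribution to $c$.
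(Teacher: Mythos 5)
Your overall plan matches the paper's: Lusin's theorem to get continuity on a large compact set, mollification, a smooth cutoff to make the function equal $1$ near $\partial\Omega$, an integral correction, and then Lemma~\ref{T11}. The gap is in the integral-correction step: you do not actually control the positivity of $\tilde g=g+c\rho$. Two issues. First, the proposed resolution of the circularity (``fix $\rho$ first'') is itself circular: $\rho$ must be supported in $\Omega\setminus\overbar{V_2}$, which depends on $V_2$, which depends on $K$, which depends on $N$, so $\rho$ is the \emph{last} object to be chosen, not the first. (And since $\supp\rho$ must be disjoint from $K$, one necessarily has $|\supp\rho|\le|\Omega\setminus K|<\eps$, hence $\|\rho\|_\infty>1/\eps$; the ``absolute constant'' $\|\rho\|_\infty^{-1}$ in fact shrinks with $\eps$ and with all the other parameters.) Second, and more seriously, the needed inequality $|c|<\|\rho\|_\infty^{-1}$ can genuinely fail. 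Since $\int\rho=1$ and $\rho$ is a smooth bump compactly supported in $\Omega\setminus\overbar{V_2}$, one always has $\|\rho\|_\infty^{-1}<|\Omega\setminus\overbar{V_2}|\le|\Omega\setminus V_2|$. On the other hand, since $g\equiv 1$ on $\Omega\setminus V_2$, the quantity
$$
c=\int_\Omega(f-g)=\int_K(f-h_\eta)+\int_{V_2\setminus K}(f-g)+\Big(\int_{\Omega\setminus V_2}f-|\Omega\setminus V_2|\Big)
$$
has its last bracket forced to be roughly $-|\Omega\setminus V_2|$ whenever $\int_{\Omega\setminus K}f\ll|\Omega\setminus K|$ (e.g.\ take $\Omega$ a ball and $f$ continuous, vanishing at $\partial\Omega$, normalized to have mean $1$; then $\Omega\setminus K$ is a thin collar on which $f$ is tiny). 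After you tighten $\eta$ and $|V_2\setminus K|$, the remaining terms are negligible, and $c\approx-|\Omega\setminus V_2|<0$ with $|c|>\|\rho\|_\infty^{-1}$, so $\tilde g=1+c\rho$ becomes negative on part of $\supp\rho$.

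The paper avoids this by making the correction one-sided. It writes the target as $f_\eps=f_2+f_3+f_4$ with $f_2\ge 0$ a mollified truncated extension of $f|_K$ supported in $\Omega'$ and bounded by $M=\sup_Kf$, and $f_3>0$ equal to $1$ near $\partial\Omega$ but with $\int_\Omega f_3$ small; the parameters are arranged so that $\int_\Omega(f_2+f_3)<|\Omega|$, and therefore the correcting term $f_4$ used to restore $\int_\Omega f_\eps=|\Omega|$ can be taken \emph{nonnegative}. Positivity of $f_\eps$ is then automatic (from $f_3>0$), and no delicate comparison between $|c|$ and $\|\rho\|_\infty$ is needed. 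If you want to keep the additive-bump structure, you would have to replace the piece $(1-\chi)\cdot 1$ of $g$ by $(1-\chi)\tau$ with $\tau>0$ small away from $\partial\Omega$ (so that $\int g<|\Omega|$ is guaranteed and $c>0$), which essentially reproduces the paper's argument.
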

\begin{proof}
By Lusin's theorem, we can find a compact set $K\subset\Omega$, such that $|\Omega\setminus K|<\eps$ and $f$ is continuous and strictly positive in $K$. Let
$$
m:=\inf_K f
\qquad
\text{and}
\qquad
M:=\sup_K f\,.
$$
Clearly,
$$
\int_K f(x)\, dx=|\Omega|-2M\delta
\quad
\text{for some } \delta>0.
$$
Let $\Omega'$ and $\Omega''$ be open sets such that
$$
K\subset\Omega'\Subset\Omega''\Subset\Omega,
\quad
\text{and}
\quad
|\Omega'\setminus K|<\delta,
$$
We can extend $f$ from $K$ to a continuous function $0\leq f_1\leq M$ that is compactly supported in $\Omega'$, so
$$
\int_\Omega f_1(x)\, dx = \int_{\Omega'\setminus K} f_1(x)\, dx +
\int_{K} f(x)\, dx<
|\Omega'\setminus K|\cdot M+(|\Omega|-2M\delta)<|\Omega|-M\delta.
$$
Using a standard approximation of $f_1$ by convolution, we find $f_2\in C_c^\infty(\Omega')$, $f_2\geq 0$, such that
\begin{equation}
\label{eq20}
|f(x)-f_2(x)|=|f_1(x)-f_2(x)|<\frac{\eps m}{2}
\quad
\text{for all } x\in K.
\end{equation}
Since the approximation by convolution preserves the $L^1$ norm of a non-negative function (by Fubini's theorem), we have
\begin{equation}
\label{eq17}
\int_\Omega f_2(x)\, dx=\int_\Omega f_1(x)\, dx<|\Omega|-M\delta.
\end{equation}
It is easy to see that there is
${f}_3\in C^\infty(\Omega)$ that it is strictly positive in $\Omega$, ${f}_3=1$ in a neighborhood of $\partial\Omega$, ${f}_3<\eps m/2$ in $K$, and
\begin{equation}
\label{eq18}
\int_\Omega {f}_3(x)\, dx<M\delta.
\end{equation}
Integrals in \eqref{eq17} and \eqref{eq18} add to a number less than $|\Omega|$ and we can find a~function $f_4\in C_c^\infty(\Omega''\setminus \overbar{\Omega}')$, $f_4\geq 0$, such that
$$
\int_\Omega f_2(x)+f_3(x)+f_4(x)\, dx = |\Omega|.
$$
Observe that the function $f_\eps:=f_2+f_3+f_4\in C^\infty(\Omega)$ equals $f_3=1$ in a neighborhood of $\partial\Omega$ and equals $f_2+f_3$ in $K$, so
$$
|f(x)-f_\eps(x)|\leq |f(x)-f_2(x)|+f_3(x)<\eps m\leq \eps f(x)
\quad
\text{for all } x\in K.
$$
These conditions and Lemma~\ref{T11} imply the existence of a~smooth diffeomorphism $\Psi:\Omega\to\Omega$ that is identity near the boundary and satisfies $J_\Psi=f_\eps$, so \eqref{eq21} is satisfied.
\end{proof}

\begin{corollary}
\label{T29}
Let $\Omega, \Omega' \subset \bbbr^n$ be bounded domains and let $F: \Omega \to F(\Omega)=\Omega'$ be an orientation preserving diffeomorphism between $\Omega$ and $\Omega'$. Suppose that $f \in L^{1}(\Omega)$, $f > 0$ a.e. and
$$
    \int_\Omega f(x) \, dx = |\Omega'|.
$$
Then for any $\eps>0$ there exists a compact set $K\subset\Omega$ with $|\Omega\setminus K|<\eps$, and
a $C^{\infty}$-diffeomorphism $F'$ of $\Omega$ onto $\Omega'$, that equals $F$ on a neighborhood of $\partial\Omega$ and satisfies
\begin{equation}
\label{eq36}
|J_{F'}(x)-f(x)|<\eps f(x)
\qquad
\text{for all } x\in K.
\end{equation}
\end{corollary}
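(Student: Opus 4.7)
The natural idea is to reduce the corollary to Lemma~\ref{T12} on $\Omega'$ by seeking $F'$ in the form $F':=\Psi\circ F$, where $\Psi:\Omega'\to\Omega'$ is a smooth self-diffeomorphism equal to the identity near $\partial\Omega'$. The chain rule $J_{F'}(x)=J_\Psi(F(x))J_F(x)$ shows that enforcing $J_{F'}\approx f$ on $\Omega$ is equivalent to enforcing $J_\Psi\approx h$ on $\Omega'$, where
$$
h(y):=\frac{f(F^{-1}(y))}{J_F(F^{-1}(y))},\qquad y\in\Omega'.
$$
A direct change of variables will show $h>0$ a.e.\ on $\Omega'$, $h\in L^1(\Omega')$, and $\int_{\Omega'}h=\int_\Omega f=|\Omega'|$, so $h$ will satisfy the hypotheses of Lemma~\ref{T12} on $\Omega'$.

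\textbf{Parameter selection and application of Lemma~\ref{T12}.} The step requiring the most care will be the passage from a small-complement compact set in $\Omega'$ (produced by Lemma~\ref{T12}) to a small-complement compact set in $\Omega$: since $F^{-1}$ need not be Lipschitz up to $\partial\Omega'$, a bound on $|\Omega'\setminus K'|$ alone will not yield one on $|\Omega\setminus F^{-1}(K')|$. The remedy is absolute continuity: because $J_{F^{-1}}\in L^1(\Omega')$ (its integral over $\Omega'$ equals $|\Omega|$), there exists $\delta>0$ with $\int_E J_{F^{-1}}<\eps$ whenever $|E|<\delta$. I will then apply Lemma~\ref{T12} to $(\Omega',h)$ with parameter $\eps':=\min(\eps,\delta)$, obtaining a compact $K'\subset\Omega'$ with $|\Omega'\setminus K'|<\eps'$ and a $C^\infty$-diffeomorphism $\Psi:\Omega'\to\Omega'$, identity near $\partial\Omega'$, satisfying $|J_\Psi(y)-h(y)|<\eps' h(y)$ on $K'$.

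\textbf{Verification.} Setting $F':=\Psi\circ F$ and $K:=F^{-1}(K')$, I will verify the three requirements. The composition $F'$ is a diffeomorphism of $\Omega$ onto $\Psi(\Omega')=\Omega'$ of the same regularity as $F$ (so $C^\infty$ provided $F$ is). Since $F$ is a homeomorphism, $K$ is compact, and because the locus $\{y\in\Omega':\Psi(y)\ne y\}$ has compact closure in $\Omega'$, its $F^{-1}$-image has compact closure in $\Omega$; hence $F'=F$ on an $\Omega$-neighborhood of $\partial\Omega$. The change-of-variables formula
$$
|\Omega\setminus K|=\int_{\Omega'\setminus K'}J_{F^{-1}}(y)\,dy
$$
combined with the choice of $\delta$ gives $|\Omega\setminus K|<\eps$. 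Finally, for $x\in K$ and $y=F(x)\in K'$, the identity $h(y)J_F(x)=f(x)$ and $J_{F'}(x)=J_\Psi(y)J_F(x)$ turn the pointwise estimate on $K'$ into $|J_{F'}(x)-f(x)|<\eps' f(x)\le\eps f(x)$, which is exactly~\eqref{eq36}. The only delicate point in the whole argument is the measure estimate on $K$; it is handled by the preliminary absolute-continuity trick that forces Lemma~\ref{T12} to be invoked with the reduced parameter $\eps'$ rather than $\eps$ itself.
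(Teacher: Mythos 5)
Your proposal is correct and follows essentially the same route as the paper: both define the pushed-forward density $h=g=f\circ F^{-1}/J_F\circ F^{-1}$, apply Lemma~\ref{T12} on $\Omega'$, and set $F'=\Psi\circ F$ with $K=F^{-1}(K')$. Your use of absolute continuity of $J_{F^{-1}}\in L^1(\Omega')$ is precisely the justification the paper compresses into ``by taking $\eps'\leq\eps$ sufficiently small, we can guarantee that $|\Omega\setminus K|<\eps$.''
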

\begin{remark}
The condition that $F$ is orientation preserving is necessary. Indeed, in view of \eqref{eq36}, $J_{F'}(x) > 0$ for $x \in K$, so $J_{F'}>0$ on $\Omega$, and hence $J_F>0$ on $\Omega$, because $F=F'$ on an open set.
\end{remark}
\begin{proof}
Let
$$
g(y)=\frac{f(F^{-1}(y))}{J_F(F^{-1}{(y))}},
\qquad
\text{so}
\qquad
g(F(x))J_F(x)=f(x).
$$
It follows from the change of variables formula that $g\in L^1(\Omega')$, $g>0$ a.e, and
$$
\int_{\Omega'} g(y)\, dy = \int_{\Omega}f(x)\, dx =|\Omega'|.
$$
Therefore, for any $\eps'>0$, Lemma~\ref{T12} yields a compact set $K'\subset\Omega'$, $|\Omega'\setminus K'|<\eps'$, and a diffeomorphism $G$ of $\Omega'$ onto itself that is identity in a neighborhood of $\partial\Omega'$ and satisfies
\begin{equation}
\label{eq59}
|J_G(y)-g(y)|<\eps'g(y)
\quad
\text{for all } y\in K'.
\end{equation}
Let $K=F^{-1}(K')$.
By taking $\eps'\leq\eps$ sufficiently small, we can guarantee that $|\Omega\setminus K|<\eps$. Now the diffeomorphism $F'=G\circ F:\Omega\to\Omega'$ satisfies the claim of the corollary. Indeed, $F'=F$ near $\partial\Omega$ and \eqref{eq59} yields
$$
|J_{F'}(x)-f(x)|=
|J_G(F(x))J_F(x)-g(F(x))J_F(x)|<
\eps'g(F(x))J_F(x)=\eps'f(x)\leq \eps f(x),
$$
whenever $x\in K$. The proof is complete.
\end{proof}

\subsection{Uniform metric}
\label{S26}
Let us denote by
$$
d(\Phi,\Psi):=\sup_{x\in \mathscr{Q}}|\Phi(x)-\Psi(x)|+\sup_{x\in \mathscr{Q}}|\Phi^{-1}(x)-\Psi^{-1}(x)|
$$
the {\em uniform metric} in the space of homeomorphisms of the unit cube $\mathscr{Q}=[0,1]^n$ onto itself. It is known that the space of homeomorphisms is a complete metric space with respect to the metric $d$. More precisely, we have:
\begin{lemma}
\label{T6}
Let $\Phi_{k}:\mathscr{Q}\to \mathscr{Q}$, $k=1,2,\ldots$ be a Cauchy sequence of surjective homeomorphisms in the uniform metric $d$. Then $\Phi_{k}$ converges uniformly to a homeomorphism $\Phi:\mathscr{Q}\to \mathscr{Q}$, and
$\Phi_{k}^{-1}$ converges uniformly, and the limit is equal to $\Phi^{-1}$.
\end{lemma}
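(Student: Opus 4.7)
The plan is to exploit completeness of $C(\mathscr{Q},\mathscr{Q})$ with respect to the sup norm, extracting uniform limits for both $\Phi_k$ and $\Phi_k^{-1}$ separately, and then verifying that these two limits are inverse to each other.

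First, I would unpack the Cauchy condition. Since $d(\Phi_k,\Phi_j) = \sup_{\mathscr{Q}}|\Phi_k - \Phi_j| + \sup_{\mathscr{Q}}|\Phi_k^{-1} - \Phi_j^{-1}|$ is Cauchy, each of the two summands is Cauchy. Hence both $(\Phi_k)$ and $(\Phi_k^{-1})$ are Cauchy sequences in the Banach space $C(\mathscr{Q},\bbbr^n)$ equipped with the supremum norm. By completeness, they converge uniformly to continuous maps $\Phi:\mathscr{Q}\to\mathscr{Q}$ and $\Psi:\mathscr{Q}\to\mathscr{Q}$ respectively (the values remain in $\mathscr{Q}$ because $\mathscr{Q}$ is closed).

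The main step is to verify $\Psi=\Phi^{-1}$. For any $x\in\mathscr{Q}$ we have the identity $\Phi_k^{-1}(\Phi_k(x))=x$, and I would estimate
\[
|\Psi(\Phi(x)) - x| \leq |\Psi(\Phi(x)) - \Psi(\Phi_k(x))| + |\Psi(\Phi_k(x)) - \Phi_k^{-1}(\Phi_k(x))|.
\]
The second term is bounded by $\sup_{\mathscr{Q}}|\Psi - \Phi_k^{-1}|\to 0$, while the first tends to $0$ because $\Psi$ is continuous and $\Phi_k(x)\to \Phi(x)$. Therefore $\Psi\circ\Phi=\id_{\mathscr{Q}}$. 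The symmetric argument applied to $\Phi_k(\Phi_k^{-1}(y))=y$ gives $\Phi\circ\Psi=\id_{\mathscr{Q}}$. Consequently $\Phi$ is a bijection of $\mathscr{Q}$ with continuous two-sided inverse $\Psi$, i.e.\ a homeomorphism, and $\Phi^{-1}=\Psi$, which is precisely the content of the lemma.

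No single step is really hard; the only subtlety worth stressing is that one must apply continuity of the \emph{limit} map $\Psi$ (not of $\Phi_k^{-1}$, whose moduli of continuity are not uniformly controlled) when passing to the limit in the composition $\Phi_k^{-1}\circ\Phi_k$. This is what forces the two-step triangle inequality above and is the reason the Cauchy condition is imposed on both $\Phi_k$ and $\Phi_k^{-1}$ simultaneously rather than only on $\Phi_k$.
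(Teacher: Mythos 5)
Your proof is correct and follows essentially the same route as the paper: extract uniform limits $\Phi$ and $\Psi$ of $\Phi_k$ and $\Phi_k^{-1}$ from completeness, then pass to the limit in $\Phi_k^{-1}\circ\Phi_k=\id$ and $\Phi_k\circ\Phi_k^{-1}=\id$ to identify $\Psi=\Phi^{-1}$. The only difference is that you spell out the triangle-inequality argument (using continuity of the limit map) that the paper leaves implicit in the phrase ``pass with $k$ to the limit.''
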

\begin{proof}
Obviously $\Phi_{k}$ and $\Phi_{k}^{-1}$ are Cauchy sequences in the space of continuous mappings $C(\mathscr{Q},\mathscr{Q})$, thus they converge (uniformly)
to some $\Phi$ and $\Psi\in C(\mathscr{Q},\mathscr{Q})$, respectively.
To see that $\Psi=\Phi^{-1}$, fix a point $x\in \mathscr{Q}$ and pass with $k$ to the limit in the equality $\Phi_{k}(\Phi_{k}^{-1}(x))=x$ to prove that $\Phi(\Psi(x))=x$.
We show that $\Psi(\Phi(x))=x$ in an analogous way.
\end{proof}
\begin{lemma}
\label{T46}
Assume that $\Phi:\mathscr{Q}\to\mathscr{Q}$ is a $C^1$-diffeomorphism such that $\Phi=\id$ in a neighborhood of $\partial\mathscr{Q}$. Then $\Phi$ can be approximated in the uniform metric $d$ by a sequence of $C^\infty$-diffeomorphisms $\Phi_k\overset{d}{\to}\Phi$ such that $\Phi_k=\id$ in a neighborhood of $\partial\mathscr{Q}$.
\end{lemma}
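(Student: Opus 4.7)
The plan is to mollify $\Phi$ in the standard way and use the tools developed earlier in the section to upgrade the resulting smooth local diffeomorphism to a global one. Since $\Phi=\id$ on a neighborhood of $\partial\mathscr{Q}$, there is $\delta>0$ such that $\Phi(x)=x$ whenever $\dist(x,\partial\mathscr{Q})<2\delta$. Extend the map $x\mapsto \Phi(x)-x$ by zero to all of $\bbbr^n$; the extension is $C^1$ and compactly supported in $\mathring{\mathscr{Q}}$. Let $\psi_k$ be a standard smooth mollifier with $\supp \psi_k\subset B(0,1/k)$ and set
$$
\Phi_k(x):=x+\bigl((\Phi-\id)*\psi_k\bigr)(x),\qquad x\in\bbbr^n.
$$
Then $\Phi_k\in C^\infty(\bbbr^n,\bbbr^n)$, and for $k$ large enough (so that $1/k<\delta$) we have $\Phi_k=\id$ on a neighborhood of $\bbbr^n\setminus\mathring{\mathscr{Q}}$, in particular on a neighborhood of $\partial\mathscr{Q}$.

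Next I would establish the regularity and convergence properties. Since $\Phi-\id$ is $C^1$ with compact support, both $\Phi-\id$ and $D\Phi-\mathcal{I}$ are uniformly continuous, so by standard properties of mollification
$$
\Phi_k\rightrightarrows\Phi
\quad\text{and}\quad
D\Phi_k=\mathcal{I}+(D\Phi-\mathcal{I})*\psi_k \rightrightarrows D\Phi
\quad\text{on }\bbbr^n.
$$
As $\det D\Phi>0$ on the compact set $\mathscr{Q}$ is bounded away from zero and $D\Phi_k=\mathcal{I}$ outside $\mathscr{Q}$, Lemma~\ref{T16} and the uniform convergence of derivatives imply that $\det D\Phi_k>0$ on all of $\bbbr^n$ for $k$ sufficiently large. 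Hence each such $\Phi_k$ is a $C^\infty$ local diffeomorphism of $\bbbr^n$.

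The main obstacle is to promote $\Phi_k$ from a local to a global homeomorphism of $\mathscr{Q}$. Here I would use Lemma~\ref{T13}: the extended map $\Phi_k:\bbbr^n\to\bbbr^n$ coincides with the identity outside a compact set, so it is proper, and $\bbbr^n$ is path-connected, Hausdorff and simply connected. Thus $\Phi_k$ is a global homeomorphism of $\bbbr^n$ onto itself, and being smooth with non-vanishing Jacobian it is a $C^\infty$-diffeomorphism. Since $\Phi_k=\id$ outside $\mathring{\mathscr{Q}}$, we conclude $\Phi_k(\mathscr{Q})=\mathscr{Q}$ and $\Phi_k|_{\mathscr{Q}}:\mathscr{Q}\to\mathscr{Q}$ is a $C^\infty$-diffeomorphism equal to identity in a neighborhood of $\partial\mathscr{Q}$.

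It remains to verify convergence in the uniform metric $d$. The uniform convergence $\Phi_k\rightrightarrows\Phi$ on $\mathscr{Q}$ is already established. For the inverses, I would use the standard argument: given $\sigma>0$, uniform continuity of $\Phi^{-1}$ on $\mathscr{Q}$ yields $\eta>0$ so that $|y-y'|<\eta$ implies $|\Phi^{-1}(y)-\Phi^{-1}(y')|<\sigma$; for $k$ large enough that $\|\Phi_k-\Phi\|_\infty<\eta$, and for any $y\in\mathscr{Q}$, setting $x_k=\Phi_k^{-1}(y)$ gives $|\Phi(x_k)-y|=|\Phi(x_k)-\Phi_k(x_k)|<\eta$, hence $|\Phi_k^{-1}(y)-\Phi^{-1}(y)|=|x_k-\Phi^{-1}(\Phi(x_k))+\Phi^{-1}(\Phi(x_k))-\Phi^{-1}(y)|<\sigma$. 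Thus $\Phi_k^{-1}\rightrightarrows\Phi^{-1}$ and so $d(\Phi_k,\Phi)\to 0$, completing the proof.
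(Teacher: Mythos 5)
Your proof is correct and follows the same overall strategy as the paper: extend $\Phi$ by the identity, mollify, observe that $\det D\Phi_k>0$ for large $k$ by uniform convergence of derivatives and compactness, promote the resulting local diffeomorphism to a global one by a topological argument, and verify $d$-convergence (including the inverse) by uniform continuity of $\Phi^{-1}$. The one genuine variation is in the local-to-global step: the paper invokes Lemma~\ref{T14} on the cube, which gives that $\Phi_\eps$ is a homeomorphism onto its image, after which one still has to identify $\Phi_\eps(\mathscr{Q})=\mathscr{Q}$ (via $\Phi_\eps|_{\partial\mathscr{Q}}=\id$ and an argument like Lemma~\ref{lempg1}); you instead work with the extension on $\bbbr^n$, note it is proper because it is the identity off a compact set, and apply Lemma~\ref{T13} directly, which gives a global diffeomorphism of $\bbbr^n$ fixing $\bbbr^n\setminus\mathring{\mathscr{Q}}$ and hence $\Phi_k(\mathscr{Q})=\mathscr{Q}$ with no extra work. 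That is a slightly cleaner route, and both lemmas are available in the paper. One small nit: citing Lemma~\ref{T16} for the positivity of $\det D\Phi_k$ is not quite on target --- that lemma only yields invertibility of nearby matrices, not the sign of the determinant; what you actually use is the continuity of $\det$, compactness of $\mathscr{Q}$, and the fact that $\det D\Phi$ is bounded away from $0$ there. The argument is correct nonetheless.
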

\begin{proof}
Approximating $\Phi$ by convolution with a standard symmetric mollifier $\psi_\eps$ we obtain smooth maps $\Phi_\eps=\Phi*\psi_\eps$ that are identity near $\partial\mathscr{Q}$ and converge uniformly to $\Phi$ on $\mathscr{Q}$. Since $\det\Phi_\eps\to\det\Phi$ uniformly, we see that $\det\Phi_\eps>0$ in $\mathscr{Q}$, provided $\eps>0$ is sufficiently small. This implies that $\Phi_\eps$ is a local diffeomorphism and according to Lemma~\ref{T14} it is a global diffeomorphism of $\mathscr{Q}$ onto itself. It easily follows that $\Phi_\eps\to\Phi$ in the uniform metric $d$.
\end{proof}
\section{Proof of Theorem~\ref{T31}}
\label{trzy}

\begin{proof}[Proof of Theorem~\ref{T31}]
First, we prove the theorem under the assumption that
\begin{equation}
\label{eq83}
\int_\Omega\det T(x)\, dx = |F(\Omega)|.
\end{equation}
The general case will then easily follow from this one.

Let $0<\eps<|\Omega|$ be given and fix $\beta$ such that
$(1-\eps|\Omega|^{-1})^{1/8}< \beta <1$.

Corollary~\ref{T29} yields a~$C^\infty$-diffeomorphism $\Psi: \Omega \to F(\Omega)$ and a~compact set $K_1 \subset \Omega$ with $|\Omega \setminus K_1| < \tfrac{1}{2}(1-\beta)|\Omega|$ such that
$\Psi = F$ near $\partial \Omega$ and
\begin{equation}
\label{eq29}
|\det D\Psi(x) - \det T(x)| < (1 - \beta) \det T(x) \text{ for } x \in K_1.
\end{equation}

On the other hand, Theorem~\ref{T30} gives us a mapping $\mathcal{A} \in C^1_c(\Omega, \bbbr^n)$ and a compact set $K_2\subset\Omega$ with
$|\Omega\setminus K_2| <\tfrac{1}{2}(1-\beta)|\Omega|$ such that
\begin{equation}
\label{eq78}
D\mathcal{A}(x) = T(x)  \text{ for } x \in K_2.
\end{equation}

Let $G\subset K_1\cap K_2$ be the set of density points of $K_1\cap K_2$ that belong to $K_1\cap K_2$ and observe that
$$
|G| = |K_1 \cap K_2| >\beta |\Omega|.
$$
It follows from \eqref{eq29} and \eqref{eq78} that
\begin{equation}
\label{eq79}
D\mathcal{A}(x) = T(x)
\quad
\text{and}
\quad
\det D\Psi(x)>\beta \det D\mathcal{A}(x)>0 \text{ for all } x \in G.
\end{equation}
Let us interrupt the proof for a moment and explain its main idea. The idea is to find a finite family of balls
--- in the proof it will be the family $\{B(p_{ij},\beta^{2/n}r_{ij})\}_{i,j}$ --- such that we can replace $\Psi$ with $\mathcal{A}(x)+\tau_{ij}$ for some $\tau_{ij}\in\bbbr^n$ on each of the balls and the resulting map $\Phi$ will be a diffeomorphism. Then $\Phi=\Psi=F$ near $\partial\Omega$ and
\begin{equation}
\label{eq82}
D\Phi(x) = D\mathcal{A}(x)=T(x)
\text{ for } x\in G\cap\bigcup\nolimits_{ij} B(p_{ij},\beta^{2/n}r_{ij}).
\end{equation}
Moreover, the family of balls will be constructed in such a way that the measure of the complement of the set in \eqref{eq82} (reproduced in the proof as \eqref{eq81}) will be less than $\eps$. This will complete the proof. We will replace $\Psi$
with $\mathcal{A}(x)+\tau_{ij}$ by a sequence of diffeomorphic gluing. We will glue $\Psi$ with its affine approximation, then we will glue the affine approximation of $\Psi$ with the affine approximation of $\mathcal{A}(x)+\tau_{ij}$, which we will glue with $\mathcal{A}(x)+\tau_{ij}$. To this end we will use \eqref{eq24} and \eqref{eq55} in Lemma~\ref{T17}, Proposition~\ref{T21} and Proposition~\ref{T38}. Now, we shall return to the proof.

For any $x_o \in G$, there is $r_{x_o} > 0$ such that for all $r \leq r_{x_o}$, the following conditions hold
\begin{enumerate}[(a)]
\item $B(x_o, r) \Subset \Omega$;
\item $|B(x_o, r) \cap G| \geq \beta |B(x_o, r)|$;
\item $\mathcal{A}$ is a~diffeomorphism on $B(x_o, r)$;
\item $D\mathcal{A}(x)$ is close to $D\mathcal{A}(x_o)$ for $x \in B(x_o, r)$ in the sense that
\begin{equation}
\label{eq70}
\sup_{x \in B(x_o,r)} \|D\mathcal{A}(x) - D\mathcal{A}(x_o)\| < \left(\beta^{-1/(2n)} - 1 \right)
\Vert (D\mathcal{A}(x_o))^{-1} \Vert^{-1};
\end{equation}
\item there exists a diffeomorphism $\Psi'_{x_o, r}:\Omega\to F(\Omega)$ such that
\begin{equation}
\label{eq30}
\Psi'_{x_o, r}(x) =
\begin{cases}
\Psi(x_o) + D\Psi(x_o)(x - x_o) & \text{ for } x \in \overbar{B}(x_o, \beta^{1/n} r),\\
\Psi(x) & \text{ for } x \in \Omega \setminus B(x_o, r).
\end{cases}
\end{equation}
\end{enumerate}
Property (b) follows from the fact that $x_o$ is a~density point of $G$. Property (c) follows from \eqref{eq79}.
Property (d) is a consequence of continuity of $D\mathcal{A}$.
Finally, (e) follows from Lemma~\ref{T17}.

The family of balls
$$
\mathfrak{B} = \{\overbar{B}(x_o, r): \, x_o \in G,\, r \leq r_{x_o}\}
$$
is a Vitali covering of $G$,
so by Vitali's covering theorem, we can choose a~finite subfamily of pairwise disjoint balls $\{\overbar{B}_{i}\}_{i=1}^{N}$ so that the measure of these balls satisfies
\begin{equation}
\label{eq32}
\Big| \bigcup_{i=1}^{N} \overbar{B}_{i} \Big| \geq \beta |G| > \beta^2 |\Omega|.
\end{equation}
We replace $\Psi$ with $\Psi_{x_i,r_i}'$ in each of the balls $\overbar{B}_i=\overbar{B}(x_i,r_i)$. The resulting diffeomorphism satisfies
$$
\Psi'(x)=
\begin{cases}
\Psi(x) & \text{on\ \  } \Omega\setminus\bigcup_{i=1}^N B_i,\\
\Psi(x_i)+D\Psi(x_i)(x-x_i) & \text{on\ \  } B_i':=B(x_i,\beta^{1/n}r_i).
\end{cases}
$$
In particular, $\Psi'=F$ near $\partial\Omega$.

Let $r:G\to (0,\infty)$ be such that for any $p\in G$ and any $0<r<r(p)$,
\begin{equation}
\label{eq72}
|G \cap B(p, r)| > \beta |B(p,r)|
\end{equation}
and there is a diffeomorphism on $\bbbr^n$ that equals
\begin{equation}
\label{eq80}
\Upsilon(x)=
\begin{cases}
\mathcal{A}(x) & \text{on } \overbar{B}(p,\beta^{2/n}r),\\
\mathcal{A}(p)+D\mathcal{A}(p)(x-p) &\text{on } \bbbr^n\setminus \overbar{B}(p,\beta^{3/(2n)}r).
\end{cases}
\end{equation}
Existence of such a function $r$ is guaranteed by the fact that $G$ consists of density points and by
formula \eqref{eq55} in Lemma~\ref{T17} (note that $\mathcal{A}$ is a diffeomorphism in a neighborhood of $p$, by (c) above).

Since $x_i\in G$, \eqref{eq79} yields that $D\Psi(x_i), D\mathcal{A}(x_i)\in GL(n)^+$ and
$$
\det D\Psi(x_i)> \beta \det D\mathcal{A}(x_i).
$$
This is condition \eqref{eq46} from Proposition~\ref{T38} which we want to use to modify $\Psi'$ on each of the balls $B_i'$.
Proposition~\ref{T38} and Remark~\ref{R1} give a finite family of pairwise disjoint closed balls $\overbar{B}_{ij}=\overbar{B}(p_{ij},r_{ij})\subset B_i'$ such that
\begin{equation}
\label{eq33}
p_{ij}\in {G}\cap B_i',
\quad
r_{ij}<r(p_{ij}),
\quad
\Big|{G} \cap \bigcup_{j} \overbar{B}_{ij}\Big|>
\beta |{G} \cap B'_i| > \beta^2 |B'_i|
\end{equation}
(the last inequality follows from (b)),
and a diffeomorphism $F_i:B_i'\to\bbbr^n$ such that
$$
F_i(x)=\Psi(x_i)+D\Psi(x_i)(x-x_i)
\quad
\text{in a neighborhood of } \partial B_i',
$$
and
$$
F_i(x) = D\mathcal{A}(x_i)x + v_{ij} \text{ for } x \in \overbar{B}_{ij}
\text{ and some } v_{ij}\in\bbbr^n.
$$
Note that $F_i=\Psi'$ in a neighborhood of $\partial B_i'$. Hence, if we replace $\Psi'$ with $F_i$ on each of the balls $B_i'$, we will obtain a diffeomorphism $F':\Omega\to F(\Omega)$ which agrees with $F$ near $\partial\Omega$ and satisfies
\begin{equation}
\label{eq71}
F'(x)=D\mathcal{A}(x_i)x + v_{ij} \text{ for } x \in \overbar{B}_{ij}
\text{ and some } v_{ij}\in\bbbr^n.
\end{equation}

We will now replace the affine map \eqref{eq71} with a diffeomorphism $\mathcal{A}(x)+\tau_{ij}$, $\tau_{ij}\in\bbbr^n$, in two steps. In the first step, we will replace \eqref{eq71} with an affine map $D\mathcal{A}(p_{ij})x + w_{ij}$ (on a smaller ball) using Proposition~\ref{T21}. Then we will replace this new affine map (on an even smaller ball) with a diffeomorphism $\mathcal{A}(x)+\tau_{ij}$ using formula \eqref{eq55} from Lemma~\ref{T17}.

Let us fix $A_1 := D\mathcal{A}(p_{ij})$ and $A_2 := D\mathcal{A}(x_i)$ and observe that in view of~\eqref{eq70},
$$
\| A_1 - A_2\| < (\beta^{-1/(2n)} - 1)  \Vert A_2^{-1}\Vert^{-1},
$$
which by triangle inequality and sublinearity of operator norm implies that
\begin{equation*}
\| A_2^{-1}  A_1 \| = \| A_2^{-1} \left( A_1 - A_2 \right) + \mathcal{I} \|
 \leq \| A_2^{-1} \| \, \| A_1 - A_2\| + 1
< \beta^{-1/(2n)}.
\end{equation*}
Therefore,
$$
A_2^{-1}A_1(B(0,1))\Subset B(0,\beta^{-1/(2n)})
\quad
\text{so}
\quad
A_1(B(0, \beta^{1/n}r_{ij})) \Subset A_2(B(0, \beta^{1/(2n)}r_{ij})).
$$
Applying Proposition~\ref{T21} yields a~diffeomorphism $\Theta_{ij}: \bbbr^n \to \bbbr^n$, which equals $A_1$ on $B(0, \beta^{1/n}r_{ij})$ and $A_2$ on $\bbbr^n \setminus B(0,\beta^{1/(2n)}r_{ij})$.
Set
$$
H_{ij}(x) := \Theta_{ij}(x - p_{ij}) + A_2 p_{ij} + v_{ij},
$$
where $v_{ij}$ were defined in \eqref{eq71}. We have
$$
H_{ij}(x) =
\begin{cases}
    D\mathcal{A}(p_{ij})x \underbrace{-D\mathcal{A}(p_{ij}) p_{ij} + D\mathcal{A}(x_i) p_{ij} + v_{ij}}_{\omega_{ij}} & \text{ on }  B(p_{ij}, \beta^{1/n} r_{ij}), \\
    D\mathcal{A}(x_i)x + v_{ij} & \text{ on }  \bbbr^n\setminus B(p_{ij},\beta^{1/(2n)}r_{ij}).
\end{cases}
$$
Observe that according to the definition of $F'$ in \eqref{eq71},
$$
H_{ij} = F'
\text{ on }
B(p_{ij}, r_{ij})\setminus B(p_{ij},\beta^{1/(2n)}r_{ij}), \text{ for all $i$ and $j$}.
$$
Hence, if we replace $F'$ with $H_{ij}$ on each of the balls $\overbar{B}_{ij}$, we will obtain a diffeomorphism $F'':\Omega\to F(\Omega)$ which agrees with $F$ near $\partial\Omega$ and satisfies
$$
F''(x)=D\mathcal{A}(p_{ij})x+\omega_{ij}
\text{ for } x\in \overbar{B}(p_{ij},\beta^{1/n}r_{ij})
\text{ and some } \omega_{ij}\in\bbbr^n.
$$
Since $r_{ij}<r(p_{ij})$, \eqref{eq80} gives a~diffeomorphism $\Upsilon_{ij}$ such that
$$
\Upsilon_{ij} (x) =
\begin{cases}
    \mathcal{A}(x) & \text{ for } x \in \overbar{B}(p_{ij}, \beta^{2/n}r_{ij}),\\
    \mathcal{A}(p_{ij}) + D\mathcal{A}(p_{ij})(x - p_{ij}) & \text{ for } x \in \bbbr^n\setminus \overbar{B}(p_{ij}, \beta^{3/(2n)}r_{ij}).
\end{cases}
$$
The translated diffeomorphism
$$
\Upsilon_{ij}'(x):= \Upsilon_{ij}(x) + \omega_{ij} - \mathcal{A}(p_{ij}) + D\mathcal{A}(p_{ij})p_{ij}
=:\Upsilon_{ij}(x)+\tau_{ij}
$$
satisfies
$$
\Upsilon'_{ij}(x) =
\begin{cases}
\mathcal{A}(x)+\tau_{ij} & \text{ for } x\in \overbar{B}(p_{ij}, \beta^{2/n}r_{ij}),\\
D\mathcal{A}(p_{ij})x + \omega_{ij}  & \text{ for } x  \in \bbbr^n\setminus \overbar{B}(p_{ij}, \beta^{3/(2n)}r_{ij}).
\end{cases}
$$
Observe that
$$
\Upsilon_{ij}'=F''
\text{ on }
B(p_{ij},\beta^{1/n}r_{ij})\setminus B(p_{ij},\beta^{3/(2n)}r_{ij}).
$$
Hence if we replace $F''$ with $\Upsilon_{ij}'$ on $\overbar{B}(p_{ij},\beta^{1/n}r_{ij})$, we obtain a diffeomorphism $\Phi:\Omega\to F(\Omega)$ which agrees with $F$ near $\partial\Omega$
and satisfies
$$
\Phi(x)= \mathcal{A}(x)+\tau_{ij}
\text{ for } x\in \overbar{B}(p_{ij},\beta^{2/n}r_{ij})
\text{ and some } \tau_{ij}\in\bbbr^n.
$$

Clearly, $D\Phi=D\mathcal{A}$ on ${B}(p_{ij},\beta^{2/n}r_{ij})$. Since $D\mathcal{A}=T$  on $G$ by \eqref{eq79}, we have that
\begin{equation}
\label{eq81}
D\Phi(x)=T(x)
\text{ for } x\in G\cap\bigcup\nolimits_{i,j} B(p_{ij},\beta^{2/n}r_{ij})
\end{equation}
and it remains to show that the complement of this set has measure less that $\eps$.

Since $p_{ij}\in G$ and $r_{ij}<r(p_{ij})$, \eqref{eq72} implies that
$$
|G\cap B(p_{ij},\beta^{2/n}r_{ij})|>
\beta |B(p_{ij},\beta^{2/n}r_{ij})|=\beta^3|B_{ij}|.
$$
Therefore, the fact that the balls $B_{ij}$ are pairwise disjoint, \eqref{eq33} and \eqref{eq32} give
$$
\Big|G\cap\bigcup\nolimits_{ij} B(p_{ij},\beta^{2/n}r_{ij})\Big|
>\beta^3\sum_{i,j}|B_{ij}|>
\beta^5\sum_i|B_i'|
 =
\beta^6\sum_i|B_i|>
\beta^8|\Omega|.
$$
Clearly, we can find a compact set $K$ contained in the set \eqref{eq81} so that
$|K|>\beta^8|\Omega|$ and hence
$|\Omega\setminus K|<(1-\beta^8)|\Omega|<\eps$.
This completes the proof under the assumption \eqref{eq83}.

Now we will prove the result in the general case, when $\int_\Omega\det T\leq |F(\Omega)|$. It is easy to see that there is a measurable map $\widetilde{T}:\Omega\to GL(n)^+$ such that $\int_\Omega\det\widetilde{T}=|F(\Omega)|$ and $|\{\widetilde{T}\neq T\}|<\eps/2$.
We proved that in this situation there is a $C^1$-diffeomorphism $\Phi:\Omega\to F(\Omega)$ that agrees with $F$ near $\partial\Omega$ and satisfies
$|\{ D\Phi\neq\widetilde{T}\}|<\eps/2$. Clearly, $|\{ D\Phi\neq T\}|<\eps$ and we can find a compact set $K\subset\Omega$ such that $D\Phi=T$ on $K$ and $|\Omega\setminus K|<\eps$.
The proof is complete.
\end{proof}

\section{Preliminaries for Theorem~\ref{T36}}
\label{S3}

This section consists of a series of similar lemmata of increasing complexity, whose aim is to prove Proposition~\ref{T32}. In this proposition we construct a particular partition of a diffeomorphic closed cube, which is instrumental to carry out the iteration in the proof of Theorem~\ref{T36}.

Essentially, the results in this section are far-reaching modifications of the construction of the mapping in the proof of the homeomorphic measures theorem by Oxtoby and Ulam, see \cite[Theorem 2]{OxtobyUlam41} for the original paper or \cite[Section A2.2]{AlpernPrasad}, \cite[Chapter 7]{goffman} for a~concise treatment.

The original construction of Oxtoby and Ulam, \cite{OxtobyUlam41,AlpernPrasad,goffman}, does not lead to any differentiability properties of the homeomorphism, even if the measure is absolutely continuous with respect to the Lebesgue measure (which is the case considered by us). Therefore, in order to prove the a.e. approximate differentiability claimed in Theorem~\ref{T36}, we need essential modifications of the argument of Oxtoby and Ulam.

\begin{lemma}
\label{T8}
Let $P$ be a rectangular box
$$
P=P_1\cup\ldots\cup P_k=[0,a]\times [0,1]^{n-1},
\quad
k\geq 2,
$$
represented as the union of adjacent boxes
$$
P_i=[a_{i-1},a_i]\times[0,1]^{n-1},
\quad
0=a_0<a_1<\ldots<a_k=a.
$$
If functions $f,g\in L^1(P)$, $f,g>0$ a.e., are such that
\begin{equation}
\label{eq12}
\int_P f(x)\, dx=\int_P g(x)\, dx,
\end{equation}
then there is a diffeomorphism $\Phi:P\to P$ that is identity in a neighborhood of $\partial P$, and such that
$$
\int_{P_i} f(x)\, dx=\int_{\Phi(P_i)} g(x)\, dx
\quad
\text{for } i=1,2,\ldots,k.
$$
\end{lemma}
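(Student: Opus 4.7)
The plan is to construct $\Phi$ in the product form $\Phi(x_1,x')=(\varphi(x_1,x'),x')$ with $x'=(x_2,\ldots,x_n)$, where $\varphi$ blends the identity with a one-dimensional diffeomorphism $\varphi_0$ of $[0,a]$ via a cutoff $\eta$ in $x'$. Geometrically, the separating hyperplanes $\{x_1=a_i\}$ will be pushed to graphs $\{x_1=\psi_i(x')\}$ that coincide with $\{x_1=a_i\}$ near $\partial([0,1]^{n-1})$ (for the boundary condition) and with a constant height $\lambda_i$ on the bulk of $[0,1]^{n-1}$; the heights $\lambda_i$ are then selected so that the $g$-mass of the $i$-th region equals $m_i:=\int_{P_i}f\,dx$.

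Put $m_i:=\int_{P_i}f\,dx$, so that each $m_i>0$ and $\sum_i m_i=\int_P g\,dx$, and fix any $\delta\in(0,\min_i m_i)$. Absolute continuity of the measure $g\,dx$ provides a $C^\infty$ cutoff $\eta:[0,1]^{n-1}\to[0,1]$, equal to $0$ near $\partial([0,1]^{n-1})$ and equal to $1$ off a thin strip, with $\int_{\{\eta<1\}\times[0,a]}g<\delta$. For $\lambda\in[0,a]$, let $\psi_i^{\lambda}(x'):=(1-\eta(x'))a_i+\eta(x')\lambda$ and $F_i(\lambda):=\int_{\{x_1<\psi_i^{\lambda}(x')\}}g$. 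Since $g>0$ a.e.\ and $\eta>0$ on a set of positive measure, $F_i$ is continuous and strictly increasing in $\lambda$. The smallness of $\delta$ gives $F_i(0)\le\int_{\{\eta<1\}\times[0,a]}g<m_i\le\sum_{j\le i}m_j$ and $\int_P g-F_i(a)\le\int_{\{\eta<1\}\times[0,a]}g<m_{i+1}$, whence $F_i(a)>\int_P g-m_{i+1}\ge\sum_{j\le i}m_j$, so the intermediate value theorem yields a unique $\lambda_i\in(0,a)$ with $F_i(\lambda_i)=\sum_{j\le i}m_j$ for each $i=1,\ldots,k-1$. Set $\lambda_0:=0$ and $\lambda_k:=a$.

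The key monotonicity $\lambda_1<\cdots<\lambda_{k-1}$ follows from the same estimate: the set $\{\psi_i^{\lambda}<x_1<\psi_{i+1}^{\lambda}\}$ lies in $\{\eta<1\}\times[0,a]$ (because $\psi_{i+1}^{\lambda}-\psi_i^{\lambda}=(1-\eta)(a_{i+1}-a_i)$ vanishes where $\eta=1$), so $F_{i+1}(\lambda_i)-F_i(\lambda_i)<\delta<m_{i+1}$, hence $F_{i+1}(\lambda_i)<\sum_{j\le i+1}m_j=F_{i+1}(\lambda_{i+1})$, and strict monotonicity of $F_{i+1}$ forces $\lambda_i<\lambda_{i+1}$. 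I then pick a $C^\infty$-diffeomorphism $\varphi_0:[0,a]\to[0,a]$, identity near $\{0,a\}$, with $\varphi_0(a_i)=\lambda_i$ for every $i$ (construct it by integrating a smooth positive density on $[0,a]$ that equals $1$ near the endpoints and has prescribed integral $\lambda_i-\lambda_{i-1}$ over each $[a_{i-1},a_i]$). Define $\varphi(x_1,x'):=(1-\eta(x'))x_1+\eta(x')\varphi_0(x_1)$ and $\Phi(x_1,x'):=(\varphi(x_1,x'),x')$.

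Smoothness of $\Phi$ is immediate, and its Jacobian $\partial_{x_1}\varphi=1+\eta(x')(\varphi_0'(x_1)-1)>0$ is a convex combination of the positive numbers $1$ and $\varphi_0'(x_1)$; for each $x'$ the map $\varphi(\cdot,x')$ is thus a strictly increasing smooth map of $[0,a]$ onto itself, which makes $\Phi$ a global $C^\infty$-diffeomorphism of $P$. The boundary condition $\Phi=\id$ near $\partial P$ is visible from the formula: near the side faces $\{x_j\in\{0,1\}\}$, $j\ge2$, one has $\eta(x')=0$, so $\varphi(x_1,x')=x_1$, while near $\{x_1\in\{0,a\}\}$ one has $\varphi_0(x_1)=x_1$, so again $\varphi(x_1,x')=x_1$. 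Finally, since $\varphi(\cdot,x')$ maps $[a_{i-1},a_i]$ bijectively onto $[\psi_{i-1}(x'),\psi_i(x')]$ with $\psi_i(x'):=\psi_i^{\lambda_i}(x')$, one has $\Phi(P_i)=\{(y_1,x'):\psi_{i-1}(x')\le y_1\le\psi_i(x')\}$ and $\int_{\Phi(P_i)}g=F_i(\lambda_i)-F_{i-1}(\lambda_{i-1})=m_i$, as required. The main obstacle is the coupling between the choice of $\eta$ and the simultaneous existence and monotonicity of the $\lambda_i$; both issues are resolved in one stroke by requiring $\int_{\{\eta<1\}\times[0,a]}g<\min_i m_i$.
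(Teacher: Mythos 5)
Your proof is correct, and it takes a genuinely different route from the paper's. The paper argues by induction on $k$: it handles two adjacent boxes using the flow of a smooth vector field parallel to the $x_1$-axis (an intermediate-value argument on the family $\Phi_t$), then pulls back $g$ via the two-box diffeomorphism and recurses on the smaller box $\widetilde{P}=P_1\cup\cdots\cup P_k$. You instead give a direct, non-inductive construction: the diffeomorphism is an explicit ``shear'' of product form $\Phi(x_1,x')=(\varphi(x_1,x'),x')$, with the separating hyperplanes bent into graphs $\{x_1=\psi_i(x')\}$ that agree with the original faces near $\partial P$. The heights $\lambda_i$ are found by $k-1$ independent applications of the intermediate value theorem to the strictly increasing functions $F_i$, and the one device of choosing the cutoff $\eta$ so that $\int_{\{\eta<1\}\times[0,a]}g<\min_i m_i$ simultaneously guarantees both the bracketing $F_i(0)<\sum_{j\le i}m_j<F_i(a)$ and the ordering $\lambda_1<\cdots<\lambda_{k-1}$, which is the point where a naive version of this plan would break. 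Your argument avoids the flow method and the induction, produces a diffeomorphism with a closed formula and a triangular Jacobian matrix, and keeps the verification of the boundary condition and of $\Phi(P_i)=\{\psi_{i-1}\le y_1\le\psi_i\}$ transparent. The paper's inductive/flow approach, by contrast, is less explicit but fits naturally with the pullback machinery reused in Lemma~\ref{T9} and Corollary~\ref{T10}. One small point you might add for completeness: to view $\Phi$ as a diffeomorphism of the \emph{closed} box in the sense used in the paper, extend $\eta$ by zero to a neighborhood of $[0,1]^{n-1}$ and $\varphi_0$ by the identity to a neighborhood of $[0,a]$, which is possible precisely because $\eta$ vanishes near $\partial([0,1]^{n-1})$ and $\varphi_0$ is the identity near $\{0,a\}$.
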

\begin{remark}
The lemma has a simple geometric interpretation. The functions $f$ and $g$ are densities of absolutely continuous measures $\mu_f$ and $\mu_g$, and \eqref{eq12} means that $\mu_f(P)=\mu_g(P)$. Then, the lemma says that given the partition $P_1,\ldots,P_k$ of $P$, we can find a diffeomorphic  partition $\Phi(P_1),\ldots,\Phi(P_k)$ of $P$, such that the corresponding cells have equal measures $\mu_f(P_i)=\mu_g(\Phi(P_i))$.
\end{remark}
\begin{proof}[Proof of Lemma~\ref{T8}]
The proof will be by induction with respect to $k$. Thus, first assume that $k=2$. Since
$\int_{P_1}f+\int_{P_2}f=\int_{P_1}g+\int_{P_2}g$, without loss of generality, we may assume that $\int_{P_1}f\geq \int_{P_1}g$.
Let $K$ be a compact rectangular box in the interior of $P$, with edges parallel to the coordinate axes, such that
$$
\int_{P_1} f(x)\, dx< \int_Kg(x)\, dx.
$$
By taking $K$ sufficiently large, we may assume that the common face of $P_1$ and $P_2$ intersects $K$.
Let $X$ be a smooth vector field parallel to the $x_1$ coordinate axis, non-zero in a neighborhood of $K$, zero in a neighborhood of $\partial P$, and such that $X$ points in the positive direction of the $x_1$-axis whenever $X\neq 0$. If $\Phi_t$ is the one-parameter family of diffeomorphisms generated by $X$, then $\Phi_0(P_1)=P_1$, so
\begin{equation}
\label{eq13}
\int_{P_1}f(x)\, dx\geq \int_{\Phi_0(P_1)}g(x)\, dx.
\end{equation}

Since the common face of $P_1$ and $P_2$ intersects with $K$, $X$ is non-zero in a neighborhood of that intersection. Now, from a~standard compactness and open covering argument recalled below, we see that there is $t_o$ such that $K\subset\Phi_t(P_1)$ for all $t>t_o$, so
\begin{equation}
\label{eq14}
\int_{P_1} f(x)\, dx<\int_K g(x)\, dx\leq\int_{\Phi_t(P_1)} g(x)\, dx
\quad
\text{for all } t>t_o.
\end{equation}
Indeed, since $\Phi_t$ depends continuously on the parameter $t$, for any $x \in K$, there is a~$t_x>0$ and $\eps_x>0$ such that for $t > t_x$, $\Phi_{-t}(B(x, \eps_x)) \subset P_1$. Balls $B(x, \eps_x)$ for $x \in K$ form an open covering of $K$ and by compactness of $K$, we can choose a~finite subcovering of $K$, balls $B(x_i, \eps_{x_i})$ for $i = 1, \ldots, N$. Setting $t_o:= \max_i t_{x_i}$, we see that for $t > t_o$ for all $x \in K$, $\Phi_{-t}(x) \in P_1$, i.e., $x \in \Phi_t (P_1)$. This implies that $K \subset \Phi_t(P_1)$ for $t > t_o$.

Since the function $t\mapsto\int_{\Phi_t(P_1)}g$ is continuous, it follows from \eqref{eq13} and \eqref{eq14} that there is $t\in [0,t_o]$, such that $\Phi:=\Phi_t$ satisfies
$$
\int_{P_1} f(x)\, dx=\int_{\Phi(P_1)} g(x)\, dx
\quad
\text{and hence}
\quad
\int_{P_2} f(x)\, dx = \int_{\Phi(P_2)} g(x)\, dx.
$$
Observe that the diffeomorphism $\Phi$ equals identity in a neighborhood of $\partial P$, on the set where $X=0$.

We completed the proof in the case of $k=2$. Suppose now that the claim is true for all integers in $\{2,\ldots,k\}$ and we will prove it for $k+1$.

Let us write
$$
P=\underbrace{P_1\cup\ldots\cup P_k}_{\widetilde{P}}\cup P_{k+1}
=\widetilde{P}\cup P_{k+1}.
$$
Applying the claim for two boxes, we can find a diffeomorphism
$\Phi_1:P\to P$, that is identity in a neighborhood of $\partial P$ and such that
\begin{equation}
\label{eq15}
\int_{\widetilde{P}} f(x)\, dx = \int_{\Phi_1(\widetilde{P})} g(x)\, dx
\quad
\text{and}
\quad
\int_{P_{k+1}} f(x)\, dx = \int_{\Phi_1(P_{k+1})} g(x)\, dx.
\end{equation}
Note that the second equality in \eqref{eq15} is as desired and our diffeomorphism $\Phi$ will be equal $\Phi_1$ in $P_{k+1}$. However, we have to modify it in $\widetilde{P}$. The diffeomorphism $\Phi_1$ is orientation preserving and hence its Jacobian $J_{\Phi_1} = \det D\Phi_1$ is positive in $\widetilde{P}$. Let
$$
\widetilde{g}(x)=(g\circ\Phi_1)(x)J_{\Phi_1}(x)
\quad
\text{for } x\in\widetilde{P}.
$$
The change of variables formula yields
$$
\int_{\widetilde{P}} \widetilde{g}(x)\, dx =
\int_{\widetilde{P}}(g\circ\Phi_1)(x)J_{\Phi_1}(x)\, dx =
\int_{\Phi_1(\widetilde{P})} g(x)\, dx = \int_{\widetilde{P}}f(x)\, dx,
$$
and hence the pair of functions $f$ and $\widetilde{g}$ satisfies the  assumption \eqref{eq12} on $\widetilde{P}=P_1\cup\ldots\cup P_k$. Thus, the induction hypothesis yields a diffeomorphism $\Phi_2:\widetilde{P}\to\widetilde{P}$ that is identity near $\partial\widetilde{P}$ and such that for $i=1,2,\ldots,k$, we have
$$
\int_{P_i} f(x)\, dx = \int_{\Phi_2(P_i)}\widetilde{g}(x)\, dx=
\int_{\Phi_2(P_i)}(g\circ\Phi_1)(x)J_{\Phi_1}(x)\, dx=
\int_{(\Phi_1\circ\Phi_2)(P_i)}g(x)\, dx.
$$
Therefore, the diffeomorphism $\Phi:P\to P$ defined by
$$
\begin{cases}
\Phi_1\circ\Phi_2(x) & \text{if } x\in \widetilde{P},\\
\Phi_1(x)            & \text{if } x\in P_{k+1},
\end{cases}
$$
satisfies the claim for $k+1$. To see that $\Phi$ is a well defined diffeomorphism, note that $\Phi_2$ is identity near $\partial\widetilde{P}$ and hence $\Phi_1\circ\Phi_2=\Phi_1$ near the common face of the boxes $\widetilde{P}$ and $P_{k+1}$. Also, $\Phi$ is identity near the boundary of $P$. The proof is complete.
\end{proof}
The following corollary shows that Lemma~\ref{T8} can be applied to diffeomorphic closed cubes and their partitions that are diffeomorphic to the partition in Lemma \ref{T8}.

\begin{corollary}
\label{T10}
Let $P$ and its partition be as in Lemma~\ref{T8}. Let $\Theta:P\to  \bbbr^n$ be a diffeomorphism of the closed rectangular box $P$. Denote by $\widetilde{P}$ and $\widetilde{P}_i$ the images of $P$ and $P_i$ under $\Theta$.
If functions $f,g\in L^1(\widetilde{P})$, $f,g>0$ a.e., are such that
$$
\int_{\widetilde{P}} f(x)\, dx=\int_{\widetilde{P}} g(x)\, dx,
$$
then there is a diffeomorphism $\Phi:\widetilde{P}\to \widetilde{P}$, that is identity in a neighborhood of $\partial \widetilde{P}$, and such that
$$
\int_{\widetilde{P}_i} f(x)\, dx=\int_{\Phi(\widetilde{P}_i)} g(x)\, dx
\quad
\text{for } i=1,2,\ldots,k.
$$
\end{corollary}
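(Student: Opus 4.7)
The plan is to reduce Corollary~\ref{T10} to Lemma~\ref{T8} by transporting everything to the original box $P$ through $\Theta$, applying Lemma~\ref{T8} there, and then conjugating the resulting diffeomorphism back. Since $\Theta$ is defined on a neighborhood of $P$, I can pull back both densities to $P$ and I can conjugate any diffeomorphism of $P$ to a diffeomorphism of $\widetilde{P}$.

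Concretely, set
\[
\tilde f(y) := f(\Theta(y))\,|J_\Theta(y)|, \qquad \tilde g(y) := g(\Theta(y))\,|J_\Theta(y)|, \qquad y \in P.
\]
Since $\Theta$ is a diffeomorphism, $J_\Theta$ has constant sign and is nowhere zero, so $\tilde f,\tilde g\in L^1(P)$ and $\tilde f, \tilde g > 0$ almost everywhere. By the change-of-variables formula applied to the diffeomorphism $\Theta$,
\[
\int_P \tilde f(y)\,dy = \int_{\widetilde P} f(x)\,dx = \int_{\widetilde P} g(x)\,dx = \int_P \tilde g(y)\,dy,
\]
so the pair $(\tilde f,\tilde g)$ on the rectangular box $P$ with its partition $P_1,\ldots,P_k$ satisfies the hypothesis \eqref{eq12} of Lemma~\ref{T8}. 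That lemma produces a diffeomorphism $\Psi:P\to P$, equal to the identity in some neighborhood $V\subset P$ of $\partial P$, such that
\[
\int_{P_i} \tilde f(y)\,dy = \int_{\Psi(P_i)} \tilde g(y)\,dy \qquad \text{for } i=1,\ldots,k.
\]

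Now define $\Phi : \widetilde P \to \widetilde P$ by $\Phi := \Theta \circ \Psi \circ \Theta^{-1}$. As a composition of diffeomorphisms of closed sets this is a diffeomorphism of $\widetilde P$ onto itself. Moreover, $\Theta^{-1}(\widetilde V) = V$ for $\widetilde V := \Theta(V)$, which is a neighborhood of $\partial\widetilde P$ in $\widetilde P$, so on $\widetilde V$ we have $\Phi = \Theta \circ \id \circ \Theta^{-1} = \id$; hence $\Phi$ is the identity near $\partial\widetilde P$. Finally, using $\Phi(\widetilde P_i) = \Theta(\Psi(P_i))$ and the change of variables formula twice,
\[
\int_{\Phi(\widetilde P_i)} g(x)\,dx = \int_{\Psi(P_i)} \tilde g(y)\,dy = \int_{P_i} \tilde f(y)\,dy = \int_{\widetilde P_i} f(x)\,dx,
\]
which is the desired equality.

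There is no real obstacle here; the only thing to watch is the orientation of $\Theta$, which I handle uniformly by using $|J_\Theta|$ rather than $J_\Theta$ in the pullback, and the fact that ``identity near $\partial P$'' transports to ``identity near $\partial\widetilde P$'' under conjugation precisely because $\Theta$ is a homeomorphism between the two boundaries and their collar neighborhoods. Everything else is a direct application of the change-of-variables formula and Lemma~\ref{T8}.
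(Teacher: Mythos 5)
Your proof is correct and follows the same approach the paper takes: pull back $f$ and $g$ to $P$ via $\Theta$ using the densities $(f\circ\Theta)|J_\Theta|$ and $(g\circ\Theta)|J_\Theta|$, apply Lemma~\ref{T8}, and conjugate the resulting diffeomorphism by $\Theta$. You have simply filled in the "easy details" that the paper explicitly leaves to the reader.
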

\begin{proof}
Using $\Theta$ as a change of variables we can reduce the problem to Lemma~\ref{T8}. The induced functions on $P$ will be
$$
(f\circ\Theta)(x)|J_\Theta(x)|
\qquad
\text{and}
\qquad
(g\circ\Theta)(x)|J_\Theta(x)|.
$$
Actually, we used a very similar argument in the proof of Lemma~\ref{T8} and we leave easy details to the reader.
\end{proof}

Naturally, given a~compact set $K$ in a~rectangular box $P$, it cannot be expected that $\overbar{P \setminus K}$ is  diffeomorphic to a cube. However, in Lemma~\ref{T24} we show that $K$ can be replaced by another compact set $K'$, with small measure of the symmetric difference $|K \vartriangle K'|$, so that $\overbar{P \setminus K'}$ is a diffeomorphic closed cube. We do it by approximating $K$ with small balls and smoothly connecting them with thin tubes which start from one face of $P$, see Figure~\ref{fig:1}. We will need it in Lemma~\ref{T25} to construct a~diffeomorphism as in Lemma~\ref{T8} which additionally is identity on a~large part of a~given compact set.

\begin{lemma}
\label{T24}
Let $a > 0$, $P = [0,a] \times [0,1]^{n-1}$ be a~rectangular box and
$F=(0,a) \times (0,1)^{n-2} \times \{0\}$ a~fixed open face of $P$.
If $K$ is a~compact subset of $P$, then for any $\eps >0$, it is possible to find a~compact set $K' \subset P$ and a~diffeomorphism $\Psi: \bbbr^n \to \bbbr^n$ such that
$$
K'\cap (\partial P\setminus F)=\varnothing,
\quad
|K \vartriangle K'| < \eps,
\quad
\Psi (P) = \overbar{P \setminus K'},
$$
and  $\Psi = \id$ outside an arbitrarily small neighborhood of $K'$.
\end{lemma}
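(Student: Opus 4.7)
The strategy is to approximate $K$ by a disjoint union of small closed balls, connect each ball to the face $F$ by a thin smooth tube, and then build $\Psi$ as a composition of local diffeomorphisms (one per ball--tube pair) supported near $K'$.

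\textbf{Approximation and tubes.} Choose an open set $U$ with $K\subset U\subset P\setminus(\partial P\setminus F)$ and $|U|<|K|+\varepsilon/4$; note that $K\cap(\partial P\setminus F)$ has measure zero because it lies in finitely many hyperplanes. The family of closed balls $\overbar{B}(p,r)\subset U\cap\mathring{P}$ with $p\in K\cap\mathring{P}$ and $r$ small is a Vitali covering of $K\cap\mathring{P}$, so Vitali's theorem yields a finite pairwise disjoint subfamily $\{\overbar{B}_j\}_{j=1}^N\subset\mathring{P}$ satisfying $|K\vartriangle\bigcup_j\overbar{B}_j|<\varepsilon/2$. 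Process the balls one by one: for each $\overbar{B}_j$ pick a smooth embedded arc $\gamma_j\subset\mathring{P}\cup F$ joining $\overbar{B}_j$ to a point of $F$, missing all other balls and the previously constructed tubes. Such an arc exists by path-connectedness of $\mathring{P}\cup F$. Thicken $\gamma_j$ into a smooth tube $T_j$ of radius small enough to be disjoint from all other tubes and from $\overbar{B}_k$ for $k\ne j$, to stay inside $P\setminus(\partial P\setminus F)$, and to satisfy $\sum_j|T_j|<\varepsilon/2$. Shape the tubes so that each finger $\overbar{B}_j\cup T_j$ is a smoothly bounded compact region, with tube--ball junctions smoothly rounded and tubes attached to $F$ via a fixed local model. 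Set $K':=\bigcup_j(\overbar{B}_j\cup T_j)$; then $K'\cap(\partial P\setminus F)=\varnothing$ and, combining the ball bound with the tube bound, $|K\vartriangle K'|<\varepsilon$.

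\textbf{The diffeomorphism $\Psi$.} Choose pairwise disjoint open neighborhoods $V_j$ of the fingers $\overbar{B}_j\cup T_j$ in $\bbbr^n$, each avoiding $\partial P\setminus F$. For each $j$ construct a diffeomorphism $\Psi_j:\bbbr^n\to\bbbr^n$ supported in $V_j$ with $\Psi_j(P\cap V_j)=\overbar{P\setminus(\overbar{B}_j\cup T_j)}\cap V_j$: intuitively, $\Psi_j$ ``carves out'' the finger from $P$ while leaving everything outside $V_j$ fixed. By a local change of coordinates moving the finger to a canonical position, it suffices to do this for a standard model: a short, straight rod attached smoothly to a flat face and ending in a round ball, sitting inside a coordinate slab. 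For the canonical model the required diffeomorphism is the time-one map of a compactly supported smooth vector field whose flow pushes the flat face of the slab inward along the rod and around the ball, creating the finger cavity. Set $\Psi:=\Psi_N\circ\cdots\circ\Psi_1$; since the $V_j$ are pairwise disjoint, $\Psi=\id$ outside $\bigcup_j V_j$, which can be made an arbitrarily small neighborhood of $K'$, and $\Psi(P)=\overbar{P\setminus K'}$.

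\textbf{Main obstacle.} The technical heart of the argument lies in the construction of $\Psi_j$ for the canonical finger model, since $\overbar{P\setminus K'}$ must genuinely be the smooth image of $P$ under a diffeomorphism of $\bbbr^n$. This forces a careful geometric design of the fingers: the tube must join $F$ in a way that makes $\overbar{P\setminus(\overbar{B}_j\cup T_j)}\cap V_j$ diffeomorphic to $P\cap V_j$ rel.\ $\partial V_j$, e.g.\ by having the tube flare smoothly into $F$ or match corner structure along $\partial K'\cap F$ with that of $\partial P$ at $F$. Once the geometry is fixed, the isotopy extension theorem --- applied to an isotopy of the relevant piece of $\partial P$ to the corresponding indented boundary portion of $\overbar{P\setminus K'}$, through hypersurfaces contained in $V_j$ --- produces the desired $\Psi_j$.
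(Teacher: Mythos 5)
Your construction is correct, and it takes a genuinely different route from the paper in one key respect: the topology of $K'$. In the paper's proof there is a single embedded arc $\gamma$ starting at a point of $F$ that visits all the balls in sequence, so after thickening, $K'$ becomes \emph{one} connected ``caterpillar'' that is diffeomorphic to a single ball-with-tube attached to $F$; the diffeomorphism $\Psi$ is then produced in one shot as a map pushing a cylinder glued to $F$ into the resulting box. Your version instead attaches each ball separately to $F$ by its own tube, producing a disjoint union of ``fingers,'' and $\Psi$ is a composition of local diffeomorphisms $\Psi_j$ with pairwise disjoint supports, each carving out one finger. What the paper's route buys is that only one diffeomorphism needs to be built, from the simplest possible model (one ball, one tube); what your route buys is the clean commutativity of the local modifications, more transparent control of the support (each $V_j$ can be an arbitrarily thin neighborhood of its finger), and no need to thread a single arc through all the balls in some order. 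Both arguments leave the actual construction of the ``pushing'' diffeomorphism at a similar level of rigor: the paper states it ``intuitively,'' while you invoke a canonical finger model plus the isotopy extension theorem, which is the standard way to make it precise. The only genuinely delicate point in your variant --- that the $N$ arcs $\gamma_j$ and their tube thickenings can be chosen pairwise disjoint, disjoint from all the other balls, and with distinct endpoints on $F$ --- is fine for $n\geq 2$ (a finite union of disjoint compact balls and thin tubes does not disconnect $\mathring{P}\cup F$), and the paper's running hypothesis $n\geq 2$ covers this.
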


\begin{proof}
\begin{figure}

\begin{tikzpicture}
\node at (0,0) {\includegraphics[scale=0.8]{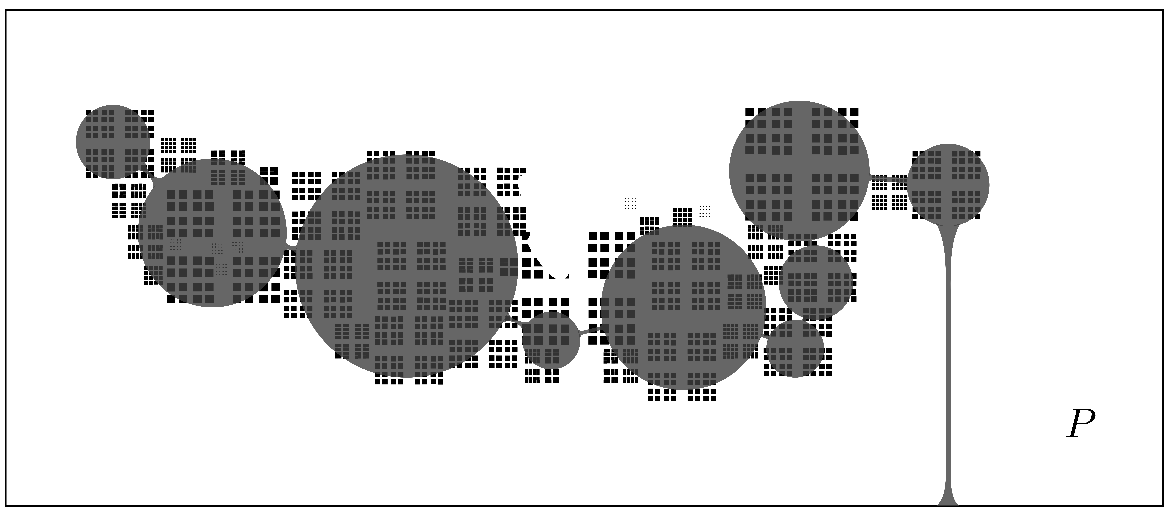}};
\node[scale=1.4] at (-4.5,-2.3) {$F$};
\draw[thick,->] (-4.2,-2.3) to[out=-15,in=110] (-2,-3.2);
\end{tikzpicture}
\caption{The compact set $K\subset P$ (black) is  approximated by the set $K'$ (grey) which consists of a finite number of balls, connected smoothly by thin tubes to the side $F$ of $P$.}
\label{fig:1}
\end{figure}
We find a~finite number of disjoint closed balls $B_{j}$ contained in $\mathring{P}$ so that $|K \vartriangle \bigcup_j B_{j}| < \eps/2$ (easy exercise)
and a~smooth curve $\gamma$ without self-intersections which starts at a~point of the open face $F$ and connects all balls: the curve enters each ball once and leaves it at another point, except the last ball that it does not leave. We can easily guarantee that $\gamma$ intersects $\partial P$ only at the starting point in $F$.

We thicken slightly the curve $\gamma$ to get a~tube $\gamma_\eps$ so that the measure of $\gamma_\eps$ does not exceed $\eps/2$. In doing so, we can guarantee that $\gamma_\eps$ connects the balls and touches $\partial P$ at the face $F$ only, in a~smooth manner, see Figure \ref{fig:1}.
The desired compact set $K'$ consists of $\bigcup_j B_{j}$ and $\gamma_\eps$. Clearly, $|K \vartriangle K'| < \eps$.

Since $K'$ is diffeomorphic to a~ball connected smoothly with a~thin tube to the face $F$ of $P$, the set $\overbar{P \setminus K'}$ is diffeomorphic to a~rectangular box. Intuitively speaking, diffeomorphism $\Psi: \bbbr^n \to \bbbr^n$, which maps $P$ onto $\overbar{P \setminus K'}$ pushes a~smooth cylinder glued to $F$ outside $P$ inside this rectangular box, transforming $P$ into $\overbar{P \setminus K'}$. It can be guaranteed that $\Psi$ fixes points outside an arbitrarily small neighborhood of $K'$.
\end{proof}

The next lemma is an enhanced version of Lemma~\ref{T8}. We find a~diffeomorphism of a~rectangular box onto itself which transforms measures of cells accordingly and, additionally, equals identity on a~large part of a~given compact subset $K$ of $P$.
\begin{lemma}
\label{T25}
Let $P$, $P_i$, $f$ and $g$ be as in Lemma~\ref{T8}. Assume that $K$ is a~compact subset of $P$ and that the functions $f$ and $g$ satisfy the additional condition that
\begin{equation}
\label{eq28}
f = g \text{ a.\,e.\ on } K.
\end{equation}
Then, for any $\eps > 0$, there is a~diffeomorphism $\Phi: P \to P$ and a~compact set $\widetilde{K}\subset K$ such that $\Phi$ equals identity in a neighborhood of $\widetilde{K}\cup\partial P$, $|K\setminus \widetilde{K}| < \eps$ and
$$
\int_{P_i} f(x)\, dx=\int_{\Phi(P_i)} g(x)\, dx \quad \text{for } i=1,2,\ldots,k.
$$
\end{lemma}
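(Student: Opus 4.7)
The plan is to use Lemma~\ref{T24} to replace $K$ by a nicer approximation $K'$ for which $\overbar{P\setminus K'}$ is diffeomorphic to a rectangular box via some $\Psi$, to run Corollary~\ref{T10} on $\overbar{P\setminus K'}$ in the \emph{transported} densities $f^*, g^*$, and finally to conjugate back to $P$ by $\Psi^{-1}$.

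First I apply Lemma~\ref{T24} to $P$ with the compact set $K$, obtaining $K'\subset P$ with $|K\vartriangle K'|<\eps$ and a diffeomorphism $\Psi$ of $\bbbr^n$ that equals the identity outside an arbitrarily small open neighborhood $V$ of $K'$ and satisfies $\Psi(P)=\overbar{P\setminus K'}$. Set $\widetilde K:=K\cap K'$; this is compact, contained in $K$, and $|K\setminus\widetilde K|=|K\setminus K'|\le|K\vartriangle K'|<\eps$. On $\overbar{P\setminus K'}$ I introduce the pulled-back densities
\[
f^*(y):=f(\Psi^{-1}(y))\,|J_{\Psi^{-1}}(y)|, \qquad g^*(y):=g(\Psi^{-1}(y))\,|J_{\Psi^{-1}}(y)|;
\]
both are in $L^1$ and positive a.e., and the change of variables formula gives $\int_{\overbar{P\setminus K'}}f^*=\int_P f=\int_P g=\int_{\overbar{P\setminus K'}}g^*$.

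Next I invoke Corollary~\ref{T10} on $\overbar{P\setminus K'}$ with $\Theta=\Psi|_P$, so that $\widetilde P=\overbar{P\setminus K'}$ and $\widetilde P_i=\Psi(P_i)$. This yields a $C^\infty$-diffeomorphism $\Phi_1$ of $\overbar{P\setminus K'}$, identity in an open neighborhood $W$ of $\partial(\overbar{P\setminus K'})=(\partial P\setminus K')\cup\partial K'$, with $\int_{\Psi(P_i)}f^*=\int_{\Phi_1(\Psi(P_i))}g^*$ for each $i$. A glance at the proof of Lemma~\ref{T8} shows the flow-generating vector field can be chosen to vanish on any prescribed thin collar of the boundary, so $W$ may be arranged to contain $V\cap\overbar{P\setminus K'}$ (a thin collar of $\partial K'$); equivalently, once $W$ is known one shrinks the parameter $V$ of Lemma~\ref{T24} so that $V\cap\overbar{P\setminus K'}\subset W$. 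Define $\Phi := \Psi^{-1}\circ\Phi_1\circ\Psi\colon P\to P$.

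The map $\Phi$ is a $C^\infty$-diffeomorphism as a composition of such. For $x\in V\cap P$ one has $\Psi(x)\in V\cap\overbar{P\setminus K'}\subset W$, hence $\Phi_1(\Psi(x))=\Psi(x)$ and $\Phi(x)=x$; since $V\cap P$ is an open neighborhood of $\widetilde K\subset K'\subset V\cap P$, this gives $\Phi=\id$ near $\widetilde K$. For $x$ near $\partial P$ outside $V$, $\Psi(x)=x$ is close to $\partial P\setminus K'\subset W$, so again $\Phi(x)=x$; combining the two cases shows $\Phi=\id$ in a full neighborhood of $\partial P$. Finally, two applications of the change of variables formula $y=\Psi(x)$ give
\[
\int_{\Phi(P_i)} g\,dx = \int_{\Phi_1(\Psi(P_i))} g^*\,dy = \int_{\Psi(P_i)} f^*\,dy = \int_{P_i} f\,dx
\]
for every $i$, as required. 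The main obstacle lies in matching $V$ and $W$: the conjugation $\Psi^{-1}\circ\Phi_1\circ\Psi$ reduces to the identity at $x$ only when $\Psi(x)\in W$, so one must ensure $\Psi(K')\subset W$ (to cover $\widetilde K$) and its boundary analogue (to cover $\partial P$). This is not automatic from the bare statements of Lemma~\ref{T24} and Corollary~\ref{T10}, but follows from the freedom to shrink $V$ together with the freedom in the support of the vector field generating $\Phi_1$; once this matching is in place, everything else is a routine unwinding of the conjugation together with the change of variables formula.
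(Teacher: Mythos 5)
The conjugation $\Phi=\Psi^{-1}\circ\Phi_1\circ\Psi$ is vacuous, and this is where the argument breaks. Trace the proof of Corollary~\ref{T10} with $\Theta=\Psi|_P$ and the push-forward densities $f^*,g^*$: the induced densities on $P$ are $(f^*\circ\Psi)|J_\Psi|=f$ and $(g^*\circ\Psi)|J_\Psi|=g$, so Lemma~\ref{T8} is applied to the \emph{original} $f,g$ on the \emph{original} partition $\{P_i\}$, producing some $\Phi_0:P\to P$ identity near $\partial P$, and $\Phi_1=\Psi\circ\Phi_0\circ\Psi^{-1}$. Then $\Phi=\Psi^{-1}\circ\Phi_1\circ\Psi=\Phi_0$: the whole $K'$-and-$\Psi$ apparatus cancels, and $\Phi$ is just the bare Lemma~\ref{T8} diffeomorphism, which has no reason to equal the identity near $\widetilde K$.

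Consequently the ``$V$--$W$ matching'' you flag is not a technicality; it is the entire content of the lemma. Requiring $\Phi_1=\id$ on $V\cap\overbar{P\setminus K'}$ is, after conjugating back to $P$, the requirement that $\Phi_0=\id$ on $\Psi^{-1}(V\cap\overbar{P\setminus K'})=V\cap P$, a full neighborhood of $K'$ including the interior balls of which $K'$ is composed --- this is not a thin collar of $\partial P$. The freedom in the support of the vector field in the proof of Lemma~\ref{T8} lets one vanish near $\partial P$, not on a prescribed interior compact set; insisting on the latter while still balancing measures \emph{is} Lemma~\ref{T25}, so the argument is circular. The paper avoids this by never conjugating: it applies Lemma~\ref{T24} \emph{cell by cell} so that each $K_i'$ sits inside $\mathring P_i$; it then applies Corollary~\ref{T10} on $\widetilde P=\overbar{P\setminus K'}$ not to the transported $f^*$ but to an auxiliary piecewise-constant density $\hat f$ chosen so that $\int_{\widetilde P_i}\hat f=\int_{P_i}f-\int_{K_i'}g$; and it \emph{extends} the resulting $\Phi'$ (identity near $\partial\widetilde P$) by the identity across $K'$. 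Since $\Phi(P_i)=\Phi'(\widetilde P_i)\cup K_i'$ with overlaps of measure zero, the required integral identity drops out by inspection, and $\Phi=\id$ near $K'\supset\widetilde K$ is automatic.
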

\begin{proof}
The main idea of the proof is applying Lemma~\ref{T24} to each $P_i$ to eventually remove a~set including a~large part of set $K$ and to obtain a~diffeomorphic closed cube. We can then use Corollary~\ref{T10} to construct the desired diffeomorphism which transforms the measures of cells and equals identity near the boundary.

By removing from $K$ a subset $A$ with $|A|<\eps/2$ we can assume that $\hat{K}:=K\setminus A$ is compact and $K_i:=\hat{K}\cap P_i\neq P_i$ for all $i=1,2,\ldots, k$.

Choose a sufficiently small $0<\eta<\eps/(2k)$ so that
\begin{equation}
\label{eq67}
|E|<\eta
\quad
\Longrightarrow
\quad
\int_E g(x)\,dx<\min_i \int_{P_i\setminus K_i} f(x)\,dx.
\end{equation}
(Since $K_i\neq P_i$, the minimum in \eqref{eq67} is positive.)
Lemma~\ref{T24} applied to each $P_i$, the compact sets $K_i$ and the chosen $\eta$ yields diffeomorphisms $\Psi_i:\bbbr^n\to\bbbr^n$ and compact sets $K_i'$ for $i=1,2,\ldots,k$ such that $\Psi_i(P_i)=\overbar{P_i\setminus K_i'}$, $|K_i\vartriangle K_i'|<\eta$, and $\Psi_i=\id$ outside an arbitrarily small neighborhood $U_i$ of $K_i'$. Also, the sets $K_i'$ do not intersect any of the common faces of the partition of $P$ and thus they are pairwise disjoint.

Since the sets $K_i'$ are compact and pairwise disjoint, we may guarantee that the neighborhoods $U_i$ are pairwise disjoint and hence the diffeomorphisms $\Psi_i$ can be glued together to a diffeomorphism $\Psi:\bbbr^n\to\bbbr^n$ such that $\Psi=\Psi_i$ on each $P_i$.

Consequently, if $K' = \bigcup_{i=1}^k K'_i$, then $\overline{P \setminus K'}$ is diffeomorphic to $P$ by this diffeomorphism $\Psi$ and hence it satisfies the assumptions of Corollary~\ref{T10} for
$$
\widetilde{P} = \overline{P \setminus K'}, \quad \widetilde{P}_i = \overline{P_i \setminus K'_i}.
$$

At this point, observe that
$|P\setminus K'|=|\overline{P\setminus K'}|=|\widetilde{P}|$,
because the boundary of $K'$ is piecewise smooth. Also, it is obvious that
\begin{equation}
\label{eq109}
   P = \widetilde{P} \cup K' \quad \text{and} \quad P_i = \widetilde{P}_i \cup K'_i \quad \text{for } i =1, \ldots, k.
\end{equation}
We still need to define suitable functions to use in place of $f$ and $g$ in Corollary~\ref{T10}. Observe that $|K_i \vartriangle K'_i| < \eta$ for each $i = 1, \ldots, k$, yields
\begin{align*}
\int_{K'_i} g(x) \, dx & \leq \int_{K_i} g(x) \, dx + \int_{K'_i \setminus K_i} g(x) \, dx  \\
&\overset{\eqref{eq67}}{<}
 \int_{K_i} g(x) \, dx + \int_{P_i \setminus K_i} f(x) \, dx
 \overset{\eqref{eq28}}{=} \int_{P_i} f(x) \, dx.
\end{align*}
Consequently, the function
\begin{equation}
\label{eq116}
\hat{f}(x) = \sum_{i=1}^k   \left( \int_{P_i} f - \int_{K'_i} g \right)\,\frac{\chi_{\widetilde{P}_i}(x)}{|\widetilde{P}_i|}
\end{equation}
is positive a.\,e.\ on $\widetilde{P}$ and satisfies
\begin{equation}
\label{eq108}
\int_{\widetilde{P}} \hat{f}(x) \, dx=\int_{\widetilde{P}} g(x) \, dx.
\end{equation}
Indeed, since $|P\setminus K'|=|\widetilde{P}|$, \eqref{eq108} follows from
$$
\int_{\widetilde{P}} \hat{f}(x) \, dx=\sum_{i=1}^k\Big(\int_{P_i}f - \int_{K_i'} g\Big)=
\int_P f(x)\, dx-\int_{K'}g(x)\, dx \overset{\eqref{eq12}}{=} \int_{P\setminus K'}g(x)\, dx.
$$

All in all, we have checked the assumptions of Corollary~\ref{T10} for functions $\hat{f}$ and $g$. This corollary provides us with a~diffeomorphism $\Phi': \widetilde{P} \to \widetilde{P}$, $ \Phi' = \mathrm{id}$ in a~neighborhood of $\partial \widetilde{P}$, such that
\begin{equation} \label{eq104}
\int_{\widetilde{P}_i} \hat{f}(x) \, dx
     = \int_{\Phi'(\widetilde{P}_i)} g(x) \, dx.
\end{equation}
Set $\widetilde{K} = K \cap K'$ and
$$
\Phi(x) = \begin{cases}
            x & \text{ for } x \in P \setminus \widetilde{P},\\
            \Phi'(x) & \text{ for } x \in \widetilde{P}.
        \end{cases}
$$

The set $\widetilde{K}$ is clearly compact whereas $\Phi$ is a~well defined diffeomorphism of $P$ onto itself since $\Phi'(x) = x$ near $\partial \widetilde{P}$. We shall now check that $\Phi$ and $\widetilde{K}$ satisfy all the desired properties. Writing $\partial P = (\partial P \setminus \partial \widetilde{P}) \, \cup \, (\partial P \cap \partial \widetilde{P})$, we see that $\Phi = \id$ near $\partial P$. It follows immediately from the definition of $\Phi$ and $\overline{P\setminus\widetilde{P}}=K'$, that $\Phi = \id$ on $K'$. We can say even more: since $\partial K' \subset \partial P \cup \partial \widetilde{P}$, $\Phi = \id$ in a~neighborhood of $K'$ and, consequently, in a~neighborhood of $\widetilde{K}$.

Since $\Phi=\Phi'$ on $\widetilde{P}_i$, \eqref{eq104} yields
\begin{equation}
\label{eq105}
\int_{\widetilde{P}_i} \hat{f}(x) \, dx = \int_{\Phi(\widetilde{P}_i)} g(x) \, dx.
\end{equation}
By~\eqref{eq109}, $\Phi(P_i) = \Phi(\widetilde{P}_i) \cup \Phi(K'_i)$
and the two sets $\Phi(\widetilde{P}_i)$ and $\Phi(K'_i)$ overlap on a set of measure zero (the image of a part of the piecewise smooth boundary of $K_i'$). Hence \eqref{eq105} yields
\begin{align*}
\int_{\Phi(P_i)} g(x) \, dx &= \int_{\Phi(\widetilde{P}_i)} g(x) \, dx + \int_{\Phi(K'_i)} g(x) \, dx = \int_{\widetilde{P}_i} \hat{f}(x) \, dx + \int_{K'_i} g(x) \, dx \\
&\overset{\eqref{eq116}}{=} \int_{P_i} f(x) \, dx - \int_{K'_i} g(x) \, dx + \int_{K'_i} g(x) \, dx  = \int_{P_i} f(x) \, dx,
\end{align*}
as required. At last, we compute
$$
|K\setminus\widetilde{K}|=|A\setminus K'|+|\widehat{K}\setminus K'|\leq
|A|+\sum_{i=1}^k |K_i\vartriangle K_i'|<\frac{\eps}{2}+k\eta<\eps
$$
which finishes the proof.
\end{proof}

\begin{lemma}
\label{T9}
Let $\mathscr{Q}=\bigcup_{j=1}^{2^{nk}} Q_j$ be the dyadic partition of the unit cube $\mathscr{Q}=[0,1]^n$ into  cubes $Q_j$ of side-length $2^{-k}$. Let $K$ be a~compact subset of $\mathscr{Q}$ and $f,g\in L^1(\mathscr{Q})$, $f,g>0$ a.\,e.\ be such that
$$
\int_\mathscr{Q} f(x)\, dx = \int_\mathscr{Q} g(x)\, dx \text{ and } f(x) = g(x) \text{ for a.\,e.\ } x \in K.
$$
Then, for any $\eps > 0$, there is a~diffeomorphism $\Phi:\mathscr{Q}\to \mathscr{Q}$ and a~compact set $\widetilde{K} \subset K$ such that $\Phi = \mathrm{id}$ in a neighborhood of  $\widetilde{K}\cup\partial \mathscr{Q}$, $|K \setminus \widetilde{K}| < \eps$  and
$$
\int_{Q_j} f(x)\, dx = \int_{\Phi(Q_j)} g(x)\, dx
\quad
\text{for } j=1,2,\ldots,2^{nk}.
$$
\end{lemma}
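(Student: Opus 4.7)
I would prove Lemma~\ref{T9} by induction on $k$, or equivalently, by iterating $nk$ one-dimensional bisections; at each level the proof is essentially Lemma~\ref{T25} applied inside each cell of the current refinement. The base case $k=0$ is trivial ($\Phi=\id$, $\widetilde K=K$). For the inductive step, the level-$k$ dyadic partition $\{Q_j\}_{j=1}^{2^{nk}}$ refines the level-$(k-1)$ partition $\{Q'_l\}_{l=1}^{2^{n(k-1)}}$: each $Q'_l$ splits into $2^n$ subcubes $Q_j$, which I would further obtain by $n$ successive bisections in the coordinate directions.

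The key intermediate tool I would record first is a version of Lemma~\ref{T25} for \emph{diffeomorphic} rectangular boxes (standing to Lemma~\ref{T25} as Corollary~\ref{T10} stands to Lemma~\ref{T8}): if $\widetilde P=\Theta(P)$ is the image of a rectangular box $P=P_1\cup\cdots\cup P_m$ under a diffeomorphism $\Theta$, and $f,g\in L^1(\widetilde P)$ are positive with $\int_{\widetilde P}f=\int_{\widetilde P}g$ and $f=g$ on a compact $K\subset\widetilde P$, then the analogous measure-transferring diffeomorphism of $\widetilde P$ exists. The derivation is the same as Corollary~\ref{T10}: pull $f,g,K$ back to $P$ via $\Theta$; the hypothesis $f=g$ on $K$ pulls back to $(f\circ\Theta)|J_\Theta|=(g\circ\Theta)|J_\Theta|$ on $\Theta^{-1}(K)$; apply Lemma~\ref{T25} on $P$; push the resulting diffeomorphism forward by $\Theta$.

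For the inductive step itself, apply the inductive hypothesis with error $\eps/2$ to obtain $\Phi^{(k-1)}$ and $\widetilde K^{(k-1)}\subset K$ with $|K\setminus\widetilde K^{(k-1)}|<\eps/2$, with $\Phi^{(k-1)}=\id$ near $\widetilde K^{(k-1)}\cup\partial\mathscr Q$, and with $\int_{Q'_l}f=\int_{\Phi^{(k-1)}(Q'_l)}g$ for each $l$. Inside each diffeomorphic box $\Phi^{(k-1)}(Q'_l)$, I would then iterate the diffeomorphic-box version of Lemma~\ref{T25} $n$ times, bisecting in successive coordinate directions of the pre-image cube $Q'_l$, to produce a diffeomorphism of $\Phi^{(k-1)}(Q'_l)$ which is identity near its boundary and near a compact subset of $\widetilde K^{(k-1)}\cap\Phi^{(k-1)}(Q'_l)$ of small complementary measure, and which transfers $g$-measure on $\Phi^{(k-1)}(Q_j)$ to equal $f$-measure on $Q_j$ for each $Q_j\subset Q'_l$. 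Because each such local diffeomorphism equals the identity near the boundary of its piece, they glue to a diffeomorphism $\Psi$ of $\mathscr Q$; set $\Phi=\Psi\circ\Phi^{(k-1)}$ and let $\widetilde K$ be the union of the local compact subsets, after which $|K\setminus\widetilde K|<\eps$ is achieved by distributing the error budget among the finitely many pieces and bisections.

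The main obstacle, though not deep, is the bookkeeping required to propagate the identity-on-compact condition through all these pullbacks, pushforwards, and gluings. The central observation making everything consistent is that wherever the accumulated diffeomorphism equals the identity, its Jacobian is $1$, so the pulled-back density of $g$ coincides with $g$ itself there; thus at every internal bisection step the hypothesis $f=g$ on the residual compact set survives, and Lemma~\ref{T25} applies as needed. With this and the standard observation that diffeomorphisms equal to the identity near a common boundary can be glued to a global diffeomorphism, the finite iteration produces the required $\Phi$ and $\widetilde K$.
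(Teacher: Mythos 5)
Your proposal is correct, but it organizes the argument differently from the paper. The paper fixes $k$ and runs a \emph{single} finite induction over the coordinate direction $\ell=1,\dots,n$: at stage $\ell$ the cube is partitioned into ``towers'' $L_{j_1\ldots j_\ell}=P_{j_1\ldots j_\ell}\times[0,1]^{n-\ell}$, each tower is sliced into $2^k$ adjacent slabs along coordinate $\ell+1$, and Lemma~\ref{T25} is applied in each tower to the pair $\bigl(f,\ (g\circ\Psi_\ell)J_{\Psi_\ell}\bigr)$, i.e.\ it always pulls $g$ back to the \emph{domain} and therefore only needs Lemma~\ref{T25} on honest rectangular boxes. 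You instead induct on the dyadic level $k$ (with an inner loop of $n$ bisections inside each cell) and choose to work on the \emph{image} side inside the deformed cubes $\Phi^{(k-1)}(Q'_l)$; this is what forces you to first establish a diffeomorphic-box version of Lemma~\ref{T25}. That version is not stated in the paper but follows exactly as you sketch, by the change of variables used for Corollary~\ref{T10} (and analogously to how Proposition~\ref{T32} is deduced from Proposition~\ref{T43}); the only point to add is that $\Theta$ is bi-Lipschitz on a compact set, so the measure of the lost part of $K$ is controlled after push-forward. Both routes hinge on the same observation you single out, namely that where the accumulated diffeomorphism is the identity its Jacobian is $1$, so the equality $f=g$ survives on the retained compact, and so Lemma~\ref{T25} applies again at the next stage. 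The paper's flat $\ell$-induction is marginally leaner; your nested approach is more modular (prove the $k=1$ statement, then bootstrap), and would become essentially identical to the paper's if, instead of working inside $\Phi^{(k-1)}(Q'_l)$, you pulled $g$ back to $Q'_l$ and applied Lemma~\ref{T25} there, which would remove the need for the diffeomorphic version of Lemma~\ref{T25} altogether.
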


\begin{remark}
The statement of the lemma is very similar to that of Lemma~\ref{T25}. The main difference is that the boxes in Lemma~\ref{T25} were arranged into a single line. This arrangement played an important role in the proof and it is not obvious how to modify the proof of Lemma~\ref{T25} to cover the situation described in Lemma~\ref{T9}.
\end{remark}

\begin{proof}[Proof of Lemma \ref{T9}]
For $1\leq \ell\leq n$ and $j_1,\ldots,j_\ell\in \{1,\ldots,2^k\}$ we shall denote
\begin{align*}
P_{j_1\ldots j_\ell}&=[(j_1-1) 2^{-k},j_1 2^{-k}]\times [(j_2-1) 2^{-k},j_{2} 2^{-k}]\times\cdots\times [(j_\ell-1) 2^{-k},j_\ell 2^{-k}],\\
L_{j_1\ldots j_\ell}&=P_{j_1\ldots j_\ell}\times [0,1]^{n-\ell}.
\end{align*}
The sets $P_{j_1\ldots j_\ell}$ are cubes in the dyadic partition of $[0,1]^{n-\ell}$ and
$L_{j_1\ldots j_\ell}$ are ``towers'' over cubes $P_{j_1\ldots j_\ell}$ covering $[0,1]^n$. In particular $L_1,\ldots, L_{2^k}$ result from slicing of $\mathscr{Q}$ along the first coordinate like a toast bread into $2^k$ sandwiches. If $\ell=n$, $L_{j_1\ldots j_n}=P_{j_1\ldots j_n}$ are exactly the dyadic cubes $Q_j$.

Our aim is to prove (by finite induction on $\ell$) the following claim:
\begin{quote}
For any $\ell\in\{1,2,\ldots,n\}$ there exists a diffeomorphism $\Psi_\ell:\mathscr{Q}\to \mathscr{Q}$ and a~compact set $K_\ell \subset K$ such that
\begin{equation}
\label{eq106}
\Psi_\ell = \id \text{ in a neighborhood of } K_\ell \cup \partial \mathscr{Q},
\end{equation}
\begin{equation}
\label{eq10}
\int_{L_{j_1\ldots j_\ell}} f(x) dx = \int_{\Psi_\ell(L_{j_1\ldots j_\ell})} g(x) dx \quad \text{ for all }j_1,\ldots,j_\ell\in \{1,\ldots,2^k\},
\end{equation}
\begin{equation}
\label{eq66}
|K \setminus K_\ell| < 2^{\ell - n} \eps.
\end{equation}
\end{quote}

Note that for $\ell=n$ we obtain $\Phi :=\Psi_n$ and $\widetilde{K} := K_n$ that satisfy the conditions of the lemma (because $L_{j_1\ldots j_n}$ are exactly the cubes $Q_j$), so proving the above claim suffices to prove the lemma.

Let us first consider the case $\ell=1$. Then
$$
\mathscr{Q}=L_1\cup L_2\cup\cdots\cup L_{2^k}=\left(\left[0,\frac{1}{2^k}\right]\cup \left[\frac{1}{2^k},\frac{2}{ 2^k}\right]\cup\cdots\cup\left[\frac{2^k-1}{2^k},1\right]\right)\times [0,1]^{n-1}
$$
is a decomposition of a rectangular box into a union of  adjacent boxes, exactly as in Lemma~\ref{T8}. By Lemma \ref{T25} we can find a diffeomorphism $\Psi_1:\mathscr{Q}\to \mathscr{Q}$ and a~compact set $K_1 \subset K$ such that $\Psi_1 = \id$ in a neighborhood of $K_1 \cup \partial \mathscr{Q}$,
$$
|K \setminus K_1| < 2^{1-n} \eps \text{ and } \int_{L_{j}} f(x) dx = \int_{\Psi_1(L_{j})} g(x) dx \quad \text{ for all }j\in \{1,\ldots,2^k\}.
$$

Assume now that the claim  holds for some $\ell$, $1\leq \ell<n$, so that there exists a~diffeomorphism $\Psi_\ell:\mathscr{Q}\to \mathscr{Q}$ and a~compact set $K_\ell \subset K$ satisfying \eqref{eq106}, \eqref{eq10}, \eqref{eq66}.

For any $j_1,\ldots,j_\ell$
$$
\int_{L_{j_1\ldots j_\ell}} f(x) dx =\int_{\Psi_\ell(L_{j_1\ldots j_\ell})} g(x) dx=\int_{L_{j_1\ldots j_\ell}}(g\circ \Psi_\ell)(x) J_{\Psi_\ell}(x) dx=\int_{L_{j_1\ldots j_\ell}}\tilde{g}_\ell(x) dx
$$
for $\tilde{g}_\ell(x)=(g\circ \Psi_\ell)(x) J_{\Psi_\ell}(x)$.
Since $\Psi_\ell=\id$ near $\partial \mathscr{Q}$, the diffeomorphism $\Psi_\ell$ is orientation preserving and hence $J_{\Psi_\ell}>0$.
Since $\Psi_\ell=\id$ near $K_\ell$, $D\Psi_\ell=\mathcal{I}$ on $K_\ell$, and hence
$f(x)=g(x)=\tilde{g}_\ell(x)$ for a.\,e.\ $x \in K_\ell$.

Now, let us fix $j_1\ldots j_\ell$. Note that
\begin{equation*}
\begin{split}
L_{j_1\ldots j_{\ell }}&=L_{j_1\ldots j_\ell\, 1}\cup L_{j_1\ldots j_\ell\, 2}\cup\cdots\cup L_{j_1\ldots j_\ell\, 2^k}\\
&=P_{j_1\ldots j_\ell}\times \left(\left[0,\frac{1}{2^k}\right]\cup \left[\frac{1}{2^k},\frac{2}{ 2^k}\right]\cup\cdots\cup\left[\frac{2^k-1}{2^k},1\right]\right)\times [0,1]^{n-\ell-1}
\end{split}
\end{equation*}
is again a decomposition of the rectangular box $L_{j_1\ldots j_\ell}$ into a union of adjacent boxes isometric to that in Lemma~\ref{T8}, satisfying assumptions of Lemma~\ref{T25} for the functions $f$, $\tilde{g}_\ell$, and the compact set $K_\ell \cap L_{j_1 \ldots j_\ell}$. Therefore, we can find a diffeomorphism $\Upsilon_{j_1\ldots j_\ell}:L_{j_1\ldots j_\ell}\to L_{j_1\ldots j_\ell}$ and a~compact set $\widetilde{K}_{j_1 \ldots j_\ell}\subset K_\ell \cap L_{j_1 \ldots j_\ell}$ such that $\Upsilon_{j_1\ldots j_\ell} = \id$ in a~neighborhood of $\widetilde{K}_{j_1 \ldots j_\ell} \cup \partial L_{j_1\ldots j_\ell}$,
$$
\int_{L_{j_1\ldots j_\ell\,j_{\ell+1}}} f(x) dx = \int_{\Upsilon_{j_1\ldots j_\ell}(L_{j_1\ldots j_\ell\,j_{\ell+1}})} \tilde{g}_\ell(x) dx \quad \text{ for all }j_{\ell + 1}\in \{1,\ldots,2^k\}
$$
and
\begin{equation} \label{eq73}
| \left( K_\ell \cap L_{j_1 \ldots j_\ell} \right) \setminus \widetilde{K}_{j_1 \ldots j_\ell}| < 2^{-k\ell}\,2^{\ell - n} \, \eps.
\end{equation}

Since the diffeomorphisms $\Upsilon_{j_1\ldots j_\ell}$ are identity near $\partial L_{j_1\ldots j_\ell}$, they agree near the boundary of adjacent boxes $L_{j_1\ldots j_\ell}$ and thus we can glue them to a diffeomorphism $\Upsilon:\mathscr{Q}\to \mathscr{Q}$, identity near $\partial \mathscr{Q}$. Setting $K_{\ell+1} = \bigcup_{j_1 \ldots j_\ell} \widetilde{K}_{j_1 \ldots j_\ell}$, we see that $K_{\ell+1}\subset K_\ell$, that $\Upsilon(x) = x$ in a neighborhood of $K_{\ell + 1}$ and that for any $j_1,\ldots,j_{\ell+1}$
\[
\begin{split}
\int_{L_{j_1\ldots j_\ell\,j_{\ell+1}}} f(x) dx
&=
\int_{\Upsilon(L_{j_1\ldots j_\ell\,j_{\ell+1}})} \tilde{g}_\ell(x) dx=\int_{\Upsilon(L_{j_1\ldots j_\ell\,j_{\ell+1}})} (g\circ \Psi_\ell)(x)J_{\Psi_\ell}(x) dx\\
&=
\int_{(\Psi_\ell\circ\Upsilon)(L_{j_1\ldots j_\ell\,j_{\ell+1}})} g(x) dx.
\end{split}
\]
Moreover, by \eqref{eq73} we can see that
$$
|K_\ell \setminus K_{\ell + 1}| = \sum_{j_1 \ldots j_\ell} |\left( K_\ell \cap L_{j_1 \ldots j_\ell} \right) \setminus \widetilde{K}_{j_1 \ldots j_\ell}| < 2^{\ell - n} \eps,
$$
which implies that
$$
|K \setminus K_{\ell+1}| = |K \setminus K_\ell | + |K_\ell \setminus K_{\ell + 1}| < 2^{\ell + 1 - n} \eps.
$$

Therefore, we set $\Psi_{\ell+1}=\Psi_\ell\circ\Upsilon$, which is identity in a neighborhood of  $K_{\ell + 1}\cup\partial \mathscr{Q}$ and consequently satisfies the claim for $\ell + 1$ in place of $\ell$, which completes the inductive step and the proof.
\end{proof}

Moreover, we immediately see that the diameters of the cubes $Q_j$ can be made arbitrarily small by taking large $k$. However, we have no control over the diameters of $\Phi(Q_j)$. The next proposition corrects that.

\begin{proposition}
\label{T43}
Let $\mathscr{Q}=[0,1]^n$ and $K$ be a~compact subset of $\mathscr{Q}$. Assume that $f, g \in L^1(\mathscr{Q})$, $f,g>0$ a.\,e.\ satisfy
\begin{equation} \label{eq88}
\int_{\mathscr{Q}} f(x) \, dx = \int_{\mathscr{Q}} g(x) \, dx \text{ and } f(x) = g(x) \text{ for a.\,e.\ } x \in K.
\end{equation}
Then for any $\eps >0 $, $\eta > 0$, there exist a diffeomorphic dyadic partition $\mathscr{Q}=\bigcup_{j=1}^{2^{nN}} P_j$, a diffeomorphism $\Psi:\mathscr{Q}\to \mathscr{Q}$ and a~compact set $\widetilde{K} \subset K$,  with $|K \setminus \widetilde{K}| < \eta$ such that $\Psi=\id$ in a neighborhood of $\widetilde{K}\cup \partial \mathscr{Q}$, $\diam P_j<\eps$,  $\diam \Psi(P_j)<\eps$,
and
$$
\displaystyle\int_{P_j} f(x)\, dx = \int_{\Psi(P_j)} g(x)\, dx,
\quad
\text{for }  j=1,2,\ldots,2^{nN}.
$$
\end{proposition}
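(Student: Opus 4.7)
The plan is a two-stage application of Lemma~\ref{T9}. The first stage matches masses at a coarse dyadic scale and produces a global diffeomorphism $\Phi_1$; once this $\Phi_1$ is fixed, its uniform continuity dictates a fine enough second level at which to refine. The second stage then applies Lemma~\ref{T9} again inside each coarse cell, with $f$ and $g$ playing reversed roles, to realign integrals at the finer level without enlarging the image cells.

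Stage one. Pick $k_1$ so large that $\sqrt{n}\,2^{-k_1}<\varepsilon$ and apply Lemma~\ref{T9} to $(f,g)$ at level $k_1$ with tolerance $\eta/2$. We obtain a diffeomorphism $\Phi_1:\mathscr{Q}\to\mathscr{Q}$ and a compact $\widetilde K_1\subset K$ with $|K\setminus\widetilde K_1|<\eta/2$, $\Phi_1=\id$ near $\widetilde K_1\cup\partial \mathscr{Q}$, and $\int_{R_j^{(1)}}f=\int_{\Phi_1(R_j^{(1)})}g$ for each cube $R_j^{(1)}$ of the standard dyadic partition at level $k_1$. Since $\Phi_1$ is now fixed and uniformly continuous on $\overbar{\mathscr{Q}}$, we pick $k_2\geq k_1$ with $\diam\Phi_1(E)<\varepsilon$ whenever $\diam E\leq \sqrt{n}\,2^{-k_2}$.

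Stage two. For each $j$ set $\tilde g_j(x):=g(\Phi_1(x))\,J_{\Phi_1}(x)$ on $R_j^{(1)}$. Then $\tilde g_j>0$ a.e., and by change of variables $\int_{R_j^{(1)}}\tilde g_j=\int_{\Phi_1(R_j^{(1)})}g=\int_{R_j^{(1)}}f$; moreover $\tilde g_j=g=f$ on $\widetilde K_1\cap R_j^{(1)}$, since $\Phi_1=\id$ there and $f=g$ on $K$. Apply Lemma~\ref{T9}, rescaled from $\mathscr{Q}$ to $R_j^{(1)}$ by an affine map, to the pair $(\tilde g_j,f)$, with compact set $\widetilde K_1\cap R_j^{(1)}$, dyadic sub-partition $\{R_{j,i}^{(2)}\}_i$ of $R_j^{(1)}$ into cubes of side $2^{-k_2}$, and tolerance $\eta\,2^{-nk_1-1}$. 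This yields a diffeomorphism $\alpha_j:R_j^{(1)}\to R_j^{(1)}$ and a compact $\widetilde K_{j,2}\subset \widetilde K_1\cap R_j^{(1)}$ with $\alpha_j=\id$ near $\widetilde K_{j,2}\cup \partial R_j^{(1)}$ and
\begin{equation*}
\int_{R_{j,i}^{(2)}}\tilde g_j=\int_{\alpha_j(R_{j,i}^{(2)})}f\qquad\text{for every }i.
\end{equation*}

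Gluing and verification. Because each $\alpha_j$ is identity near $\partial R_j^{(1)}$, the maps $\alpha_j^{-1}$ glue to a diffeomorphism $\Theta:\mathscr{Q}\to\mathscr{Q}$ with $\Theta|_{R_j^{(1)}}=\alpha_j^{-1}$. Set $N:=k_2$, $\Psi:=\Phi_1\circ\Theta$, $P_{j,i}:=\Theta^{-1}(R_{j,i}^{(2)})=\alpha_j(R_{j,i}^{(2)})$, and $\widetilde K:=\bigcup_j\widetilde K_{j,2}$. Then $\{P_{j,i}\}$ is a diffeomorphic dyadic partition witnessed by $\Theta$; the inclusion $P_{j,i}\subset R_j^{(1)}$ yields $\diam P_{j,i}<\varepsilon$, while $\Psi(P_{j,i})=\Phi_1(R_{j,i}^{(2)})$ has diameter less than $\varepsilon$ by the choice of $k_2$. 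Both $\Theta$ and $\Phi_1$ are the identity near $\widetilde K\cup\partial\mathscr{Q}$, so $\Psi=\id$ there; the integrals match via
\begin{equation*}
\int_{P_{j,i}}f=\int_{\alpha_j(R_{j,i}^{(2)})}f=\int_{R_{j,i}^{(2)}}\tilde g_j=\int_{\Phi_1(R_{j,i}^{(2)})}g=\int_{\Psi(P_{j,i})}g,
\end{equation*}
and $|K\setminus\widetilde K|\leq |K\setminus\widetilde K_1|+\sum_j |(\widetilde K_1\cap R_j^{(1)})\setminus \widetilde K_{j,2}|<\eta/2+2^{nk_1}\cdot \eta\,2^{-nk_1-1}=\eta$. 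The main obstacle is the image-diameter bound $\diam\Psi(P_{j,i})<\varepsilon$, which Lemma~\ref{T9} alone does not provide; it is overcome precisely by freezing $\Phi_1$ before choosing $k_2$, so that uniform continuity of the \emph{fixed} $\Phi_1$ controls $\diam \Phi_1(R_{j,i}^{(2)})$ via the smallness of the sub-cubes.
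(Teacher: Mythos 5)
Your proposal is correct and follows essentially the same two-stage strategy as the paper's proof: first apply Lemma~\ref{T9} at a coarse level to obtain $\Phi_1$, then use uniform continuity of the now-fixed $\Phi_1$ to choose the refinement level, transport $g$ by $\Phi_1$ to define $\tilde g_j = (g\circ\Phi_1)J_{\Phi_1}$ (the paper's $\tilde f$), apply Lemma~\ref{T9} again cell-by-cell with roles reversed, and glue. Up to the notational choice of taking $\Theta$ to be the inverse of the paper's glued map, the constructions coincide step for step.
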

\begin{remark}
Recall that here, the diffeomorphic dyadic partition means that there is a diffeomorphism $\Theta:\mathscr{Q}\to \mathscr{Q}$ such that $\Theta(Q_j)=P_j$, where $\mathscr{Q}=\bigcup_{j=1}^{2^{nN}}Q_j$ is the standard dyadic partition of $\mathscr{Q}$
into $2^{nN}$ identical cubes of side-length $2^{-N}$. In fact, the diffeomorphism $\Theta$ constructed in the proof will have the additional property that $\Theta=\id$ in a neighborhood of $\partial\mathscr{Q}$.
\end{remark}
\begin{remark}
The idea is to take the diffeomorphism $\Phi$ from Lemma~\ref{T9} and then apply
a version of Lemma~\ref{T9} for a diffeomorphic dyadic partition to each of the sets $\Phi(Q_j)$ and the inverse diffeomorphism $\Phi^{-1}:\Phi(Q_j)\to Q_j$. While we do not have control of the diameters of the sets $\Phi(Q_j)$, after the construction described here, we will partition $\Phi(Q_j)$ into sets of as small diameters as we wish.
\end{remark}

\begin{proof}
Choose $k\in \bbbn$ such that $2^{-k}\sqrt{n}<\eps$ and let $\mathscr{Q}=\bigcup_{j=1}^{2^{nk}} Q_j$ be the dyadic decomposition into $2^{nk}$ identical cubes of side-length $2^{-k}$, so $\diam Q_j<\eps$. Let $\Phi$ be the diffeomorphism and $K_1 \subset K$ the compact set provided by Lemma~\ref{T9} so that $\Phi = \id$ in a~neighborhood of $K_1 \cup \partial \mathscr{Q}$, $|K \setminus K_1| < \eta/2$ and
$$
\int_{Q_j} f(x)\, dx = \int_{\Phi(Q_j)} g(x)\, dx
\quad
\text{for } j=1,2,\ldots,2^{nk}.
$$

The diffeomorphism $\Phi$ is uniformly continuous in $\mathscr{Q}$; let $\delta>0$ be such that
\begin{equation}
\label{eq117}
|\Phi(x)-\Phi(y)|<\eps
\text{ whenever } |x-y|<\delta.
\end{equation}
Let $\ell\in \bbbn$ satisfy $2^{-(k+\ell)}\sqrt{n}<\delta$ and
consider the dyadic partition $\mathscr{Q}=\bigcup_{j=1}^{2^{nk}}\bigcup_{i=1}^{2^{n\ell}} \tilde{P}_{ij}$ into identical cubes of side-length $2^{-(\ell+k)}$, so that each cube $Q_j$ is partitioned into $2^{n\ell}$ identical cubes $\tilde{P}_{ij}$. Clearly, $\diam\widetilde{P}_{ij}<\delta$.

For any $j$ we have, bearing in mind that $J_{\Phi}>0$,
$$
\int_{Q_j} f(x)\, dx = \int_{\Phi(Q_j)} g(x)\, dx=\int_{Q_j} (g\circ \Phi)(x)J_{\Phi}(x)\,dx,
$$
so if we denote $\tilde{g}(x):=f(x)$, $\tilde{f}(x):=(g\circ\Phi)(x)J_{\Phi}(x)$,\footnote{$\widetilde{g}=f$ is not a typo; we reverse notation of $f$ and $g$ for a reason.} we have
$$
\int_{Q_j} \tilde{f}(x)\, dx = \int_{Q_j} \tilde{g}(x)\, dx.
$$
Observe that for a.\,e.\ $x \in K_1$, $\tilde{f}(x) = \tilde{g}(x)$. Indeed,
$$
\widetilde{f}(x)=(g\circ\Phi)(x)J_\Phi(x)=g(x)=f(x)=\widetilde{g}(x).
$$

Applying Lemma \ref{T9} with $Q_j$ in place of $\mathscr{Q}$, $\tilde{f}$ for $f$ and $\tilde{g}$ for $g$, partition $Q_j=\bigcup_{i=1}^{2^{n\ell}}\tilde{P}_{ij}$ and the compact set $K_1 \cap Q_j$ yields a~diffeomorphism $\Theta_j:Q_j\to Q_j$ and a~compact set $K_{2j} \subset K_1 \cap Q_j$ such that $\Theta_j = \id$ in a~neighborhood of $K_{2j} \cup \partial Q_j$,
\begin{equation}
\label{eq19}
|(K_1 \cap Q_j) \setminus K_{2j}| < \tfrac{\eta}{2} \, 2^{-nk} \text{ and } \int_{\tilde{P}_{ij}} \tilde{f}(x)\, dx = \int_{\Theta_j(\tilde{P}_{ij})} \tilde{g}(x)\, dx.
\end{equation}

 Since $\Theta_j$ are identity near $\partial Q_j$, they glue together to a diffeomorphism $\Theta:\mathscr{Q}\to \mathscr{Q}$, identity near $\partial \mathscr{Q}$. Set
$$
\widetilde{K} := \bigcup \nolimits_j K_{2j} \subset K_1 \subset K.
$$
By~\eqref{eq19}, $|K_1 \setminus \widetilde{K}| < \eta/2$ and consequently,
\begin{equation} \label{eq107}
|K \setminus \widetilde{K}| = |K \setminus K_1| + |K_1 \setminus \widetilde{K}| < \eta.
\end{equation}
Moreover, $\Theta = \id$ in a~neighborhood of $\widetilde{K}$. Let $P_{ij}=\Theta(\tilde{P}_{ij})$.

Then, we check that
\begin{equation*}
\begin{split}
\int_{P_{ij}} f(x)\,dx&=\int_{\Theta(\tilde{P}_{ij})} \tilde{g}(x) \, dx \overset{\eqref{eq19}}{=}\int_{\tilde{P}_{ij}} \tilde{f}(x)\, dx\\
&=\int_{\tilde{P}_{ij}} (g\circ\Phi)(x)J_{\Phi}(x)\, dx\\
&=\int_{\Phi(\tilde{P}_{ij})} g(x)\,dx=\int_{\Phi(\Theta^{-1}(P_{ij}))} g(x)\,dx,
\end{split}
\end{equation*}
so setting $\Psi=\Phi\circ\Theta^{-1}$ we get
$$
\int_{P_{ij}} f(x)\,dx=\int_{\Psi(P_{ij})} g(x)\,dx.
$$
Since $\Theta^{-1} = \id$ and $\Phi = \id$ in a~neighborhood of $\widetilde{K} \cup \partial \mathscr{Q}$, $\Psi = \id$ there, as well. We have already checked in~\eqref{eq107} that $|K \setminus \widetilde{K}| < \eta$.  Finally, $P_{ij}\subset Q_j$, so $\diam P_{ij}\leq \diam Q_j<\eps$; also $\diam \tilde{P}_{ij}=2^{-(k+\ell)}\sqrt{n}<\delta$, so  $\diam \Psi(P_{ij})=\diam \Phi(\tilde{P}_{ij})<\eps$ by \eqref{eq117}.

Note also that the partition of $\mathscr{Q}$ into the $2^{n(\ell+k)}$ sets $P_{ij}$ is diffeomorphic to the dyadic partition $\mathscr{Q}=\bigcup_{j=1}^{2^{nk}}\bigcup_{i=1}^{2^{n\ell}} \tilde{P}_{ij}$ by the diffeomorphism $\Theta:\mathscr{Q}\to \mathscr{Q}$.
\end{proof}

Eventually, we will need an analogue of the previous proposition in terms of diffeomorphic images of cubes, which is again a~consequence of the change of variables theorem.
\begin{proposition}
\label{T32}
Let $\mathscr{Q}=[0,1]^n$ and $\Theta: \mathscr{Q} \to \bbbr^n$ be a~diffeomorphism of the closed unit cube $\mathscr{Q}$. Let $\widetilde{\mathscr{Q}} = \Theta(\mathscr{Q})$ and $K$ be a~compact subset of $\widetilde{\mathscr{Q}}$. Suppose that
$f, g \in L^1({\widetilde{\mathscr{Q}}})$, $f,g>0$ a.\,e.\ satisfy
\begin{equation}
\label{eq120}
\int_{\widetilde{\mathscr{Q}}} f(x) \, dx = \int_{\widetilde{\mathscr{Q}}} g(x) \, dx \text{ and } f(x) = g(x) \text{ for a.\,e.\ } x \in K.
\end{equation}
Then, for any $\varepsilon, \eta >0$ there exists a diffeomorphic dyadic partition  $\widetilde{\mathscr{Q}}=\bigcup_{j=1}^{2^{nN}} \widetilde{P}_j$, a diffeomorphism $\Psi:\widetilde{\mathscr{Q}}\to \widetilde{\mathscr{Q}}$ and a~compact set $\widetilde{K} \subset K$, with $|K \setminus \widetilde{K}| < \eta$ such that $\Psi = \id$ in a~neighborhood of $\widetilde{K} \cup \partial \widetilde{\mathscr{Q}}$, $\diam \widetilde{P}_j<\eps$, $\diam \Psi(\widetilde{P}_j)<\eps$ and
$$
\int_{\widetilde{P}_j} f(x)\, dx = \int_{\Psi(\widetilde{P}_j)} g(x)\, dx
\quad
\text{for } j=1,2,\ldots,2^{nN}.
$$
\end{proposition}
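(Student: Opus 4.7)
The plan is to reduce Proposition~\ref{T32} to Proposition~\ref{T43} by pulling back everything through the diffeomorphism $\Theta$, applying Proposition~\ref{T43} on the standard unit cube $\mathscr{Q}$, and then pushing forward through $\Theta$. Since $\Theta$ is a diffeomorphism of the closed cube $\mathscr{Q}$, it extends to a diffeomorphism of a neighborhood of $\mathscr{Q}$, so both $\Theta$ and $\Theta^{-1}$ are uniformly continuous with bounded (away from $0$ and $\infty$) Jacobians on $\mathscr{Q}$ and $\widetilde{\mathscr{Q}}$ respectively.

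First I would define pullbacks
$$
\hat{f}(y) := f(\Theta(y))\, J_\Theta(y), \qquad \hat{g}(y) := g(\Theta(y))\, J_\Theta(y) \qquad \text{for } y \in \mathscr{Q},
$$
and $\hat{K} := \Theta^{-1}(K) \subset \mathscr{Q}$. Then $\hat{f}, \hat{g} \in L^1(\mathscr{Q})$ are positive a.e., the change of variables formula gives $\int_{\mathscr{Q}} \hat{f} = \int_{\widetilde{\mathscr{Q}}} f = \int_{\widetilde{\mathscr{Q}}} g = \int_{\mathscr{Q}} \hat{g}$, and at points $y \in \hat{K}$ (where $\Theta(y) \in K$) we have $\hat{f}(y) = \hat{g}(y)$ a.e., since $f(\Theta(y)) = g(\Theta(y))$ a.e. on $K$ and $\Theta$ preserves null sets. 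Thus \eqref{eq88} holds for $(\hat{f}, \hat{g}, \hat{K})$.

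Next, by uniform continuity of $\Theta$, choose $\varepsilon' > 0$ so small that $\diam \Theta(A) < \varepsilon$ whenever $A \subset \mathscr{Q}$ has $\diam A < \varepsilon'$. By the lower bound on $J_\Theta$, choose $\eta' > 0$ so small that $|\Theta(E)| < \eta$ whenever $E \subset \mathscr{Q}$ has $|E| < \eta'$. Then apply Proposition~\ref{T43} with these $\varepsilon'$ and $\eta'$ in place of $\varepsilon$ and $\eta$, for the data $(\hat{f}, \hat{g}, \hat{K})$. This yields a diffeomorphic dyadic partition $\mathscr{Q} = \bigcup_{j=1}^{2^{nN}} P_j$, a diffeomorphism $\Psi_0 : \mathscr{Q} \to \mathscr{Q}$, and a compact set $K_0 \subset \hat{K}$ with $|\hat{K} \setminus K_0| < \eta'$, such that $\Psi_0 = \id$ in a neighborhood of $K_0 \cup \partial \mathscr{Q}$, $\diam P_j < \varepsilon'$, $\diam \Psi_0(P_j) < \varepsilon'$, and $\int_{P_j} \hat{f} = \int_{\Psi_0(P_j)} \hat{g}$ for each $j$.

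Finally, define
$$
\widetilde{P}_j := \Theta(P_j), \qquad \Psi := \Theta \circ \Psi_0 \circ \Theta^{-1} : \widetilde{\mathscr{Q}} \to \widetilde{\mathscr{Q}}, \qquad \widetilde{K} := \Theta(K_0) \subset K.
$$
The family $\{\widetilde{P}_j\}$ is a diffeomorphic dyadic partition of $\widetilde{\mathscr{Q}}$: if $\Theta_0$ witnesses that $\{P_j\}$ is diffeomorphically dyadic on $\mathscr{Q}$ (so $\Theta_0(P_j) = Q_j$ is the standard dyadic partition), then $\Theta_0 \circ \Theta^{-1}$ witnesses it for $\{\widetilde{P}_j\}$. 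The map $\Psi$ is a diffeomorphism of $\widetilde{\mathscr{Q}}$ equal to the identity near $\widetilde{K} \cup \partial \widetilde{\mathscr{Q}}$ because $\Psi_0 = \id$ near $K_0 \cup \partial \mathscr{Q}$ and $\Theta$ maps $K_0$ onto $\widetilde{K}$ and $\partial \mathscr{Q}$ onto $\partial \widetilde{\mathscr{Q}}$. The diameter estimates $\diam \widetilde{P}_j = \diam \Theta(P_j) < \varepsilon$ and $\diam \Psi(\widetilde{P}_j) = \diam \Theta(\Psi_0(P_j)) < \varepsilon$ follow from our choice of $\varepsilon'$. The measure estimate $|K \setminus \widetilde{K}| = |\Theta(\hat{K} \setminus K_0)| < \eta$ follows from our choice of $\eta'$. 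Lastly, the change of variables formula gives
$$
\int_{\widetilde{P}_j} f = \int_{P_j} \hat{f} = \int_{\Psi_0(P_j)} \hat{g} = \int_{\Theta(\Psi_0(P_j))} g = \int_{\Psi(\widetilde{P}_j)} g,
$$
completing the proof. The main obstacles are essentially bookkeeping: choosing $\varepsilon'$ and $\eta'$ in terms of the moduli of continuity and Jacobian bounds of $\Theta$ on the compact cube $\mathscr{Q}$, and verifying that the pulled-back partition remains diffeomorphically dyadic after composition with $\Theta^{-1}$.
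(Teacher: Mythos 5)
Your proof is correct and matches the paper's intended approach: the paper gives no explicit argument for Proposition~\ref{T32} beyond remarking that it ``is again a~consequence of the change of variables theorem,'' which is exactly the pullback--pushforward reduction to Proposition~\ref{T43} that you carry out. Two small slips worth fixing: the pullback densities should read $\hat f(y)=f(\Theta(y))\,|J_\Theta(y)|$ and $\hat g(y)=g(\Theta(y))\,|J_\Theta(y)|$ (as in Corollary~\ref{T10}) unless you explicitly assume $\Theta$ orientation-preserving, and the choice of $\eta'$ uses the \emph{upper} bound on $|J_\Theta|$ over $\mathscr{Q}$ (so that $|\Theta(E)|\le \bigl(\sup_{\mathscr{Q}}|J_\Theta|\bigr)\,|E|$), not the lower bound as written.
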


At the end of this section, we add a technical lemma concerning connectedness of complements of compact sets.
The proof of Lemma~\ref{T40} explains the shape of the set $K$ on Figure~\ref{fig:1}.
\begin{lemma}
\label{T40}
Let $E$ be a~measurable subset of a~domain $\Omega$ in $\bbbr^n$. Then, for any $\eps >0$ there is a~compact set $K \subset E$ such that $|E \setminus K| < \eps$ and $\Omega \setminus K$ is connected.
\end{lemma}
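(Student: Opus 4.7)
The plan is a two-step construction: I would first apply inner regularity of Lebesgue measure to approximate $E$ from inside by a compact set $K_1$, and then excise a carefully chosen family of thin open tubes from $K_1$ in order to merge all connected components of the complement in $\Omega$ into a single one. (The conclusion is only meaningful when $|E|<\infty$, which holds automatically in every intended application, since there $E$ lies inside a cube.)

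For the first step, I would pick a compact $K_1\subset E$ with $|E\setminus K_1|<\eps/2$ by inner regularity. The set $\Omega\setminus K_1$ is open in $\bbbr^n$ and nonempty (otherwise $\Omega=K_1$ would be both open and compact, which is impossible), so it decomposes into at most countably many connected components $\{V_j\}_{j\in J}$, $J\subset\bbbn$. I would fix an index $j_0\in J$ together with a basepoint $q\in V_{j_0}$, and for every other $j$ pick $p_j\in V_j$. Since $\Omega$ is path-connected, I can join each $p_j$ to $q$ by a polygonal arc $\gamma_j\subset\Omega$ and then thicken $\gamma_j$ to an open tubular neighborhood $T_j\subset\Omega$ with $|T_j|<\eps\cdot 2^{-(j+1)}$. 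Setting
\[
K:=K_1\setminus\bigcup\nolimits_{j\in J,\, j\neq j_0}T_j,
\]
the set $\bigcup_j T_j$ is open, so $K$ is the intersection of the bounded closed set $K_1$ with a closed set, and therefore compact; clearly $K\subset K_1\subset E$, and $|E\setminus K|\leq|E\setminus K_1|+\sum_j|T_j|<\eps$.

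It remains to show that $\Omega\setminus K$ is connected, which I would do by joining an arbitrary $p\in\Omega\setminus K$ to $q$ by a path staying outside $K$. If $p\in\Omega\setminus K_1$, then $p\in V_j$ for some $j$, and I would connect $p$ to $p_j$ inside $V_j$ (or directly to $q$ if $j=j_0$) and then follow $\gamma_j$ to $q$. If instead $p\in K_1\setminus K$, then $p\in T_j$ for some $j\neq j_0$, and I can connect $p$ to $q$ inside the open and path-connected set $T_j$, which contains $q$ as an endpoint of $\gamma_j\subset T_j$. By construction each $T_j$ is disjoint from $K$, so all such paths lie in $\Omega\setminus K$. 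The main (and only) technical point requiring attention is that $J$ may be infinite, so one must weight the tubes geometrically as $|T_j|<\eps\cdot 2^{-(j+1)}$ to keep their total measure bounded; apart from this, I do not anticipate any further obstacle.
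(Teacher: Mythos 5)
Your proof is correct, and it takes a genuinely different route from the paper's. The paper's proof intersects the compact approximant $K_1$ with a finite union $C=\bigcup_{i=1}^N C_i$ of fat Cantor sets placed inside disjoint closed cubes $P_i$ that almost cover $K_1$, sets $K=K_1\cap C$, and deduces connectedness of $\Omega\setminus K$ from the topological fact that the complement of a totally disconnected compact set in a domain of $\bbbr^n$, $n\geq 2$, is connected (here made concrete by observing that $P_i\setminus C_i$ is path-connected and contains $\partial P_i$). You instead keep $K_1$ itself and excise from it a countable family of thin open tubes $T_j\subset\Omega$ joining the (at most countably many) components $V_j$ of $\Omega\setminus K_1$ to a common basepoint $q$; the geometric weights $|T_j|<\eps\,2^{-(j+1)}$ control the total removed measure, the union $\bigcup_j T_j$ is open so $K=K_1\setminus\bigcup_j T_j$ stays compact, and the tubes together with the $V_j$'s make $\Omega\setminus K$ path-connected. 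Both arguments are elementary and correct; the Cantor-set route sidesteps the need to enumerate and control infinitely many components and tubes, while your tube-removal route is more directly geometric and dispenses with the intermediate step of approximating $K_1$ from outside by finitely many disjoint cubes. You are also right that the statement tacitly assumes $|E|<\infty$ (otherwise no compact $K_1\subset E$ with $|E\setminus K_1|<\eps/2$ can exist); the paper's proof relies on this just as silently, and the hypothesis holds in every application since $E\subset\mathscr{Q}$.
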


\begin{proof}
First, let $K_1\subset E$ be a compact set such that $|E\setminus K_1|\leq \eps/2$.

Next, let $K_2\subset\Omega$ denote a finite sum of pairwise disjoint closed cubes, $K_2=\bigcup_{i=1}^N P_i$, such that $|K_1\setminus K_2|<\eps/4$.
In each of the cubes $P_i$ let $C_i$ denote the standard Cantor set of positive measure, such that $|P_i\setminus C_i|<\eps/(4N)$. Denote $C=\bigcup_{i=1}^N C_i$.
Finally, set $K=K_1\cap C$.

Then $K$ is obviously compact,
$$
E\setminus K\subset (E\setminus K_1)\cup (K_1\setminus K_2)\cup (K_2\setminus K)
$$
and $K_2\setminus K\subset \bigcup_{i=1}^N (P_i\setminus C_i)$, thus
$$
|E\setminus K|\leq |E\setminus K_1|+|K_1\setminus K_2|+\sum_{i=1}^N|P_i\setminus C_i|<\eps/2+\eps/4+N\eps/(4N)=\eps.
$$

The fact that $\Omega\setminus K$ is connected follows from construction of the Cantor sets $C_i$. Indeed, in each of the cubes $P_i$ the complement of the set $C_i$ is path-connected and contains $\partial P_i$. It follows that any point $p$ in $\Omega\setminus C$ can be connected to a given point $q$ in $C$ by a path which intersects $C$ in $q$ only. Therefore the complement of any subset of $C$ (in particular, of $K$) is path connected.
(The fact that $\Omega\setminus K$ is connected follows also from more involved results in topological dimension theory \cite[p.\,22 and p.\,48]{HW}.)
\end{proof}

\section{Approximately differentiable functions}
\label{AD}
\begin{definition}[Classical definition]
Let $f:E\to\bbbr$ be a measurable function defined on a measurable set $E\subset\bbbr^n$. We say that
$f$ is {\em approximately differentiable} at $x\in E$ if there is a linear function $L:\bbbr^n\to\bbbr$
such that for any $\eps>0$ the set
$$
\left\{ y\in E:\, \frac{|f(y)-f(x)-L(y-x)|}{|y-x|} <\eps \right\}
$$
has $x$ as a density point.
\end{definition}

The next result provides a useful characterization of approximate differentiability, see \cite[Proposition~5.2]{GoldsteinHajlasz17}.
\begin{lemma}
\label{T1}
A measurable function $f:E\to\bbbr$ defined in a measurable set $E\subset\bbbr^n$ is approximately
differentiable at $x\in E$ if and only if there is a measurable set $E_x\subset E$
and a linear function $L:\bbbr^n\to\bbbr$ such that $x$ is a density point of $E_x$ and
$$
\lim_{E_x\ni y\to x} \frac{|f(y)-f(x)-L(y-x)|}{|y-x|} = 0.
$$
\end{lemma}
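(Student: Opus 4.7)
The plan is to show the two directions separately, with the nontrivial direction using a standard diagonal/annular construction.

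The easy direction is ($\Leftarrow$). Assume there exist a linear map $L$ and a measurable set $E_x\subset E$ with $x$ a density point of $E_x$ satisfying the stated limit. Fix $\eps>0$. By the limit condition there exists $r_0>0$ such that for every $y\in E_x\cap B(x,r_0)\setminus\{x\}$ we have $|f(y)-f(x)-L(y-x)|<\eps|y-x|$. Hence
$$
E_x\cap B(x,r_0)\subset A_\eps:=\left\{y\in E:\, \frac{|f(y)-f(x)-L(y-x)|}{|y-x|}<\eps\right\}\cup\{x\},
$$
so $|A_\eps\cap B(x,r)|\ge |E_x\cap B(x,r)|$ for every $r<r_0$. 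Since $x$ is a density point of $E_x$, it is also a density point of $A_\eps$, which is the classical definition.

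The interesting direction is ($\Rightarrow$). Assume $f$ is approximately differentiable at $x$ with linear approximation $L$, so every set $A_\eps$ has $x$ as a density point. Choose a decreasing sequence $\eps_k\searrow 0$ (e.g.\ $\eps_k=1/k$). For each $k$, since $x$ is a density point of $A_{\eps_k}$, there is $r_k>0$ with
$$
\frac{|A_{\eps_k}\cap B(x,r)|}{|B(x,r)|}>1-2^{-k}\qquad\text{for all } 0<r\le r_k.
$$
By passing to a subsequence, we may assume $r_k$ strictly decreases to $0$. Define
$$
E_x:=\{x\}\cup\bigcup_{k=1}^\infty\bigl(A_{\eps_k}\cap (B(x,r_k)\setminus B(x,r_{k+1}))\bigr).
$$
This set is measurable. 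For any $r$ with $r_{k+1}<r\le r_k$ one has $E_x\cap B(x,r)\supset A_{\eps_k}\cap B(x,r)$, so $|B(x,r)\setminus E_x|\le 2^{-k}|B(x,r)|$; since $k\to\infty$ as $r\to 0^+$, $x$ is a density point of $E_x$. Moreover, if $y\in E_x\setminus\{x\}$ with $|y-x|<r_k$, then $y$ belongs to some annulus with index $\ge k$, so $y\in A_{\eps_j}$ for some $j\ge k$, giving
$$
\frac{|f(y)-f(x)-L(y-x)|}{|y-x|}<\eps_j\le\eps_k.
$$
Since $\eps_k\to 0$, the desired limit along $E_x$ follows.

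The only delicate point is the diagonal construction in the ($\Rightarrow$) direction: one must choose $r_k$ decreasing fast enough to get density of $E_x$ while still capturing the uniform smallness inherited from $A_{\eps_k}$; the annular decomposition above is the standard way to do both simultaneously.
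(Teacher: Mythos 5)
The paper itself does not prove Lemma~\ref{T1}; it cites \cite[Proposition~5.2]{GoldsteinHajlasz17}. So your blind proof has to be judged on its own. Your overall strategy is the standard (and correct) one: the easy direction follows from the definitions, and the harder direction uses a diagonal/annular construction. However, there is a concrete error in the density estimate in the $(\Rightarrow)$ direction.

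You claim that for $r_{k+1}<r\le r_k$ one has $E_x\cap B(x,r)\supset A_{\eps_k}\cap B(x,r)$. This inclusion is false, and in fact runs the wrong way. Since $\eps_{k+1}<\eps_k$, the sets $A_\eps$ satisfy $A_{\eps_{k+1}}\subset A_{\eps_k}$, so on the deeper annuli $B(x,r_j)\setminus B(x,r_{j+1})$ with $j>k$ your set $E_x$ picks up only the \emph{smaller} set $A_{\eps_j}$, not $A_{\eps_k}$. Thus $E_x\cap B(x,r)\subset A_{\eps_k}\cap B(x,r)\cup\{x\}$, which does not directly bound $|B(x,r)\setminus E_x|$. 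The fix is to estimate annulus by annulus and sum a geometric series: for $r_{k+1}<r\le r_k$,
\begin{align*}
|B(x,r)\setminus E_x|
&\le |B(x,r)\setminus A_{\eps_k}|+\sum_{j>k}|B(x,r_j)\setminus A_{\eps_j}|\\
&< 2^{-k}|B(x,r)|+\sum_{j>k}2^{-j}|B(x,r_j)|
\le 2^{-k+1}|B(x,r)|,
\end{align*}
using $r_j< r$ for $j>k$ and the density choice of $r_j$. Since $k\to\infty$ as $r\to0^+$, $x$ is indeed a density point of $E_x$, so your conclusion is salvageable, but as written the argument contains a genuine gap. The remaining parts of the proof — the $(\Leftarrow)$ direction and the pointwise estimate along $E_x$ in the $(\Rightarrow)$ direction — are correct.
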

If a function $f$ is approximately differentiable at $x$, $L$ is unique, and we call it approximate derivative of $f$ at $x$. The approximate derivative will be denoted by $D_{\rm a}f(x)$ or simply by $Df(x)$.
\begin{lemma}
\label{T7}
Assume that $f,g:U\to\bbbr$, $U\subset\bbbr^n$, are given measurable functions and $E\subset U$ is a measurable set. If $f$ is approximately differentiable a.e. and $f=g$ in $E$, then $g$ is approximately differentiable a.e. in $E$ and
\begin{equation}
\label{eq7}
D_{\rm a} g(x)=D_{\rm a} f(x)
\quad
\text{for almost all } x\in E.
\end{equation}
\end{lemma}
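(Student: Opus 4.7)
The plan is to combine the characterization of approximate differentiability given by Lemma~\ref{T1} with the Lebesgue density theorem. The key point is that approximate differentiability is a purely local property measured along a single set of density $1$ at $x$, so restricting that witness set to $E$ is harmless as long as $x$ is itself a density point of $E$.

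First, I would pick the ``good'' subset of $E$ on which to work. By the Lebesgue density theorem, almost every $x\in E$ is a density point of $E$; by hypothesis, almost every $x\in U$ (hence almost every $x\in E$) is a point of approximate differentiability of $f$. Intersecting, we get a full-measure subset $E_0\subset E$ of points $x$ that are density points of $E$ and where $D_{\rm a}f(x)$ exists. It will suffice to show that $g$ is approximately differentiable at every $x\in E_0$ with $D_{\rm a}g(x)=D_{\rm a}f(x)$.

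Fix $x\in E_0$. By Lemma~\ref{T1}, there is a measurable set $E_x\subset U$ having $x$ as a density point and satisfying
$$
\lim_{E_x\ni y\to x}\frac{|f(y)-f(x)-D_{\rm a}f(x)(y-x)|}{|y-x|}=0.
$$
Set $E_x':=E_x\cap E$. Since the complement of an intersection is contained in the union of the complements, the elementary inclusion
$$
B(x,r)\setminus E_x'\subset \bigl(B(x,r)\setminus E_x\bigr)\cup\bigl(B(x,r)\setminus E\bigr)
$$
shows that $x$ is also a density point of $E_x'$. On $E_x'$ we have $f=g$, and $f(x)=g(x)$ because $x\in E$, so the displayed limit above remains valid with $f$ replaced by $g$ and $y$ restricted to $E_x'$. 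Applying Lemma~\ref{T1} in the reverse direction yields that $g$ is approximately differentiable at $x$ with $D_{\rm a}g(x)=D_{\rm a}f(x)$, which gives \eqref{eq7}.

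There is no real obstacle; the entire content of the argument is the observation that intersecting the witness set $E_x$ with $E$ preserves density at $x$, and this is immediate from the subadditivity of measure applied to complements in $B(x,r)$.
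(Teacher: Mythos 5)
Your argument is correct and is essentially the same as the paper's (which only states, without elaboration, that \eqref{eq7} holds at density points of $E$ where $f$ is approximately differentiable, as a consequence of Lemma~\ref{T1}). You have merely spelled out the routine details: intersect the witness set $E_x$ with $E$, note that density at $x$ is preserved, and apply Lemma~\ref{T1} in both directions.
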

\begin{proof}
It easily follows from Lemma~\ref{T1} that \eqref{eq7} is satisfied whenever $x\in E$ is a~density point of $E$, such that $f$ is approximately differentiable at $x$.
\end{proof}
The next result was proved by Whitney \cite{whitney1951}.
\begin{lemma}
\label{T2}
Let $U\subset\bbbr^n$ be open. Then, a function $f:U\to\bbbr$ is approximately differentiable a.e. if an only if for every $\eps>0$ there is a function $f_\eps\in C^1(\bbbr^n)$ such that $|\{x\in U:\, f(x)\neq f_\eps(x)\}| <\eps$.
\end{lemma}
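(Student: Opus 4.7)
The plan splits into two directions.

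The easy direction (existence of $C^1$-approximants implies a.e.\ approximate differentiability) follows from Lemma~\ref{T7} together with the Lebesgue density theorem. Given $f_k\in C^1(\bbbr^n)$ with $|E_k|<1/k$ for $E_k:=\{f\neq f_k\}\cap U$, set $F_k:=U\setminus E_k$ and let $F_k^*$ be the set of density points of $F_k$ belonging to $F_k$. Since $f_k$ is approximately differentiable everywhere and $f=f_k$ on $F_k$, Lemma~\ref{T7} gives approximate differentiability of $f$ almost everywhere in $F_k$, with $D_{\rm a}f=Df_k$. As $|U\setminus F_k^*|<1/k$, the union $\bigcup_k F_k^*$ has full measure in $U$.

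The non-trivial direction would occupy the bulk of the argument and proceeds through Whitney's classical $C^1$-extension theorem. Fix $\eps>0$. First I would apply Lusin's theorem to $f$ and to the (measurable) entries of $D_{\rm a}f$, obtaining a compact set $K_0\subset U$ with $|U\setminus K_0|<\eps/4$ on which $f$ and $D_{\rm a}f$ are continuous. Next, to make the approximate differentiability uniform, define
\begin{equation*}
A_{m,k}=\bigl\{x\in K_0:\bigl|\{y\in B(x,r):|f(y)-f(x)-D_{\rm a}f(x)(y-x)|>|y-x|/m\}\bigr|<|B(x,r)|/m\text{ for every }r\in(0,1/k)\bigr\}.
\end{equation*}
For each $m$, $\bigcup_k A_{m,k}$ covers $K_0$ up to a null set, so I would pick $k_m$ with $|K_0\setminus A_{m,k_m}|<\eps\,2^{-m-2}$ and then take a compact subset $K\subset\bigcap_m A_{m,k_m}$ consisting only of density points of itself. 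This yields $|U\setminus K|<\eps$ together with uniform quantitative control on approximate differentiability and uniform continuity of both $f|_K$ and $D_{\rm a}f|_K$.

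The crux of the plan, and the step I expect to be the main obstacle, is to verify the Whitney jet condition on $K$: for every $\eta>0$ there is $\delta>0$ such that
\begin{equation*}
|f(y)-f(x)-D_{\rm a}f(x)(y-x)|\leq \eta|y-x|\qquad\text{for all }x,y\in K\text{ with }|x-y|<\delta.
\end{equation*}
The idea is a swapping argument: for $x,y\in K$ close and $r:=2|x-y|$, the sets of points in $B(x,r)$ where the linearisation at $x$, respectively at $y$, is worse than $r/m$ have relative measure less than $1/m$ (by $K\subset A_{m,k_m}$), whereas $B(x,r)\cap B(y,r)$ still occupies a fixed positive fraction of each ball, so one can pick $z\in B(x,r)\cap B(y,r)$ at which both linearisations are accurate to order $1/m$. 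Subtracting the two pointwise bounds and using $D_{\rm a}f(x)(z-x)=D_{\rm a}f(x)(y-x)+D_{\rm a}f(x)(z-y)$ together with uniform continuity of $D_{\rm a}f$ on the compact set $K$ yields the required bound once $m$ is large and $|x-y|$ is small. With the Whitney condition in hand, Whitney's extension theorem applied to the pair $(f|_K,D_{\rm a}f|_K)$ produces $f_\eps\in C^1(\bbbr^n)$ with $f_\eps=f$ on $K$, so that $|\{f\neq f_\eps\}\cap U|\leq|U\setminus K|<\eps$, completing the plan.
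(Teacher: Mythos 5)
The paper does not actually prove Lemma~\ref{T2}; it cites Whitney's 1951 paper \cite{whitney1951} and uses the result as a black box. Your proposal reconstructs what is essentially Whitney's own argument and the standard modern one: the easy direction via Lemma~\ref{T7} and density points, and the substantive direction via Lusin's theorem, an Egorov-type uniformization of the approximate Taylor remainder, and the $C^1$ Whitney extension theorem. The swapping argument is the right mechanism: with $r=2|x-y|$ the overlap $B(x,r)\cap B(y,r)$ occupies a dimensional fraction of either ball, so for $m$ exceeding a dimensional constant one can find a point $z$ that is good for both $x$ and $y$, and then
\[
|f(y)-f(x)-D_{\rm a}f(x)(y-x)|\le \tfrac{2r}{m}+|D_{\rm a}f(x)-D_{\rm a}f(y)|\,|z-y|
\]
delivers the Whitney jet condition once uniform continuity of $D_{\rm a}f|_K$ is used. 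Three small points deserve a line in a full write-up. First, you should justify measurability of the sets $A_{m,k}$, e.g.\ by restricting $r$ to rationals and invoking joint measurability of $(x,y)\mapsto f(y)-f(x)-D_{\rm a}f(x)(y-x)$ on $K_0\times U$. Second, the phrase ``consisting only of density points of itself'' is neither generally achievable for a compact set nor needed; the swapping argument uses only $x,y\in\bigcap_m A_{m,k_m}$, not a density property internal to $K$. Third, when $|U|=\infty$ Lusin's theorem cannot directly yield a compact $K_0$ with $|U\setminus K_0|<\eps/4$; one should first exhaust $U$ by pieces of finite measure with geometrically decreasing error budgets and arrange the resulting closed set as a union of compacta with uniform positive separation, so that the Whitney condition localizes to each piece. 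None of this affects the soundness of the strategy.
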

It is easy to see that if $x\in \{f=f_\eps\}$ is a density point of the set $\{f=f_\eps\}$, then $f$ is approximately differentiable at $x$ and $D_{\rm a}f(x)=Df_\eps(x)$.

The proof of the next result is similar to that of Lemma~13 in \cite{GoldsteinHajlasz19}.
\begin{lemma}
\label{T3}
Let $U\subset\bbbr^n$ be open.
Assume that $f:U\to\bbbr^n$ is approximately differentiable a.e. and $\det D_{\rm a}f (x)\neq 0$ a.e. Assume that $V\subset\bbbr^n$ is an open set such that
$f(U)\subset V$ and $g:V\to\bbbr^m$ is approximately differentiable a.e. Then
$g\circ f:U\to\bbbr^m$ is approximately differentiable a.e. and
\begin{equation}
\label{eq1}
D_{\rm a}(g\circ f)(x)=D_{\rm a}g(f(x))\cdot D_{\rm a}f(x)
\quad
\text{for almost all $x\in U$.}
\end{equation}
\end{lemma}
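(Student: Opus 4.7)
The plan is to reduce to the classical chain rule for $C^1$ functions by invoking Whitney's approximation theorem (Lemma~\ref{T2}) for both $f$ and $g$, and then to patch the derivative identity together on a countable exhaustion of $U$ by ``good'' sets.

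First I would fix, for every $k\in\bbbn$, mappings $f_k\in C^1(\bbbr^n,\bbbr^n)$ and $g_k\in C^1(\bbbr^n,\bbbr^m)$ with
$$|\{x\in U:\, f(x)\neq f_k(x)\}|<2^{-k},\qquad |\{y\in V:\, g(y)\neq g_k(y)\}|<2^{-k}.$$
Write $A_k=\{x\in U: f(x)=f_k(x)\}$ and $B_k=\{y\in V: g(y)=g_k(y)\}$. By Lemma~\ref{T7}, at almost every density point $x_0$ of $A_k$ that lies in $A_k$ we have $D_{\rm a}f(x_0)=Df_k(x_0)$; by our standing hypothesis, we can additionally assume $\det Df_k(x_0)=\det D_{\rm a}f(x_0)\neq 0$. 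Analogously, at almost every density point $y_0\in B_k$, $D_{\rm a}g(y_0)=Dg_k(y_0)$. By Borel--Cantelli, $|\bigcap_k(U\setminus A_k)|=0$, so almost every $x_0\in U$ lies in $A_k$ for some (in fact, all large) $k$; hence it is enough to prove \eqref{eq1} at a.e.\ density point of each $A_k$.

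Fix such a good point $x_0\in A_k$ and set $y_0:=f(x_0)=f_k(x_0)$. Since $\det Df_k(x_0)\neq 0$ and $f_k\in C^1$, the inverse function theorem gives a neighborhood $U'\ni x_0$ on which $f_k$ is a diffeomorphism onto an open neighborhood $V'\ni y_0$; in particular $f_k$ and $f_k^{-1}$ are bi-Lipschitz on compact subsets of $U'$ and satisfy both Lusin's condition (N) and (N$^{-1}$). Assuming also (as we may on a further set of full measure) that $g$ is approximately differentiable at $y_0$, Lemma~\ref{T1} provides a set $E\subset V'$ having $y_0$ as density point with
$$\lim_{E\ni z\to y_0}\frac{|g(z)-g(y_0)-D_{\rm a}g(y_0)(z-y_0)|}{|z-y_0|}=0.$$
Because $f_k^{-1}$ is Lipschitz on a neighborhood of $y_0$, it preserves null sets and hence density points, so $F:=f_k^{-1}(E)\cap A_k$ has $x_0$ as a density point.

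For $y\in F$ close to $x_0$ we have $f(y)=f_k(y)\in E$ and, since $f_k$ is $C^1$, $|f_k(y)-f_k(x_0)|\leq C|y-x_0|$ with $f_k(y)-f_k(x_0)=Df_k(x_0)(y-x_0)+o(|y-x_0|)$. Splitting
\begin{align*}
&g(f(y))-g(f(x_0))-D_{\rm a}g(y_0)\,Df_k(x_0)(y-x_0)\\
&\qquad=\bigl[g(f_k(y))-g(y_0)-D_{\rm a}g(y_0)(f_k(y)-y_0)\bigr]\\
&\qquad\quad +D_{\rm a}g(y_0)\bigl[f_k(y)-f_k(x_0)-Df_k(x_0)(y-x_0)\bigr],
\end{align*}
both brackets are $o(|y-x_0|)$ as $F\ni y\to x_0$, the first because $f_k(y)\to y_0$ inside $E$ with $|f_k(y)-y_0|\le C|y-x_0|$, the second by the $C^1$ differentiability of $f_k$. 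Thus $g\circ f$ is approximately differentiable at $x_0$ with derivative $D_{\rm a}g(f(x_0))\,D_{\rm a}f(x_0)$, which is exactly \eqref{eq1}. The main obstacle is controlling the composition $f^{-1}(E)$ when $f$ is merely approximately differentiable; invoking Whitney's theorem to replace $f$ locally by $f_k\in C^1$ with $\det Df_k\neq 0$ (hence a local diffeomorphism) is what makes this step routine, and it is why the hypothesis $\det D_{\rm a}f\neq 0$ a.e.\ is essential.
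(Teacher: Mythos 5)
Your proof is correct and follows essentially the same strategy as the paper: apply Whitney's theorem (Lemma~\ref{T2}) to replace $f$ locally by a $C^1$ map $f_k$, use $\det D_{\rm a}f\neq 0$ to make $f_k$ a local diffeomorphism, and then push the approximate differentiability of $g$ through via the bi-Lipschitz control this gives on preimages. One small presentational point: the parenthetical ``(as we may on a further set of full measure)'' justifying that $g$ is approximately differentiable at $y_0=f(x_0)$ is not automatic — the null set $N\subset V$ where $g$ fails to be approximately differentiable must have $f^{-1}(N)\cap A_k$ null, and this already requires the local bi-Lipschitz property of $f_k$ near points with $\det Df_k\neq 0$, made rigorous by covering by countably many neighborhoods where $f_k$ is a diffeomorphism; the paper handles this explicitly by decomposing $E_k$ into bi-Lipschitz pieces $W_i$, and you should do the same before, not after, invoking the approximate differentiability of $g$ at $y_0$. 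A second cosmetic point: you introduce $g_k$ and $B_k$ but never use them — your argument runs directly off Lemma~\ref{T1} for $g$, whereas the paper also applies Whitney to $g$ and then invokes the ordinary $C^1$ chain rule for $g_\ell\circ f_k$ on $Z_{i\ell}$; your direct $\varepsilon$-$\delta$ estimate is a perfectly valid substitute for that last step.
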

\begin{proof}
According to Lemma~\ref{T2}, there is a sequence of functions $f_k\in C^1(\bbbr^n;\bbbr^n)$,
and a sequence of closed sets $E_k\subset\{f_k=f\}$ such that $E_k$ is contained in the
set of density points of the set $\{f_k=f\}$, $|U\setminus E_k|<1/k$, and $\det D_{\rm a}f(x)\neq 0$ for $x\in E_k$.
Note that if $x\in {E}_k$, then $D_{\rm a}f(x)=Df_k(x)$ and hence $\det Df_k\neq 0$ in ${E}_k$.
It suffices to prove that \eqref{eq1} holds at almost all points $x\in E_k$ for all $k$.

The mapping $f_k$ is a diffeomorphism in a neighborhood of every point of $E_k$ and hence we can decompose $E_k=\bigcup_{i=1}^\infty W_{i}$ into countably many compact sets $W_{i}$, such that $f=f_k$ is bi-Lipschitz on $W_{i}$. Clearly, it suffices to prove that \eqref{eq1} is satisfied at almost all points of $W_{i}$.

According to Lemma~\ref{T2}, there is a~sequence of functions $g_\ell\in C^1(\bbbr^n;\bbbr^m)$,
and a~sequence of closed sets $F_\ell\subset\{g_\ell=g\}$ such that $F_\ell$ is contained in the set of density points of the set $\{g_\ell=g\}$, and $|V\setminus F_\ell|<1/\ell$.
Clearly, $D_{\rm a}g(y)=Dg_\ell(y)$ for $y\in F_\ell$.

Let $M_{i\ell}:=f(W_i)\cap F_\ell$. Since $f$ is bi-Lipschitz on $W_i$,
$$
\Big| f(W_i)\setminus\bigcup_{\ell=1}^\infty M_{i\ell}\Big|=0
\quad
\text{implies that}
\quad
\Big| W_i\setminus\bigcup_{\ell=1}^\infty f^{-1}(M_{i\ell})\Big|=0.
$$
Therefore, it suffices to prove that \eqref{eq1} is satisfied at almost all points of each of the sets $Z_{i\ell}:=W_i\cap f^{-1}(M_{i\ell})$. This is, however, obvious because
for $x\in Z_{i\ell}$, $f(x)=f_k(x)$, $D_{\rm a}f(x)=Df_k(x)$,
$g(f(x))=g_\ell(f(x))$, $D_{\rm a}g(f(x))=Dg_\ell(f(x))$ and hence
$$
D(g_\ell\circ f_k)(x)=Dg_\ell(f_k(x))Df_k(x)=D_{\rm a}g(f(x))\cdot D_{\rm a}f(x).
$$
It remains to observe that since $g_\ell\circ f_k\in C^1$, and $g_\ell\circ f_k=g\circ f$ on $Z_{i\ell}$, we have that $D(g_\ell\circ f_k)(x)=D_{\rm a}(g\circ f)(x)$ at all density points of $Z_{i\ell}$ and hence almost everywhere in $Z_{i\ell}$.
\end{proof}

\begin{lemma}
\label{T35}
Let $\Phi: \mathscr{Q} \to \mathscr{Q}$ be an a.\,e.\ approximately differentiable homeomorphism of $\mathscr{Q}$ such that $\int_\mathscr{Q} |\det D_{\rm a} \Phi| = 1$ and $\det D_{\rm a}\Phi \neq 0$ a.\,e. Then $\Phi$ preserves the sets of zero measure, i.\,e., both $\Phi$ and $\Phi^{-1}$ satisfy Lusin's (N) condition.
\end{lemma}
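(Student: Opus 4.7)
The plan is to reduce the statement to a bi-Lipschitz decomposition of $\mathscr{Q}$ modulo a null set, and then compare measures via the area formula.

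\emph{Step 1 (bi-Lipschitz decomposition).} By Whitney's lemma (Lemma~\ref{T2}), for each $k\in\bbbn$ there exist $f_k\in C^1(\bbbr^n;\bbbr^n)$ and a closed set $\widetilde{E}_k\subset\mathscr{Q}$ with $|\mathscr{Q}\setminus\widetilde{E}_k|<1/k$ such that $\Phi=f_k$ on $\widetilde{E}_k$; by taking $\widetilde{E}_k$ to consist of density points of $\{\Phi=f_k\}$, we obtain $D_{\rm a}\Phi=Df_k$ on $\widetilde{E}_k$. Removing the null set where $\det D_{\rm a}\Phi=0$ produces a subset $E_k$ on which $Df_k$ is invertible, so $f_k$ is a local $C^1$-diffeomorphism near every point of $E_k$. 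As in the proof of Lemma~\ref{T3}, each $E_k$ can be further decomposed into countably many compact subsets on neighborhoods of which $f_k$ is bi-Lipschitz. Enumerating all these compact sets (over all $k$ and all pieces) and replacing each one by its difference with the union of the previously chosen ones, we obtain a countable, pairwise disjoint family $\{W_j\}_{j\in\bbbn}$ of compact subsets of $\mathscr{Q}$ such that $V:=\mathscr{Q}\setminus\bigcup_j W_j$ satisfies $|V|=0$ and, on each $W_j$, $\Phi$ coincides with a bi-Lipschitz map which we still denote by $f_j$.

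\emph{Step 2 (area identity and $|\Phi(V)|=0$).} Because Lipschitz maps satisfy the Lusin (N) condition, Theorem~\ref{T42} applied to $f_j$ on an open neighborhood of $W_j$ (with test function $\chi_{f_j(W_j)}$) yields
\begin{equation*}
|\Phi(W_j)|=|f_j(W_j)|=\int_{W_j}|\det Df_j|\,dx=\int_{W_j}|\det D_{\rm a}\Phi|\,dx.
\end{equation*}
The sets $\Phi(W_j)$ are pairwise disjoint, since $\Phi$ is injective and the $W_j$ are disjoint; summing therefore gives
\begin{equation*}
\Big|\Phi\Big(\bigcup\nolimits_j W_j\Big)\Big|=\sum_{j}\int_{W_j}|\det D_{\rm a}\Phi|\,dx=\int_{\mathscr{Q}}|\det D_{\rm a}\Phi|\,dx=1.
\end{equation*}
Because $\Phi$ is a bijection of $\mathscr{Q}$ onto itself, $1=|\Phi(\mathscr{Q})|=\big|\Phi\big(\bigcup_j W_j\big)\big|+|\Phi(V)|$, which forces $|\Phi(V)|=0$.

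\emph{Step 3 (Lusin (N) in both directions).} For any null $A\subset\mathscr{Q}$ we have $\Phi(A)\subset\Phi(V)\cup\bigcup_j f_j(A\cap W_j)$: the first set is null by Step~2 and each $f_j(A\cap W_j)$ is null because $f_j$ is Lipschitz. For any null $B\subset\mathscr{Q}$, $\Phi^{-1}(B)\subset V\cup\bigcup_j (f_j|_{W_j})^{-1}\bigl(B\cap\Phi(W_j)\bigr)$, and each summand is null because $V$ is null and $(f_j|_{W_j})^{-1}$ is Lipschitz. The main obstacle is setting up the bi-Lipschitz decomposition cleanly in Step~1, but this follows the standard construction already carried out in the proof of Lemma~\ref{T3}.
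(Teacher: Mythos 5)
Your proof is correct, and it takes a genuinely different route from the paper's. The paper's argument leans directly on Theorem~\ref{T42} (Federer's change of variables), in particular on the clause that $\Phi$ may be redefined on a null set $Z$ to obtain a map $\Psi$ satisfying the Lusin (N) condition; it then derives the identity $\int_E|\det D_{\rm a}\Phi|=|\Phi(E\setminus Z)|$, specializes to $E=\mathscr{Q}$ to conclude $|\Phi(Z)|=0$, and for $\Phi^{-1}$ uses the resulting integral identity $\int_E|\det D_{\rm a}\Phi|=|\Phi(E)|$ together with the a.e.\ nonvanishing of the Jacobian. Your argument bypasses the ``redefinition'' clause of Federer's theorem entirely: following the decomposition technique already carried out in the proof of Lemma~\ref{T3}, you build an explicit countable bi-Lipschitz decomposition $\mathscr{Q}=V\cup\bigcup_j W_j$ with $|V|=0$, apply only the elementary area formula for the $C^1$-diffeomorphic pieces $f_j$, and read off $|\Phi(V)|=0$ from the same measure-balance equation $\sum_j\int_{W_j}|\det D_{\rm a}\Phi|=1=|\mathscr{Q}|$. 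The (N) property in both directions then follows directly from the bi-Lipschitz structure of the pieces (Lipschitz forward, Lipschitz inverse) plus $|\Phi(V)|=|V|=0$, whereas the paper deduces (N) for $\Phi^{-1}$ from the integral identity and $\det D_{\rm a}\Phi\neq 0$ a.e. Your version is thus more self-contained and elementary—it needs only the classical area formula, not the full Federer machinery—and it produces a cleaner structural picture (a null set plus countably many bi-Lipschitz pieces), at the cost of some extra construction; the paper's version is shorter because it delegates that construction to the black box of Theorem~\ref{T42}. One small stylistic remark: your invocation of Theorem~\ref{T42} for the $C^1$-diffeomorphisms $f_j$ is overkill—the classical change-of-variables formula for $C^1$-diffeomorphisms suffices there, which is precisely what makes your approach more elementary.
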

\begin{proof}
By Theorem \ref{T42}, there is a set $Z\subset \mathscr{Q}$ of measure zero and $\Psi:\mathscr{Q}\to\bbbr^n$ such that $\Phi=\Psi$ on $\mathscr{Q}\setminus Z$, $\Psi$ satisfies Lusin's (N) condition and \eqref{eq84} holds for any measurable $f:\bbbr^n\to \bbbr$.

Since any measurable set is contained in a Borel set of the same measure, we may assume that $Z$ is Borel, and thus $\Phi(Z)$ is Borel.

Observe that whenever $E\subset \mathscr{Q}$ is measurable, so is $\Psi(E)$. Indeed, if  $E'\subset \mathscr{Q}$ is Borel, then $\Psi(E'\setminus Z)=\Phi(E'\setminus Z)$ is Borel (because homeomorphisms preserve Borel sets) and $\Psi(E')$ differs from $\Psi(E'\setminus Z)$ by a subset of $\Psi(Z)$, which has measure zero, thus $\Psi(E')$ is measurable, and for any measurable $E\subset \mathscr{Q}$ there is a Borel $E'\supset E$, $|E'\setminus E|$=0, and thus $\Psi(E)$ differs from $\Psi(E')$ by a subset of $\Psi(E'\setminus E)$ which has measure zero.

For any measurable $E\subset \mathscr{Q}$ setting $f=\chi_{\Psi(E)}$ in \eqref{eq84} yields
\begin{equation}\label{eq113}
\int_E |\det D_{\rm a} \Psi(x)|\,dx=\int_{\Psi(E)}N(\Psi,y)\, dy
\end{equation}
Note that
\begin{itemize}
    \item on $E\setminus Z$ we have $\Psi=\Phi$, thus by Lemma \ref{T7}, $\det D_{\rm a}\Psi=\det D_{\rm a}\Phi$ a.e. in $E$;
    \item since $|\Psi(Z)|=0$ and $\Psi^{-1}(y)=\Phi^{-1}(y)$ is a singleton for $y\in \Psi(E)\setminus \Psi(Z)$, ${N(\Psi,y)=1}$ a.e. in $\Psi(E)$;
    \item $|\Psi(E)|=|\Psi(E\setminus Z)|=|\Phi(E\setminus Z)|$.
\end{itemize}
The above observations combined with \eqref{eq113} give
\begin{equation}
\label{eq114}
\int_E |\det D_{\rm a} \Phi(x)|\,dx=|\Psi(E)|=|\Phi(E\setminus Z)|.
\end{equation}
In particular, for $E=\mathscr{Q}$ we get
$$
1=\int_\mathscr{Q}|\det D_{\rm a} \Phi(x)|\,dx=|\Phi(\mathscr{Q}\setminus Z)|,
$$
therefore $|\Phi(Z)|=0$.
Note also that this and \eqref{eq114} yield that
\begin{equation}
\label{eq118}
\int_E |\det D_{\rm a} \Phi(x)|\,dx=|\Phi(E)|.
\end{equation}

Assume now $N\subset \mathscr{Q}$ is a set of zero measure. Then $\Phi(N)=\Phi(N\setminus Z)\cup\Phi(N\cap Z)=\Psi(N\setminus Z)\cup\Phi(N\cap Z)\subset \Psi(N)\cup \Phi(Z)$, and since both $\Phi(Z)$ and $\Psi(N)$ have measure zero, so has $\Phi(N)$. This proves that $\Phi$ satisfies Lusin's (N) condition.

To prove that $\Phi^{-1}$ satisfies Lusin's (N) condition, assume $A\subset \mathscr{Q}$ is a set of zero measure. We need to show that $\Phi^{-1}(A)$ has measure zero. \emph{A priori} it need not be even measurable.

Let $A'\subset \mathscr{Q}$, $A\subset A'$, be a Borel set of zero measure. Then $\Phi^{-1}(A')$ is Borel and thus measurable. Since we have just proven that $\Phi$ satisfies Lusin's (N) condition, setting $E=\Phi^{-1}(A')$ in \eqref{eq118} we get
$$
\int_{\Phi^{-1}(A')} |\det D_{\rm a} \Phi(x)|\,dx=|A'|=0,
$$
and since $\det D_{\rm a} \Phi\neq 0$ a.e., the set $\Phi^{-1}(A')$ has measure zero, and so has its subset, $\Phi^{-1}(A)$.
\end{proof}
A~somewhat different proof of the following lemma can be also found in~\cite[Corollary 5.1]{DOnofrio14}, we provide it here for convenience of the reader.
\begin{lemma}
\label{T44}
Let $U \subset \bbbr^n$ be open. Assume that $\Phi: U \to \bbbr^n$ is an a.\,e.\ approximately differentiable homeomorphism satisfying Lusin's (N) condition. Then $\Phi^{-1}$ is also a.\,e.\ approximately differentiable and
\begin{equation}
\label{eq102}
D_{\rm a} \Phi^{-1} (y) = (D_{\rm a} \Phi)^{-1} (\Phi^{-1}(y))
\quad
\text{ for a.\,e.\ } y \in \Phi(U).
\end{equation}
\end{lemma}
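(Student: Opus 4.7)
The plan is to combine Whitney's theorem (Lemma~\ref{T2}), the change of variables formula (Theorem~\ref{T42}), and the inverse function theorem, following the decomposition argument from the proof of Lemma~\ref{T3}.

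First, split $U$ into three measurable pieces: a null set $N$ on which $\Phi$ is not approximately differentiable, the set $B$ where $\Phi$ is approximately differentiable with $\det D_{\rm a}\Phi=0$, and $A=U\setminus(N\cup B)$. Since $\Phi$ satisfies the Lusin (N) condition, $|\Phi(N)|=0$. The set $B$ is the union of a Borel set and a null set, so $\Phi(B)$ is measurable (homeomorphisms preserve Borel sets, and (N) handles the null piece). Applying Theorem~\ref{T42} with $f=\chi_{\Phi(B)}$, and using that $\Phi$ is a bijection onto $\Phi(U)$ (so $N(\Phi,y)=1$ there), gives
$$
0 = \int_B |\det D_{\rm a}\Phi(x)|\, dx = |\Phi(B)|.
$$
Therefore $\Phi(A)$ exhausts $\Phi(U)$ modulo a null set, and it suffices to establish approximate differentiability of $\Phi^{-1}$ with the claimed formula at almost every point of $\Phi(A)$.

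By Whitney's theorem, for each $k$ I would choose $f_k\in C^1(\bbbr^n,\bbbr^n)$ with $|U\setminus E_k|<1/k$, where $E_k=\{\Phi=f_k\}$; at every density point of $E_k$ one has $D_{\rm a}\Phi=Df_k$ (by the remark following Lemma~\ref{T2} and Lemma~\ref{T7}). Restricting to the density points of $E_k\cap A$, the derivative $Df_k$ is invertible, so by the inverse function theorem and a standard countable exhaustion (exactly as in the proof of Lemma~\ref{T3}), up to a null set,
$$
A\cap E_k = \bigcup_{i=1}^{\infty} W_{k,i},
$$
where each $W_{k,i}$ is compact and $f_k$ is a $C^1$-diffeomorphism in an open neighborhood of each point of $W_{k,i}$. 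Since $|A\setminus\bigcup_{k,i}W_{k,i}|=0$, the Lusin (N) condition yields $|\Phi(A)\setminus\bigcup_{k,i}\Phi(W_{k,i})|=0$.

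Fix $k,i$. On $W_{k,i}$ we have $\Phi=f_k$, so by injectivity of $\Phi$, $\Phi^{-1}=f_k^{-1}$ on $\Phi(W_{k,i})=f_k(W_{k,i})$, using the local branch of $f_k^{-1}$ supplied by the inverse function theorem. At almost every $y\in\Phi(W_{k,i})$, the point $y$ is a density point of $\Phi(W_{k,i})$, and since $f_k^{-1}$ is $C^1$ in a neighborhood of $y$, Lemma~\ref{T7} applied to $\Phi^{-1}$ and $f_k^{-1}$ shows that $\Phi^{-1}$ is approximately differentiable at $y$ with
$$
D_{\rm a}\Phi^{-1}(y) \;=\; Df_k^{-1}(y) \;=\; \bigl(Df_k(\Phi^{-1}(y))\bigr)^{-1} \;=\; \bigl(D_{\rm a}\Phi(\Phi^{-1}(y))\bigr)^{-1}.
$$
Assembling over $k$ and $i$ gives \eqref{eq102} on $\Phi(U)$ modulo a null set. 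The only genuine obstacle is disposing of the degenerate set $B$; this is precisely what the change-of-variables identity settles, provided one is careful enough about measurability of $\Phi(B)$ to invoke it, which is why the Lusin (N) hypothesis is essential.
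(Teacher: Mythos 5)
Your proof is correct and follows the same core strategy as the paper's: Whitney's theorem (Lemma~\ref{T2}) to produce $C^1$ approximations $f_k$ agreeing with $\Phi$ on large sets, the inverse function theorem to obtain local $C^1$ inverses of $f_k$ where $\det Df_k=\det D_{\rm a}\Phi\neq 0$, transfer to $\Phi^{-1}$ via the agreement of $\Phi^{-1}$ with the local branch on the image set, and the change-of-variables formula \eqref{eq115} combined with Lusin's (N) condition to dispose of the degenerate set where $\det D_{\rm a}\Phi=0$. There is, however, one genuine difference in the bookkeeping that is worth pointing out. The paper first reduces to the case where $U$ is bounded and $\det D_{\rm a}\Phi\in L^1(U)$, and then runs a quantitative $\eps$--$\eta$ argument: it chooses $\eta$ so that $|E|<\eta$ forces $\int_E|\det D_{\rm a}\Phi|<\eps$, picks a single compact $K_\eta$ with $|U\setminus K_\eta|<\eta$, and shows via \eqref{eq115} that $|\Phi(U)\setminus\Phi(K_\eta\setminus Z)|<\eps$. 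You instead perform a countable exhaustion $\bigcup_{k,i}W_{k,i}$ of the non-degenerate set $A$ with $|A\setminus\bigcup_{k,i}W_{k,i}|=0$, and invoke the Lusin (N) condition once to conclude that the image of the remaining null set is null. Your version avoids both the reduction to bounded $U$ and the integrability hypothesis on the Jacobian, at the cost of a slightly more elaborate combinatorial decomposition. Two small points of rigor you elided but should be aware of: first, when you identify $\Phi^{-1}$ with the local branch $f_k^{-1}$ on $\Phi(W_{k,i})\cap B(y,r)$, the radius $r$ must be taken small enough that $\Phi^{-1}(B(y,r))$ lands inside the domain of the local inverse; this uses the continuity of $\Phi^{-1}$ (the paper's proof implicitly does the same when it asserts $g_y=\Phi^{-1}$ on $\Phi(K_\eta\setminus Z)\cap B_y$). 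Second, the identification $Df_k(\Phi^{-1}(y))=D_{\rm a}\Phi(\Phi^{-1}(y))$ requires $\Phi^{-1}(y)$ to be a density point of $E_k$, which you ensure by restricting $W_{k,i}$ to density points of $E_k\cap A$ (these are automatically density points of the larger set $E_k$).
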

\begin{proof}
Since it suffices to prove \eqref{eq102} on every subdomain $U'\Subset U$, and clearly,
$$
\int_{U'}|\det D_{\rm a}\Phi(x)|\, dx=|\Phi(U')|<\infty,
$$
we can assume that $U$ is bounded and $\det D_{\rm a}\Phi\in L^1(U)$ is integrable.

Fix $\eps >0$ and choose $\eta$ so that for any $|E| < \eta$, $\int_E |\det D_{\rm a} \Phi| < \eps$. By Lemma~\ref{T2} and approximation of measurable sets with compact ones, we find $f_\eta \in C^{1}(\bbbr^n, \bbbr^n)$ and a~compact set $K_\eta$ such that
\begin{equation} \label{eq101}
f_\eta = \Phi \text{ and } D f_\eta = D_{\rm a} \Phi \text{ on } K_\eta \quad \text{and} \quad |U \setminus K_\eta| < \eta.
\end{equation}

Set
$$
Z:= \{x \in U: \, \Phi \text{ is approximately differentiable at } x \text{ and } \det D_{\rm a} \Phi = 0 \}.
$$

We aim to show that $\Phi^{-1}$ is approximately differentiable at density points of the set $\Phi(K_\eta \setminus Z)$ and that \eqref{eq102} holds there. This suffices to prove the lemma. Indeed, by \eqref{eq115} $\Phi(Z)=0$ and thus, by the choice of $\eta$,
$$
| \Phi(U) \setminus \Phi(K_\eta \setminus Z)| \leq |\Phi(U \setminus K_\eta)| + |\Phi(Z)| < \eps.
$$
Consequently, approximate differentiability of $\Phi^{-1}$ a.\,e.\ on $\Phi(U)$ follows from arbitrariness of $\eps$.

Observe that for any $y \in \Phi(K_\eta \setminus Z)$, $f_\eta(\Phi^{-1}(y))=y$, $\det Df_\eta (\Phi^{-1}(y)) \neq 0$, and the inverse function theorem implies that $f_\eta$ is a~diffeomorphism on some neighborhood of $\Phi^{-1}(y)$. Denote the inverse diffeomorphism of $f_\eta$ restricted to a neighborhood of $\Phi^{-1}(y)$ by $g_y$ so we may assume that $g_y$ is defined in $B_y:=B(y,r)$ for some $r>0$ and $f_\eta\circ g_y=\id$ on $B_y$. In particular,
$$
Dg_y(z)=(Df_\eta)^{-1}(g_y(z))
\text{ for } z\in B_y
\quad
\text{and}
\quad
g_y=\Phi^{-1} \text{ on } \Phi(K_\eta\setminus Z)\cap B_y.
$$
Take a~density point $y$ of the set $\Phi(K_\eta \setminus Z)$ and set
$$
L := (D_{\rm a} \Phi)^{-1}( \Phi^{-1} (y)) \overset{\eqref{eq101}}{=} (Df_\eta)^{-1} (g_y(y))=Dg_y(y).
$$
Since $g_y$ is differentiable at $y$,
$$
\lim_{\substack{z \to y, \\ z \in \Phi(K_\eta \setminus Z)}} \frac{|\Phi^{-1}(z) - \Phi^{-1}(y) - L (z - y)|}{|z - y|} = \lim_{\substack{z \to y, \\ z \in \Phi(K_\eta \setminus Z)}} \frac{|g_y(z) - g_y(y) - L(z -y)|}{|z-y|} = 0,
$$
as for $z$ sufficiently close to $y$, $z \in B(y, r)$, where $g_y$ is well-defined. By Lemma~\ref{T1}, this shows that $\Phi^{-1}$ is approximately differentiable at density points of $\Phi(K_\eta \setminus Z)$ and that \eqref{eq102} holds. As explained earlier, this finishes the proof.
\end{proof}

\subsection{Reflection on a measurable set}
Let
\begin{equation}
\label{eq110}
\mathcal{R}:=
\left[
\begin{array}{ccccc}
1       &      0       &   \ldots   &   0      &     0      \\
0       &      1       &   \ldots   &   0      &     0      \\
\vdots  &   \vdots     &   \ddots   &  \vdots  &    \vdots  \\
0       &      0       &   \ldots   &   1      &     0      \\
0       &      0       &   \ldots   &   0      &    -1      \\
\end{array}
\right]\, .
\end{equation}
The main result of this section is the following theorem. It is a generalization of the main result in \cite{GoldsteinHajlasz17}, which is recalled as Theorem~\ref{T33} above.
\begin{theorem}
\label{T5}
Let $\mathscr{Q}=[0,1]^n$ and let $E\subset \mathscr{Q}$ be a measurable set. Then there
exists an almost everywhere approximately differentiable homeomorphism $\Phi$ of the cube $\mathscr{Q}$ onto itself, such that $\Phi|_{\partial \mathscr{Q}}=\id$ and
\begin{equation}
\label{eq3}
D_{\rm a}\Phi(x)=
\begin{cases}
\mathcal{R} & \text{for almost all } x\in E,\\
\mathcal{I} & \text{for almost all } x\in \mathscr{Q}\setminus E.
\end{cases}
\end{equation}
Moreover, $\Phi$ is a limit, in the uniform metric $d$, of $C^\infty$ volume preserving diffeomorphisms that are identity in a neighborhood of $\partial \mathscr{Q}$.
\end{theorem}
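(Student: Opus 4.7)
The plan is to realize $\Phi$ as a uniform limit of a sequence of homeomorphisms $\Phi_k$, each obtained from the previous by composing with a ``reflection correction'' supported on a finite pairwise disjoint union of small closed cubes inside $\mathring{\mathscr{Q}}$. Theorem~\ref{T33} itself is the building block: applied on an axis-aligned closed cube $Q\subset\mathring{\mathscr{Q}}$ by affine rescaling, it produces a homeomorphism $\psi_Q:\mathscr{Q}\to\mathscr{Q}$ equal to the identity outside $Q$, with $D_{\rm a}\psi_Q=\mathcal{R}$ a.\,e.\ on $Q$ and $\mathcal{I}$ a.\,e.\ elsewhere, realized as a uniform limit of $C^\infty$ volume-preserving diffeomorphisms that are identity near $\partial\mathscr{Q}$. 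For any finite pairwise disjoint family $\mathscr{F}=\bigsqcup_i Q_i\subset\mathring{\mathscr{Q}}$ of such cubes, gluing the $\psi_{Q_i}$'s (each identity in a neighborhood of its own $\partial Q_i$) produces $\tilde\psi_\mathscr{F}$ with $D_{\rm a}\tilde\psi_\mathscr{F}=\mathcal{R}$ a.\,e.\ on $\mathscr{F}$ and $\mathcal{I}$ elsewhere, still a uniform limit of such diffeomorphisms.

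Set $T(x):=\mathcal{R}$ on $E$ and $\mathcal{I}$ on $\mathscr{Q}\setminus E$, and $\Phi_0:=\id$. Inductively I maintain that $\Phi_k$ is an a.\,e.\ approximately differentiable homeomorphism of $\mathscr{Q}$, identity on $\partial\mathscr{Q}$, with $D_{\rm a}\Phi_k(x)\in\{\mathcal{I},\mathcal{R}\}$ a.\,e., so $|\det D_{\rm a}\Phi_k|=1$ a.\,e., and by Lemma~\ref{T35} $\Phi_k$ preserves Lebesgue measure. Let $D_k:=\{x:D_{\rm a}\Phi_k(x)\neq T(x)\}$ with $|D_k|<2^{-k}$. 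I choose a pairwise disjoint family $\mathscr{F}_{k+1}\subset\mathring{\mathscr{Q}}$ of closed cubes, each of diameter less than $\delta_k$, where $\delta_k>0$ is picked small enough that $\delta_k<2^{-(k+1)}$ and $|\Phi_k^{-1}(y)-\Phi_k^{-1}(z)|<2^{-(k+1)}$ whenever $|y-z|<\delta_k$ (uniform continuity of $\Phi_k^{-1}$ on the compact set $\mathscr{Q}$), and arrange that $|\Phi_k(D_k)\vartriangle\mathscr{F}_{k+1}|<2^{-(k+2)}$. Set $\Phi_{k+1}:=\tilde\psi_{\mathscr{F}_{k+1}}\circ\Phi_k$. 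The chain rule for approximate derivatives (Lemma~\ref{T3}) together with $\mathcal{R}\mathcal{I}=\mathcal{R}$ and $\mathcal{R}^2=\mathcal{I}$ yields $D_{k+1}=D_k\vartriangle\Phi_k^{-1}(\mathscr{F}_{k+1})$, and measure-preservation of $\Phi_k$ gives $|D_{k+1}|=|\Phi_k(D_k)\vartriangle\mathscr{F}_{k+1}|<2^{-(k+1)}$. Since $\tilde\psi_{\mathscr{F}_{k+1}}$ displaces points by at most the diameter of their enclosing cube, the choice of $\delta_k$ forces $d(\Phi_{k+1},\Phi_k)<2^{-k}$, so $\{\Phi_k\}$ is Cauchy in the uniform metric and converges by Lemma~\ref{T6} to a homeomorphism $\Phi:\mathscr{Q}\to\mathscr{Q}$ with $\Phi|_{\partial\mathscr{Q}}=\id$. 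Because composition is continuous on $C(\mathscr{Q},\mathscr{Q})$ in the uniform metric, each $\Phi_k$ is itself a uniform limit of $C^\infty$ volume-preserving diffeomorphisms identity near $\partial\mathscr{Q}$, and a diagonal extraction yields the required approximating sequence for $\Phi$.

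The delicate step, and the main obstacle, is transferring pointwise derivative data to the limit, since approximate derivatives need not respect uniform convergence. Two Borel--Cantelli arguments resolve this. First, $\sum_k|D_k|<\infty$, so for a.\,e.\ $x$ there is $K_1(x)$ with $D_{\rm a}\Phi_k(x)=T(x)$ for all $k\geq K_1$. Second, $|\mathscr{F}_{k+1}|<2^{-k}+2^{-(k+2)}$ is summable, and measure-preservation of $\Phi_k$ makes $\sum_k|\Phi_k^{-1}(\mathscr{F}_{k+1})|<\infty$; setting $S_K:=\bigcup_{k\geq K}\Phi_k^{-1}(\mathscr{F}_{k+1})$ one has $|S_K|\to 0$. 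For any $y\notin S_K$, $\Phi_k(y)\notin\mathscr{F}_{k+1}$ for every $k\geq K$, hence $\tilde\psi_{\mathscr{F}_{k+1}}(\Phi_k(y))=\Phi_k(y)$ and therefore $\Phi(y)=\Phi_K(y)$. Using monotonicity $S_{K+1}\subset S_K$ together with the Lebesgue density theorem, for a.\,e.\ $x$ one can pick $K\geq K_1(x)$ for which $x$ is a density point of $\mathscr{Q}\setminus S_K$; intersecting this set with the full-density set witnessing approximate differentiability of $\Phi_K$ at $x$ yields a set on which $\Phi$ coincides with $\Phi_K$ and which still has $x$ as density point, so $\Phi$ is approximately differentiable at $x$ with $D_{\rm a}\Phi(x)=D_{\rm a}\Phi_K(x)=T(x)$, completing the proof.
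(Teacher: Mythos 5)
Your proposal is correct and follows essentially the same route as the paper's proof: iterated reflection corrections (scaled copies of Theorem~\ref{T33}) on finite disjoint families of small cubes approximating the image of the current ``bad'' set, measure-preservation of each $\Phi_k$ via Lemma~\ref{T35}, Cauchyness in the uniform metric from the diameter bounds, and a Borel--Cantelli style argument to pass the approximate derivative to the limit. The only stylistic difference is in the final step: the paper sets $A_j=\mathscr{Q}\setminus\bigcup_{\ell\geq j}L_\ell$, proves $|B_j\cap A_j|=0$, and invokes Lemma~\ref{T7}, whereas you split this into two Borel--Cantelli claims (on $D_k$ and on $S_K$) and re-derive the density-point argument that Lemma~\ref{T7} encapsulates.
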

\begin{remark}
\label{R2}
As $\det D_{\rm a}\Phi = \pm 1$ a.\,e.\ on $\mathscr{Q}$, Lemma~\ref{T35} implies that $\Phi$ satisfies Lusin's (N) condition. It then follows from \eqref{eq115} that the homeomorphism $\Phi$ is measure preserving. Moreover, by Lemma~\ref{T44}, $\Phi^{-1}$ is also approximately differentiable a.\,e.\ on $\mathscr{Q}$ and $D_{\rm a} \Phi^{-1} (y) = (D_{\rm a} \Phi)^{-1} (\Phi^{-1}(y))$ for a.\,e.\ $y \in \mathscr{Q}$. The fact that $\Phi$ is volume preserving can also be directly concluded from the fact that $\Phi$ is a limit of volume preserving diffeomorphisms in the metric $d$, see \cite[Lemma~1.2]{GoldsteinHajlasz17}.
\end{remark}

\begin{proof}
Denote the function on the right hand side of \eqref{eq3} by $\lambda$, so \eqref{eq3} reads as $D_{\rm a}\Phi=\lambda$ a.e. Note that
$$
\mathcal{R}{\lambda}=
\begin{cases}
\mathcal{I} & \text{for almost all } x\in E,\\
\mathcal{R} & \text{for almost all } x\in \mathscr{Q}\setminus E.
\end{cases}
$$
The homeomorphism $\Phi$ will be constructed as a limit of a sequence of measure preserving homeomorphisms $\Phi_j$, where at almost every point the approximate derivative of $\Phi_{j}$ will be equal $\mathcal{I}$ or $\mathcal{R}$; in other words, it will be equal $\lambda(x)$ of $\mathcal{R}{\lambda}(x)$. The main idea behind the construction of the sequence is as follows. If $D_{\rm a}\Phi_j(x)=\lambda(x)$, then $x$ is a `good' point. Otherwise we have a `bad' point, where $D_{\rm a}\Phi_j(x)=\mathcal{R}{\lambda}(x)$. We want to modify $\Phi_j$ in a way that bad points will became good.

The main step in the construction of $\Phi_{j+1}$ from $\Phi_j$ is based on the following idea.
Let $B\subset \mathscr{Q}$ be the set of bad points, i.e., $D_{\rm a}\Phi_j=\mathcal{R}\lambda$ on $B$, and let $G=\mathscr{Q}\setminus B$, so $D_{\rm a}\Phi_j=\lambda$ a.e. in $G$, i.e., almost all points of $G$ are good.

Suppose $K\subset \mathscr{Q}$ is a closed cube and assume that $|K\cap\Phi_j(B)|=(1-\eps)|K|$. Clearly, $|K\cap \Phi_j(G)|=\eps |K|$. That is, most of the cube $K$ is covered by the image of bad points.

Note that $D_{\rm a}\Phi_j=\mathcal{R}{\lambda}$ in $\Phi_j^{-1}(K)\cap B$, and $D_{\rm a}\Phi_j=\lambda$ a.e. in $\Phi_j^{-1}(K)\cap G$.
We want to change the derivative of $\Phi_j$ on the bad set $\Phi_j^{-1}(K)\cap B$ from $\mathcal{R}\lambda$ to $\lambda$.

Denote by $\Psi:\mathscr{Q}\to\mathscr{Q}$ the homeomorphism $\Phi$ from Theorem~\ref{T33}.
Let $\kappa:K\to \mathscr{Q}$ be the standard similarity, and let
$$
\Psi_K(y):=
\begin{cases}
\kappa^{-1}\circ\Psi\circ\kappa(y)& \text{ if } y\in K \\
y&\ \text{ if } y\in \mathscr{Q}\setminus K.
\end{cases}
$$
Since $\Psi$ is the identity on the boundary of $\mathscr{Q}$, the mapping $\Psi_K$ is a homeomorphism of $\mathscr{Q}$. Then, we define $\Phi_{j+1}=\Psi_K\circ\Phi_j$.

It follows from the chain rule, Lemma~\ref{T3}, that
$D_{\rm a}\Phi_{j+1}=\lambda$ a.e. in  $\Phi_j^{-1}(K)\cap B$, so the bad set becomes a good one. Unfortunately, we also have that $D_{\rm a}\Phi_{j+1}=\mathcal{R}{\lambda}$ a.e. in $\Phi_j^{-1}(K)\cap G$, so the good set is bad now.
However,
$$
|\Phi_j^{-1}(K)\cap B|=|K\cap\Phi_j(B)|=(1-\eps)|K|
\quad
\text{and}
\quad
|\Phi_j^{-1}(K)\cap G|=|K\cap\Phi_j(G)|=\eps|K|
$$
(because the transformation $\Phi_j$ is measure preserving), so
we changed the bad derivative to the good one on a set of measure $(1-\eps)|K|$ which is much larger than the measure $\eps|K|$ of the set where the good derivative turned bad.
We iterate this procedure infinitely many times in such a way that the measure of the set of good points converges to the measure of the cube $\mathscr{Q}$.

In fact, in the actual construction, we will use not only one cube $K$ to modify $\Phi_j$, but a finite family of cubes that approximates well the measure of the set $\Phi_j(B)$.

By taking sufficiently small cubes we will guarantee that $\Phi_j$ is a Cauchy sequence in the uniform metric $d$. Thus, the sequence will converge to a homeomorphism $\Phi$ and it will follow that $\Phi$ satisfies \eqref{eq3}.

Using the above idea,
we will construct a sequence of measure preserving homeomorphisms $\{\Phi_j\}_{j=0}^\infty$, $\Phi_j:\mathscr{Q}\to \mathscr{Q}$, $\Phi_j|_{\partial \mathscr{Q}}=\id$, that are approximately differentiable almost everywhere and for all $j\geq 1$ satisfy:
\begin{equation}
\label{eq8}
D_{\rm a}\Phi_j(x)\in \{\mathcal{I},\mathcal{R}\} =\{\lambda(x),\mathcal{R}\lambda(x)\}
\quad
\text{a.e. in $\mathscr{Q}$,}
\end{equation}
\begin{equation}
\label{eq11}
|B_j|\leq 2^{-j},
\end{equation}
\begin{equation}
\label{eq9}
d(\Phi_j,\Phi_{j+1})\leq 2^{-j+1},
\quad
|L_j|\leq 2^{-j+1},
\quad
\Big|B_j\setminus \bigcup_{\ell=j}^\infty L_\ell\Big|=0,
\end{equation}
where $d$ is the uniform metric,
$L_j:=\overline{\{x\in \mathscr{Q}:\, \Phi_j\neq\Phi_{j+1}\}}$, and the set
$$
B_j:=\{x\in \mathscr{Q}:\, D_{\rm a}\Phi_j(x)=\mathcal{R}\lambda(x)\}
$$
is the set of points where the approximate derivative of $\Phi_j$ is bad.

Before we construct such a sequence, we show that a sequence satisfying the above conditions converges to a homeomorphism $\Phi$ that has all the required properties, except for being a limit of volume preserving diffeomorphisms (we will take care of it at the end of the proof).

Clearly, \eqref{eq9} and Lemma~\ref{T6} imply convergence in the uniform metric to some homeomorphism $\Phi:\mathscr{Q}\to \mathscr{Q}$ that is identity on the boundary.

Note that by \eqref{eq9},
$$
A_j:=\mathscr{Q}\setminus \bigcup_{\ell=j}^\infty L_\ell.
$$
is an increasing sequence of measurable sets that exhaust $\mathscr{Q}$ up to a set of measure zero.

We have that $\Phi=\Phi_j$ on $A_j$, because $\Phi_j=\Phi_{j+1}=\Phi_{j+2}=\ldots$ on $A_j$, and hence
$$
D_{\rm a}\Phi=D_{\rm a}\Phi_j=\lambda
\quad
\text{almost everywhere in } A_j.
$$
The first equality follows from Lemma~\ref{T7} and
the last equality follows \eqref{eq8}, from the definition of $B_j$ and the fact that
$|A_j\cap B_j|=0$ (which is a consequence of \eqref{eq9} and the definition of $A_j$).
Since the sets $A_j$ exhaust $\mathscr{Q}$ up to a set of measure zero, $D_{\rm a}\Phi=\lambda$ a.e. in $\mathscr{Q}$.

Therefore, it remains to construct the sequence $\Phi_j$ with the properties described above and after the construction is completed to prove that the homeomorphisms $\Phi_j$ can be approximated by volume preserving diffeomorphisms.

We will construct a sequence $\Phi_j$ by induction as a sequence of measure preserving homeomorphisms that are identity on the boundary, and satisfy properties \eqref{eq8} and \eqref{eq11}. Then, the properties listed in \eqref{eq9} will be verified directly, but they are not needed to run the induction.

In the initial step we choose $\Phi_0=\id:\mathscr{Q}\to \mathscr{Q}$, so obviously $B_0=E$ (up to a set of measure zero) and conditions \eqref{eq8} and \eqref{eq11} are satisfied. Now suppose that we already constructed homeomorphisms $\Phi_\ell$ for $\ell\leq j$; we will describe construction of $\Phi_{j+1}$ as a modification of $\Phi_j$.

The homeomorphism $\Phi_j^{-1}$ is uniformly continuous on $\mathscr{Q}$, so let $\delta_j>0$ be such that $|\Phi_j^{-1}(x)-\Phi_j^{-1}(y)|\leq 2^{-j}$ whenever $|x-y|\leq \delta_j$.

Next, we choose a finite family of closed cubes $K^j_i$, $i=1,\ldots,m_j$, with pairwise disjoint interiors, with $\diam K_i^j\leq \min(\delta_j,2^{-j})$, and such that $\bigcup_{i=1}^{m_j}K_i^j$ well approximates the set $\Phi_j(B_j)$ measurewise:
\begin{equation}
\label{eq5}
\bigg|\Phi_j(B_j)\vartriangle \bigcup_{i=1}^{m_j}K_i^j\bigg|\leq 2^{-(j+1)}.
\end{equation}
We set $L_j=\Phi^{-1}_j(\bigcup_{i=1}^{m_j}K_i^j)$; then \eqref{eq5} and the fact that $\Phi_j$ is a measure preserving homeomorphism yields $|B_j\vartriangle L_j|\leq 2^{-(j+1)}$.

Note that with this choice
$$
|L_j|\leq |B_j|+|B_j\vartriangle L_j|\leq 2^{-j}+2^{-(j+1)}<2^{-j+1}.
$$
Let $\kappa_i^j:K_i^j\to \mathscr{Q}$ be the standard similarity maps between the cubes and define
$$
\Psi_j(y):=
\begin{cases}
(\kappa_i^j)^{-1}\circ\Psi\circ\kappa_i^j(y)& \text{ if } y\in K_i^j, \\
y&\ \text{ if } y\in \mathscr{Q}\setminus \bigcup_{i=1}^{m_j}K_i^j.
\end{cases}
$$
Clearly,
\begin{equation}
\label{eq6}
D_{\rm a}\Psi_j(y)=\mathcal{R}
\qquad
\text{for almost all } y\in\bigcup_{i=1}^{m_j} K_i^j=\Phi_j(L_j).
\end{equation}
Since the cubes $K_i^j$ have pairwise disjoint interiors, it follows that $\Psi_j$ is a measure preserving homeomorphism. Moreover, $\Psi_j|_{\partial \mathscr{Q}}=\id$. Then we define $\Phi_{j+1}=\Psi_j\circ\Phi_j$ and clearly $\Phi_{j+1}$ is measure preserving, too, with $\Phi_{j+1}|_{\partial \mathscr{Q}}=\id$. Note that $\Psi_j$ is approximately differentiable a.\,e.\ and  $\det D_{\rm a} \Psi = \pm 1$ a.\,e.\ on $\mathscr{Q}$ so by Lemma~\ref{T35}, $\Phi_{j+1}$ which is the composition of $\Psi_j$ with $\Phi_j$, is approximately differentiable a.\,e.\ on $\mathscr{Q}$.

The homeomorphisms $\Phi_j$ and $\Phi_{j+1}$ differ only on the set $L_j=\bigcup_i\Phi_j^{-1}(K_i^j)$. Since both $\diam \Phi_j^{-1}(K_i^j)$ and $\diam K_i^j$ are at most $2^{-j}$, we have $d(\Phi_j,\Phi_{j+1})\leq 2^{-j+1}$.

Also, the definition of the set $L_j$ and the construction of $\Phi_{j+1}$ shows that
$$
L_j=\overline{\{x\in \mathscr{Q}:\, \Phi_j(x)\neq\Phi_{j+1}(x)\}}.
$$
It follows from Lemma~\ref{T3} and \eqref{eq6} that
$$
D_{\rm a}\Phi_{j+1}(x)= D_{\rm a}\Psi_j(\Phi_j(x))\cdot D_{\rm a}\Phi_j(x)=
\mathcal{R}D_{\rm a}\Phi_j(x)
\quad
\text{for almost all } x\in L_j.
$$
Thus, $D_{\rm a}\Phi_{j+1}=\lambda$ on $L_j\cap B_j$. Also,
$$
D_{\rm a}\Phi_{j+1}(x)=D_{\rm a}\Phi_j(x)
\quad
\text{for almost all } x\in \mathscr{Q}\setminus L_j,
$$
so $D_{\rm a}\Phi_{j+1}=\lambda$  on $(\mathscr{Q}\setminus L_j)\setminus B_j$. This means that the set $B_{j+1}$ of points where $D_{\rm a}\Phi_{j+1}=\mathcal{R}\lambda$ is contained (modulo a set of measure zero) in the complement of the union of these two sets, which is $B_j\vartriangle L_j$, and hence
$$
|B_{j+1}|\leq |B_j\vartriangle L_j|\leq 2^{-(j+1)}.
$$
This completes the proof that $\Phi_{j+1}$ satisfies the induction hypothesis.
What is left to prove is the last part of \eqref{eq9}, $\Big|B_j\setminus \bigcup_{\ell=j}^\infty L_\ell\Big|=0$.

To see that, it suffices to realize that for all $k>j$ the mappings $\Phi_k$ have bad derivative
$D_{\rm a}\Phi_k=\mathcal{R}\lambda$
at almost all points of $B_j\setminus \bigcup_{\ell=j}^\infty L_\ell$,
because $\Phi_k$ is obtained from $\Phi_j$ by a sequence of modifications which happen only in
$\bigcup_{\ell=j}^\infty L_\ell$. Thus $B_j\setminus \bigcup_{\ell=j}^\infty L_\ell\subset \bigcap_{k\geq j}B_k$ (modulo a set of measure zero). Since $|B_k|\to 0$, it follows that $\Big|B_j\setminus \bigcup_{\ell=j}^\infty L_\ell\Big|=0$.

Now it remains to show that the homeomorphism $\Phi$ can be approximated in the metric $d$ by volume preserving diffeomorphisms that are identity in a neighborhood of $\partial Q$.

Let $\Psi_k\overset{d}{\to}\Psi$ be a sequence of volume preserving diffeomorphisms, $\Psi_k=\id$ near $\partial\mathscr{Q}$, from Theorem~\ref{T33}.
Note that the diffeomorphisms
$$
\Psi_{j,k}:=
\begin{cases}
(\kappa_i^j)^{-1}\circ\Psi_k\circ\kappa_i^j(y)& \text{ if } y\in K_i^j, \\
y&\ \text{ if } y\in \mathscr{Q}\setminus \bigcup_{i=1}^{m_j}K_i^j
\end{cases}
$$
are volume preserving and $\Psi_{j,k}=\id$ near $\partial\mathscr{Q}$. Clearly, $\Psi_{j,k}\overset{d}{\to}\Psi_j$ as $k\to\infty$.

An easy induction on $j$ shows that $\Phi_j$ can be approximated in the metric $d$ by $C^\infty$ volume preserving diffeomorphisms $\Phi_{j,k}$ that are identity near $\partial\mathscr{Q}$. If $j=0$, then $\Phi_0=\id$ and we can take $\Phi_{0,k}:=\Phi_0$.
If the claim is true for $j$, $\Phi_{j,k}\overset{d}{\to}\Phi_j$, then, it is easy to check that
$\Phi_{j+1,k}:=\Psi_{j,k}\circ\Phi_{j,k}\overset{d}{\to}\Psi_j\circ\Phi_j=\Phi_{j+1}$.
Therefore, for each $j$, we can find $k_j$ such that $d(\Phi_{j,k_j},\Phi_j)<2^{-j}$ and hence $\Phi_{j,k_j}\overset{d}{\to}\Phi$.
\end{proof}

\section{Proof of Theorem \ref{T36}}
\label{szesc}

\noindent
{\em Proof of Theorem~\ref{T36}.}
\subsection{Reduction to the case \texorpdfstring{$\det T > 0$}{TEXT}}\label{S7.1}
It suffices to prove Theorem \ref{T36} under an additional assumption that $\det T>0$ a.e. Indeed, assume that we have already proven Theorem \ref{T36} under this assumption and, for a general $T$, let
$$
\hat{T}(x)=
\begin{cases}
T(x) &\text{ if }\det T\geq 0,\\
\mathcal{R}T(x) &\text{ if }\det T<0,
\end{cases}
$$
where $\mathcal{R}$ is defined in \eqref{eq110}.
Then $\det \hat{T}=|\det T|>0$ a.e. Let $\hat{\Phi}$ be the almost everywhere approximately differentiable homeomorphism provided by Theorem~\ref{T36} with $\hat{T}$ in place of $T$, $D_{\rm a}\hat{\Phi}=\hat{T}$ a.e.

Let $E:=\{\det T<0\}$.
Note that since $\hat{\Phi}$ satisfies the Lusin (N) condition, the set $\hat{E}:=\hat{\Phi}(E)$ is measurable (because $E$ is the union of a Borel set and a set of measure zero). Theorem~\ref{T5} yields an a.e. approximately differentiable homeomorphism $\Phi'$ such that $\Phi'|_{\partial \mathscr{Q}}=\id$ and
$$
D_{\rm a}\Phi'(x)=
\begin{cases}
\mathcal{R} &\text{ for a.e. } x\in \hat{E},\\
\mathcal{I} &\text{ for a.e. }x\in \mathscr{Q}\setminus \hat{E}.
\end{cases}
$$
Then Lemma~\ref{T3} implies that the composition $\Phi:=\Phi'\circ\hat{\Phi}$ is a.e. approximately differentiable and satisfies the chain rule \eqref{eq1}, so
for a.e. $x\in \mathscr{Q}$ we have
\begin{equation}
\begin{split}
D_{\rm a}\Phi(x)=D_{\rm a} \Phi'(\hat{\Phi}(x))\,D_{\rm a}\hat{\Phi}(x)
&=
\left.\begin{cases}
\mathcal{R}\,\mathcal{R} T(x)  &\text{ for a.e. }x\in E,\\
\mathcal{I}\,T(x) &\text{ for a.e. }x\in \mathscr{Q}\setminus E\\
\end{cases}\right\}
=T(x).
\end{split}
\end{equation}
Obviously, $\Phi|_{\partial \mathscr{Q}}=\id$ and, since $\Phi'$ is measure preserving (see Remark \ref{R2}) and $\hat{\Phi}$ preserves sets of measure zero, $\Phi$ preserves sets of measure zero as well.

Since by Theorem~\ref{T5} and Theorem~\ref{T36} the homeomorphisms $\Phi'$ and $\hat{\Phi}$ can be approximated in the uniform metric $d$ by $C^\infty$-diffeomorphisms $\Phi'_k$ and $\hat{\Phi}_k$, $\Phi'_k=\hat{\Phi}_k=\id$ near $\partial\mathscr{Q}$, one can easily check that
$$
\Phi'_k\circ\hat{\Phi}_k\overset{d}{\to} \Phi'\circ\hat{\Phi}=\Phi.
$$

It remains to check that $\Phi^{-1}$ is approximately differentiable a.\,e.\ on $\mathscr{Q}$ and $D_{\rm a}\Phi^{-1}(y)=T^{-1}(\Phi^{-1})(y)$ for a.\,e.\ $y\in \mathscr{Q}$.
Indeed, $\Phi^{-1} = \hat{\Phi}^{-1} \circ \Phi'^{-1}$ and both $\hat{\Phi}^{-1}$ (by Theorem~\ref{T36}) and $\Phi'^{-1}$ (see Remark~\ref{R2})
are approximately differentiable a.\,e.\ on $\mathscr{Q}$. Since $\det D_{\rm a} (\Phi'^{-1})\neq 0$ a.\,e.\ (Remark~\ref{R2}), $\Phi^{-1}$ is approximately differentiable a.\,e.\ by Lemma~\ref{T3}. Then applying the chain rule~\eqref{eq1} to $\Phi^{-1}\circ\Phi=\mathcal{I}$ yields  that the derivative of $\Phi^{-1}$ has the required form.

This concludes the proof of Theorem \ref{T36} in the general case, provided we can prove it for $T$ such that $\det T>0$ a.e. in $\mathscr{Q}$.

Thus, from now on, we assume that $\det T>0$ a.e.

\subsection{General outline of the construction}
\label{gen}
The general plan of the proof is as follows: To construct a~homeomorphism $\Phi$ such that $\Phi|_{\partial \mathscr{Q}} = \id$ and $D_{\rm a} \Phi = T$ a.\,e., we shall iterate the construction from Theorem~\ref{T31} on the smaller and smaller subsets of $\mathscr{Q}$ on which the derivative is not yet as required.

We inductively show that there exists a~family of orientation preserving $C^1$-diffeomorphisms $\Phi_k$ of $\mathscr{Q}$ and Borel sets $C_k \subset \mathscr{Q}$ with the following properties for $k \geq 1$:
\begin{enumerate}[(i)]
\item $\Phi_k = \mathrm{id}$ near $\partial \mathscr{Q}$;
\item $\Phi_{k+1} = \Phi_k$ on $C_k$;
\item $D\Phi_k = T$  on $C_k$;
\item $C_k$ is an increasing family of sets, $C_1\subset C_2\subset\cdots$,  with $\lim_{k \to \infty} |C_k| = 1$;
\item $d(\Phi_k,\Phi_{k+1}) < 2^{-(k-1)}$ for $k \geq 2$.
\end{enumerate}
The limit map $\Phi := \lim_{k \to \infty} \Phi_k$ is the required homeomorphism. Indeed, property (v) implies that $(\Phi_k)$ is a~Cauchy sequence in the uniform metric $d$, hence its limit is a~homeomorphism as shown in Lemma~\ref{T6}. By (i), $\Phi = \mathrm{id}$ on $\partial \mathscr{Q}$.
Note that $\Phi=\Phi_k$ on $C_k$ by (ii) and (iv). Therefore, Lemma~\ref{T7} and (iii) imply that
$$
D_{\rm a} \Phi = T \text{ a.\,e.\ on } \bigcup_{k=1}^\infty C_k,
$$
and hence $D_{\rm a}\Phi=T$ a.e. on $\mathscr{Q}$, because $|C_k|\to 1$.

At this point, let us stress that Lemma~\ref{T35} implies that $\Phi$ and $\Phi^{-1}$ satisfy the Lusin (N) condition which is part (b) of Theorem~\ref{T36}
and property (a) follows directly from Lemma~\ref{T44}.

Finally, $\Phi_k$ are $C^1$-diffeomorphisms, identity near $\partial\mathscr{Q}$, that converge to $\Phi$ in the uniform metric $d$, but according to Lemma~\ref{T46}, each $\Phi_k$ can be approximated in the metric $d$ by $C^\infty$-diffeomorphisms and (c) follows.

This completes the proof of Theorem~\ref{T36} and it remains to construct diffeomorphisms $\Phi_k$ and Borel sets $C_k$ satisfying (i)-(v).

The construction of the family $\Phi_k$ and of the sets $C_k$ is complicated. In fact, the sets $C_k$ are not constructed inductively, but defined only at the end, when all the steps of the inductive construction are concluded.

The actual inductive construction provides a quadruple $(\Phi_k,\mathscr{P}_k,E_k,L_k)_{k=1}^\infty$:
\begin{itemize}
\item a~diffeomorphism $\Phi_k$ of $\mathscr{Q}$ onto itself; the diffeomorphism $\Phi_k$ is constructed by a modification of $\Phi_{k-1}$,
\item a~partition $\mathscr{P}_k$ of the unit cube; the corrections leading from $\Phi_{k-1}$ to $\Phi_k$ are done at a small scale, i.e., within the elements of the partition $\mathscr{P}_{k-1}$.
\item a large set $E_k$, on which $D\Phi_k=T$,
\item a small set $L_k\subset E_{k-1}$ such that $\Phi_k=\Phi_{k-1}$ on $E_{k-1}\setminus L_k$
(although $D\Phi_{k-1}=T$ on $E_{k-1}$, for technical reasons in the construction of $\Phi_k$ from $\Phi_{k-1}$ we alter $\Phi_k$ on the subset $L_k$ of $E_{k-1}$),
\item the sets $C_k$ are constructed at the very end by \eqref{eq87}.
\end{itemize}
    To be more precise, we have, for $k \geq 1$,
\begin{enumerate}[(a)]
\item a $C^1$-diffeomorphism $\Phi_k: \mathscr{Q} \to \mathscr{Q}$, $\Phi_k = \mathrm{id}$ near $\partial \mathscr{Q}$;
\item a compact set $E_k \subset \mathscr{Q}$ such that $D\Phi_k = T$  on $E_k$;
\item $2^{-(k+1)}< |\mathscr{Q} \setminus E_k| < 2^{-k}$;
\item a Borel set $L_k\subset E_{k-1}$ for $k\geq 2$, such that $\Phi_k = \Phi_{k-1}$ on $E_{k-1} \setminus L_k$ and $|L_k| < 2^{-k}$;
\item a partition $\mathscr{P}_k$ of the unit cube, $\mathscr{P}_k = \{ P_{ki}\}_{i=1}^{M_k}$ for $k\geq 2$, such that
\begin{equation}
\label{eq112}
\Phi_k(P_{k-1,i})=\Phi_{k-1}(P_{k-1,i})
\quad
\text{ for } k\geq 3,
\end{equation}
\begin{equation}
\label{eq37}
        |\Phi_k(P_{ki})| = \int_{P_{ki}} \det T(x) \, dx
        \quad
        \text{ for } k\geq 2,
\end{equation}
\begin{equation}
\label{eq38}
        \diam{P_{ki}} < 2^{-k}, \quad \diam{\Phi_k(P_{ki})} < 2^{-k}
        \quad
        \text{ for } k\geq 2.
\end{equation}
\end{enumerate}

We will show that the family of diffeomorphisms $(\Phi_k)$ with properties (a)-(e) satisfies conditions (i)-(v) for
\begin{equation} \label{eq87}
C_k := \bigcap_{j = k}^\infty \left( E_j \setminus L_{j+1} \right).
\end{equation}
Conditions (i) and  (a) are the same. Clearly, $C_k \subset E_k$, which means that condition (iii) is satisfied. Applying (d) with $k+1$ in place of $k$, we get $\Phi_{k+1} = \Phi_k$ on $E_k \setminus L_{k+1}$ and since $C_k \subset E_k \setminus L_{k+1}$, condition (ii) also holds. Since the sets $C_k$ form an increasing family of sets, in order to show (iv), it remains to check that $\lim_{k \to \infty} |C_k| = 1$.

For the sake of this calculation, set $A_j := E_j \setminus L_{j+1}$. Since $L_{j+1} \subset E_j$ and in view of (c) and (d),
\begin{equation}
\label{eq111}
|\mathscr{Q} \setminus A_j| = |\mathscr{Q}\setminus E_j| + |L_{j+1}| < 2^{-j} + 2^{-(j+1)} = 3 \cdot 2^{-(j+1)}.
\end{equation}
Therefore,
$$
|\mathscr{Q}\setminus C_k|=
\Big| \mathscr{Q} \setminus \bigcap_{j=k}^\infty A_j \Big| = \Big|\bigcup_{j=k}^\infty (\mathscr{Q} \setminus A_j)\Big| \leq \sum_{j=k}^\infty |\mathscr{Q} \setminus A_j| < 3 \sum_{j=k}^\infty 2^{-(j+1)}=3\cdot 2^{-k}.
$$
This implies that
$$
|C_k| > 1 - 3\cdot 2^{-k}
$$
and since $|C_k| \leq 1$, this shows that $|C_k| \to 1$ and finishes the proof of (iv). It remains to show (v).

It follows from condition (e) that for $k\geq 3$
the diffeomorphism $\Phi_{k}$ is a modification of $\Phi_{k-1}$ in each of the sets $P_{k-1,i}$, i.e., $\Phi_{k}(P_{k-1,i})=\Phi_{k-1}(P_{k-1,i})$.
Hence
$$
\Vert\Phi_k-\Phi_{k-1}\Vert_\infty\leq \max_i\big\{\diam\Phi_{k-1}(P_{k-1,i})\big\}<2^{-(k-1)},
$$
$$
\Vert\Phi_k^{-1}-\Phi_{k-1}^{-1}\Vert_\infty\leq \max_i\{\diam P_{k-1,i}\}<2^{-(k-1)},
$$
so $d(\Phi_k,\Phi_{k-1})<2\cdot 2^{-(k-1)}=2^{-(k-2)}$ and (v) follows.
The proof of properties (i)-(v) is complete.\footnote{In the construction of $\Phi_2$ from $\Phi_1$, we modify $\Phi_1$ in $\mathscr{Q}$, so in this step we use the trivial partition $\mathscr{P}_1=\{\mathscr{Q}\}$.}

\subsection{Construction of \texorpdfstring{$\Phi_1$}{TEXT} and \texorpdfstring{$\Phi_2$}{TEXT}.} By a direct application of Theorem~\ref{T31}, we obtain a~diffeomorphism $\Phi_1$ of the unit cube, $\Phi_1 = \mathrm{id}$ near $\partial \mathscr{Q}$, and a~compact set $E_1$ such that  $D\Phi_1 = T$ on $E_1$ and $1/4 < |\mathscr{Q} \setminus E_1| < 1/2$.
Note that $\Phi_1$ and $E_1$ satisfy conditions (a)-(e), because conditions (d) and (e) do not apply to $k=1$.

We shall now describe in detail the construction of $\Phi_2$, which demonstrates all the crucial aspects of the construction of $\Phi_{k}$ based on $\Phi_{k-1}$. The induction step for general $k$ will be described later.

In the course of the proof we use two numbers $\alpha = 1/2$, $\beta = 3/4$. We write $\alpha$, $\beta$ instead of the actual fractions as we believe it makes it easier to transfer this argument to the proof for arbitrary $k$.
Note that $|E_1|>1-\alpha$.

We begin with correcting the way $\Phi_1$ distributes measure so that we are later able to apply Theorem~\ref{T31} in sets of smaller diameter. To this end, we use Proposition~\ref{T43} for the compact set $\Phi_1(E_1)$ and functions
$$
f(y) = \det T(\Phi_1^{-1}(y)) \det D\Phi_1^{-1}(y) \quad \text{and} \quad g(y) = 1.
$$

We check that~\eqref{eq88} is satisfied. By change of variables,
$$
\int_\mathscr{Q} \det T(\Phi_1^{-1}(y)) \det D\Phi_1^{-1}(y) \, dy = \int_{\Phi_1^{-1}(\mathscr{Q})} \det T (x) \, dx = \int_\mathscr{Q} \det T(x) \, dx = \int_\mathscr{Q} 1 \, dx.
$$
Moreover, since $D\Phi_1 = T$ on $E_1$,  we have $f(y) = g(y)$ for all $y\in\Phi_1(E_1)$. Therefore, the assumptions of Proposition~\ref{T43} are satisfied.

By Proposition~\ref{T43}, we get a~diffeomorphism $\Psi: \mathscr{Q} \to \mathscr{Q}$, a~partition $\mathscr{R} = \{ R_{2i} \}_{i=1}^{2^{nN_2}}$ of $\mathscr{Q}$ and a~compact set $\widetilde{K} \subset \Phi_1(E_1)$ with properties described below.

Partition $\mathscr{R}$ is a diffeomorphic dyadic partition and satisfies
\begin{equation} \label{eq90}
\diam \Psi(R_{2i}) < 1/4 \text{ and } \diam (\Phi_1^{-1}(R_{2i})) < 1/4
\end{equation}
(we use here the uniform continuity of $\Phi_1^{-1}$) and
$$
\int_{R_{2i}} f(y) \, dy = |\Psi(R_{2i})|,
$$
which after a change of variables in the integral becomes
\begin{equation} \label{eq91}
\int_{\Phi_1^{-1}(R_{2i})} \det T(x) \, dx = |\Psi(R_{2i})|.
\end{equation}
We find $\eta >0$ such that for any measurable set $A \subset \mathscr{Q}$ with $|A| < \eta$, $\int_A \det \, D\Phi^{-1}_1 < \alpha^2/2$. We choose $\widetilde{K} \subset \Phi_1(E_1)$ so that
$$
|\Phi_1(E_1) \setminus \widetilde{K} |  < \eta.
$$
Moreover, $\Psi = \mathrm{id}$ near $\widetilde{K}\cup\partial \mathscr{Q}$. In view of the inequality above, setting $K := \Phi_1^{-1}(\widetilde{K}) \subset E_1$, we have
\begin{equation}
\label{eq89}
|E_1 \setminus K| = |\Phi_1^{-1} \, ( \Phi_1(E_1) \setminus \widetilde{K} ) | < \alpha^2/2.
\end{equation}

Eventually, we set
$$
\widetilde{\Phi}_2 := \Psi \circ \Phi_1 \quad \text{and} \quad \mathscr{P}_2 := \{ P_{2i} \}_{i=1}^{M_2}, \text{ where } M_2=2^{nN_2}, \text{ and }P_{2i} := \Phi^{-1}_1(R_{2i}).
$$
Then~\eqref{eq90} and~\eqref{eq91} become
\begin{equation} \label{eq92}
\diam{\widetilde{\Phi}_2 (P_{2i})} < 1/4, \quad \diam ({P_{2i}}) < 1/4, \quad \int_{P_{2i}} \det T(x) \, dx = |\widetilde{\Phi}_2(P_{2i})|,
\end{equation}
i.\,e., diffeomorphism $\widetilde{\Phi}_2$ satisfies (e) for $k=2$ (condition \eqref{eq112} does not apply to $k=2$). Observe also that $\widetilde{\Phi}_2 = \Phi_1$ near $K\cup\partial \mathscr{Q}$. Consequently,
\begin{equation}
\label{eq98}
D\widetilde{\Phi}_2 = T \text{ on } K, \quad \widetilde{\Phi}_2 = \mathrm{id}  \text{ near } \partial \mathscr{Q} \quad \text{and} \quad \widetilde{\Phi}_2 = \Phi_1 \text{ on } K \subset E_1.
\end{equation}
We construct $\Phi_2$ by replacing $\widetilde{\Phi}_2$ inside each $P_{2i}$ with a diffeomorphism $\Phi_{2i}$ which has correct derivative $D\Phi_{2i}=T$ on a  larger set. To this end, we would like to keep $\widetilde{\Phi}_2$ on $K$ unchanged and apply Theorem~\ref{T31} to the open set $\mathring{P}_{2i}\setminus K$. Unfortunately, this set need not be connected and Theorem~\ref{T31} cannot be applied. To overcome this difficulty we use Lemma~\ref{T40} to find a compact set $\widetilde{E}_{2i} \subset \mathring{P}_{2i} \cap K$ so that $\mathring{P}_{2i} \setminus \widetilde{E}_{2i}$ is connected and
\begin{equation}
\label{eq93}
| (\mathring{P}_{2i} \cap K) \setminus \widetilde{E}_{2i}| < \alpha^2/(2M_2).
\end{equation}
Set
$$
\widetilde{E}_2 := \bigcup_{i=1}^{M_2} \widetilde{E}_{2i} \subset E_1 \quad \text{and} \quad L_2 := E_1 \setminus \widetilde{E}_{2}.
$$
Clearly, $\widetilde{E}_2$ is compact and summing~\eqref{eq93} over $i = 1, \ldots, M_2$ yields
$$
|K \setminus \widetilde{E}_2 | < \alpha^2/2.
$$
By~\eqref{eq89}, we arrive at
\begin{equation}
\label{eq119}
|L_2| = |E_1 \setminus \widetilde{E}_2|=|E_1\setminus K|+|K\setminus\widetilde{E}_2| < \alpha^2.
\end{equation}

Consequently,
\begin{equation} 
\label{eq96}
|\widetilde{E}_2 | = |E_1| - |L_2| > 1 - \alpha - \alpha^2.
\end{equation}
Let us stress that $\widetilde{E}_2 \subset K$ so \eqref{eq98} yields
\begin{equation} 
\label{eq97}
D\widetilde{\Phi}_2 = T \text{ on } \widetilde{E}_{2}
\end{equation}
and
\begin{equation} 
\label{eq99}
\widetilde{\Phi}_2 = \Phi_1 \text{ on } \widetilde{E}_2 = E_1 \setminus L_2.
\end{equation}

Although $D\Phi_1=T$ on $E_1$,
as explained earlier, the set $L_2$ is the set on which we will have spoiled the already prescribed derivative of $\Phi_1$. This is the cost we bear in order to be able to prescribe the derivative farther. One can also think of the set $\widetilde{E}_2$ as the set of points in which the prescribed derivative \textit{survives} the transition from $\Phi_1$ to $\Phi_2$.

Let us now focus on applying Theorem~\ref{T31}, i.\,e.\,, correcting the derivative of $\widetilde{\Phi}_2$. The set
$$
\Omega_{2i} := \mathring{P}_{2i} \setminus \widetilde{E}_{2i}
$$
is open and connected. Observe that, setting $\Omega_2 := \bigcup_i \Omega_{2i}$,
$$
|\Omega_2| = |\mathscr{Q} \setminus \widetilde{E}_2|,
$$
since the set $\Omega_2$ coincides with $\mathscr{Q} \setminus \widetilde{E}_2$ up to a~set of measure zero which consists of boundaries of the diffeomorphic cubes $P_{2i}$. We check that $\Omega_{2i}$ satisfies
\begin{align}
\label{eq100}
\begin{split}
| \widetilde{\Phi}_2(\Omega_{2i}) | &= |\widetilde{\Phi}_2(P_{2i})| - | \widetilde{\Phi}_2 (\widetilde{E}_{2i})| \overset{\eqref{eq92}}{=} \int_{P_{2i}} \det T(x) \, dx - \int_{\widetilde{E}_{2i}} \det D \widetilde{\Phi}_2(x) \, dx \\
&\overset{\eqref{eq97}}{=} \int_{P_{2i}} \det T(x) \, dx - \int_{\widetilde{E}_{2i}} \det T(x) \, dx = \int_{\Omega_{2i}} \det T(x) \, dx.
\end{split}
\end{align}

We are now in position to use Theorem~\ref{T31} for the domain $\Omega_{2i}$ and the diffeo\-morphism $\widetilde{\Phi}_2$. We find a~diffeomorphism $\Phi_{2i}: \Omega_{2i} \to \widetilde{\Phi}_2(\Omega_{2i})$ and a~compact set $E'_{2i} \subset \Omega_{2i}$ such that
\begin{equation}
\label{eq95}
D\Phi_{2i} = T \text{  on } E'_{2i}, \quad \Phi_{2i} = \widetilde{\Phi}_2 \text{ near } \partial \Omega_{2i} \quad \text{and} \quad |E'_{2i}| > \beta |\Omega_{2i}|.
\end{equation}
Let
$$
E'_2 := \bigcup_{i=1}^{M_2} E'_{2i}.
$$
Clearly, $E'_2$ is compact. By the third condition in~\eqref{eq95}, we have
$$
|E'_2| > \beta |\Omega_2| = \beta |\mathscr{Q} \setminus \widetilde{E}_2 |.
$$

We replace the diffeomorphism $\widetilde{\Phi}_2$ with $\Phi_{2i}$ inside each $\Omega_{2i}$, setting
$$
\Phi_2 := \begin{cases}
    \Phi_{2i} &\text{on } \Omega_{2i},\\
    \widetilde{\Phi}_2 &\text{on } \mathscr{Q} \setminus \Omega_2.
\end{cases}
$$
Observe that thanks to the second condition in~\eqref{eq95}, $\Phi_2$ is indeed a~diffeomorphism. Moreover, $\Phi_2 = \widetilde{\Phi}_2$ near $\bigcup_i \partial P_{2i}$. Since $\partial \mathscr{Q}\subset\bigcup_i \partial P_{2i}$, $\Phi_2=\id$ near $\partial \mathscr{Q}$, whence (a) holds.

Note that $\widetilde{E}_2\subset\mathscr{Q}\setminus\Omega_2$ and $\Phi_2=\widetilde{\Phi}_2$ in a neighborhood of $\widetilde{E}_2$. Hence,
the first condition in~\eqref{eq95} together with~\eqref{eq97} imply that
$$
D\Phi_2 = T \text{  on } \hat{E}_2:=E'_2 \cup \widetilde{E}_2.
$$
We calculate
\begin{align*}
\begin{split}
|\hat{E}_2| &= |E'_2| + |\widetilde{E}_2| > \beta | \mathscr{Q} \setminus \widetilde{E}_2| + |\widetilde{E}_2| = \beta|\mathscr{Q}| + (1 - \beta)|\widetilde{E}_2| \\[2mm]
&\overset{\eqref{eq96}}{>} \beta + (1- \beta)(1 - \alpha - \alpha^2) = 1 - \alpha(1 + \alpha)(1 - \beta) \\[2mm]
&> 1 - \alpha^2,
\end{split}
\end{align*}
since $\beta > 1/(1 + \alpha)$.
Therefore, $|\mathscr{Q}\setminus\hat{E}_2|<\alpha^2=1/4$. By discarding some points from $\hat{E}_2$ if necessary, we obtain a compact set $E_2\subset\hat{E}_2$ such that $1/8<|\mathscr{Q}\setminus E_2|<1/4$. Clearly, $D\Phi_2=T$ on $E_2$.

Let us check if $\Phi_2$ satisfies properties (a)-(e). We have already verified conditions (a), (b) and (c).
Moreover, in view of \eqref{eq92}, (e) holds as well, because $\Phi_2(P_{2i})=\widetilde{\Phi}_2(P_{2i})$ (\eqref{eq112} does not apply to $k=2$).
Since $\Phi_2 = \widetilde{\Phi}_2$ on $\widetilde{E}_2 = E_1 \setminus L_2$, recalling \eqref{eq99} and \eqref{eq119}, we see that (d) also holds.

\subsection{Construction of \texorpdfstring{$\Phi_{k}$}{TEXT} given \texorpdfstring{$\Phi_{k-1}$}{TEXT}.}
Let $k\geq 3$.
As in the construction of $\Phi_2$, we use $\alpha = 1/2$ and $\beta = 3/4$. We assume that we have found $\Phi_{k-1}$ satisfying properties (a)-(e) for $k-1$ instead of $k$ and we show how to construct $\Phi_k$.
In fact, we only need the following properties from the previous step:
\begin{itemize}
\item $\Phi_{k-1}:\mathscr{Q}\to \mathscr{Q}$, $\Phi_{k-1}=\id$ near $\partial \mathscr{Q}$,
\item $D\Phi_{k-1}=T$ on a compact set $E_{k-1}$, $\alpha^k<|\mathscr{Q}\setminus E_{k-1}|<\alpha^{k-1}$,
\item $\Phi_{k-1}(P_{k-1,i})=\int_{P_{k-1,i}}\det T(x)\, dx$.
\end{itemize}
In the construction of $\Phi_2$ from $\Phi_1$, we alter $\Phi_1$ inside $\mathscr{Q}$ while keeping $\Phi_2=\Phi_1$ near $\partial \mathscr{Q}$. In the construction of $\Phi_k$ from $\Phi_{k-1}$ we repeat the same construction, but we alter $\Phi_{k-1}$ inside each diffeomorphic closed cube $P_{k-1,i}$ while keeping $\Phi_k=\Phi_{k-1}$ near $\partial P_{k-1,i}$.
Therefore, the crucial though technical difference between the construction for $k \geq 3$ is that we do not use Proposition~\ref{T43} (stated for a~closed cube) but Proposition~\ref{T32} (stated for a~diffeomorphic closed cube).

Define $E_{k-1, i} := E_{k-1} \cap P_{k-1,i}$ for $i = 1, \ldots, M_{k-1}$. Note that $E_{k-1, i}$ is compact and that $|\bigcup_i E_{k-1, i}| = |E_{k-1}|$.

Let us firstly show that it suffices to construct for $i = 1, \ldots, M_{k-1}$
\begin{itemize}
    \item a~family of diffeomorphisms $\Phi_{ki}: P_{k-1, i} \to \Phi_{k-1}(P_{k-1,i})$,
    \item compact sets $\hat{E}_{ki} \subset \mathring{P}_{k-1, i}$,
    \item Borel sets $L_{ki} \subset E_{k-1, i}$,
    \item a~partition $\mathscr{P}_{ki} = \{ P_{kij}\}_{j=1}^{2^{nN_{ki}}}$ of ${P}_{k-1, i}$
\end{itemize}
such that
\begin{enumerate}
\item $\Phi_{ki} = \Phi_{k-1}$ near $\partial P_{k-1, i}$;
\item $D\Phi_{ki}  = T$ on $\hat{E}_{ki}$;
\item $|\hat{E}_{ki}| > \beta |P_{k-1,i}| + (1 - \beta) |E_{k-1,i} \setminus L_{ki}|$;
\item $\Phi_{ki} = \Phi_{k-1}$ on $E_{k-1,i} \setminus L_{ki}$ and $|L_{ki}| < \alpha^k \, M_{k-1}^{-1}$;
\item partition $\mathscr{P}_{ki}$ is a diffeomorphic dyadic partition and satisfies for $j = 1, \ldots, 2^{nN_{ki}}$
$$
|\Phi_{ki}(P_{kij})| = \int_{P_{kij}} \det T(x) \, dx
$$
and
$$
\diam P_{kij} < 2^{-k}, \quad \diam \Phi_{ki}(P_{kij}) < 2^{-k}.
$$
\end{enumerate}

We set
$$
\Phi_k := \Phi_{ki} \text{ on } P_{k-1, i}.
$$
By condition (1), $\Phi_k$ is indeed a~diffeomorphism and $\Phi_k = \mathrm{id}$ near $\partial \mathscr{Q}$, which is (a). Next, we set
$$
 \hat{E}_k := \bigcup_{i=1}^{M_{k-1}} \hat{E}_{ki} \quad \text{and} \quad L_{k} := \bigcup_{i=1}^{M_{k-1}} L_{ki}\subset E_{k-1}.
$$
Since $D\Phi_k = D\Phi_{ki}=T$ on $\hat{E}_{ki}$, we have that $D\Phi_k=T$ on $\hat{E}_k$.

Summing (4) over $i = 1, \dots, M_{k-1}$, we readily see that  $\Phi_k=\Phi_{k-1}$ on $E_{k-1}\setminus L_k$ and
$|L_k| < \alpha^k$, i.\,e., (d) holds. Moreover, summing (3) over $i = 1, \ldots, M_{k-1}$ and recalling that $|E_{k-1}| > 1 - \alpha^{k-1}$ , we get
\begin{equation*}
|\hat{E}_k| > \beta |\mathscr{Q}| + (1 - \beta) |E_{k-1} \setminus L_k| > \beta + (1-\beta)(1 - \alpha^{k-1} - \alpha^k) > 1 - \alpha^k,
\end{equation*}
so $|\mathscr{Q}\setminus\hat{E}_k|<\alpha^k$. By discarding some points from $\hat{E}_k$ if necessary, we can find another compact set $E_k\subset\hat{E}_k$ such that $\alpha^{k+1}<|\mathscr{Q}\setminus E_k|<\alpha^k$. Clearly, $D\Phi_k=T$ on $E_k$ i.\,e., conditions (b) and (c) are satisfied.  It remains to verify (e).

We define the partition\footnote{While $\mathscr{P}_{ki}$ are a diffeomorphic dyadic partitions, $\mathscr{P}_k$ need not be, because we might divide each of the diffeomorphic cubes $P_{k-1,i}$ into a different number of diffeomorphic dyadic cubes.}  $\mathscr{P}_k$ as the union of partitions $\mathscr{P}_{ki}$
$$
\mathscr{P}_k=\bigcup_{i=1}^{M_{k-1}}\bigcup_{j=1}^{2^{nN_{ki}}}\{ P_{kij}\}.
$$
After re-enumeration of the diffeomorphic cubes $P_{kij}$, we can write
$$
\mathscr{P}_k = \bigcup_{\ell=1}^{M_k}\{ P_{k\ell}\},
\quad
\text{where}
\quad
M_k=\sum_{i=1}^{M_{k-1}} 2^{nN_{ki}}.
$$
Since $\Phi_k=\Phi_{ki}$ on $P_{k-1,i}$, and $\Phi_{ki}=\Phi_{k-1}$ near $\partial P_{k-1,i}$, it follows that $\Phi_k(P_{k-1,i})=\Phi_{k-1}(P_{k-1,i})$, which is \eqref{eq112}. Since $P_{k\ell}=P_{kij}$ for some $i,j$ and $\Phi_k=\Phi_{ki}$ on $P_{k\ell}=P_{kij}\subset P_{k-1,i}$, condition (5) yields
$$
|\Phi_k(P_{k\ell})|=|\Phi_{ki}(P_{kij})|=\int_{P_{kij}}\det T(x)\, dx=\int_{P_{k\ell}} \det T(x)\, dx,
$$
which is \eqref{eq37}. Also
$$
\diam P_{k\ell}=\diam P_{kij}<2^{-k}
\quad
\text{and}
\quad
\diam\Phi_k(P_{k\ell})=\diam \Phi_{ki}(P_{kij})<2^{-k},
$$
which is \eqref{eq38}. This completes the proof of (e) and hence that of (a)-(e).

Fix any $i = 1, \ldots, M_{k-1}$. We shall now show that given $\Phi_{k-1}$, we can construct $\Phi_{ki}$ as described above. As before, we begin by correcting the way $\Phi_{k-1}$ distributes measure so that the subsequent corrections from $\Phi_{k-1}$ to $\Phi_k$ are made at a~smaller scale. To this end, we use Proposition~\ref{T32} for the diffeomorphic closed cube $\Phi_{k-1}(P_{k-1,i})$, the compact set $\Phi_{k-1}(E_{k-1, i})$ and functions
$$
f(y) = \det T(\Phi_{k-1}^{-1}(y)) \det D\Phi_{k-1}^{-1}(y) \quad \text{and} \quad g(y) = 1.
$$
We check that~\eqref{eq120} is satisfied by change of variables and the inductive assumption~\eqref{eq37} for $\Phi_{k-1}$,
\begin{align*}
\int_{\Phi_{k-1}(P_{k-1,i})} \det T(\Phi_{k-1}^{-1}(y)) \det D\Phi_{k-1}^{-1}(y) \, dy &= \int_{P_{k-1,i}} \det T (x) \, dx \overset{\eqref{eq37}}{=} |\Phi_{k-1}(P_{k-1, i})|\\
& = \int_{\Phi_{k-1}(P_{k-1,i})} 1 \, dx.
\end{align*}
Moreover, since $D\Phi_{k-1} = T$ on $E_{k-1, i}$, we have $f(y) = g(y)$ for all $y\in\Phi_{k-1}(E_{k-1,i})$. Therefore, assumptions of Proposition~\ref{T32} are satisfied.

By Proposition~\ref{T32}, we get a~diffeomorphism $\Psi_i: \Phi_{k-1}(P_{k-1,i})\to\Phi_{k-1}(P_{k-1,i})$, a~partition $\mathscr{R}_{ki} = \{ R_{kij} \}_{j=1}^{2^{nN_{ki}}}$ of $\Phi_{k-1}(P_{k-1,i})$ and a~compact set $\widetilde{K}_i \subset \Phi_{k-1}(E_{k-1, i})$ with properties described below.

Partition $\mathscr{R}_{ki}$ is a diffeomorphic dyadic partition and satisfies an analogue of \eqref{eq90} and \eqref{eq91}, namely
\begin{equation}
\label{eq103a}
\diam \Psi_i(R_{kij}) < 2^{-k}, \quad \diam(\Phi_{k-1}^{-1}(R_{kij})) < 2^{-k},
\end{equation}
and
\begin{equation}
\label{103b}
\int_{\Phi^{-1}_{k-1}(R_{kij})} \det T(x) \, dx = |\Psi_i (R_{kij}) |.
\end{equation}

We find $\eta >0$ such that for any measurable set $A \subset \mathscr{Q}$,
$$
|A| < \eta
\quad
\Rightarrow
\quad
\int_A \det D\Phi_{k-1}^{-1} < \frac{\alpha^k}{2M_{k-1}} \,.
$$
We choose a~compact set $\widetilde{K}_i\subset\Phi_{k-1}(E_{k-1,i})$ so that
$$
|\Phi_{k-1}(E_{k-1, i}) \setminus \widetilde{K}_i | < \eta .
$$
Moreover, $\Psi_i = \mathrm{id}$ near $\widetilde{K}_i\cup\partial(\Phi_{k-1}(P_{k-1, i}))$. In view of the inequality above, setting $K_i := \Phi_{k-1}^{-1}(\widetilde{K}_i) \subset E_{k-1, i}$, we have
\begin{equation}
\label{eq89k}
|E_{k-1, i} \setminus K_i| < \frac{\alpha^k}{2M_{k-1}} \,.
\end{equation}

Eventually, we set
$$
\widetilde{\Phi}_{ki} := \Psi_i \circ \Phi_{k-1} \text{ in } P_{k-1,i} \quad \text{and} \quad \mathscr{P}_{ki} := \{ P_{kij} \}_{j=1}^{2^{nN_{ki}}}, \text{ where } P_{kij} := \Phi^{-1}_{k-1}(R_{kij}).
$$
Then~\eqref{eq103a} and~\eqref{103b} become
\begin{equation} \label{eq92k}
\diam{\widetilde{\Phi}_{ki} (P_{kij})} < 2^{-k}, \quad \diam ({P_{kij}}) < 2^{-k}, \quad \int_{P_{kij}} \det T(x) \, dx = |\widetilde{\Phi}_{ki}(P_{kij})|,
\end{equation}
i.\,e., diffeomorphism $\widetilde{\Phi}_{ki}$ satisfies (5). Observe also that
\begin{equation}
\label{eq98k}
\widetilde{\Phi}_{ki} = \Phi_{k-1}
\text{ near } K_i\cup\partial P_{k-1, i}
\quad
\text{so}
\quad
D\widetilde{\Phi}_{ki} = T \text{ on } K_i\subset E_{k-1,i}.
\end{equation}
We will now replace $\widetilde{\Phi}_{ki}$ inside each $P_{kij}$ with a~diffeomorphism $\Phi_{kij}$ which has correct derivative on a~larger set.

As explained earlier, above inequality \eqref{eq93}, we need to use Lemma~\ref{T40} to get a~compact set $\widetilde{E}_{kij} \subset \mathring{P}_{kij} \cap K_i$ such that $\mathring{P}_{kij} \setminus \widetilde{E}_{kij}$ is connected and
\begin{equation}
\label{eq93k}
| ( \mathring{P}_{kij} \cap K_i) \setminus \widetilde{E}_{kij}| < \alpha^k \, 2^{-n N_{ki} - 1}M_{k-1}^{-1}.
\end{equation}
Set
$$
\widetilde{E}_{ki} := \bigcup_{j=1}^{2^{nN_{ki}}} \widetilde{E}_{kij} \subset K_i \quad \text{and} \quad L_{ki} := E_{k-1, i} \setminus \widetilde{E}_{ki}.
$$
Clearly, $\widetilde{E}_{ki}\subset\mathring{P}_{k-1,i}$.
Summing~\eqref{eq93k} over $j = 1, \ldots, 2^{nN_{ki}}$ yields
$$
|K_i \setminus \widetilde{E}_{ki} |  < \frac{\alpha^k}{2M_{k-1}} \,.
$$
By~\eqref{eq89k}, we arrive at
\begin{equation} \label{eq121}
|L_{ki}| = |E_{k-1, i} \setminus K_i|+|K_i\setminus\widetilde{E}_{ki}| < \alpha^k M_{k-1}^{-1}.
\end{equation}
Consequently,
\begin{equation} \label{eq96k}
|\widetilde{E}_{ki} | = |E_{k-1, i}| - |L_{ki}| > |E_{k-1, i}| - \alpha^k \, M_{k-1}^{-1}.
\end{equation}

Let us stress that $\widetilde{E}_{ki} \subset K_i$ so \eqref{eq98k} yields
\begin{equation} 
\label{eq97k}
D\widetilde{\Phi}_{ki} = T \text{ on } \widetilde{E}_{ki}.
\end{equation}
and
\begin{equation} 
\label{eq99k}
\widetilde{\Phi}_{ki} = \Phi_{k-1} \text{ on } \widetilde{E}_{ki} = E_{k-1, i} \setminus L_{ki}.
\end{equation}

Although $D\Phi_{k-1}=T$ on $E_{k-1,i}$,
the set $L_{ki}$ is the set on which we will have spoiled the already prescribed derivative of $\Phi_{k-1}$. On the other hand, the set $\widetilde{E}_{ki}$ consists of points in which the prescribed derivative \textit{survives} the transition from $\Phi_{k-1}$ to $\Phi_{ki}$ and, as we will soon see, to $\Phi_k$.

We now correct the derivative of $\widetilde{\Phi}_{ki}$. The set
$$
\Omega_{kij} := \mathring{P}_{kij} \setminus \widetilde{E}_{kij}
$$
is open and connected. Observe that, setting $\Omega_{ki} := \bigcup_j \Omega_{kij}$,
$$
|\Omega_{ki}| = |P_{k-1, i} \setminus \widetilde{E}_{ki}|.
$$
Exactly as in~\eqref{eq100}, invoking \eqref{eq92k} and \eqref{eq97k} instead of \eqref{eq92} and \eqref{eq97}, we check that $\Omega_{kij}$ satisfies
$$
| \widetilde{\Phi}_{ki} (\Omega_{kij})| = \int_{\Omega_{kij}} \det T(x) \, dx.
$$

We are now in the position to use Theorem~\ref{T31} for the domain $\Omega_{kij}$ and the diffeomorphism $\widetilde{\Phi}_{ki}$. We find a~diffeomorphism $\Phi_{kij}: \Omega_{kij} \to \widetilde{\Phi}_{ki}(\Omega_{kij})$ and a~compact set $E'_{kij} \subset \Omega_{kij}$ such that
\begin{equation} 
\label{eq95k}
D\Phi_{kij} = T \text{ on } E'_{kij}, \quad \Phi_{kij} = \widetilde{\Phi}_{ki} \text{ near } \partial \Omega_{kij} \quad \text{and} \quad |E'_{kij}| > \beta |\Omega_{kij}|.
\end{equation}
and thus $\Phi_{kij}(\Omega_{kij}) = \widetilde{\Phi}_{ki}(\Omega_{kij})$.
Let
$$
E'_{ki} := \bigcup_{j=1}^{2^{nN_{ki}}} E'_{kij}\subset\mathring{P}_{k-1,i}.
$$
By the third condition in~\eqref{eq95k}, we have
$$
|E'_{ki}| > \beta |\Omega_{ki}| = \beta |P_{k-1, i} \setminus \widetilde{E}_{ki}|.
$$

We replace the diffeomorphism $\widetilde{\Phi}_{ki}$ with $\Phi_{kij}$ inside each $\Omega_{kij}$, setting
$$
\Phi_{ki} := \begin{cases}
    \Phi_{kij} &\text{on } \Omega_{kij},\\
    \widetilde{\Phi}_{ki} &\text{on } P_{k-1,i} \setminus \Omega_{ki}.
\end{cases}
$$
Observe that thanks to the second condition in~\eqref{eq95k}, $\Phi_{ki}$ is indeed a~diffeomorphism.

Moreover, $\Phi_{ki} = \widetilde{\Phi}_{ki}$ near $ \partial P_{k-1, i}\subset \bigcup_j \partial P_{kij}$. Since $\widetilde{\Phi}_{ki}=\Phi_{k-1}$ near $\partial P_{k-1, i}$, property (1) follows.

Since $\Phi_{ki}=\widetilde{\Phi}_{ki}$ near $\bigcup_j\partial P_{kij}$, $\Phi_{ki}(P_{kij})=\widetilde{\Phi}_{ki}(P_{kij})$, so \eqref{eq92k} proves property (5).

Note that $\widetilde{E}_{ki}\subset \mathring{P}_{k-1,i}\setminus\Omega_{ki}$ and $\Phi_{ki}=\widetilde{\Phi}_{ki}$ in a neighborhood of $\widetilde{E}_{ki}$, so the first condition in \eqref{eq95k} together with \eqref{eq97k} imply that
$$
D\Phi_{ki} = T \text{ on } \hat{E}_{ki}:=E'_{ki} \cup \widetilde{E}_{ki}\subset\mathring{P}_{k-1,i}.
$$
Note that $\hat{E}_{ki}$ is compact as a~finite sum of compact sets. This proves property (2). We calculate
\begin{equation*}
|\hat{E}_{ki}| = |E'_{ki}| + |\widetilde{E}_{ki}| > \beta |P_{k-1, i} \setminus \widetilde{E}_{ki}| + |\widetilde{E}_{ki}| = \beta|P_{k-1, i} | + (1 - \beta) |E_{k-1, i} \setminus L_{ki}|,
\end{equation*}
where the last equality follows from $\widetilde{E}_{ki}=E_{k-1,i}\setminus L_{ki}$. This proves (3).

Since $\Phi_{ki} = \widetilde{\Phi}_{ki}$ on $\widetilde{E}_{ki} = E_{k-1, i} \setminus L_{ki}$, recalling \eqref{eq99k} and \eqref{eq121}, we see that (4) also holds.

We verified conditions (1)-(5) and that completes the proof of the theorem.
\qed

\section{Appendix}
\begin{lemma}
\label{T48}
Let $E\subset\bbbr^n$, $n\geq 1$, be a measurable set of finite measure. Then the space of measurable functions $f:E\to\bbbr$ is complete with respect to the Lusin metric $d_L$.
\end{lemma}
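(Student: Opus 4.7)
The plan is the standard extraction-of-subsequence argument familiar from proofs of completeness of $L^p$ or of convergence in measure, adapted to the particular metric $d_L$. First I would verify that $d_L$ indeed satisfies the triangle inequality, which is immediate from the set-theoretic inclusion
\[
\{x\in E:\, f(x)\neq h(x)\}\subset\{x\in E:\, f(x)\neq g(x)\}\cup\{x\in E:\, g(x)\neq h(x)\}.
\]
(Symmetry and the fact that $d_L(f,g)=0$ if and only if $f=g$ almost everywhere are obvious.)

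Let $(f_k)$ be a $d_L$-Cauchy sequence. Extract a subsequence $(f_{k_j})$ with $d_L(f_{k_j},f_{k_{j+1}})<2^{-j}$ and set
\[
A_j:=\{x\in E:\, f_{k_j}(x)\neq f_{k_{j+1}}(x)\},\qquad B:=\bigcap_{j=1}^\infty\bigcup_{i\geq j}A_i.
\]
Since $\sum_j|A_j|<\infty$, the Borel--Cantelli lemma gives $|B|=0$. For $x\in E\setminus B$ the sequence $f_{k_j}(x)$ is eventually constant, so one can define $f(x):=\lim_{j\to\infty}f_{k_j}(x)$ on $E\setminus B$ and, say, $f(x):=0$ on $B$. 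Then $f$ is measurable as the pointwise limit, on a set of full measure, of measurable functions.

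Next I would estimate $d_L(f_{k_j},f)$. If $x\in E\setminus B$ and $x\notin\bigcup_{i\geq j}A_i$, then $f_{k_j}(x)=f_{k_{j+1}}(x)=\cdots=f(x)$, so
\[
\{x\in E:\, f_{k_j}(x)\neq f(x)\}\subset B\cup\bigcup_{i\geq j}A_i,
\]
which has measure at most $\sum_{i\geq j}2^{-i}=2^{-j+1}$. Hence $d_L(f_{k_j},f)\to 0$. Finally, the full sequence converges: given $\varepsilon>0$, choose $N$ so that $d_L(f_k,f_\ell)<\varepsilon/2$ for all $k,\ell\geq N$, then pick $j$ with $k_j\geq N$ and $d_L(f_{k_j},f)<\varepsilon/2$, so the triangle inequality gives $d_L(f_k,f)<\varepsilon$ for $k\geq N$.

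There is no real obstacle here; the only mildly delicate point is making sure that after throwing away the null set $B$ the limit can be extended to a genuinely measurable function defined on all of $E$ (rather than merely on $E\setminus B$), which is handled trivially by defining $f=0$ on $B$. Finite measure of $E$ is not actually needed for the above argument, but it is the natural setting in which $d_L$ takes finite values.
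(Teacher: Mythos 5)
Your proof is correct and follows essentially the same extraction-of-subsequence argument as the paper: the paper works with the sets $A_\ell=\{f_{k_\ell}=f_{k_{\ell+1}}\}$ and their eventual intersection $C=\bigcup_\ell\bigcap_{i\geq\ell}A_i$, whereas you work with the complementary sets $\{f_{k_j}\neq f_{k_{j+1}}\}$ and their limsup $B$, which is just the dual description of the same construction ($C=E\setminus B$). The additional remarks you include (verifying the triangle inequality, passing from the subsequence to the whole sequence, measurability of the limit, and observing that finiteness of $|E|$ is not actually needed) are correct but are left implicit in the paper.
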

\begin{proof}
Let $\{ f_k\}_k$ be a Cauchy sequence. It suffices to show that it has a convergent subsequence in the metric $d_L$. To this end, we will show that a subsequence $\{f_{k_\ell}\}_\ell$ such that
$$
d_L(f_{k_\ell},f_{k_{\ell+1}})=|\{f_{k_\ell}\neq f_{k_{\ell+1}}\}|<2^{-(\ell+1)}
$$
is convergent. Let
$$
A_\ell:=\{f_{k_\ell}=f_{k_{\ell+1}}\}
\quad
\text{and}
\quad
C:=\bigcup_{\ell=1}^\infty\bigcap_{i=\ell}^\infty A_i.
$$
Note that for any $\ell$,
$$
|E\setminus C|\leq\Big|E\setminus\bigcap_{i=\ell}^\infty A_i\Big|\leq
\sum_{i=\ell}^\infty |E\setminus A_i|<2^{-\ell},
\quad
\text{so}
\quad
|E\setminus C|=0.
$$
If $x\in C$, then $x\in\bigcap_{i=\ell}^\infty A_i$ for some $\ell$ and hence
$$
f_{k_\ell}(x)=f_{k_{\ell+1}}(x)=f_{k_{\ell+2}}(x)=\ldots
\quad
\text{so}
\quad
f(x):=\lim_{\ell\to\infty} f_{k_\ell}(x)
$$
exists for all $x\in C$ and hence for almost all $x\in E$. In fact, $f(x)=f_{k_\ell}(x)$ for $x\in\bigcap_{i=\ell}^\infty A_i$ and hence
$$
d_L(f,f_{k_\ell})\leq \Big|E\setminus\bigcap_{i=\ell}^\infty A_i\Big|<2^{-\ell},
$$
proving that $f_{k_\ell}\to f$ in the metric $d_L$.
\end{proof}

\end{document}